\documentclass[12pt,reqno]{amsart}
\usepackage[T2A]{fontenc}
\usepackage[cp1251]{inputenc}

\pagestyle{plain}
\usepackage{geometry} 
\usepackage{amsmath, amsthm, amssymb}
\geometry{letterpaper} 
\usepackage{hyperref}
\usepackage{epigraph}
\usepackage{mathtools}
\usepackage{longtable}
\usepackage{pifont}
\usepackage{scalerel}
\usepackage{array}
\usepackage{lipsum}
\usepackage{scrextend}
\hypersetup{colorlinks=true,linkcolor= Mahogany, linktocpage, citecolor = OliveGreen}
\usepackage{graphicx} 
\usepackage{mathrsfs} 
\usepackage{color}
\usepackage{verbatim}
\usepackage[dvipsnames]{xcolor}
\usepackage{leftidx}

\usepackage{csquotes}

\usepackage[new]{old-arrows}

\usepackage{csquotes}

\usepackage{enumitem}
\setlist[enumerate,1]{label=(\arabic*),ref=\arabic*$^\circ$}

\newcommand{\sbullet}[1][.7]{\mathbin{\vcenter{\hbox{\scalebox{#1}{$\bullet$}}}}}

\usepackage{tikz-cd}
\usetikzlibrary{arrows}
\usepackage{graphicx}

\usepackage{geometry}
\geometry{
a4paper,
left=20mm,
right=20mm,
top=20mm,
bottom=20mm,
}

\makeatletter
\makeatother

\usepackage[all]{xy}
\usepackage{hyperref}


\newcounter{eq}
\newcounter{intr}

\newtheorem{thmintr}[intr]{Theorem}
\newtheorem{propintr}[intr]{Proposition}

\newtheorem{prop}[eq]{Proposition}
\newtheorem{thm}[eq]{Theorem}
\newtheorem{cor}[eq]{Corollary}
\newtheorem{lemma}[eq]{Lemma}
\theoremstyle{definition}
\newtheorem{df}[eq]{Definition}
\newtheorem{rem}[eq]{Remark}

\newtheorem{example}[eq]{Example}

\numberwithin{eq}{subsection}
\numberwithin{equation}{subsection}

\DeclareMathOperator{\SheafHom}{\mathcal{H\kern -3pt o\kern -2pt m\kern -1pt}}

\newcommand{\A}{\mathbb A}

\newcommand{\Z}{\mathbb Z}


\address{\parbox{\linewidth}{
Department of Mathematics and Mechanics, Lomonosov Moscow State University, Moscow 119234, Russia
}}
\email{dmitrii.badulin@math.msu.ru, dbadulin28@gmail.com}

\setcounter{tocdepth}{1}

\let\oldtocsection=\tocsection

\let\oldtocsubsection=\tocsubsection

\let\oldtocsubsubsection=\tocsubsubsection

\renewcommand{\tocsection}[2]{\hspace{0em}\oldtocsection{#1}{#2}}
\renewcommand{\tocsubsection}[2]{\hspace{1em}\oldtocsubsection{#1}{#2}}
\renewcommand{\tocsubsubsection}[2]{\hspace{2em}\oldtocsubsubsection{#1}{#2}}

\begin{document}
\author{
Dmitry Badulin
}

\title{Embeddings and intersections of adelic groups 
}
\date{}

\maketitle

\begin{abstract}
We prove embeddings of adelic groups on an excellent scheme of special type and a flat quasicoherent sheaf on it. For a normal excellent scheme of special type we establish the equality $\A_I(X,\mathcal{F})\cap\A_J(X,\mathcal{F})=\A_{I\setminus0}(X,\mathcal{F})$ in the case $I\cap J=I\setminus0$. We show that the limit of restrictions of global sections of a locally free sheaf on a Cohen--Macaulay projective scheme to power thickenings of integral subschemes equals the group of global sections of this sheaf. Using this result, we deduce a theorem on intersections of adelic groups for normal projective surfaces. We also compute cohomology groups of a curtailed adelic complex and, as a consequence, show that on a three-dimensional regular projective variety over a countable field the intersection $\A_I(X,\mathcal{F})\cap\A_J(X,\mathcal{F})$ equals $\A_{I\cap J}(X,\mathcal{F})$ for any $I,J\subset\{0,1,2,3\}$ and any locally free sheaf $\mathcal{F}$ on $X$.

\end{abstract}

\tableofcontents

\section*{Introduction}\label{sect1}
A generalization of adeles on curves to the higher-dimensional case was first given by A.N. Parshin in \cite{Pa1} for the case of a surface. Later, A.A. Beilinson introduced a simplicial approach and generalized the notion of adeles for Noetherian schemes and quasicoherent sheaves on them in \cite{Be}. More precisely, a complex of abelian groups
$$\mathbb{A}^0(X, \mathcal{F})\overset{d^1}{\longrightarrow} \mathbb{A}^1(X, \mathcal{F})\overset{d^2}{\longrightarrow}\ldots \overset{d^{n}}{\longrightarrow}\mathbb{A}^n(X, \mathcal{F})\overset{d^{n + 1}}{\longrightarrow} \ldots$$
is associated with any Noetherian scheme $X$ and a quasicoherent sheaf $\mathcal{F}$ on $X$. This complex is called the {\it adelic complex}. By the theorem (see \cite{Be}, \cite[Theorem 4.2.3]{Hu}, or \cite[Theorem 1]{Pa1} for the case of a surface), the cohomology groups of this complex are equal to the cohomology groups $H^i(X, \mathcal{F})$ of the sheaf $\mathcal{F}$. Each term of this complex is a ``restricted product'' of adelic local factors $\A_\Delta(X, \mathcal{F})$ in simplexes $\Delta = (p_0, \ldots, p_n)$ of points of $X$. Considering non-degenerate simplexes, we obtain the complex of reduced adeles $\A_{\mathrm{red}}^{\sbullet}(X, \mathcal{F})$. If $X$ is finite-dimensional, then there is a decomposition
 $$\A^n_{\mathrm{red}}(X, \mathcal{F}) = \bigoplus\limits_{\substack{I\subset \{0, 1, \ldots, \dim X\}\\ |I| = n + 1}}\A_I(X, \mathcal{F}),$$
 where each term $\A_I(X, \mathcal{F})$ is a ``restricted product'' of local factors $\A_\Delta(X, \mathcal{F})$ such that the set of codimensions of points of $\Delta$ equals $I$.

On a regular surface over a field, A.N. Parshin noticed from the explicit description of local factors (see \cite[Section 2]{Pa1}, \cite[Section 3.3]{Os2}, \cite[Section 8.5]{Mo}) that
$$\A_I(X, \mathcal{O}_X)\longhookrightarrow \A_J(X, \mathcal{O}_X),$$
if $I\subset J\subset \{0, 1, 2\}$. He also showed in \cite{FP} that for a projective regular surface, the equality
\begin{equation}\label{eqParshinIntersectionAdeles}
\A_I(X, \mathcal{O}_X)\cap \A_J(X, \mathcal{O}_X) = \A_{I\cap J} (X, \mathcal{O}_X)  \tag{1}
\end{equation} 
holds for arbitrary $I, J\subset \{0, 1, 2\}$, where we assume $\A_{\varnothing}(X, \mathcal{F}) = H^0(X, \mathcal{F})$. R.Ya. Budylin and S.O. Gorchinskiy generalized \eqref{eqParshinIntersectionAdeles} for the case of a regular projective surface and a locally free sheaf on it in \cite{BG}. The equality \eqref{eqParshinIntersectionAdeles} was also used in \cite{OP} to prove the Riemann--Roch theorem on a surface by means of harmonic analysis on local fields and adelic spaces.

In view of the equality \eqref{eqParshinIntersectionAdeles}, A.N. Parshin formulated the following question in \cite[Remark~5]{Pa3}:

\vspace{0.25cm}

 \begin{center}
 \blockquote{\it Is it true that on a Cohen--Macaulay projective variety $X$, the equality 
 $$\A_I(X, \mathcal{F})\cap \A_J(X, \mathcal{F}) = \A_{I\cap J}(X, \mathcal{F})$$ 
 \it holds for every locally free sheaf $\mathcal{F}$ on $X$ and any $I, J\subset \{0, 1, \ldots, \dim X\}$?}
 \end{center}
 \vspace{0.25cm}
 Here we assume that $\A_{\varnothing}(X, \mathcal{F}) = H^0(X, \mathcal{F})$. Note that the projectivity assumption cannot be dropped, since R.Ya. Budylin and S.O. Gorchinskiy provided a counterexample to this question in \cite{BG} for the case of an affine surface over a countable field and the structure sheaf on it. Nevertheless, as they also showed, the question is true in the case of a regular (not necessarily projective) surface over an uncountable field. 

In order to solve the question posed by Parshin, it is necessary to correctly determine the intersections of adelic groups. In other words, we must prove the embeddings
\begin{equation}\label{eqIntrEmbed}
\A_I(X, \mathcal{F})\longhookrightarrow \A_J(X, \mathcal{F}) \tag{2}
\end{equation}
for a Cohen--Macaulay variety $X$ and flat quasicoherent sheaf $\mathcal{F}$ on $X$, when $I\subset J$. Then we can take the intersection of $\A_I(X, \mathcal{F})$ and $\A_J(X, \mathcal{F})$ for $I, J\subset \{0, 1, \ldots, \dim X\}$ in $\A_K(X, \mathcal{F})$ for any $K\subset \{0, 1, \ldots, \dim X\}$ such that $I\cup J\subset K$. 

In \cite{Os1} D.V. Osipov gave a generalization of the notion of {\it restricted adeles} associated with a flag of embedded ample Cartier divisors, first introduced by A.N. Parshin for the case of a surface in \cite{Pa3}. D.V. Osipov also proved analogues of the embedding property \eqref{eqIntrEmbed} and of the intersection property \eqref{eqParshinIntersectionAdeles} for the case of restricted adeles on an arbitrary Cohen--Macaulay variety and a locally free sheaf on it. This result allowed him to generalize the Krichever correspondence to the higher-dimensional case.

\subsection*{Main results}\label{sect1.1}
In this article we investigate the question posed by A.N. Parshin. We will first show that adelic groups on thickenings of integral excellent schemes of special type are embedded in each other, when $I\subset J$. See section \ref{sect2} for the notation used below.
\begin{thmintr}[Theorem \ref{MainThmembeddingAdeles}]\label{thm1}
        Let $Y$ be an excellent integral Noetherian scheme and let $X$ be a closed subscheme which is locally defined by primary ideals. 
        Assume that $Y$ is strongly biequidimensional.
        Let $\mathcal{F}$ be a subsheaf of some flat quasicoherent sheaf on $X$ and let $I\subset J$ be subsets of $\{0, 1, \ldots, \dim X\}$. Then the map
        $$\varphi_{IJ}\colon\A_I(X, \mathcal{F})\longrightarrow\A_{J}(X, \mathcal{F})$$
        is an embedding.
    \end{thmintr}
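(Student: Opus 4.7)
The plan is to reduce the global embedding $\varphi_{IJ}$ to the injectivity of maps between adelic local factors on nested simplexes, and then to deduce this local injectivity from the flatness of the ambient sheaf containing $\mathcal{F}$.

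First, I would unwind the definition of $\varphi_{IJ}$. For any chain $\Delta' = (p_{j_0} < \ldots < p_{j_m})$ of codimension-type $J$, the subchain $\Delta$ of codimension-type $I$ is uniquely obtained by discarding the points whose codimensions lie in $J \setminus I$. The component of $\varphi_{IJ}(\alpha)$ at $\Delta'$ is then the image of $\alpha_\Delta$ under the natural map
$$\A_\Delta(X, \mathcal{F}) \longrightarrow \A_{\Delta'}(X, \mathcal{F})$$
coming from the insertion of the extra points into the iterated completion-localization description of the Beilinson local factor. Injectivity of $\varphi_{IJ}$ thus reduces to showing that for every chain $\Delta$ of codim-type $I$ in $X$, at least one extension $\Delta \subset \Delta'$ of codim-type $J$ exists such that $\A_\Delta(X, \mathcal{F}) \hookrightarrow \A_{\Delta'}(X, \mathcal{F})$ is injective.

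Second, I would reduce to the base case $|J \setminus I|=1$, say $J = I \sqcup \{k\}$, and iterate. In this single-step setting, an admissible extension of $\Delta$ is given by inserting one point $q$ of codimension $k$ at the position determined by $k$'s place in $J$. The strongly biequidimensional hypothesis on $Y$ ensures that codimensions of points in $X$ behave additively along specialization chains, so that for every $\Delta$ of codim-type $I$ at least one such insertion point $q \in X$ exists.

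Third, for a chosen extension $\Delta \subset \Delta'$, I would establish the local injectivity. Writing $\mathcal{F} \hookrightarrow \mathcal{G}$ with $\mathcal{G}$ a flat quasicoherent sheaf, the Beilinson local factor $\A_\Delta(X, -)$ is a composition of localizations and ideal-adic completions, each of which is exact on flat modules. Thus $\A_\Delta(X, \mathcal{G})$ is flat, the inclusion $\A_\Delta(X, \mathcal{F}) \hookrightarrow \A_\Delta(X, \mathcal{G})$ holds, and the one additional completion-localization step producing $\A_{\Delta'}(X, \mathcal{G})$ is injective (in the excellent setting it is even faithfully flat at the local level). A diagram chase then yields injectivity of $\A_\Delta(X, \mathcal{F}) \to \A_{\Delta'}(X, \mathcal{F})$. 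The main obstacle will be ensuring that the local rings arising in the iterated completion-localization construction are Noetherian and retain flatness at each level; this relies crucially on the excellence of $Y$ together with the primary-ideal assumption on $X \subset Y$, which identifies $X$ as a thickening of an integral subscheme and controls the primary decomposition encountered in each completion step.
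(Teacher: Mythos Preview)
Your reduction to the local statement is sound, but the local injectivity claim is where the argument breaks. You assert that for a \emph{single} extension $\Delta \subset \Delta'$ the map $\A_\Delta(X,\mathcal{G}) \to \A_{\Delta'}(X,\mathcal{G})$ is injective because ``the one additional completion--localization step is injective, even faithfully flat in the excellent setting.'' This is false without normality. Take the front-insertion case $\Delta' = \mathfrak{q}\vee\Delta$ in an affine chart $\mathrm{Spec}\,R$: injectivity of $C_\Delta R \to C_{\mathfrak{q}}S_{\mathfrak{q}}^{-1}C_\Delta R$ amounts to $\bigcap_n S_{\mathfrak{q}}(\mathfrak{q}^n)C_\Delta R = 0$, which via intersection-flatness over $C_{(\mathfrak{p}_0)}R$ reduces to the same vanishing in $\widehat{R}_{\mathfrak{p}_0}$. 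But $\widehat{R}_{\mathfrak{p}_0}$ is a domain only when $R_{\mathfrak{p}_0}$ is analytically irreducible; excellence alone does not give this --- one needs normality (analytic normality of normal excellent local rings). The faithful flatness you invoke goes the other way: the paper's Proposition~\ref{Cp0CDeltaflat} shows $C_{(\mathfrak{p}_0)}R \to C_\Delta R$ is faithfully flat, which says nothing about $C_\Delta R \to C_{\mathfrak{q}\vee\Delta}R$.

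The paper repairs this by passing to the normalization $S$ of $R$, where the single-step embedding $C_\Gamma S \hookrightarrow C_{\mathfrak{r}\vee\Gamma}S$ does hold (Lemma~\ref{NormalLocalFactorEmbedding}), and then uses the decomposition $C_\Delta S = \prod_{\Gamma\mid\Delta}C_\Gamma S$ to deduce an embedding of $C_\Delta R$ into a \emph{finite product} $\prod_j C_{\mathfrak{q}_j\vee\Delta}R$ (Lemma~\ref{ExcDomainLocalFactorEmb}) --- not into one factor. This is why the local theorem (Theorem~\ref{TheoremEmbeddingofAdelicGroups}) embeds $\A_\Delta$ into the product over \emph{all} $\Delta'\supset\Delta$. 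A second issue you have not addressed: when the inserted codimension $k$ is not the minimum of $J$, the new point lies in the middle or at the end of the chain, and $\A_{\Delta'}$ is \emph{not} obtained from $\A_\Delta$ by one further completion--localization on top. The paper's induction in Theorem~\ref{LocalFactorEmbedding} treats this case ($j=i$) separately, via symbolic powers $\mathfrak{p}_0^{(m)}$, an Artin--Rees argument, and passage to inverse limits.
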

In particular, this theorem holds for strongly biequidimensional excellent integral Noetherian schemes, since we can set $X = Y$ in theorem \ref{thm1}. By remarks \ref{remkFinTypeScheme} and \ref{remZFinTypeScheme}, equidimensional schemes of finite type over a field and irreducible flat schemes of finite type over $\mathrm{Spec}\: \Z$ are strongly biequidimensional.
    
Since adelic groups are embedded into the product of its local factors, it is sufficient to show the embedding on local factors. We have the following result.
\begin{thmintr}[Theorem \ref{TheoremEmbeddingofAdelicGroups}]
        Let $Y$ be an excellent integral Noetherian scheme and let $X$ be a closed subscheme which is locally defined by primary ideals. Assume that $Y$ is strongly biequidimensional.
        Let $\mathcal{F}$ be a flat quasicoherent sheaf on $X$ and let $I, J\subset \{0, 1, \ldots, \dim X\}$ be non-empty subsets such that $I\subset J$. Let $\Delta \in S(X, I)$. Then there is an inclusion
        $$\A_\Delta(X, \mathcal{F})\longhookrightarrow\prod_{\substack{\Delta'\in S(X, J)\\ \Delta\subset \Delta'}}\A_{\Delta'}(X, \mathcal{F}).$$
    \end{thmintr}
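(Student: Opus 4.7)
My approach is to reduce first to the case $|J\setminus I|=1$ by induction on $|J\setminus I|$, and then to settle that single-insertion case by unwinding the iterated localization--completion description of $\A_\Delta(X,\mathcal{F})$ and applying a commutative-algebra injectivity statement for flat modules over the resulting Noetherian rings.

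\textit{Inductive reduction.} The case $|J\setminus I|=0$ is trivial ($\Delta'=\Delta$). For the step, pick $j\in J\setminus I$ and set $K:=J\setminus\{j\}$. The inductive hypothesis for the pair $I\subset K$ yields an embedding
$$\A_\Delta(X,\mathcal{F})\ \hookrightarrow\ \prod_{\substack{\Delta''\in S(X,K)\\ \Delta\subset\Delta''}}\A_{\Delta''}(X,\mathcal{F}).$$
If the single-insertion case $K\subset J$ is established for every intermediate $\Delta''\in S(X,K)$, then functoriality of the local factors under chain extensions shows that any $a\in\A_\Delta(X,\mathcal{F})$ mapping to zero in every $\A_{\Delta'}(X,\mathcal{F})$ with $\Delta'\in S(X,J)$, $\Delta\subset\Delta'$, also maps to zero in every $\A_{\Delta''}$, so $a=0$. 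Hence the task reduces to the case $J=K\cup\{j\}$, fixing $\Delta=(p_0,\ldots,p_n)\in S(X,K)$ and considering points $q\in X$ of codimension $j$ inserted at the unique admissible position of the specialization chain.

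\textit{Single insertion and commutative algebra.} Unwinding the inductive definition of $\A_\Delta(X,\mathcal{F})$ as an iterated localization--completion of stalks of $\mathcal{F}$ along $p_0\rightsquigarrow\cdots\rightsquigarrow p_n$, let $R$ denote the iterated Noetherian ring obtained from $\mathcal{O}_X$ and $M$ the corresponding flat $R$-module obtained from $\mathcal{F}$; the map $\A_\Delta\to\A_{\Delta'_q}$ becomes one extra step of localizing and completing at the prime $\mathfrak{q}\subset R$ corresponding to $q$. The question thus reduces to the injectivity of
$$M\ \longrightarrow\ \prod_{\mathfrak{q}}\bigl(M\otimes_R\widehat{R_\mathfrak{q}}\bigr),$$
with $\mathfrak{q}$ ranging over the primes of $R$ corresponding to admissible insertion points. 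Flatness of $M$ over $R$ further reduces this to injectivity of $R\hookrightarrow\prod_{\mathfrak{q}}\widehat{R_\mathfrak{q}}$, and faithful flatness of completion of Noetherian local rings reduces it in turn to $R\hookrightarrow\prod_{\mathfrak{q}}R_\mathfrak{q}$, i.e.\ to the statement that the $\mathfrak{q}$'s collectively separate nonzero elements of $R$.

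\textit{Main obstacle.} The main difficulty is to propagate the hypotheses through the iterated construction: at each step $R$ must remain Noetherian, equidimensional, and free of embedded associated primes, and the collection of primes of $R$ of the correct height must be rich enough to separate $R$. This is precisely where the hypotheses enter. Excellence of $Y$ guarantees that each completion morphism $\mathcal{O}_{X,p}\to\widehat{\mathcal{O}_{X,p}}$ is regular, and strong biequidimensionality is preserved under regular completions by Ratliff-type theorems; the local primary definition of $X\subset Y$ prevents embedded associated primes from appearing after iterated localization and completion. Once this stability is secured, standard commutative algebra identifies the relevant primes $\mathfrak{q}\subset R$ of the inserted height with the admissible insertion points $q$, and shows that they cover the ``next layer'' densely enough to give injectivity into the product. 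The hardest step is the bookkeeping of the iteration; the final injectivity argument is then formal.
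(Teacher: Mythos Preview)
Your proposal has a genuine gap in the ``single insertion'' reduction. You claim that, for $\Delta=(p_0,\ldots,p_n)\in S(X,K)$ and $J=K\cup\{j\}$, the map $\A_\Delta\to\A_{\Delta'_q}$ is obtained by ``one extra step of localizing and completing at the prime $\mathfrak{q}\subset R$ corresponding to $q$,'' where $R$ is the full iterated ring $C_\Delta(\mathcal{O}_X)$. This is only correct when the inserted codimension $j$ is smaller than $\mathrm{codim}\,p_0$, i.e.\ when $q$ is inserted at the \emph{front} of the chain. If instead $p_{k-1}>q>p_k$ for some $k\ge1$, then $C_{\Delta'_q}$ is obtained by inserting $C_qS^{-1}_q$ in the \emph{middle} of the iterated construction, not by applying an outer localize-complete to $C_\Delta$. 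There is no single prime $\mathfrak{q}$ of $R=C_\Delta(\mathcal{O}_X)$ whose localization-completion recovers $C_{\Delta'_q}$. So the reduction to ``$R\hookrightarrow\prod_\mathfrak{q} R_\mathfrak{q}$'' breaks down for all but one insertion position.

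There is a second, structural issue. You propose to work directly with the iterated ring over $X$, but $X$ is only a primary thickening and $\mathcal{O}_X$ is not a domain; the rings $C_\Delta(\mathcal{O}_X)$ will generally not be domains or equidimensional, and your ``primes separate elements'' argument does not obviously survive. The paper circumvents this by working over the integral ambient $Y$: it proves, for the domain ring $R=\mathcal{O}_Y(\mathrm{Spec}\,R)$, \emph{simultaneously} the embedding $C_\Psi R\hookrightarrow\prod C_{\Psi'}R$ and the intersection property $C_\Psi R\cap\prod\mathfrak{n}C_{\Psi'}R=\mathfrak{n}C_\Psi R$ for the primary ideal $\mathfrak{n}$ cutting out $X$ (Theorem~\ref{LocalFactorEmbedding}). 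It is this second assertion that allows descent to $R/\mathfrak{n}$ and hence to $X$. Your sketch never produces such an intersection statement, and the vague reference to ``Ratliff-type theorems'' does not substitute for it. In the paper the middle-insertion case is handled by peeling off $\mathfrak{p}_0$ (the $j=i$ case in the induction on $|J|$), using symbolic powers $\mathfrak{p}_0^{(m)}$ as primary ideals in the inductive hypothesis, and passing through intersection flatness of $C_\Delta R$ over $C_{\mathfrak{p}_0}S^{-1}_{\mathfrak{p}_0}R$; this machinery is what makes the middle insertions work, and it is absent from your outline.
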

In turn, this theorem can be reduced to the case of affine $X$. Consequently, the question on the embedding of adelic local factors reduces to commutative algebra. For adelic local factors on an affine scheme there is the following result.
\begin{thmintr}[Theorem \ref{LocalFactorEmbedding}]
        Let $R$ be an excellent biequidimensional domain and let $I, J\subset \{0, 1, \ldots, \dim R\}$ be subsets such that $I\subset J$. Let $\Delta \in S(R, I)$. Let $\mathfrak{n}$ be a primary ideal in $R$ such that $\mathrm{ht}\: \mathfrak{n}\leq j$ if $J = (j, \ldots)\neq \varnothing$, and $\mathfrak{n} \geq  \Delta$. Then 
        $$C_\Delta R\longhookrightarrow \prod_{\substack{\Delta'\in S(R, J)\\ \Delta\subset \Delta'}}C_{\Delta'}R, \quad C_\Delta R\cap \prod_{\substack{\Delta'\in S(R, J)\\ \Delta\subset \Delta'}}\mathfrak{n}C_{\Delta'}R = \mathfrak{n}C_\Delta R.$$
    \end{thmintr}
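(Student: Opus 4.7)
The plan is to prove both assertions together by induction on $|J \setminus I|$. The base case $I = J$ is immediate. For the inductive step, it suffices to treat $J = I \cup \{j\}$ for a single new height $j$, since single-step extensions compose: both the embedding and the intersection property, once known for each single step, combine to give the general statement. The height condition on $\mathfrak{n}$ is compatible with iteration because $\min J$ can only decrease as $J$ grows, and the primary ideal $\mathfrak{n}$ keeps satisfying the hypothesis at each stage.

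For a single added height $j$, the extensions $\Delta' \supset \Delta$ in $S(R, J)$ correspond to primes $\mathfrak{q}$ of height $j$ fitting into the chain $\Delta$ at the slot determined by the position of $j$ among the heights in $I$. The construction of $C_\Delta R$ proceeds by iterated localization and completion starting from $R$; extending $\Delta$ by a prime $\mathfrak{q}$ amounts to performing one more such step at the appropriate intermediate stage. By excellence and biequidimensionality of $R$, these operations are faithfully flat on the relevant rings, and the product map over all valid $\mathfrak{q}$ is also faithfully flat after identification with an honest (or restricted) product. The embedding then follows from faithful flatness, and the intersection statement follows from the general fact that $\mathfrak{m}B\cap A = \mathfrak{m}$ for any faithfully flat $A\to B$ and any ideal $\mathfrak{m}\subset A$, applied to $\mathfrak{n}$ and the single localization-completion step.

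The main obstacle I expect is the middle case, when the added prime $\mathfrak{q}$ sits strictly between two existing primes of $\Delta$: here the new step is performed inside an intermediate completion, and verifying that the product over all admissible $\mathfrak{q}$ is faithfully flat requires a careful unfolding of the construction of $C_\Delta R$. The biequidimensionality of $R$ is crucial, as it ensures that primes of the required height exist inside the relevant intervals and that chains of primes behave uniformly; the excellence hypothesis ensures that completion preserves the domain-like behaviour on irreducible components and that primary decomposition is compatible with completion, so that the primary ideal $\mathfrak{n}$ remains compatible with the extended chains. The height bound $\mathrm{ht}\:\mathfrak{n}\leq j$ aligns $\mathfrak{n}$ with the lowest rung of $J$ so that its image in each $C_{\Delta'}R$ is still a proper primary ideal, which is precisely what makes the intersection assertion meaningful and makes the faithful flatness argument go through at each intermediate step.
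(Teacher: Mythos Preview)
Your plan has a genuine gap at its core: the assertion that ``the product map over all valid $\mathfrak{q}$ is faithfully flat'' is exactly the nontrivial content of the theorem, and you have not justified it. A single step $C_\Delta R \to C_{\mathfrak{q}\vee\Delta}R$ is not faithfully flat in general (it factors through the localization $S^{-1}_{\mathfrak{q}}C_\Delta R$, which already kills information), and there is no general reason why the diagonal map into the full product over all admissible $\mathfrak{q}$ should be faithfully flat either. The paper does \emph{not} prove faithful flatness of this product map. Instead, injectivity is obtained by passing to the normalization $S$ of $R$ (finite over $R$ by excellence), using that for a normal excellent local domain the completion is again a domain (analytic normality), and then reducing to a carefully chosen \emph{finite} set of primes $\{\mathfrak{q}_j\}$ for which injectivity holds. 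The intersection statement is likewise proved for this finite set, using primary decomposition and the specific form of the kernel $\ker(R\to T^{-1}(R/\mathfrak{n}))$ for the multiplicative set $T = R\setminus\bigcup\mathfrak{q}_j$; the primarity of $\mathfrak{n}$ enters precisely to force this kernel back down to $\mathfrak{n}$.

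Your inductive scheme also differs from the paper's in a way that creates extra difficulty. You induct on $|J\setminus I|$, adding one new height $j\notin I$ at a time, which forces you to handle insertion of a prime \emph{at any position} in the chain $\Delta$ --- including between two existing primes or above $\mathfrak{p}_n$. The paper instead inducts on $|J|$ and always removes $\min J$. When $\min J < \min I$ this is the bottom-insertion case handled by the normalization argument above; when $\min J = \min I$ the paper removes $\mathfrak{p}_0$ from $\Delta$ as well, applies the induction hypothesis to the shorter flag $\Gamma = \Delta\setminus\mathfrak{p}_0$ with the \emph{symbolic powers} $\mathfrak{p}_0^{(m)}$ playing the role of the primary ideal, and then reconstructs $C_\Delta R$ as an inverse limit. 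This last step is delicate (it needs lemma~\ref{aSCDeltaR}, the Artin--Rees lemma, and a computation with $S_{\mathfrak{p}_0}(\mathfrak{n}+\mathfrak{p}_0^m)$), and nothing in your sketch corresponds to it. In short, the mechanism by which excellence and biequidimensionality enter is not a blanket faithful-flatness statement but a chain of specific reductions (normalization, finite products, symbolic powers, intersection-flatness of $C_\Delta R$ over $C_{(\mathfrak{p}_0)}R$) that your proposal does not supply.
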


In \cite[Theorem 1]{BG} a positive answer was given to Parshin's question on a regular (not necessarily projective) surface in the case $I \cap J= I\setminus 0$. We generalize this result for the case of a normal excellent scheme and a flat quasicoherent sheaf on it.
\begin{thmintr}[Theorem \ref{TheoremIcapJI0}]
    Let $X$ be a normal integral excellent Noetherian scheme, let $I, J\subset \{0, 1, \ldots, \dim X\}$ be subsets with $I\cap J = I\setminus 0$, and let $\mathcal{F}$ be a flat quasicoherent sheaf on $X$. 
    Assume that $X$ is strongly biequidimensional.
    Then
    $$\A_I(X, \mathcal{F})\cap \A_J(X, \mathcal{F}) = \A_{I\setminus 0}(X, \mathcal{F}).$$
\end{thmintr}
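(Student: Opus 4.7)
The inclusion $\A_{I \setminus 0}(X, \mathcal{F}) \subset \A_I(X, \mathcal{F}) \cap \A_J(X, \mathcal{F})$ is immediate from Theorem \ref{MainThmembeddingAdeles}, since $I \setminus 0 \subset I$ and $I \setminus 0 = I \cap J \subset J$; the same theorem also provides a common ambient group $\A_{I \cup J}(X, \mathcal{F})$ in which the intersection is interpreted. I may assume $0 \in I$ and $0 \notin J$, for otherwise $I \subset J$ and the statement reduces directly to the embedding theorem. Write $K = I \setminus 0$, so that $I = \{0\} \cup K$, $K \subset J$, and $I \cup J = \{0\} \cup J$.

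The non-trivial inclusion is checked component-wise via the embedding of $\A_{I \cup J}(X, \mathcal{F})$ into $\prod_{\Delta \in S(X, I \cup J)} \A_\Delta(X, \mathcal{F})$. For each $\Delta \in S(X, I \cup J)$, write $\Delta|_S$ for the sub-chain of $\Delta$ with codimensions in $S$; then the $\Delta$-component of an element of $\A_I$ (resp.\ $\A_J$, $\A_K$) is the canonical image of its $\Delta|_I$-component (resp.\ $\Delta|_J$-, $\Delta|_K$-component). The sub-chain $\Delta|_I = (\eta) \cup \Delta|_K$ is uniquely determined by $\Delta|_K$ since $X$ is integral, while $\Delta|_J$ varies as we vary $\Delta$. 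After restricting to an affine open, the claim reduces to the following commutative-algebra statement: for an excellent normal domain $R$ with fraction field $F$, a flat $R$-module $M$, and a chain $\Delta_K = (\mathfrak{p}_1 \subsetneq \dots \subsetneq \mathfrak{p}_k)$ of primes of positive height, an element of $C_{\Delta_K} M \otimes_R F = C_{((0), \Delta_K)} M$ lies in the canonical image of $C_{\Delta_K} M$ if and only if it belongs to the image of $C_{\Delta_J} M$ for every extension $\Delta_J \supsetneq \Delta_K$ in $S(R, J)$.

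The key tool for the affine claim is the normality identity $A = \mathrm{Frac}(A) \cap \bigcap_{\mathrm{ht}\, \mathfrak{q} = 1} A_\mathfrak{q}$, applied to $A = C_{\Delta_K} R$, which remains normal because completions and localizations of an excellent normal domain are normal (this is where excellence and strong biequidimensionality enter). The principal obstacle is that $C_{\Delta_K} R$ is in general a finite product of complete local rings rather than a single domain, and the iterated Beilinson construction of $C_{\Delta_J}$ from $C_{\Delta_K}$ alternates localizations and completions at successively higher primes; one must verify that the height-one normality intersection property survives at every stage of this construction. At this point the primary-ideal intersection statement of Theorem \ref{LocalFactorEmbedding} furnishes precisely the compatibility needed between local integrality and the restricted-product structure, allowing the individual height-one normality identities to be patched together, through the chain $\Delta_K$, into the desired global equality.
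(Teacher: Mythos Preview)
Your outline has the right ingredients --- reduce to affine, use normality, invoke Theorem \ref{LocalFactorEmbedding} --- but the key normality step is misformulated, and this is where the argument breaks down.

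You propose to apply the normality identity $A = \mathrm{Frac}(A)\cap\bigcap_{\mathrm{ht}\,\mathfrak{q}=1} A_\mathfrak{q}$ with $A = C_{\Delta_K}R$, and then immediately acknowledge that $A$ is generally a product of complete local rings, not a domain. That already means $\mathrm{Frac}(A)$ is undefined, but the deeper problem is that even componentwise the height-$1$ primes of $C_{\Delta_K}R$ have no direct relation to the flags in $S(R,J)$, which are built from primes of $R$. What actually has to be shown is an identity where the intersection runs over height-$1$ primes $\mathfrak{p}$ \emph{of $R$}, namely $\bigcap_{\mathrm{ht}_R\,\mathfrak{p}=1} R_\mathfrak{p}\cdot C_{\Delta_K}R = C_{\Delta_K}R$ inside $\mathrm{Frac}(R)\cdot C_{\Delta_K}R$. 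This is the content of the paper's Proposition \ref{IntersectionofHeight1}, and it is not the classical statement: its proof uses intersection flatness of $C_{\Delta_K}R$ over $C_{\mathfrak{p}_0}S^{-1}_{\mathfrak{p}_0}R$ to reduce to the single complete local ring $C_{\mathfrak{p}_0}S^{-1}_{\mathfrak{p}_0}R$, and only \emph{there} does excellence give normality and let the classical identity apply. Your sketch does not isolate this reduction.

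The second gap is in how Theorem \ref{LocalFactorEmbedding} enters. In the paper it is applied with the \emph{primary} ideals $\mathfrak{q}^{(n)}$ (symbolic powers of a height-$1$ prime $\mathfrak{q}$ of $R$) to obtain $S^{-1}_\mathfrak{q}C_{\Delta_K}R \cap \prod_\Gamma \mathfrak{q}^n S^{-1}_\mathfrak{q}C_\Gamma R = \mathfrak{q}^n S^{-1}_\mathfrak{q}C_{\Delta_K}R$; combined with the principality of $\mathfrak{q}R_\mathfrak{q}$ (normality again), this forces any element of $C_\Delta R \cap \prod_\Gamma C_\Gamma R$ to lie in $S^{-1}_\mathfrak{q}C_{\Delta_K}R$ for every such $\mathfrak{q}$, after which Proposition \ref{IntersectionofHeight1} finishes. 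Your final sentence gestures at this but does not say which primary ideals are used or how the local pieces are glued; as written, that paragraph is an outline of an outline rather than an argument.
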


In section \ref{sect2.2} we introduce different index categories of closed subschemes of a scheme $X$. In particular, we define index categories $\mathscr{P}$ and $\mathscr{S}$ of power and symbolic thickenings and define index categories $\mathscr{R}^{pow}$, $\mathscr{R}^{sym}$, and $\mathscr{E}$. Objects in $\mathscr{E}$ are locally equidimensional closed subschemes such that all irreducible components have the same codimension in $X$. We also show the cofinality of $\mathscr{P}$ and $\mathscr{S}$ on a regular scheme.

In \cite[Proof of Theorem 1(iii)]{BG} it is shown that the limit of restrictions of global sections of a locally free sheaf to closed subschemes is naturally isomorphic to the group of global sections of this sheaf on a regular surface. We generalize this result for the case of an arbitrary Cohen--Macaulay projective scheme and a locally free sheaf on it.
\begin{thmintr}[Theorem \ref{Hjlim}]\label{intrLimitofCohomology}
    Let $X$ be an irreducible Cohen--Macaulay projective scheme over a field $\Bbbk$, let $\mathcal{F}$ be a locally free sheaf on $X$, and let $\mathscr{C}$ be some index category of closed subschemes of $X$. Then there is a natural map
    $$H^k(X, \mathcal{F})\overset{\iota}{\longrightarrow} \lim_{Z\in \mathscr{C}}H^k(Z, \mathcal{F}|_Z).$$
    Then $\iota$ is an isomorphism, if one of the following is satisfied:
    \vspace{0.1cm}

    $1)$ $0\leq k < \dim X - 1$ and $\mathscr{C} = \mathscr{E}$,
    \vspace{0.1cm}

    $2)$ $k = 0$ and $\mathscr{C} = \mathscr{P}_{i, j}$ for $0\leq i < j \leq \dim X$,
    \vspace{0.1cm}

    $3)$ $X$ is regular, $k = 0$ and $\mathscr{C} = \mathscr{R}^{pow}_{i, j}$ for $0\leq i < j \leq \dim X$.
\end{thmintr}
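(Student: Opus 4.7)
First I construct $\iota$: for each $Z\in\mathscr{C}$ with closed immersion $i_Z\colon Z\hookrightarrow X$, the unit of adjunction $\mathcal{F}\to (i_Z)_*(\mathcal{F}|_Z)$ induces a restriction $H^k(X,\mathcal{F})\to H^k(Z,\mathcal{F}|_Z)$, and functoriality in $Z$ assembles these into the map $\iota$. Injectivity of $\iota$ I would establish uniformly across the three cases: given $\alpha\in\ker\iota$, pick a subscheme $V$ of positive codimension in the index category, so that $\alpha$ is killed by restriction to every power thickening $V^{(n)}$. For $k=0$, Krull's intersection theorem applied at each stalk $\mathcal{O}_{X,x}$ with $x\in V$ forces $\alpha_x=0$, and then irreducibility of $X$ together with local freeness of $\mathcal{F}$ yields $\alpha=0$; for $k>0$ the analogous argument uses the Cohen--Macaulay local cohomology vanishing $\mathcal{H}^j_V(\mathcal{F})=0$ for $j<\operatorname{codim}(V,X)$.

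For surjectivity I would treat case (2) first. The limit of $H^0(V^{(n)},\mathcal{F}|_{V^{(n)}})$ over the power thickenings of a fixed integral $V$ computes $H^0(\widehat X_V,\widehat{\mathcal{F}}_V)$. For an ample divisor $H$ on the projective Cohen--Macaulay $X$, Serre duality combined with Serre vanishing forces $H^i(X,\mathcal{F}(-nH))=0$ for $i<\dim X$ and $n\gg 0$, so the short exact sequence $0\to\mathcal{F}(-nH)\to\mathcal{F}\to\mathcal{F}|_{nH}\to 0$ yields $H^0(X,\mathcal{F})\xrightarrow{\sim}\varprojlim_n H^0(nH,\mathcal{F}|_{nH})$; matching this with the compatibility imposed by the other $V$'s in $\mathscr{P}_{i,j}$ identifies the image of $H^0(X,\mathcal{F})$ with the whole inverse limit. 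Case (3) then follows from case (2) by the cofinality of $\mathscr{P}$ in $\mathscr{R}^{pow}$ on regular schemes, which the paper has already established. For case (1) with general $0\le k<\dim X-1$, I would combine the Cohen--Macaulay local cohomology vanishing with the short exact sequences
\[
0\to \mathcal{I}_Z^{n+1}\mathcal{F}\to \mathcal{I}_Z^n\mathcal{F}\to \mathrm{gr}_Z^n\mathcal{F}\to 0
\]
along the filtration by power thickenings of the equidimensional $Z$'s, and use Serre vanishing on the projective $X$ to control both $H^k$ and $H^{k+1}$ of the ideal sheaves asymptotically. This is exactly what permits one to lift a compatible family to a class in $H^k(X,\mathcal{F})$ in the range $k+1<\dim X$.

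The main obstacle will be surjectivity in case (1) when $k\ge 1$: producing a genuine class in $H^k(X,\mathcal{F})$ from a family of higher cohomology classes compatible across all equidimensional thickenings $Z\in\mathscr{E}$ requires controlling the $\varprojlim^{1}$ terms in each tower of thickenings, and the sharp bound $k<\dim X-1$ is precisely what allows Serre duality against the Cohen--Macaulay dualizing sheaf $\omega_X$ to kill the relevant $H^{k+1}$-obstructions. Arranging this so that the estimates remain uniform as $Z$ varies over the entire category $\mathscr{E}$, rather than along a single cofinal tower, is the delicate technical step.
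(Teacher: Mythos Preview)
Your reduction of case (3) to case (2) is wrong. What the paper establishes (Proposition~\ref{PropCofinalSymbPow} and Lemma~\ref{EqualityRpowRsym}) is the cofinality of \emph{power} and \emph{symbolic} thickenings of a fixed subscheme on a regular scheme; it says nothing about $\mathscr{R}^{pow}_{i,j}$ versus $\mathscr{P}_{i,j}$. In fact $\mathscr{R}^{pow}_{i,j}\subset\mathscr{P}_{i,j}$ is a strict subcategory (only \emph{regular} codimension-$i$ pieces are allowed, and codimension-$j$ pieces must sit inside one), and it is not cofinal: a power thickening of a singular integral codimension-$i$ subscheme cannot be embedded in a power thickening of a regular integral subscheme of the same codimension. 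Hence the limit over $\mathscr{R}^{pow}_{i,j}$ is a priori \emph{larger} than the one over $\mathscr{P}_{i,j}$, and case (2) does not imply case (3).

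Your surjectivity argument in case (2) also has a real gap. The tower $\{nH\}$ of thickenings of a single ample divisor lies in $\mathscr{P}_{i,j}$ only when $i=1$; for $i>1$ none of the $nH$ are objects of $\mathscr{P}_{i,j}$, so there is no restriction map from $\lim_{\mathscr{P}_{i,j}}$ to $\varprojlim_n H^0(nH,\mathcal F|_{nH})$. Even when $i=1$, you have only produced a candidate $a\in H^0(X,\mathcal F)$ from the $nH$'s; the phrase ``matching with the compatibility imposed by the other $V$'s'' is precisely the heart of the matter and is left unexplained. The paper handles (2) and (3) simultaneously by induction on $\dim X$: it chooses \emph{admissible} hypersurface sections $D$ (irreducible in case (2), regular in case (3), using the Bertini-type results of \cite{GK} to pass them through prescribed closed points), applies the inductive hypothesis on $D$ to get $a$, and then for any $Z\in\mathscr{C}$ and any closed point $x\in Z$ threads an admissible $D$ through $x$ so that $a|_Z$ and $a_Z$ agree on every thickening of $\overline{\{x\}}$; Krull's intersection theorem then gives $a|_Z=a_Z$.

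For case (1) the paper avoids $\varprojlim{}^1$ entirely. After arranging $H^k(X,\mathcal F(-m))=0$ for all $m>0$ and $k<\dim X$, the short exact sequences for hypersurface sections give outright isomorphisms $H^k(X,\mathcal F)\xrightarrow{\sim}H^k(D,\mathcal F|_D)$ for every hypersurface section $D$ and $k<\dim X-1$, and injections $H^k(E,\mathcal F|_E)\hookrightarrow H^k(D\cap E,\mathcal F|_{D\cap E})$ for $k\le\dim X-2$. One lifts the family along a single $D$, then uses intersections $D\cap E$ to propagate $a|_E=a_E$ to every other hypersurface section, and finally to every $Z\in\mathscr E$ by embedding $Z$ in some hypersurface section. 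Your $\varprojlim{}^1$ strategy is not obviously doomed, but as you yourself note it requires uniform control over all of $\mathscr E$ rather than a single tower, and you have not said how to get it; the paper's argument sidesteps this difficulty completely.
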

From this theorem we show that $\A_{\dim X - 1}(X,\mathcal{F})\cap \A_{\dim X}(X,\mathcal{F})=H^0(X, \mathcal{F})$ for a Cohen--Macaulay variety $X$ and a locally free sheaf on it (see proposition \ref{IntersectiondimX}).

Since adelic groups can be represented as a limit of restrictions to power thickenings, one can generalize \cite[Theorem 1(iii)]{BG} using theorem \ref{intrLimitofCohomology} for the case of a normal projective surface.
\begin{thmintr}[Theorem \ref{ThmDimX2}]
    Let $X$ be a normal projective surface over a field $\Bbbk$. Let $\mathcal{F}$ be a locally free sheaf on $X$ and let $I, J\subset \{0, 1, 2\}$. Then there is an equality
    $$\A_I(X, \mathcal{F})\cap \A_J(X, \mathcal{F}) = \A_{I\cap J}(X, \mathcal{F}).$$
\end{thmintr}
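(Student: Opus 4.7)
The plan is a case analysis on unordered pairs $\{I, J\}$, reducing each incomparable pair to a previously proven result. A normal projective surface $X$ over $\Bbbk$ is an integral, excellent, strongly biequidimensional Noetherian scheme, and the locally free sheaf $\mathcal{F}$ is flat quasicoherent, so Theorem \ref{thm1} yields embeddings $\A_I(X, \mathcal{F}) \hookrightarrow \A_J(X, \mathcal{F})$ for all $I \subset J \subset \{0, 1, 2\}$. We therefore regard every $\A_I(X, \mathcal{F})$ as a subgroup of the common ambient $\A_{\{0, 1, 2\}}(X, \mathcal{F})$, where all intersections are computed. The case $I \subset J$ is then trivial, so we restrict to incomparable pairs.

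If $0 \in I$, $0 \notin J$ and $I \setminus \{0\} \subset J$, then $I \cap J = I \setminus 0$ and Theorem \ref{TheoremIcapJI0} gives $\A_I \cap \A_J = \A_{I \cap J}$. This covers the five pairs $\{\{0\}, \{1\}\}$, $\{\{0\}, \{2\}\}$, $\{\{0\}, \{1, 2\}\}$, $\{\{0, 1\}, \{1, 2\}\}$ and $\{\{0, 2\}, \{1, 2\}\}$. The pair $\{\{1\}, \{2\}\}$ is Proposition \ref{IntersectiondimX} in dimension two, yielding $\A_{\{1\}} \cap \A_{\{2\}} = H^0(X, \mathcal{F}) = \A_\varnothing(X, \mathcal{F})$; this is the step where Theorem \ref{intrLimitofCohomology} enters, via the Cohen--Macaulay property of $X$.

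The pairs $\{\{1\}, \{0, 2\}\}$ and $\{\{2\}, \{0, 1\}\}$ reduce to the previous cases by a chain argument. For the first, using $\A_{\{1\}} \hookrightarrow \A_{\{1, 2\}}$, we get
$$\A_{\{1\}} \cap \A_{\{0, 2\}} \subset \A_{\{1, 2\}} \cap \A_{\{0, 2\}} = \A_{\{2\}}$$
by Theorem \ref{TheoremIcapJI0}, so $\A_{\{1\}} \cap \A_{\{0, 2\}} \subset \A_{\{1\}} \cap \A_{\{2\}} = H^0(X, \mathcal{F})$, and the opposite inclusion is automatic. The second pair is symmetric.

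The main obstacle is the sole remaining pair $\{\{0, 1\}, \{0, 2\}\}$, where we must prove $\A_{\{0, 1\}} \cap \A_{\{0, 2\}} = \A_{\{0\}}$. The inclusion $\supset$ is immediate from the embeddings. For the reverse, we use Theorem \ref{LocalFactorEmbedding}: an element of $\A_{\{0, 1\}}$ sitting in $\A_{\{0, 1, 2\}}$ has, at each full flag $(\eta, C, x)$, value equal to the image of its value at the subflag $(\eta, C)$, and symmetrically for $\A_{\{0, 2\}}$ and $(\eta, x)$. An element of the intersection satisfies both constraints at every flag. A connectivity argument on the incidence graph of pairs $(C, x)$ with $x \in C$ — available because on a projective surface any two closed points lie on a common irreducible curve and any two irreducible curves share a closed point — combined with the control over the local rings at curves and closed points afforded by normality of $X$, forces the data to descend from a single element of $\mathcal{F}_\eta = \A_{\{0\}}(X, \mathcal{F})$. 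Carrying out this descent from local-factor compatibility to the generic stalk, in the possible presence of singular points, is the hardest step and is the place where normality is essential.
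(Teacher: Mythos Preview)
Your case analysis is sound for eight of the nine incomparable pairs, and for those it matches the paper's logic (the paper phrases the reduction slightly differently, first peeling off $0$ from whichever of $I,J$ contains it via Theorem~\ref{TheoremIcapJI0}, but the content is the same). The genuine gap is the final pair $\{\{0,1\},\{0,2\}\}$: you describe a connectivity argument on the incidence graph of flags and then say ``carrying out this descent \ldots\ is the hardest step,'' which is an acknowledgement that the argument is not actually given. The sketch is not obviously wrong, but neither is it a proof; in particular you have not explained why local-factor compatibility at all full flags forces the $\A_{\{0,1\}}$-data $(a_C)_C$ and the $\A_{\{0,2\}}$-data $(a_x)_x$ to be images of a single element of $\mathcal{F}_\eta$, rather than merely pairwise compatible.

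The paper avoids this difficulty entirely by a reduction you did not use. When $0\in I\cap J$, Lemma~\ref{IandI0} gives
\[
\A_I(X,\mathcal{F})\cap\A_J(X,\mathcal{F})=\A_{I\setminus 0}(X,\mathcal{F}_\eta)\cap\A_{J\setminus 0}(X,\mathcal{F}_\eta),
\]
and writing $\mathcal{F}_\eta=\varinjlim_D\mathcal{F}(D)$ over Cartier divisors $D$ (with each $\mathcal{F}(D)$ locally free) turns this, since adelic functors commute with filtered colimits and filtered colimits are exact, into
\[
\varinjlim_D\bigl(\A_{I\setminus 0}(X,\mathcal{F}(D))\cap\A_{J\setminus 0}(X,\mathcal{F}(D))\bigr).
\]
For $I=\{0,1\}$, $J=\{0,2\}$ this is $\varinjlim_D\bigl(\A_1\cap\A_2\bigr)(X,\mathcal{F}(D))=\varinjlim_D H^0(X,\mathcal{F}(D))=\A_0(X,\mathcal{F})$, using the already-settled case $\{1\},\{2\}$ for each $\mathcal{F}(D)$. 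This colimit trick reduces the problematic pair to one you have already handled, and is what you are missing.
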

Also, considering restrictions of adelic groups to closed subschemes, one can compute cohomology groups of a curtailed adelic complex, which we introduce in definition \ref{defCurtailedAdelicComplex}. Roughly speaking, a curtailed adelic complex is a factor of the adelic complex $\A^{\sbullet}_{\mathrm{red}}(X, \mathcal{F})$ by subgroups $\A_I(X, \mathcal{F})$ with $J\subset I$ for some fixed $J\subset \{0, 1, \ldots, \dim X\}$. 
\begin{propintr}[Proposition \ref{Complwithout0}]
    Let $X$ be an irreducible Cohen--Macaulay projective scheme over a countable field $\Bbbk$ and let $\mathcal{F}$ be a locally free sheaf on $X$. Consider a curtailed adelic complex
    $$\mathcal{C}(\mathcal{F})^{\sbullet}\colon \A_1(X, \mathcal{F})\oplus \ldots\oplus\A_{\dim X}(X, \mathcal{F})\longrightarrow \ldots\longrightarrow \A_{(1,2,\ldots, \dim X)}(X, \mathcal{F}),$$
    where 
    $$\mathcal{C}(\mathcal{F})^m =  \bigoplus_{\substack{I\subset \{1, \ldots, \dim X\}\\ |I| = m + 1}}\A_I(X, \mathcal{F}).$$
    Then $H^k\big(\mathcal{C}(\mathcal{F})^{\sbullet}\big) \cong H^k(X, \mathcal{F})$ for $0\leq k < \dim X - 1$. In particular, $H^k\big(\mathcal{C}(\mathcal{F}(n))^{\sbullet}\big) = 0$ for $0 < k < \dim X - 1$ and $n\gg 0$.
\end{propintr}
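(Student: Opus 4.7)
The plan is to identify the curtailed adelic complex with an inverse limit of full adelic complexes on closed subschemes of $X$ and then invoke theorem~\ref{Hjlim}. Specifically, I would show that the natural restriction maps yield an isomorphism of complexes
$$\mathcal{C}(\mathcal{F})^\sbullet \cong \varprojlim_{Z \in \mathscr{E}} \A^\sbullet(Z, \mathcal{F}|_Z),$$
where $\mathscr{E}$ is the category of equidimensional closed subschemes of $X$ and the codimensions on the right-hand side are shifted down by one so as to match codimensions $\geq 1$ on $X$. The intuition is that each adelic factor $\A_I(X,\mathcal{F})$ with $I\subset\{1,\ldots,\dim X\}$ is supported on proper closed subsets of codimension $\geq 1$, and restricting along $Z\hookrightarrow X$ produces compatible adelic factors on $Z$ whose differentials match the ones on $X$ after the codimension shift.

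Once this identification is established, taking cohomology gives $H^k(\mathcal{C}(\mathcal{F})^\sbullet) \cong H^k(\varprojlim_{Z\in\mathscr{E}} \A^\sbullet(Z,\mathcal{F}|_Z))$, and one exchanges $H^k$ with $\varprojlim$ via Milnor's short exact sequence, provided $\varprojlim^1 H^{k-1}(Z,\mathcal{F}|_Z) = 0$. This is exactly where the countable field hypothesis enters: over a countable $\Bbbk$ the category $\mathscr{E}$ admits a countable cofinal subsystem (every scheme of finite type over a countable field has at most countably many closed subschemes), and the finite-dimensionality of each $H^{k-1}(Z,\mathcal{F}|_Z)$, coming from coherence and projectivity of $Z$, then ensures that the images in the resulting tower stabilize, so Mittag--Leffler gives $\varprojlim^1=0$. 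Combined with Beilinson's theorem $H^k(\A^\sbullet(Z,\mathcal{F}|_Z))=H^k(Z,\mathcal{F}|_Z)$, one obtains
$$H^k\big(\mathcal{C}(\mathcal{F})^\sbullet\big) \cong \varprojlim_{Z\in\mathscr{E}} H^k(Z,\mathcal{F}|_Z) \cong H^k(X,\mathcal{F}),$$
where the last isomorphism is exactly theorem~\ref{Hjlim}(1), valid for $0\leq k<\dim X-1$.

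For the concluding assertion on $\mathcal{F}(n)$, I would invoke Serre vanishing: $H^k(X,\mathcal{F}(n))=0$ for every $k\geq 1$ once $n$ is sufficiently large, so substituting into the isomorphism $H^k(\mathcal{C}(\mathcal{F}(n))^\sbullet) \cong H^k(X,\mathcal{F}(n))$ established above yields $H^k(\mathcal{C}(\mathcal{F}(n))^\sbullet)=0$ for $0<k<\dim X-1$.

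The main obstacle will be making the identification $\mathcal{C}(\mathcal{F})^\sbullet \cong \varprojlim_{Z\in\mathscr{E}}\A^\sbullet(Z,\mathcal{F}|_Z)$ precise and compatible with the adelic simplicial structure. Adelic groups involve simultaneous completions at all relevant codimensions, and tracking how the differentials behave under restriction to a subscheme $Z\in\mathscr{E}$ after the codimension shift requires careful bookkeeping, as does the verification that a countable cofinal subsystem exists on which Mittag--Leffler can be applied.
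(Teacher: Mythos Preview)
Your proposal is correct and follows essentially the same approach as the paper: the paper identifies $\mathcal{C}(\mathcal{F})^{\sbullet}$ with $\lim_{Z}\A_{(red)}^{\sbullet}(Z,\mathcal{F}|_Z)$ via lemma~\ref{LimE}, passes to a countable cofinal filtered subcategory (explicitly the hypersurface sections $\mathscr{H}$ rather than all of $\mathscr{E}$), and then runs the same $\varprojlim^1$/Mittag--Leffler argument you describe, invoking theorem~\ref{Hjlim}(1) at the end. The only point you leave implicit is that Milnor's sequence requires the transition maps on the \emph{terms} of the tower to be surjective, which the paper notes holds because restriction of reduced adeles along a closed embedding is surjective; once you record that, your argument is complete.
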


We show that on a regular projective variety $X$ there is an equality $\A_1(X, \mathcal{F})\cap \A_j(X,\mathcal{F}) = H^0(X, \mathcal{F})$ for any locally free sheaf $\mathcal{F}$ on $X$ and any $2\leq j \leq \dim X$ (see proposition \ref{A1capAj}). 
Hence considering the curtailed adelic complex on a regular three-dimensional variety, we obtain a theorem on the intersection of adelic groups.
\begin{thmintr}[Theorem \ref{ThmDimX3}]
    Let $X$ be a regular three-dimensional projective variety over a countable field $\Bbbk$, let $\mathcal{F}$ be a locally free sheaf on $X$, and let $I, J\subset \{0, 1, 2, 3\}$. Then there is an equality
    $$\A_I(X, \mathcal{F})\cap\A_J(X, \mathcal{F}) = \A_{I\cap J}(X, \mathcal{F}).$$
\end{thmintr}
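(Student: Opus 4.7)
The plan is to dispose of the theorem by a systematic case analysis on the pair $I, J\subset\{0,1,2,3\}$, assembling every previous result in the paper. Since the embedding theorem (Theorem \ref{thm1}) already yields $\A_{I\cap J}\hookrightarrow\A_I\cap\A_J$, only the reverse inclusion requires proof. If $I\subset J$ or $J\subset I$, this is the embedding itself. If $I\cap J = I\setminus 0$ or $I\cap J = J\setminus 0$, Theorem \ref{TheoremIcapJI0} applies directly. So I may assume $I$ and $J$ are not nested, and neither of these one-sided strip-$0$ configurations holds.

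The surviving cases split by whether $0$ lies in $I\cup J$. The key reductive step is that whenever $0\in I$ and $0\notin J$, setting $J' = J\cup (I\setminus 0)\subset\{1,2,3\}$ yields $I\cap J' = I\setminus 0$, so Theorem \ref{TheoremIcapJI0} gives $\A_I\cap\A_{J'} = \A_{I\setminus 0}$, and hence $\A_I\cap\A_J\subset\A_{I\setminus 0}\cap\A_J$. This reduces any pair with $0$ in exactly one of $I, J$ to the pair $(I\setminus 0, J)$ inside $\{1,2,3\}$, with the correct intersection $(I\setminus 0)\cap J = I\cap J$. The case $0\in I\cap J$ is handled by iterating this trick on both sides in the ambient $\A_{(I\cup J)\setminus 0}$, together with the trivial inclusion $H^0(X,\mathcal{F}) = \A_{\{0\}}(X,\mathcal{F})\subset \A_I\cap\A_J$ for recovering the $\{0\}$-piece. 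Thus everything is reduced to proving $\A_I\cap\A_J = \A_{I\cap J}$ for non-nested $I, J\subset\{1,2,3\}$.

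For such pairs, the singleton identities $\A_i\cap\A_j = H^0(X,\mathcal{F})$ for distinct $i, j\in\{1,2,3\}$ come from Proposition \ref{A1capAj} (covering $i=1$) and Proposition \ref{IntersectiondimX} (covering $\{i,j\}=\{2,3\}$). It remains to prove $\A_{\{i,j\}}\cap\A_{\{i,k\}} = \A_{\{i\}}$ and $\A_{\{i\}}\cap\A_{\{j,k\}} = H^0(X,\mathcal{F})$ for $\{i,j,k\}=\{1,2,3\}$. I would use the curtailed complex
\[
\mathcal{C}(\mathcal{F})^{\sbullet}\colon\ \A_1\oplus\A_2\oplus\A_3\ \xrightarrow{d^1}\ \A_{\{1,2\}}\oplus\A_{\{1,3\}}\oplus\A_{\{2,3\}}\ \xrightarrow{d^2}\ \A_{\{1,2,3\}}.
\]
For the first identity, given $a\in\A_{\{i,j\}}\cap\A_{\{i,k\}}$, place $a$ with the simplicial signs in the $\{i,j\}$- and $\{i,k\}$-slots of $\mathcal{C}^1$; this produces a cocycle since the two contributions to $\A_{\{1,2,3\}}$ cancel. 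Its cohomology class lies in $H^1(\mathcal{C}(\mathcal{F})^{\sbullet})\cong H^1(X,\mathcal{F})$ by Proposition \ref{Complwithout0}. Once this class is shown to vanish, the cocycle equals $d^1(a_1, a_2, a_3)$ for some triple, and a direct computation of $d^1$ combined with $\A_2\cap\A_3 = H^0(X,\mathcal{F})$ forces $a\in\A_{\{i\}}$.

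The main obstacle is precisely that $H^1(X,\mathcal{F})$ need not vanish, so the cocycle may fail to be a coboundary in $\mathcal{C}^{\sbullet}$. To bypass this I would twist by $\mathcal{O}_X(n)$: Proposition \ref{Complwithout0} gives $H^1(\mathcal{C}(\mathcal{F}(n))^{\sbullet}) = 0$ for $n\gg 0$, so the cocycle argument runs cleanly for $\mathcal{F}(n)$ and yields the desired identity for the twisted sheaf. The theorem for $\mathcal{F}$ then follows by descent: the adelic local factors for $\mathcal{F}$ and $\mathcal{F}(n)$ are compatibly identified on any trivializing affine cover of $X$, and the intersection of subgroups, being local on the underlying restricted products, transfers from $\mathcal{F}(n)$ back to $\mathcal{F}$. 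Countability of $\Bbbk$ enters through Proposition \ref{Complwithout0}, which in turn rests on Theorem \ref{intrLimitofCohomology} and the Mittag--Leffler analysis of limits of restrictions to power thickenings. The second identity $\A_{\{i\}}\cap\A_{\{j,k\}} = H^0(X,\mathcal{F})$ is handled by a parallel cocycle argument, starting instead from the cocycle in $\mathcal{C}^1$ supported in the $\{j,k\}$-slot, the extra hypothesis $a\in\A_{\{i\}}$ then forcing the pairwise identities to collapse the coboundary components into $H^0(X,\mathcal{F})$.
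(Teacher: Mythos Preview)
Your overall architecture matches the paper's, but two steps have real gaps. First, the case $0\in I\cap J$: your ``iterating trick'' needs $0\notin J'$ to invoke Theorem~\ref{TheoremIcapJI0}, so it cannot strip $0$ from $I$ while $0$ also lies in $J$; and the equation $H^0(X,\mathcal{F})=\A_{\{0\}}(X,\mathcal{F})$ is simply false (rather $\A_0(X,\mathcal{F})=\mathcal{F}_\eta$ while $H^0=\A_\varnothing$). The paper instead uses Lemma~\ref{IandI0} to rewrite $\A_I(X,\mathcal{F})=\A_{I\setminus 0}(X,\mathcal{F}_\eta)$ and then $\mathcal{F}_\eta=\varinjlim_D\mathcal{F}(D)$ over Cartier divisors, reducing to locally free coefficients with indices in $\{1,2,3\}$.

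Second, and more seriously, your descent from $\mathcal{F}(n)$ back to $\mathcal{F}$ via ``local triviality'' is not justified. From the identity for $\mathcal{F}(n)$ together with $\mathcal{F}\hookrightarrow\mathcal{F}(n)$ you only obtain $a\in\A_{I\cap J}(X,\mathcal{F}(n))$; deducing $a\in\A_{I\cap J}(X,\mathcal{F})$ would require injectivity of $\A_{I\cap J}\to\A_I$ with coefficients in the torsion sheaf $\mathcal{F}(n)/\mathcal{F}$, which lies outside the scope of the embedding theorems, and the global intersection statement for $\mathcal{F}(n)$ does not restrict to affines in the way you claim (extension by zero in $\A_I(U)$ is incompatible with the boundary maps into $\A_{I\cup J}$). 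The paper avoids any descent. For, say, $I=(1,2)$, $J=(2,3)$: it runs your cocycle argument directly with the given $a\in\A_{(1,2)}(X,\mathcal{F})\cap\A_{(2,3)}(X,\mathcal{F})$ embedded in the $\mathcal{F}(n)$-groups, obtains $a=t-r$ with $t\in\A_2(X,\mathcal{F}(n))$ and $r\in\A_1(X,\mathcal{F}(n))\cap\A_3(X,\mathcal{F}(n))=H^0(X,\mathcal{F}(n))$, observes that both lie in $\A_{(0,2)}(X,\mathcal{F})$ since $\mathcal{F}(n)_\eta=\mathcal{F}_\eta$, and applies Theorem~\ref{TheoremIcapJI0} once more (to $(0,2)$ and $(2,3)$) to conclude $a\in\A_2(X,\mathcal{F})$. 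The remaining $|I|=2$, $|J|=1$ cases are then one-line reductions to what is already proved, not separate cocycle arguments.
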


\vspace{0.25cm}

\subsection*{Acknowledgments}\label{sect1.3}
The author thanks his advisor Denis V. Osipov for his advice and attention to this work. The author is also grateful to Vladislav Levashev for interesting discussions and useful remarks. The author was supported by the Theoretical Physics and Mathematics Advancement Foundation ``BASIS'' under grants no. 23-8-2-14-1 and no. 24-8-2-11-1.

\section{Preliminaries}\label{sect2}
In this section we introduce notions and construct objects which we will use in the text. Throughout the paper, we assume that all rings are commutative with identity. We denote by the sign $\subset$ a non-strict inclusion and by the sign $\subsetneq$ a strict inclusion. 

By a {\it variety} over a field $\Bbbk$ we mean a separated integral scheme of finite type over $\Bbbk$. A {\it surface} is a two-dimensional variety. By a quasicoherent sheaf on a scheme $X$ we mean a quasicoherent sheaf of $\mathcal{O}_X$-modules. All locally free sheaves on schemes are assumed to be of finite rank. If $Y\overset{j}{\longrightarrow} X$ is an open or closed embedding, then denote $j^*\mathcal{F}$ by $\mathcal{F}|_Y$ for any quasicoherent sheaf $\mathcal{F}$ on $X$.

\subsection{Commutative algebra}\label{sect2.1} 
For a ring $R$ and an ideal $\mathfrak{a}$ denote by $\sqrt{\mathfrak{a}}$ the radical of $\mathfrak{a}$, i.e. the set $\{x\in R\mid \exists n >~0\colon x^n\in \mathfrak{a}\}$. Denote by $\mathrm{rad}(R)$ the {\it Jackobson radical} of $R$, i.e. the intersection of maximal ideals of $R$. Recall that $$\mathrm{ht}\:\mathfrak{a} = \inf\limits_{\mathfrak{a}\subset \mathfrak{p}}\:\mathrm{ht}\:\mathfrak{p}$$
for a proper ideal $\mathfrak{a}\subset R$, where infimum is taken over all prime ideals of $R$ containing $\mathfrak{a}$. 

\begin{df}
    Let $R$ be a ring. An ideal $\mathfrak{n}$ is called {\it primary}, if all zero divisors in the ring $R/\mathfrak{n}$ are nilpotent. 
\end{df}

Note that for a primary ideal $\mathfrak{n}\subset R$ its radical $\sqrt{\mathfrak{n}}$ is a prime ideal. For a prime ideal $\mathfrak{p}$ we will call the ideal $\mathfrak{n}$ a $\mathfrak{p}$-primary ideal, if $\sqrt{\mathfrak{n}} = \mathfrak{p}$.

\begin{df}
Let $R$ be a ring and let $\mathfrak{p}$ be a prime ideal. For a submodule $N$ of an $R$-module $M$ set
    $$S_{\mathfrak{p}}(N) = \ker\big(M\longrightarrow S^{-1}_{\mathfrak{p}}(M/N)\big).$$
\end{df}
We do not include the module $M$ in the notation $S_{\mathfrak{p}}(N)$, since it will be clear from the context or will be mentioned explicitly.

We will denote the localization by a prime ideal $S^{-1}_\mathfrak{p}M$ also as $M_\mathfrak{p}$.

\begin{df}\label{DefSymbolicPower}
    Let $R$ be a Noetherian ring and let $\mathfrak{a}$ be a proper ideal. Then the ideal
    $$\mathfrak{a}^{(n)}  = \bigcap_{\mathfrak{p}\in \mathrm{ Ass}_R(R/\mathfrak{a})}S_{\mathfrak{p}}(\mathfrak{a}^n)$$
    is called the {\it $n$-th symbolic power} of the ideal $\mathfrak{a}$. We assume that $\mathfrak{a}^{(0)} = R$ and $R^{(n)} = R$ for any $n\geq 0$.
\end{df}
Note that for a prime ideal $\mathfrak{p}$ of $R$ the ideal $\mathfrak{p}^{(n)} = S_{\mathfrak{p}}(\mathfrak{p}^n)$ is primary with $\sqrt{\mathfrak{p}^{(n)}} = \mathfrak{p}$. In fact, $\mathfrak{p}^{(n)}$ is a $\mathfrak{p}$-primary part of the primary decomposition of the ideal $\mathfrak{p}^n$, see \cite[Theorem 6.8]{Ma1}. We obviously have inclusions $\mathfrak{a}^{(n+1)}\subset \mathfrak{a}^{(n)}$, $\mathfrak{a}^n\subset \mathfrak{a}^{(n)}$ for an ideal $\mathfrak{a}$ and any $n\geq 1$. Note that $\mathfrak{a}^{(1)} = \mathfrak{a}$.

Recall that embedded primes of a finitely generated module $M$ over a Noetherian ring $R$ are non-minimal elements of $\mathrm{Ass}_R(M)$. If $\mathfrak{a}$ is an ideal of $R$, then embedded primes of $\mathfrak{a}$ are embedded primes of the $R$-module $R/\mathfrak{a}$.

\begin{lemma}\label{LocalizationSymbPower}
    Let $R$ be a Noetherian ring and let $\mathfrak{a}$ be an ideal in $R$ without embedded primes. Let $T$ be a multiplicatively closed subset of $R$ and let $S = T^{-1}R$. Then for any $n > 0$ there is an equality 
    $$\mathfrak{a}^{(n)}S = (\mathfrak{a}S)^{(n)}$$
    of ideals in $S$.
\end{lemma}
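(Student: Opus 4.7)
The strategy is to unwind both sides using the explicit formula for the symbolic power and then exploit the flatness of $R \to S$. First, since $\mathfrak{a}$ has no embedded primes, $\mathrm{Ass}_R(R/\mathfrak{a})$ is exactly the (finite) set of minimal primes over $\mathfrak{a}$, and
\[
\mathfrak{a}^{(n)} = \bigcap_{\mathfrak{p}} S_{\mathfrak{p}}(\mathfrak{a}^n),
\]
the intersection being over this finite set. By the standard behavior of associated primes under localization, $\mathrm{Ass}_S(S/\mathfrak{a}S) = \{\mathfrak{p}S \mid \mathfrak{p} \in \mathrm{Ass}_R(R/\mathfrak{a}),\ \mathfrak{p}\cap T = \varnothing\}$; in particular $\mathfrak{a}S$ has no embedded primes in $S$, and
\[
(\mathfrak{a}S)^{(n)} = \bigcap_{\mathfrak{p}\cap T = \varnothing} S_{\mathfrak{p}S}\bigl((\mathfrak{a}S)^n\bigr).
\]

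\textbf{Flatness reduction.} Because $R \to S$ is flat, extension of ideals preserves finite intersections, so $\mathfrak{a}^{(n)} S = \bigcap_{\mathfrak{p}} S_{\mathfrak{p}}(\mathfrak{a}^n)\cdot S$. Applying flatness to the defining short exact sequence $0 \to S_{\mathfrak{p}}(\mathfrak{a}^n) \to R \to R_{\mathfrak{p}}/\mathfrak{a}^n R_{\mathfrak{p}}$, and using $S\otimes_R R_{\mathfrak{p}}/\mathfrak{a}^n R_{\mathfrak{p}} = T^{-1}(R_{\mathfrak{p}}/\mathfrak{a}^n R_{\mathfrak{p}})$, one identifies
\[
S_{\mathfrak{p}}(\mathfrak{a}^n)\cdot S \;=\; \ker\!\bigl(S \longrightarrow T^{-1}(R_{\mathfrak{p}}/\mathfrak{a}^n R_{\mathfrak{p}})\bigr).
\]
Everything thus reduces to computing the ring $T^{-1}(R_{\mathfrak{p}}/\mathfrak{a}^n R_{\mathfrak{p}})$ case by case.

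\textbf{Case analysis.} If $\mathfrak{p}\cap T = \varnothing$, the elements of $T$ are already units in $R_{\mathfrak{p}}$, so $T^{-1}(R_{\mathfrak{p}}/\mathfrak{a}^n R_{\mathfrak{p}}) = R_{\mathfrak{p}}/\mathfrak{a}^n R_{\mathfrak{p}} = S_{\mathfrak{p}S}/(\mathfrak{a}S)^n S_{\mathfrak{p}S}$, and hence $S_{\mathfrak{p}}(\mathfrak{a}^n)\cdot S = S_{\mathfrak{p}S}((\mathfrak{a}S)^n)$. If instead $\mathfrak{p}\cap T \neq \varnothing$, pick any $t \in \mathfrak{p}\cap T$. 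Minimality of $\mathfrak{p}$ over $\mathfrak{a}$ means that $R_{\mathfrak{p}}/\mathfrak{a}R_{\mathfrak{p}}$ is a Noetherian local ring whose only prime is $\mathfrak{p}R_{\mathfrak{p}}/\mathfrak{a}R_{\mathfrak{p}}$; consequently this ideal is the nilradical and, being finitely generated, is nilpotent. The image of $t$ in $R_{\mathfrak{p}}/\mathfrak{a}^n R_{\mathfrak{p}}$ is therefore nilpotent, so inverting $T$ kills that ring, forcing $S_{\mathfrak{p}}(\mathfrak{a}^n)\cdot S = S$ and making this factor disappear from the intersection. Combining the two cases,
\[
\mathfrak{a}^{(n)} S = \bigcap_{\mathfrak{p}\cap T = \varnothing} S_{\mathfrak{p}S}\bigl((\mathfrak{a}S)^n\bigr) = (\mathfrak{a}S)^{(n)},
\]
as required. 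The only mildly subtle point, and the place where the no-embedded-primes hypothesis is used a second time, is ensuring that the factors indexed by $\mathfrak{p}$ with $\mathfrak{p}\cap T \neq \varnothing$ really do become trivial after passing to $S$; this is exactly what minimality of $\mathfrak{p}$ over $\mathfrak{a}$ buys us.
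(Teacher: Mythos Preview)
Your proof is correct and follows essentially the same approach as the paper's: both use flatness of $R\to S$ to extend the finite intersection defining $\mathfrak{a}^{(n)}$, identify each $S_{\mathfrak{p}}(\mathfrak{a}^n)\cdot S$ as a kernel via flatness, and then split into the same two cases according to whether $\mathfrak{p}\cap T$ is empty. The only cosmetic difference is that in the case $\mathfrak{p}\cap T\neq\varnothing$ you phrase the key point as nilpotence of the maximal ideal of $R_{\mathfrak{p}}/\mathfrak{a}R_{\mathfrak{p}}$, while the paper phrases it as $\sqrt{\mathfrak{a}R_{\mathfrak{p}}}=\mathfrak{p}R_{\mathfrak{p}}$; these are equivalent observations.
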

\begin{proof}By $\mathrm{Ass}(M)$ for an $R$-module $M$ we mean $\mathrm{Ass}_R(M)$.
    By defintion, 
    $$\mathfrak{a}^{(n)} = \bigcap_{\mathfrak{p}\in \mathrm{Ass}(R/\mathfrak{a})}S_{\mathfrak{p}}(\mathfrak{a}^n).$$
    Since $S_{\mathfrak{p}}(\mathfrak{a}^n) = \ker\big(R\longrightarrow R_{\mathfrak{p}}/\mathfrak{a}^nR_\mathfrak{p}\big)$ and $R\rightarrow S$ is a flat morphism, then $S_{\mathfrak{p}}(\mathfrak{a}^n) S= \ker\big(S\longrightarrow S_\mathfrak{p}/\mathfrak{a}^nS_\mathfrak{p}\big)$.

    Let $\mathfrak{p}\in \mathrm{Ass}(R/\mathfrak{a})$. If $\mathfrak{p}\cap T = \varnothing$, then $S_{\mathfrak{p}S} = S_\mathfrak{p} = R_\mathfrak{p}$ by \cite[p. 24, Corollary 4]{Ma1}. Thus 
    $$S_{\mathfrak{p}}(\mathfrak{a}^n) S= \ker(S\longrightarrow S_{\mathfrak{p}S}/\mathfrak{a}^nS_{\mathfrak{p}S}) = S_{\mathfrak{p}S}(\mathfrak{a}^nS).$$
    If $\mathfrak{p}\cap T\neq \varnothing$, then $\mathfrak{p}R_\mathfrak{p}\cap \tilde{T}\neq \varnothing$, where $\tilde{T}$ is the image of $T$ in $R_\mathfrak{p}$. We have $\mathrm{Ass}_{R_\mathfrak{p}}(R_\mathfrak{p}/\mathfrak{a}R_\mathfrak{p}) = \{\mathfrak{p}R_\mathfrak{p}\}$ by \cite[Theorem 6.2]{Ma1}, since $\mathfrak{a}$ has no embedded primes, in particular, $\mathfrak{p}R_\mathfrak{p}$ is the unique minimal prime ideal over $\mathfrak{a}R_\mathfrak{p}$. Therefore $\sqrt{\mathfrak{a}R_\mathfrak{p}} = \mathfrak{p}R_\mathfrak{p}$. It follows that if $x\in \mathfrak{p}R_\mathfrak{p}\cap \tilde{T}$, then $x^m\in \mathfrak{a}R_\mathfrak{p}\cap \tilde{T}$ for some $m > 0$, since $\tilde{T}$ is multiplicatively closed. Thus $\mathfrak{a}S_\mathfrak{p} = \mathfrak{a}\tilde{T}^{-1}R_\mathfrak{p} = \tilde{T}^{-1}R_\mathfrak{p} = S_\mathfrak{p}.$
    Therefore $S_\mathfrak{p}/\mathfrak{a}^nS_\mathfrak{p} = 0$, whence $S_{\mathfrak{p}}(\mathfrak{a}^n) S= \ker\big(S\longrightarrow 0\big) = S.$
    From these observations we obtain
    $$\mathfrak{a}^{(n)}S = \bigcap_{\mathfrak{p}\in \mathrm{Ass}(R/\mathfrak{a})}S_{\mathfrak{p}}(\mathfrak{a}^n)S = 
    \bigcap_{\substack{\mathfrak{p}\in \mathrm{Ass}(R/\mathfrak{a})\\\mathfrak{p}\cap T = \varnothing}}S_{\mathfrak{p}}(\mathfrak{a}^n)S =
    \bigcap_{\substack{\mathfrak{p}\in \mathrm{Ass}(R/\mathfrak{a})\\\mathfrak{p}\cap T = \varnothing}}S_{\mathfrak{p}S}(\mathfrak{a}^nS) = (\mathfrak{a}S)^{(n)},$$
    where the last equality follows from the fact that $\mathrm{Ass}_S(S/\mathfrak{a}S) = \{\mathfrak{p}S\mid \mathfrak{p}\in \mathrm{Ass}(R/\mathfrak{a})\colon \mathfrak{p}\cap T = \varnothing\}$ by \cite[Theorem 6.2]{Ma1}.
\end{proof} 

\begin{rem}\label{remRadIdealNoEmbPrimes}
    If $\mathfrak{a}\subset R$ is a radical ideal, i.e. $\sqrt{\mathfrak{a}} = \mathfrak{a}$, then $\mathfrak{a}^{(n)}T^{-1}R = (\mathfrak{a}T^{-1}R)^{(n)}$ for any $n > 0$ and any multiplicative subset $T\subset R$, since radical ideals have no embedded primes.
\end{rem}

\begin{df}
    Let $R$ be a ring. A {\it flag of prime ideals} $\Delta$ is a non-empty set of prime ideals $\{\mathfrak{p}_0, \ldots, \mathfrak{p}_n\}$ of the ring $R$ for $n \geq 0$ such that $\mathfrak{p}_i\subsetneq \mathfrak{p}_{i + 1}$ for $0\leq i < n$. We will denote this by $\Delta = (\mathfrak{p}_0, \ldots, \mathfrak{p}_n)$.
\end{df}

  Let us remark that we require a strict inclusion of prime ideals in the definition of a flag, since we will consider this case for the most part of the paper. However, a more general definition of a simplex will be given below, see section \ref{sect2.3}.

\begin{df}
    Let $R\overset{\varphi}{\longrightarrow} S$ be a morphism of rings. For an ideal $\mathfrak{a}\subset S$ denote by $\mathfrak{a}\cap R$ the ideal $\varphi^{-1}(\mathfrak{a})$. The ideal $\mathfrak{a}\cap R$ is called a {\it contraction} of $\mathfrak{a}$ to $R$.

    For a flag $\Delta = (\mathfrak{p}_0, \ldots, \mathfrak{p}_n)$ of prime ideals of $S$ denote $\Delta \cap R = (\mathfrak{p}_0\cap R, \ldots, \mathfrak{p}_n\cap R)$ if $\mathfrak{p}_i\cap R\neq \mathfrak{p}_{i + 1}\cap R$ for $0\leq i < n$. If for a flag $\Gamma$ of prime ideals of $R$ we have $\Delta \cap R = \Gamma$, then denote this by $\Delta \mid \Gamma$.
\end{df}

\begin{df}
    Let $R$ be a ring. For a prime ideal $\mathfrak{p}$ and an $R$-module $M$ denote
    $$C_\mathfrak{p}M = \varprojlim_{n}M/\mathfrak{p}^nM.$$
    If $\Delta = (\mathfrak{p}_0, \ldots, \mathfrak{p}_n)$ is a flag of prime ideals of the ring $R$, then denote
    $$C_\Delta M = C_{\mathfrak{p}_0}S^{-1}_{\mathfrak{p}_0}\ldots C_{\mathfrak{p}_n}S^{-1}_{\mathfrak{p}_n}M.$$
    Also set $C_\varnothing M = M$. 
\end{df}
\begin{rem}
By definition, $\Delta = \varnothing$ is not a flag, but we will use this notation further in the text for convenience. 
\end{rem}
\begin{rem}
    Let us remark the difference between $C_{\mathfrak{p}}M$ and $C_{(\mathfrak{p})}M$. The first module is just completion $C_{\mathfrak{p}}M = \varprojlim_n M/\mathfrak{p}^n M$, and the second one is completion after localization $C_{(\mathfrak{p})}M = \varprojlim_n S^{-1}_{\mathfrak{p}}M/\mathfrak{p}^n S^{-1}_{\mathfrak{p}}M$.
\end{rem}

\begin{df}
    Let $R$ be a ring. For an ideal $\mathfrak{a}$ and a flag of prime ideals $\Delta = (\mathfrak{p}_0,\ldots, \mathfrak{p}_n)$ let $\mathfrak{a} \geq \Delta$ (resp. $\mathfrak{a} > \Delta$),  if $\mathfrak{a} \subset \mathfrak{p}_0$ (resp. $\mathfrak{a} \subsetneq  \mathfrak{p}_0$).

    If $\Delta = (\mathfrak{p}_0,\ldots, \mathfrak{p}_n)$, $\Delta' = (\mathfrak{p}'_0,\ldots, \mathfrak{p}'_n)$ is a pair of flags of prime ideals, then let $\Delta' \geq \Delta$ (resp. $\Delta' > \Delta$), if $\mathfrak{p}_n' \geq \Delta$ (resp. $\mathfrak{p}_n' > \Delta$). 

    Let us also assume that $\Delta > \varnothing$ for any flag of prime ideals $\Delta$.
\end{df}

\begin{df}
    Let $R$ be a ring. For a pair of flags of prime ideals $\Delta = (\mathfrak{p}_0,\ldots, \mathfrak{p}_n)$, $\Delta' = (\mathfrak{p}'_0,\ldots, \mathfrak{p}'_n)$ such that $\Delta' > \Delta$ define
    $$\Delta'\vee \Delta = (\mathfrak{p}_0',\ldots, \mathfrak{p}_n', \mathfrak{p}_0,\ldots, \mathfrak{p}_n).$$

    For a prime ideal $\mathfrak{p}$ such that $\mathfrak{p} > \Delta$ denote $(\mathfrak{p})\vee \Delta$ as $\mathfrak{p}\vee \Delta$.
Let us also assume that $\Delta \vee \varnothing = \Delta$.
\end{df}

\begin{df}
    A ring $R$ is called {\it catenary}, if for any pair of prime ideals $\mathfrak{p}, \mathfrak{q}$ such that $\mathfrak{p}\subset \mathfrak{q}$ every two maximal strictly increasing chains of prime ideals between $\mathfrak{p}$ and $\mathfrak{q}$ have the same finite length.  

    A ring $R$ is called {\it universally catenary}, if any finitely generated algebra over $R$ is catenary.
\end{df}

\begin{df}
    A ring $R$ is called {\it excellent}, if $R$ is a universally catenary $G$-ring and $J$-$2$ ring. By definition, $R$ is Noetherian.
\end{df}

We do not give the full definition of an excellent ring (including $G$-ring and $J$-$2$ ring) here, because of its complexity. See \cite[p. 260]{Ma1}, \cite[Section 34]{Ma2} for details. We will only need the properties of excellent rings such as analytic normality (see \cite[Theorem 79]{Ma2}). Examples of excellent rings are algebras of finite type over a field, over $\Z$, or over complete semi-local Noetherian rings.

\begin{df}
    Let $R$ be a ring. A chain $\mathfrak{p}_1\subsetneq \mathfrak{p}_2\subsetneq \ldots \subsetneq \mathfrak{p}_n$ of prime ideals of $R$ is called {\it saturated}, if there are no prime ideals between $\mathfrak{p}_i$ and $\mathfrak{p}_{i + 1}$ for all $1\leq i\leq n - 1$.
\end{df}

\begin{df}
    $1)$ A Noetherian ring $R$ is called {\it equidimensional}, if $\dim R/\mathfrak{p} = \dim R$ for every minimal prime ideal $\mathfrak{p}$ of $R$. 

    

    $2)$ A Noetherian ring $R$ is called {\it biequidimensional}, if $\dim R < \infty$ and all maximal chains of prime ideals have the same length $\dim R$. 

    $3)$ A Noetherian ring $R$ is called {\it locally equidimensional}, if the ring $R_\mathfrak{p}$ is equidimensional for every prime ideal $\mathfrak{p}$. 
\end{df}

\begin{rem}\label{remFiniteTypeEquidim}
    Note that every finitely generated algebra over a field $\Bbbk$ without zero divisors is  biequidimensional by \cite[Corollary 14.5]{Ma2}. Thus every equidimensional finitely generated algebra over a field is biequidimensional.
\end{rem}

\begin{rem}\label{remZFinType}
Let $R$ be a $\Z$-algebra of finity type. Assume that $\mathrm{Spec}\: R$ is irreducible and all non-empty fibers over $\mathrm{Spec}\: \Z$ are equidimensional and of equal dimension. Since every closed point of $\mathrm{Spec}\: R$ lies in a fiber over a closed point of $\mathrm{Spec}\: \Z$, one can see from the catenarity of $R$ that $R$ is biequidimensional.
\end{rem}

\begin{rem}\label{remBiequidim}
    Note that the biequidimensionality of $R$ implies that for any prime ideal $\mathfrak{p}$ of $R$ and an integer $k$ with $\mathrm{ht}\:\mathfrak{p}\leq k \leq \dim R$ there exists a prime ideal $\mathfrak{q}$ of height $k$ containing $\mathfrak{p}$. 
\end{rem}

It is a well-known fact that a tensor product with a flat module commutes with finite intersections. The next definition, given in \cite[p. 41]{HH} (see also \cite[p. 75]{HJ}), is a generalization of the flatness property for the case of infinite intersections.

\begin{df}
    Let $R$ be a ring and let $S$ be an $R$-module. Then $S$ is {\it intersection flat}, if $S$ is flat over $R$ and for any family of submodules $\{M_\lambda\}_{\lambda\in \Lambda}$ of any finitely generated $R$-module $M$ we have 
    $$\big(\bigcap_{\lambda\in \Lambda}M_\lambda\big)\otimes_R S = \bigcap_{\lambda\in \Lambda}(M_\lambda\otimes_R S),$$
    where both sides are identified with its images in $M\otimes_R S$.

    If $R\rightarrow S$ is a ring homomorphism, we say that $S$ is {\it intersection flat over} $R$ if $S$ is an intersection flat $R$-module.
\end{df}

\begin{lemma}\label{LemmaABCD}
    Let $A$, $B$, $C$ be subgroups in an abelian group $D$. Then there is an equality
    $$\dfrac{A}{A\cap C}\cap \dfrac{B}{B\cap C} = \dfrac{A\cap(B + C)}{A\cap C},$$
    of subgroups in $\dfrac{D}{C}$
\end{lemma}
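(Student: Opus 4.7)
The plan is to reduce everything to computing images of the canonical map $\pi\colon D\twoheadrightarrow D/C$. First I would observe that the subgroup $A/(A\cap C)\subset D/C$ is, by the first isomorphism theorem, precisely the image $\pi(A)=(A+C)/C$, and analogously $B/(B\cap C)=(B+C)/C$ as subgroups of $D/C$. So the left-hand side is
\[
\frac{A+C}{C}\cap\frac{B+C}{C}.
\]

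Next I would compute this intersection explicitly. An element $\bar d\in D/C$ lies in both $(A+C)/C$ and $(B+C)/C$ iff there exist $a\in A$, $b\in B$, $c,c'\in C$ with $d=a+c=b+c'$, equivalently $a=b+(c'-c)\in B+C$. Hence such an element has a representative in $A\cap(B+C)$, giving the inclusion
\[
\frac{A+C}{C}\cap\frac{B+C}{C}\subseteq\frac{A\cap(B+C)+C}{C},
\]
and the reverse inclusion is immediate since $A\cap(B+C)\subseteq A$ and $A\cap(B+C)\subseteq B+C$.

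Finally I would identify $(A\cap(B+C)+C)/C$ with $(A\cap(B+C))/(A\cap C)$ via the second isomorphism theorem: for $M=A\cap(B+C)$ we have $(M+C)/C\cong M/(M\cap C)$, and since $C\subseteq B+C$ one has $M\cap C=A\cap(B+C)\cap C=A\cap C$. Combining these equalities yields the desired identity as subgroups of $D/C$.

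The only mild subtlety, rather than a real obstacle, is making sure each quotient is viewed inside the same ambient group $D/C$ via the correct injection, and that the identification $A\cap(B+C)\cap C=A\cap C$ is justified by $C\subseteq B+C$; everything else is routine subgroup arithmetic.
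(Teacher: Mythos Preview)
Your proof is correct; the paper itself simply records ``The proof is straightforward'' without giving any argument, and what you wrote is exactly the routine subgroup arithmetic that justifies this. There is nothing to compare: your identification of each quotient with an image under $\pi\colon D\to D/C$, the explicit intersection computation, and the use of $C\subseteq B+C$ to get $A\cap(B+C)\cap C=A\cap C$ are all fine.
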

\begin{proof}
    The proof is straightforward.
\end{proof}

\begin{lemma}\label{LemmaCompLocal}
    Let $R$ be a ring, and let $\mathfrak{a}$ be an ideal. For an $R/\mathfrak{a}$-module $M$ and a prime ideal $\mathfrak{p}$ such that $\mathfrak{a}\subset \mathfrak{p}$ there are equalities of $R/\mathfrak{a}$-modules
    $$C_\mathfrak{p}M = C_{\tilde{\mathfrak{p}}}M, \quad S^{-1}_\mathfrak{p}M = S^{-1}_{\tilde{\mathfrak{p}}}M,$$
    where $\tilde{\mathfrak{p}} = \mathfrak{p}\:\mathrm{mod}\: \mathfrak{a}$ and the completion and localization on the left side (resp. the right side) are considered as the completion and localization of $R$-modules (resp. $R/\mathfrak{a}$-modules).
\end{lemma}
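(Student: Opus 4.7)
The key observation is that since $M$ is an $R/\mathfrak{a}$-module, its $R$-module structure factors through the quotient map $\pi\colon R\to R/\mathfrak{a}$, so the action of any $r\in R$ on $M$ literally coincides with the action of $\pi(r)\in R/\mathfrak{a}$. The plan is to exploit this to identify the localized and completed modules on the nose, rather than merely up to isomorphism.

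For the localization, I would first note that since $\mathfrak{a}\subset \mathfrak{p}$, we have $\pi^{-1}(\tilde{\mathfrak{p}}) = \mathfrak{p}$, and hence $\pi$ maps $R\setminus \mathfrak{p}$ surjectively onto $(R/\mathfrak{a})\setminus \tilde{\mathfrak{p}}$. For each $r\in R\setminus \mathfrak{p}$, the endomorphism of $M$ given by multiplication by $r$ is exactly the same as the endomorphism given by multiplication by $\pi(r)$. By the universal property of localization (inverting a set of endomorphisms of $M$), the two localizations $S^{-1}_{\mathfrak{p}} M$ and $S^{-1}_{\tilde{\mathfrak{p}}} M$ are canonically identified, with the identification sending the fraction $m/s$ to $m/\pi(s)$.

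For the completion, the plan is to verify that $\mathfrak{p}^n M = \tilde{\mathfrak{p}}^n M$ as submodules of $M$ for every $n\geq 0$. Since $\pi$ is surjective and respects products, $\pi(\mathfrak{p}^n) = \tilde{\mathfrak{p}}^n$, and as above, the action of any element of $\mathfrak{p}^n$ on $M$ equals the action of its image in $\tilde{\mathfrak{p}}^n$. Therefore the two towers $\{M/\mathfrak{p}^n M\}_{n\geq 0}$ and $\{M/\tilde{\mathfrak{p}}^n M\}_{n\geq 0}$ coincide term-by-term together with their transition maps, and passing to the inverse limit yields $C_\mathfrak{p} M = C_{\tilde{\mathfrak{p}}} M$.

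There is no real obstacle here: the statement is essentially a tautology once one unwinds the observation that the two ring actions on $M$ are identical. The only bookkeeping is to confirm that the identifications in both cases are compatible with the natural $R/\mathfrak{a}$-module structures carried by each side, which is immediate from the explicit descriptions.
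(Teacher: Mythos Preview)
Your proof is correct and is precisely the argument the paper has in mind: the paper's own proof reads in its entirety ``The proof is straightforward,'' and what you have written is exactly the straightforward verification that unwinds this tautology.
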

\begin{proof}
 The proof is straightforward.
\end{proof}

\subsection{Notions on schemes} In this section we introduce some notions for schemes, which we will use in the text.

\begin{df}
    Let $X$ be a scheme. For a point $x\in X$ define its {\it codimension} 
    $$\mathrm{codim}_Xx = \dim \mathcal{O}_{X, x}.$$
    If $Z$ is a closed subscheme of $X$, then define $$\mathrm{codim}_XZ = \inf_{z\in Z}(\mathrm{codim}_X z).$$
    If $X$ is clear from the context, then denote $\mathrm{codim}_Xx$
    as $\mathrm{codim}\:x$ and denote $\mathrm{codim}_XZ$ as $\mathrm{codim}\: Z$.
\end{df}
Note that if $X$ is Noetherian, $\mathrm{codim}\: x$ is finite.

\begin{df}
    A scheme $X$ is called {\it catenary}, if for any $x\in X$ the local ring $\mathcal{O}_{X, x}$ is catenary.
\end{df}
\begin{df}
    A Noetherian scheme $X$ is called {\it locally equidimensional}, if for any $x\in X$ the local ring $\mathcal{O}_{X, x}$ is equidimensional.
\end{df}
\begin{df}
    A scheme $X$ is called {\it excellent}, if there is a cover $X = \bigcup_{i\in I}\mathrm{Spec}\: R_i$ by affine open subschemes, where each $R_i$ is an excellent ring.
\end{df}

Note that an integral Noetherian scheme is locally equidimensional, and an excellent scheme is catenary and locally Noetherian, but not Noetherian in general.

\begin{df} 
$1)$ A Noetherian affine scheme $X = \mathrm{Spec}\: R$ is called {\it  biequidimensional}, if $R$ is a  biequidimensional ring.

$2)$ A Noetherian scheme $X$ is called {\it strongly biequidimensional}, if $\dim X < \infty$ and every point of $X$ has a biequidimensional affine neighborhood of dimension $\dim X$.
\end{df}

\begin{rem}
    It is easy to see that a strongly biequidimensional scheme $X$ is biequidimensional in the sense of \cite[Definition 1.2]{He}, since every maximal chain of points of $X$ (ordered in the sense of definition \ref{defS(X)}) is contained in a biequidimensional affine scheme of dimension $\dim X$. We do not know an example of a biequidimensional, but not strongly biequidimensional scheme.
\end{rem}

\begin{rem}\label{remkFinTypeScheme}
    By remark \ref{remFiniteTypeEquidim}, every equidimensional scheme of finite type over a field is strongly biequidimensional.
\end{rem}

\begin{rem}\label{remZFinTypeScheme}
    By remark \ref{remZFinType}, every irreducible scheme of finite type over $\mathrm{Spec}\: \Z$ such that all non-empty fibers are equidimensional and of equal dimension is strongly biequidimensional. In particular, irreducible flat schemes of finite type over $\mathrm{Spec}\: \Z$ are strongly biequidimensional (see \cite[Theorem 14.116]{GW}). 
\end{rem}

\begin{rem}\label{remIrrSubschisBiequidim}
    Note that an equidimensional closed subscheme $X$ of a strongly biequidimensional scheme $Y$ is strongly biequidimensional of dimension $\dim Y - \mathrm{codim}\: X$. 
\end{rem}

\begin{df}
    Let $X$ be a scheme and let $\mathcal{J}$ be an ideal sheaf on $X$. Then we say that $\mathcal{J}$ is a {\it locally radical } ideal sheaf, if $\mathcal{J}_x$ is a radical ideal in $\mathcal{O}_{X, x}$ for any $x\in X$.
\end{df}

\begin{rem}
    Note that an ideal sheaf of an integral closed subscheme is locally radical.
\end{rem}

\begin{lemma}\label{lemmaRadicalSheaf}
    Let $X$ be a scheme and let $\mathcal{J}$ be an ideal sheaf on $X$. Assume that $\mathcal{J}$ is locally radical. Then for any affine open subset $U\subset X$ the ideal $\mathcal{J}(U)$ is radical in the ring $\mathcal{O}_X(U)$.
\end{lemma}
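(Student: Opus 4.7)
The plan is to reduce the claim to the standard commutative-algebra fact that reducedness is a local property, via the dictionary between a quasi-coherent ideal sheaf on an affine scheme and its module of global sections.

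First, I would set $A = \mathcal{O}_X(U)$ and $I = \mathcal{J}(U)$, so that showing $I$ is a radical ideal of $A$ amounts to showing that the quotient $A/I$ is a reduced ring. Since $\mathcal{J}$ is an ideal sheaf (implicitly quasi-coherent, as is customary in this setting), its restriction to the affine open $U$ corresponds to the ideal $I$, and for every point $x \in U$ with corresponding prime $\mathfrak{p} \subset A$ one has the identifications $\mathcal{O}_{X,x} = A_\mathfrak{p}$ and $\mathcal{J}_x = I A_\mathfrak{p}$. In particular,
$$
(A/I)_{\mathfrak{p}} \;=\; A_\mathfrak{p}/I A_\mathfrak{p} \;=\; \mathcal{O}_{X,x}/\mathcal{J}_x.
$$

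Next, I would invoke the hypothesis: by assumption $\mathcal{J}_x$ is a radical ideal in $\mathcal{O}_{X,x}$, so $\mathcal{O}_{X,x}/\mathcal{J}_x$ is reduced. Combined with the identification above, this means $(A/I)_\mathfrak{p}$ is reduced for every prime $\mathfrak{p} \subset A$. The final step is then to quote the standard fact that a commutative ring $B$ is reduced if and only if $B_\mathfrak{p}$ is reduced for every prime (equivalently, every maximal) ideal $\mathfrak{p}$; this follows from the compatibility of the nilradical with localization, $\mathrm{nil}(B)_\mathfrak{p} = \mathrm{nil}(B_\mathfrak{p})$, together with the injectivity of $B \hookrightarrow \prod_\mathfrak{m} B_\mathfrak{m}$. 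Applying this with $B = A/I$ gives that $A/I$ is reduced, hence $I = \mathcal{J}(U)$ is radical.

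There is no serious obstacle here — the whole argument is a one-line commutative algebra reduction once the sheaf-theoretic stalks are translated into localizations. The only point that requires care is making the identification $\mathcal{J}_x = I A_\mathfrak{p}$ rigorously, which uses quasi-coherence of $\mathcal{J}$ on the affine open $U$; this is the only place the hypothesis on $U$ being affine enters.
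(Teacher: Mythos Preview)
Your argument is correct and is precisely the standard one-line reduction the paper has in mind; the paper itself omits the proof as straightforward, and your write-up is an accurate expansion of that.
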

\begin{proof}
    The proof is straightforward.
\end{proof}

\begin{df}
    Let $X$ be a closed subscheme of a scheme $Y$ given by the ideal sheaf $\mathcal{J}$. Then we say that $X$ is {\it locally defined by primary ideals}, if $X$ is irreducible and $\mathcal{J}_y$ is a primary ideal in $\mathcal{O}_{Y, y}$ for any $y\in Y$.
\end{df}

\begin{rem}
    If $X$ is locally defined by primary ideals in $Y$, then $X$ is locally defined by primary ideals in itself.
\end{rem}

\begin{lemma}\label{lemmaPrimarySheaf}
    Let $Y$ be a Noetherian scheme and let $X$ be a closed subscheme given by the ideal sheaf $\mathcal{J}$. Assume that $X$ is locally defined by primary ideals in $Y$. Then for any affine open subset $U\subset Y$ the ideal $\mathcal{J}(U)$ is primary in the ring $\mathcal{O}_Y(U)$.
\end{lemma}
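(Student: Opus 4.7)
The plan is to pass to the affine open $U=\mathrm{Spec}(A)$ with $A=\mathcal{O}_Y(U)$ and $I=\mathcal{J}(U)$, and show $I$ is primary by checking that $\mathrm{Ass}_A(A/I)$ is a singleton (which characterizes primary ideals in a Noetherian ring). Note that $A$ is Noetherian, since $Y$ is Noetherian and $U$ is affine open in $Y$. Crucially, irreducibility of $X$ is already built into the definition of ``locally defined by primary ideals'' (the author put it there), so I may use it freely; I am not importing an extra assumption.

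First I would pin down the unique minimal prime of $I$. The closed subset $V(I)\subset U$ is the underlying topological space of the scheme-theoretic intersection $X\cap U$, which is open in $X$. Since $X$ is irreducible, every non-empty open subset of $X$ is irreducible, so $V(I)$ is either empty or irreducible. If $V(I)=\varnothing$ then $I=A$, which is vacuously primary (the quotient $A/I$ is zero, so all its zero-divisors are trivially nilpotent), and the conclusion holds. Otherwise $V(I)$ is irreducible, so $I$ has a unique minimal prime $\mathfrak{p}=\sqrt{I}$.

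Next I would rule out embedded primes. Let $\mathfrak{q}\in\mathrm{Ass}_A(A/I)$; then $\mathfrak{q}\supset I$ and, by the standard behaviour of associated primes under localization, $\mathfrak{q}A_\mathfrak{q}\in\mathrm{Ass}_{A_\mathfrak{q}}(A_\mathfrak{q}/IA_\mathfrak{q})$. Now $A_\mathfrak{q}=\mathcal{O}_{Y,y_\mathfrak{q}}$ for the point $y_\mathfrak{q}\in U$ corresponding to $\mathfrak{q}$, and $IA_\mathfrak{q}=\mathcal{J}_{y_\mathfrak{q}}$ is primary by the hypothesis that $X$ is locally defined by primary ideals in $Y$. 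A primary ideal in a Noetherian ring has a unique associated prime, namely its radical, so
\[
\mathrm{Ass}_{A_\mathfrak{q}}(A_\mathfrak{q}/IA_\mathfrak{q})=\{\sqrt{IA_\mathfrak{q}}\}=\{\sqrt{I}\,A_\mathfrak{q}\}=\{\mathfrak{p}A_\mathfrak{q}\}.
\]
Hence $\mathfrak{q}A_\mathfrak{q}=\mathfrak{p}A_\mathfrak{q}$, and the order-preserving bijection between primes of $A$ contained in $\mathfrak{q}$ and primes of $A_\mathfrak{q}$ forces $\mathfrak{q}=\mathfrak{p}$. Therefore $\mathrm{Ass}_A(A/I)=\{\mathfrak{p}\}$, and $I$ is $\mathfrak{p}$-primary.

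There is no serious obstacle here: the argument is a straightforward localization check combined with the standard characterizations of primary ideals and associated primes in a Noetherian ring. The only points requiring care are (i) remembering that irreducibility of $X$ is part of the hypothesis ``locally defined by primary ideals'' (so the uniqueness of the minimal prime is immediate), and (ii) handling the edge case $X\cap U=\varnothing$ separately, which falls out of the definition of primary.
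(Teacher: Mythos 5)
Your proof is correct and takes essentially the same route as the paper's: both arguments localize at primes of $A=\mathcal{O}_Y(U)$, invoke the stalkwise primary hypothesis there, and use the irreducibility of $X$ to single out the unique minimal prime $\mathfrak{p}=\sqrt{I}$. The paper packages the local check as the equality $I = S_{\mathfrak{p}}(I)$ with the $\mathfrak{p}$-primary component, while you phrase it as $\mathrm{Ass}_A(A/I)=\{\mathfrak{p}\}$; this is only a cosmetic difference, and your explicit handling of the edge case $X\cap U=\varnothing$ is a small point the paper leaves implicit.
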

\begin{proof}
    Let $R = \mathcal{O}_Y(U)$, $\mathfrak{a} = \mathcal{J}(U)$. Since $X$ is irreducible, there is a unique minimal prime ideal $\mathfrak{p}$ over $\mathfrak{a}$.  Let $\mathfrak{n} = S_\mathfrak{p}(\mathfrak{a})$ be a $\mathfrak{p}$-primary part in the primary decomposition of $\mathfrak{a}$. By our assumptions, localizations of $\mathfrak{a}$ and $\mathfrak{n}$ at prime ideals of $R$ coincide. Hence $\mathfrak{a} = \mathfrak{n}$.
\end{proof}

\subsection{Thickenings and index categories of subschemes}\label{sect2.2}
\begin{df}
    Let $X$ be a closed subscheme of a sheme $Y$ defined by the ideal sheaf $\mathcal{J}_X$. A closed subscheme $\tilde{X}$ of $Y$ is called a {\it thickening of $X$ in $Y$} if there are inclusions of ideal sheaves $\mathcal{J}_X^n\subset \mathcal{J}_{\tilde{X}}\subset \mathcal{J}_X$ for some $n\geq 1$. 

    A subscheme of $Y$ defined by the ideal sheaf $\mathcal{J}_X^n$ is called the {\it $n$-th power thickening of $X$}. We will denote it by $X_{[n]}$.
\end{df}

Let $\mathcal{J}$ be an ideal sheaf on a Noetherian scheme $X$. For every point $x\in X$ and for every integer $n\geq 1$ there is the $n$-th symbolic power $\big(\mathcal{J}_x\big)^{(n)}$ of the ideal $\mathcal{J}_x$ in $\mathcal{O}_{X, x}$. For a point $x\in X$ define the ideal sheaf $\mathcal{J}_{n, x}$ on $X$ by the formula
$$\mathcal{J}_{n, x}(U) = \begin{cases}
    \mathcal{O}_X(U), \quad x\notin U, \\
    \big(\mathcal{J}_x\big)^{(n)}\cap \mathcal{O}_X(U), \quad x\in U
\end{cases}$$
on every open subset $U\subset X$. The stalk $(\mathcal{J}_{n, x})_x $ at $x$ is equal to $\big(\mathcal{J}_x\big)^{(n)}$ by definition. 
\begin{df}
    Let $\mathcal{J}$ be an ideal sheaf on a Noetherian scheme $X$. The ideal sheaf
    $$\mathcal{J}^{(n)} = \bigcap_{x\in X}\mathcal{J}_{n, x}$$
    is called the {\it $n$-th symbolic power of $\mathcal{J}$}.
\end{df}

Denote by $\mathcal{J}_x^{(n)}$ the stalk of the sheaf $\mathcal{J}^{(n)}$ at a point $x\in X$.

\begin{prop}\label{LemmaSymbIdealSheafProperties}
    Let $X$ be a Noetherian scheme and let $\mathcal{J}$ be an ideal sheaf on $X$. Assume that $\mathcal{J}$ is locally radical.


    $1)$ If $U$ is a non-empty open subset of $X$, then $\mathcal{J}^{(n)}(U) = \bigcap_{x\in U}\mathcal{J}_{n, x}(U)$.
    
    $2)$ For a point $x\in X$ we have $\mathcal{J}^{(n)}_x = \big(\mathcal{J}_x\big)^{(n)}$.

    $3)$ If $U$ is an affine open subscheme of $X$, then $\mathcal{J}^{(n)}(U) = \mathcal{J}(U)^{(n)}$.

    $4)$ For any $n\geq 0$ there are inclusions of ideal sheaves $\mathcal{J}^{(n + 1)}\subset \mathcal{J}^{(n)}$ and $\mathcal{J}^n\subset \mathcal{J}^{(n)}$. Moreover, $\mathcal{J}^{(0)} = \mathcal{O}_X$ and $\mathcal{J}^{(1)} = \mathcal{J}$.
\end{prop}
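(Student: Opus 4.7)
The plan is to prove the parts in the order $(1) \Rightarrow (3) \Rightarrow (2) \Rightarrow (4)$, with the main content in part $(3)$ and the remaining parts obtained by localization.

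Part $(1)$ should be immediate from the definition: since each $\mathcal{J}_{n,x}$ is a subsheaf of $\mathcal{O}_X$, the intersection $\mathcal{J}^{(n)} = \bigcap_{x \in X}\mathcal{J}_{n,x}$ is computed sectionwise, and for $x \notin U$ the term $\mathcal{J}_{n,x}(U) = \mathcal{O}_X(U)$ is trivial, so it can be dropped. For part $(3)$, I work on an affine open $U = \mathrm{Spec}\:R$ and set $\mathfrak{a} = \mathcal{J}(U)$, which is a radical ideal by Lemma \ref{lemmaRadicalSheaf}. For $y \in U$ corresponding to a prime $\mathfrak{p}$, Remark \ref{remRadIdealNoEmbPrimes} applied to the radical ideal $\mathfrak{a}$ gives $(\mathcal{J}_y)^{(n)} = (\mathfrak{a}R_\mathfrak{p})^{(n)} = \mathfrak{a}^{(n)}R_\mathfrak{p}$, and therefore $\mathcal{J}_{n,y}(U) = \mathfrak{a}^{(n)}R_\mathfrak{p} \cap R$. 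Combining with part $(1)$ yields
$$\mathcal{J}^{(n)}(U) = \bigcap_{\mathfrak{p} \in \mathrm{Spec}\:R}\bigl(\mathfrak{a}^{(n)}R_\mathfrak{p} \cap R\bigr).$$
It then suffices to invoke the standard fact that any ideal $I$ in a commutative ring satisfies $I = \bigcap_{\mathfrak{p}}(IR_\mathfrak{p} \cap R)$, which follows by observing that if $r$ lies in the right-hand side then the ideal $(I:r)$ is contained in no prime and hence equals $R$. Applied with $I = \mathfrak{a}^{(n)}$, this collapses the above to $\mathfrak{a}^{(n)} = \mathcal{J}(U)^{(n)}$.

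Parts $(2)$ and $(4)$ then follow by localization. For $(2)$, I pick an affine open $U = \mathrm{Spec}\:R$ containing $x$, with $\mathfrak{p}$ corresponding to $x$; by part $(3)$ and Remark \ref{remRadIdealNoEmbPrimes},
$$\mathcal{J}^{(n)}_x = \bigl(\mathcal{J}^{(n)}(U)\bigr)_\mathfrak{p} = \mathfrak{a}^{(n)}R_\mathfrak{p} = (\mathfrak{a}R_\mathfrak{p})^{(n)} = (\mathcal{J}_x)^{(n)}.$$
For $(4)$, the stalkwise equality provided by $(2)$ reduces the claims to the classical inclusions $(\mathcal{J}_x)^{(n+1)} \subset (\mathcal{J}_x)^{(n)}$ and $(\mathcal{J}_x)^n \subset (\mathcal{J}_x)^{(n)}$ together with the conventions $(\mathcal{J}_x)^{(0)} = \mathcal{O}_{X,x}$ and $(\mathcal{J}_x)^{(1)} = \mathcal{J}_x$, all recorded after Definition \ref{DefSymbolicPower}.

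I expect no serious obstacle: the argument is essentially formal once Lemma \ref{LocalizationSymbPower} is available. The one subtle point is that the locally radical hypothesis on $\mathcal{J}$ is precisely what is needed in part $(3)$ to apply Remark \ref{remRadIdealNoEmbPrimes} and identify $(\mathcal{J}_y)^{(n)}$ with $\mathfrak{a}^{(n)}R_\mathfrak{p}$ uniformly over all $y \in U$; without it, $\mathfrak{a}$ could carry embedded primes and the equality $\mathfrak{a}^{(n)}R_\mathfrak{p} = (\mathfrak{a}R_\mathfrak{p})^{(n)}$ would generally fail, so the intersection in the displayed formula above would no longer reduce to $\mathfrak{a}^{(n)}$.
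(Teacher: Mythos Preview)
Your proof is correct and follows essentially the same approach as the paper: both arguments hinge on Lemma~\ref{LocalizationSymbPower} (via Remark~\ref{remRadIdealNoEmbPrimes}) to identify $(\mathfrak{a}R_\mathfrak{p})^{(n)}$ with $\mathfrak{a}^{(n)}R_\mathfrak{p}$, and then reduce the sheaf statements to this algebraic fact. The only organizational difference is that the paper proves $(2)$ before $(3)$ (using a squeeze of stalks $(\mathcal{J}_x)^{(n)} = \mathfrak{a}^{(n)}\mathcal{O}_{X,x} \subset \mathcal{J}_x^{(n)} \subset (\mathcal{J}_x)^{(n)}$, then concludes $(3)$ by ``equal stalks implies equal ideals''), whereas you prove $(3)$ first via the explicit identity $I = \bigcap_\mathfrak{p}(IR_\mathfrak{p}\cap R)$ and then obtain $(2)$ by localizing $(3)$; both routes are equally short and rest on the same content.
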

\begin{proof}
    By definition, 
    $$\mathcal{J}^{(n)}(U) = \bigcap_{x\in X}\mathcal{J}_{n, x}(U)$$
    on every open subset $U$. Since $\mathcal{J}_{n, x}(U) = \mathcal{O}_X(U)$ for any $x\notin U$, item $1$ is proved. 

    To prove item $2$, we can assume $X = \mathrm{Spec}\:R$ is affine. Then $\mathcal{J}$ is associated with the radical ideal $\mathfrak{a} = \mathcal{J}(X)$ in $R$ by lemma \ref{lemmaRadicalSheaf}. By lemma \ref{LocalizationSymbPower} and remark \ref{remRadIdealNoEmbPrimes}, 
    $\mathfrak{a}^{(n)}\mathcal{O}_{X, x} = (\mathfrak{a}\mathcal{O}_{X, x})^{(n)} = \big(\mathcal{J}_x\big)^{(n)}$ for any $x\in X$. Thus $\mathfrak{a}^{(n)}\subset \bigcap_{x\in X}\mathcal{J}_{n, x}(X) = \mathcal{J}^{(n)}(X)$, from what follows
    $$\big(\mathcal{J}_x\big)^{(n)} = \mathfrak{a}^{(n)}\mathcal{O}_{X, x}\subset \mathcal{J}_x^{(n)}\subset \big(\mathcal{J}_x\big)^{(n)},$$
    where the last inclusion follows from the definition of $\mathcal{J}^{(n)}$. 

    Let us prove item $3$. We can assume that $X = U$. From the proof of item $2$ we know that $\mathcal{J}(X)^{(n)}\subset \mathcal{J}^{(n)}(X)$ and $\mathcal{J}(X)^{(n)}\mathcal{O}_{X, x} = \big(\mathcal{J}_x\big)^{(n)} = \mathcal{J}^{(n)}_x$ for any point $x\in X$. Since stalks at every point are equal, the ideals are also equal. This proves item $3$.

    Inclusions of ideal sheaves from item $4$ are clear from the definition of $\mathcal{J}^{(n)}$. The equality $\mathcal{J}^{(0)} = \mathcal{O}_X$ is also clear. Since $\big(\mathcal{J}_x\big)^{(1)} = \mathcal{J}_x$, we obtain the inclusion of ideal sheaves $\mathcal{J}\subset \mathcal{J}^{(1)}$. By item $2$, the stalks of these sheaves are equal at every point, from what follows the equality of sheaves. 
\end{proof}

\begin{df}
    Let $X$ be a closed subscheme of a Noetherian sheme $Y$ defined by the ideal sheaf $\mathcal{J}_X$. Assume that $\mathcal{J}_X$ is locally radical.
    A subscheme of $Y$ defined by the ideal sheaf $\mathcal{J}_X^{(n)}$ is called the {\it $n$-th symbolic  thickening of $X$}. We will denote it by $X_{(n)}$.
\end{df}

\begin{rem}
    Note that if $X$ is an integral closed subscheme of a Noetherian scheme $Y$, then $X_{(n)}$ is locally defined by primary ideals. This follows from the remark after definition \ref{DefSymbolicPower} and from proposition \ref{LemmaSymbIdealSheafProperties}.
\end{rem}

Note that $X_{(n)}$ is a thickening of $X$ by item $4$ of proposition \ref{LemmaSymbIdealSheafProperties}. Also, it follows from this proposition that there exists a closed embedding $X_{(n)}\hookrightarrow X_{[n]}$.

It is natural to ask whether for $n\geq 1$ there exist $m\geq 1$ and a closed embedding $X_{[n]}\hookrightarrow X_{(m)}$. In general, this is not true, but this holds for regular Noetherian schemes.

\begin{prop}\label{PropCofinalSymbPow}
    Let $Y$ be a regular Noetherian scheme. Let $X$ be a closed subscheme of $Y$ defined by the ideal sheaf $\mathcal{J}$. Assume that $\mathcal{J}$ is locally radical. Then for any $n\geq 1$ there exist $m\geq 1$ and a natural closed embedding $X_{[n]}\hookrightarrow X_{(m)}$. 
\end{prop}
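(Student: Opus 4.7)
The embedding $X_{[n]} \hookrightarrow X_{(m)}$ is equivalent, at the level of ideal sheaves, to the containment $\mathcal{J}^{(m)} \subset \mathcal{J}^n$, so the proposition reduces to: for every $n \geq 1$ there exists $m \geq 1$ with $\mathcal{J}^{(m)} \subset \mathcal{J}^n$ as ideal sheaves on $Y$.

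My first step would be to reduce the problem to an ideal-theoretic statement on affine pieces. Since $Y$ is Noetherian, cover it by finitely many affine opens $U_1, \ldots, U_N$ with $U_i = \mathrm{Spec}\: R_i$, each $R_i$ being a regular Noetherian ring (regularity passing from $Y$). By lemma \ref{lemmaRadicalSheaf} the ideal $\mathfrak{a}_i := \mathcal{J}(U_i) \subset R_i$ is radical, and by item $3$ of proposition \ref{LemmaSymbIdealSheafProperties} we have $\mathcal{J}^{(m)}(U_i) = \mathfrak{a}_i^{(m)}$. Hence it suffices to produce a single integer $m$ such that $\mathfrak{a}_i^{(m)} \subset \mathfrak{a}_i^n$ holds in every $R_i$ simultaneously; then gluing over the finite cover delivers the required containment of sheaves.

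The main input at this point is the uniform symbolic-power containment theorem in regular rings, due to Ein--Lazarsfeld--Smith (equal characteristic zero), Hochster--Huneke (positive characteristic), and Ma--Schwede (mixed characteristic): if $R$ is a regular Noetherian ring and $\mathfrak{a} \subset R$ is a radical ideal whose minimal primes all have height at most $h$, then $\mathfrak{a}^{(hn)} \subset \mathfrak{a}^n$ for every $n \geq 1$. Applied to each $\mathfrak{a}_i$, whose big height $h_i$ is finite (bounded by $\dim R_i$), and setting $m := n \cdot \max_{1 \leq i \leq N} h_i$, one obtains $\mathfrak{a}_i^{(m)} \subset \mathfrak{a}_i^n$ for every $i$, and gluing gives $\mathcal{J}^{(m)} \subset \mathcal{J}^n$ and the desired closed embedding.

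The hard part is precisely this last ingredient: all known proofs of the uniform symbolic-power containment rely on non-trivial tools (multiplier ideals, tight closure, or perfectoid methods depending on the characteristic) and are not elementary. An alternative self-contained approach seems feasible only under strong restrictions, for instance when $\mathcal{J}$ is locally a complete intersection on $Y$, in which case $\mathcal{J}^{(n)} = \mathcal{J}^n$ and one may simply take $m = n$; in the general regular Noetherian setting I do not see how to bypass some form of this containment theorem, so invoking it directly is the cleanest route.
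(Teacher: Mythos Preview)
Your proof is correct and follows essentially the same route as the paper: reduce to finitely many affine charts $U_i=\mathrm{Spec}\,R_i$ with $R_i$ regular, invoke the uniform symbolic-power containment $\mathfrak{a}^{(m)}\subset\mathfrak{a}^n$ for radical ideals in regular rings, and take the maximum over the charts. The paper cites Murayama's unified version \cite{Mu} of the Ein--Lazarsfeld--Smith / Hochster--Huneke / Ma--Schwede theorem rather than the three cases separately, but the argument is otherwise identical.
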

\begin{proof}
    
    Let $Y = \bigcup_{i\in I}U_i$ be a finite covering by open affine subschemes $U_i = \mathrm{Spec}\:R_i$ with each $R_i$ being regular and Noetherian. By \cite[Theorem A]{Mu}, for every $i\in I$ there exists $m_i\geq 1$ such that $\mathcal{J}(U_i)^{(m_i)}\subset \mathcal{J}(U_i)^n$. Let $m = \max_{i\in I} m_i$. Then $\mathcal{J}^{(m)}_y\subset \mathcal{J}^n_y$ for any point $y\in Y$. Therefore there is an embedding of ideal sheaves $\mathcal{J}^{(m)}\subset \mathcal{J}^n$, from what the statement follows.
\end{proof}

\begin{df}
    Let $X$ be a Noetherian scheme and let $0\leq i_1 < i_2 < \ldots < i_n\leq \dim X$ be a sequence of integers.
    The {\it index category of power thickenings}, denoted by $\mathscr{P}_{i_1, \ldots, i_n}(X)$, is defined as follows. Objects in $\mathscr{P}_{i_1, \ldots, i_n}(X)$ are power thickenings of integral closed subschemes of $X$ of codimensions $i_1, \ldots, i_n$, and morphisms are closed embeddings.

    The {\it index category of symbolic thickenings}, denoted by $\mathscr{S}_{i_1, \ldots, i_n}(X)$, is defined analogously, with objects being symbolic thickenings.

    If the scheme $X$ is clear from the context, we will omit $X$ in the notation.
\end{df}

\begin{df}\label{defIndCatE}
Let $X$ be a Noetherian finite-dimensional scheme.
Define a category $\mathscr{E}$ in the following way way. An object in $\mathscr{E}$ is a closed locally equidimensional subscheme $Z$ such that $\dim Z < \dim X$ and all irreducible components of $Z$ have the same codimension in $X$. Morphisms in $\mathscr{E}$ are closed embeddings. 
\end{df}

Note that if $X$ is an equidimensional scheme of finite type over a field $\Bbbk$, then $X$ is locally equidimensional by \cite[14.H, Corollary 3]{Ma2}. Every irreducible finite-dimensional Noetherian scheme is locally equidimensional, and thus $\mathscr{P}_i \subset \mathscr{E}$ for any $i$.

\begin{rem}
    For an index category $\mathscr{C}$ of closed subschemes of $X$ we will denote $Z\in \mathrm{Ob}(\mathscr{C})$ as $Z\in \mathscr{C}$.
\end{rem}

For an index category $\mathscr{C}$ of closed subschemes of a Notherian scheme $X$ consider a diagram $F\colon \mathscr{C}^{op}\rightarrow \mathrm{\bf Ab}$ to the category of abelian groups. Then the limit of this diagram can be identified with the group of elements $(a_Z)_{Z\in \mathscr{C}}\in \prod_{Z\in \mathscr{C}}F(Z)$ such that $F(f)(a_Z) = a_{Z'}$ for any morphism $Z'\xrightarrow{f} Z$ in $\mathscr{C}$. We will denote this limit by $\lim_{Z\in \mathscr{C}}F(Z)$.

\begin{example}
    If $\mathcal{F}$ is a quasicoherent sheaf of $\mathcal{O}_X$-modules, then $H^0(-, \mathcal{F}|_{-})$ and $\A_{I|_{-}}(-, \mathcal{F}|_{-})$ (see sections \ref{sect2.3} and \ref{sect2.5}) are $\mathscr{C}^{op}$-shaped diagrams. 
\end{example}

\begin{lemma}\label{LemmaEqualityLimitSymbPow}
Let $X$ be a regular Noetherian scheme and let $\mathcal{F}$ be a quasicoherent sheaf of $\mathcal{O}_X$-modules. Let $i_1, i_2, \ldots, i_n$ be non-negative integers with $i_1 < i_2 < \ldots < i_n\leq \dim X$. Let $F = H^0(-, \mathcal{F}|_{-})$ or $F = \A_{I|_{-}}(-, \mathcal{F}|_{-})$ (see sections \ref{sect2.3} and \ref{sect2.5}). Then
$$\lim_{Z\in \mathscr{P}_{i_1, \ldots, i_n}}F(Z) = \lim_{Z\in \mathscr{S}_{i_1, \ldots, i_n}}F(Z).$$
\end{lemma}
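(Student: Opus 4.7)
The proof will be a cofinality argument: I plan to show that for a regular Noetherian scheme $X$, the towers of power and symbolic thickenings of each integral closed subscheme are mutually interlaced, so that any compatible family indexed by one category extends canonically to a compatible family indexed by the other. The key inputs are already in place: item $4$ of Proposition \ref{LemmaSymbIdealSheafProperties} provides canonical closed embeddings $Y_{(n)} \hookrightarrow Y_{[n]}$ for every integral closed subscheme $Y \subseteq X$ and every $n \geq 1$, and Proposition \ref{PropCofinalSymbPow} (which uses the regularity hypothesis) provides, for each such $Y$ and each $n\geq 1$, an integer $m = m(Y, n) \geq 1$ together with a canonical closed embedding $Y_{[n]} \hookrightarrow Y_{(m)}$.

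Using these embeddings I would construct mutually inverse maps
\[
\alpha \colon \lim_{Z \in \mathscr{P}_{i_1, \ldots, i_n}} F(Z) \longrightarrow \lim_{Z \in \mathscr{S}_{i_1, \ldots, i_n}} F(Z), \qquad
\beta \colon \lim_{Z \in \mathscr{S}_{i_1, \ldots, i_n}} F(Z) \longrightarrow \lim_{Z \in \mathscr{P}_{i_1, \ldots, i_n}} F(Z).
\]
Given a compatible family $(a_Z)_{Z \in \mathscr{P}}$, I define $\alpha\bigl((a_Z)\bigr)$ at $Y_{(n)}$ to be the image of $a_{Y_{[n]}}$ under the restriction $F(Y_{[n]}) \to F(Y_{(n)})$ induced by $Y_{(n)} \hookrightarrow Y_{[n]}$. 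Conversely, given a compatible family $(b_Z)_{Z \in \mathscr{S}}$, I define $\beta\bigl((b_Z)\bigr)$ at $Y_{[n]}$ to be the image of $b_{Y_{(m)}}$ under $F(Y_{(m)}) \to F(Y_{[n]})$ for any choice of $m$ with $Y_{[n]} \hookrightarrow Y_{(m)}$.

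The verifications I would then carry out are: (i) well-definedness of $\beta$, i.e.\ independence of $m$, which follows because the embeddings $Y_{(m)} \hookrightarrow Y_{(m')}$ for $m \leq m'$ are compatible with the embedding $Y_{[n]} \hookrightarrow Y_{(m)}$, so compatibility of $(b_Z)$ forces both choices to give the same element; (ii) that $\alpha\bigl((a_Z)\bigr)$ and $\beta\bigl((b_Z)\bigr)$ are themselves compatible families, which reduces to the functoriality of $F$ along the composed embeddings; and (iii) that $\alpha \circ \beta$ and $\beta \circ \alpha$ are the identity, which follows by comparing the factorizations $Y_{[n]} \hookrightarrow Y_{(m)} \hookrightarrow Y_{[m]}$ and $Y_{(n)} \hookrightarrow Y_{[n]} \hookrightarrow Y_{(m)}$ and invoking compatibility one more time.

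The main technical obstacle, which is where one must be careful, is handling morphisms in the index categories that go between thickenings of \emph{different} integral subschemes $Y \subsetneq Y'$: I need the constructions $\alpha$ and $\beta$ to intertwine such morphisms as well. This follows from the naturality of the embeddings $Y_{(n)} \hookrightarrow Y_{[n]}$ and $Y_{[n]} \hookrightarrow Y_{(m)}$ with respect to the inclusion $Y \hookrightarrow Y'$, which in turn reduces, via Proposition \ref{LemmaSymbIdealSheafProperties}, to the compatibility of symbolic and ordinary powers of ideals with restriction to localizations. Once this naturality is in hand, the two limits coincide as subgroups of a product, completing the proof.
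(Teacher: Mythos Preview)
Your proposal is correct and takes the same cofinality approach as the paper, whose proof simply observes that for each integral closed subscheme $Z$ the towers $\{Z_{[m]}\}_{m\geq 1}$ and $\{Z_{(m)}\}_{m\geq 1}$ are mutually cofinal (via $Z_{(m)}\hookrightarrow Z_{[m]}$ and Proposition~\ref{PropCofinalSymbPow}) and declares the equality of limits. Your explicit construction of $\alpha,\beta$ and your attention to morphisms between thickenings of different underlying subschemes fills in what the paper leaves implicit; note, however, that the resolution of your ``technical obstacle'' is simpler than your last paragraph suggests: a morphism $Y'_{(n')}\hookrightarrow Y_{(n)}$ in $\mathscr{S}$ forces $Y'\subseteq Y$ on underlying reduced schemes, so $Y_{[\max(n,n')]}\in\mathscr{P}$ dominates both $Y_{[n]}$ and $Y'_{[n']}$, and compatibility of $(a_Z)$ at this common object gives what you need without invoking any localization statement.
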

\begin{proof}
    For an integral closed subscheme $Z$ of $X$ of codimension $i_j$ for some $1\leq j \leq n$ we have $Z_{(m)}\hookrightarrow Z_{[m]}$ for any $m\geq 1$. By proposition \ref{PropCofinalSymbPow}, there exist $k\geq 1$ and a closed embedding $Z_{[m]}\hookrightarrow Z_{(k)}$. Thus the systems $\{Z_{[m]}\}_{m = 1}^\infty$ and $\{Z_{(m)}\}_{m = 1}^\infty$ are cofinal, from what the equality of limits follows.
\end{proof}



\begin{df}
    Let $X$ be a regular Noetherian scheme. Let $i, j$ be non-negative integers such that $i < j \leq \dim X$. Then define an index category $\mathscr{R}_{i, j}^{pow}(X)$ (resp. $\mathscr{R}_{i, j}^{sym}(X))$ in the following way. Objects in $\mathscr{R}_{i, j}^{pow}(X)$ (resp. $\mathscr{R}_{i, j}^{sym}(X)$) are either power (resp. symbolic) thickenings of regular integral closed subschemes of codimension $i$ in $X$ or power (resp. symbolic) thickenings of integral closed subschemes of codimension $j$ in $X$ which are embedded in regular integral closed subschemes of codimension $i$. Morphisms in $\mathscr{R}_{i, j}^{pow}(X)$ (resp. $\mathscr{R}_{i, j}^{sym}(X))$ are closed embeddings. If the scheme $X$ is clear from the context, we will omit $X$ in the notation.
\end{df}
\begin{rem}\label{RemarkSymPow}
    If $Z\in \mathscr{R}_{i, j}^{pow}$ is of codimension $j$ in $X$, then there exists a power thickening $Y$ of a regular integral closed subscheme of codimension $i$ in $X$, i.e. $Y\in \mathscr{R}_{i, j}^{pow}$, containing $Z$. The same holds for $\mathscr{R}_{i, j}^{sym}$, but we need to use the cofinality of symbolic and power thickenings of subschemes on a regular Noetherian scheme, see proposition \ref{PropCofinalSymbPow}.
\end{rem}

\begin{lemma}\label{EqualityRpowRsym}
    Let $X$ be a regular Noetherian scheme and let $\mathcal{F}$ be a quasicoherent sheaf of $\mathcal{O}_X$-modules. Let $i, j$ be non-negative integers with $i < j \leq \dim X$. Let $F = H^0(-, \mathcal{F}|_{-})$ or $F = \A_{I|_{-}}(-, \mathcal{F}|_{-})$ (see sections \ref{sect2.3} and \ref{sect2.5}). Then
$$\lim_{Z\in \mathscr{R}_{i, j}^{pow}}F(Z) = \lim_{Z\in \mathscr{R}_{i, j}^{sym}}F(Z).$$
\end{lemma}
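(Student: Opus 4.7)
The plan is to mirror the proof of Lemma \ref{LemmaEqualityLimitSymbPow}: I will show that the systems of thickenings parameterizing $\mathscr{R}_{i,j}^{pow}$ and $\mathscr{R}_{i,j}^{sym}$ are cofinal in each other (inside the larger category whose objects are arbitrary closed thickenings of the relevant integral subschemes), and then conclude that the inverse limits of $F$ coincide. The key input is Proposition \ref{PropCofinalSymbPow}, which applies to any integral closed subscheme of $X$ because $X$ is regular and the defining ideal sheaf of an integral closed subscheme is locally radical.

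First I would unpack the two kinds of objects appearing in each category. An object of $\mathscr{R}_{i,j}^{pow}$ is either $W_{[n]}$ for a regular integral closed subscheme $W \subset X$ of codimension $i$, or $Z_{[n]}$ for an integral closed subscheme $Z \subset X$ of codimension $j$ embedded in some such $W$. The objects of $\mathscr{R}_{i,j}^{sym}$ have the same classification, only with $(m)$ in place of $[n]$.

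Next, for each such underlying integral subscheme $T$ (of either codimension $i$ or $j$), the inclusion $\mathcal{J}_T^{\,n} \subset \mathcal{J}_T^{(n)}$ yields a closed embedding $T_{(n)} \hookrightarrow T_{[n]}$ for every $n \ge 1$, and Proposition \ref{PropCofinalSymbPow} furnishes, for each $n \ge 1$, some $m$ with a closed embedding $T_{[n]} \hookrightarrow T_{(m)}$. Crucially, both operations preserve the underlying integral subscheme $T$, so they preserve the structural conditions defining $\mathscr{R}_{i,j}$ (regularity of codimension $i$, or codimension $j$ inside a regular codimension $i$ subscheme). Therefore each object of $\mathscr{R}_{i,j}^{pow}$ admits a closed embedding into an object of $\mathscr{R}_{i,j}^{sym}$, and vice versa.

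Finally, I would conclude by the standard cofinality argument for inverse limits: given a compatible family $(a_Z)_Z$ in the limit over $\mathscr{R}_{i,j}^{pow}$, one defines $b_{T_{(m)}}$ to be the restriction of $a_{T_{[n]}}$ for any power thickening $T_{[n]}$ containing $T_{(m)}$; the compatibility follows because any two such choices can be dominated by a common (power or symbolic) thickening via the above mutual embeddings. The same construction in the other direction, together with the observation that both procedures are compatible with $F$-functoriality, yields the desired equality of limits. No step is really the main obstacle: the content of the lemma is entirely absorbed by Proposition \ref{PropCofinalSymbPow}, and one only needs to check that the restricted categorical conditions of $\mathscr{R}_{i,j}$ are preserved under the cofinality argument, which they manifestly are.
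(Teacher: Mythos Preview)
Your proposal is correct and matches the paper's own approach: the paper simply states that the proof is analogous to that of Lemma~\ref{LemmaEqualityLimitSymbPow}, i.e., one uses Proposition~\ref{PropCofinalSymbPow} to obtain mutual cofinality of the power and symbolic thickening systems for each underlying integral subscheme, noting (as you do) that passing between $T_{[n]}$ and $T_{(m)}$ fixes $T$ and hence preserves the defining conditions of $\mathscr{R}_{i,j}$.
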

\begin{proof}
    The proof is analogous to the proof of lemma \ref{LemmaEqualityLimitSymbPow}.
\end{proof}

\subsection{Parshin--Beilinson adeles on Noetherian schemes}\label{sect2.3}

In this section we recall some results on Parshin--Beilinson adeles. For details, see \cite{Be}, \cite{Hu}, \cite{Mo}, \cite{Os2}, \cite{Pa1}, \cite{Pa2}. 

\begin{df}\label{defS(X)}
Let $X$ be a Noetherian scheme. Let $p, q\in X$ be two points. Define partial order on the set of points of $X$
$$p \leq q \iff p\in \overline{\{q\}}.$$
Then we can define a simplicial set $S(X)$, where
$$S(X)_n = \{(p_0, \ldots, p_n)\mid p_j\in X,\: p_j\geq p_{j + 1}\}$$
is the set of $n$-simplices of $S(X)$ with usual boundary maps $\delta_i^n$ and degeneracy maps $\sigma_i^n$ for $n\in \mathbb{N}$, $0\leq i\leq n$.
\end{df}

For a subset $K\subset S(X)_n$ and a point $p\in X$ denote
$$\leftidx{_p}K = \{(p_1,\ldots, p_n)\in S(X)_{n - 1}\mid (p, p_1, \ldots, p_n)\in K\}.$$

Let $\text{\bf QCoh}(X)$ and $\text{\bf Coh}(X)$ be the categories of quasicoherent and coherent sheaves on $X$ respectively. Let $\text{\bf Ab}$ be the category of abelian groups. 
For any $K\subset S(X)_n$ the functor $\A(K, -)\colon \mathrm{\bf QCoh}(X)\longrightarrow \mathrm{\bf Ab}$ is defined, see \cite[Proposition 2.1.1]{Hu}, \cite{Be}. This functor is exact and commutes with filtered colimits.
Define the group of $n$-dimensional adeles with coefficients in a quasicoherent sheaf $\mathcal{F}$ as
$$\A^n(X, \mathcal{F}) = \A(S(X)_n, \mathcal{F}).$$
Let the local factor of $\A^n(X, \mathcal{F})$ in a flag $\Delta\in S(X)_n$ be
$$\A_\Delta(X, \mathcal{F}) = \A(\{\Delta\}, \mathcal{F}).$$
There is a natural inclusion of an adelic group in the product of its local factors, see \cite[Proposition 2.1.4]{Hu}.

\begin{prop}\label{LocalCondAdeleSubsheaf}
    Let $X$ be a Noetherian scheme and let $\mathcal{G}$ and $ \mathcal{F}$ be quasicoherent sheaves on $X$ such that $\mathcal{G}\subset \mathcal{F}$. Let $n\in \mathbb{Z}_{\geq 0}$ and let $K\subset S(X)_n$. Then there is an equality  
    $$\A(K, \mathcal{G}) = \A(K, \mathcal{F})\cap \prod_{\Delta\in K}\A_{\Delta}(X, \mathcal{G}),$$ where the intersection is taken in $\prod_{\Delta\in K}\A_{\Delta}(X, \mathcal{F})$.
\end{prop}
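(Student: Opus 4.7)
The plan is a diagram chase based on the exactness of the Beilinson adelic functor. One containment is essentially formal: the inclusion $\mathcal{G}\subset \mathcal{F}$ induces, functorially, an inclusion $\A(K,\mathcal{G})\hookrightarrow \A(K,\mathcal{F})$ (using that $\A(K,-)$ is exact), and similarly inclusions $\A_{\Delta}(X,\mathcal{G})\hookrightarrow \A_{\Delta}(X,\mathcal{F})$ for every $\Delta\in K$. The natural embedding $\A(K,-)\hookrightarrow \prod_{\Delta\in K}\A_{\Delta}(X,-)$ recalled from \cite[Proposition 2.1.4]{Hu} is natural in the sheaf argument. Combining these observations shows immediately that $\A(K,\mathcal{G})$ is contained in $\A(K,\mathcal{F})\cap \prod_{\Delta\in K}\A_{\Delta}(X,\mathcal{G})$.

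For the nontrivial direction, I would apply $\A(K,-)$ and $\prod_{\Delta\in K}\A_{\Delta}(X,-)$ to the short exact sequence $0\to \mathcal{G}\to \mathcal{F}\to \mathcal{F}/\mathcal{G}\to 0$ of quasicoherent sheaves. Since $\A(K,-)$ is exact and arbitrary products are exact in the category of abelian groups, this yields a commutative diagram with exact rows whose vertical arrows are the natural injections into the product of local factors. Now take $a\in \A(K,\mathcal{F})$ whose image in $\prod_{\Delta\in K}\A_{\Delta}(X,\mathcal{F})$ lies in the subgroup $\prod_{\Delta\in K}\A_{\Delta}(X,\mathcal{G})$. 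Then its image $\overline{a}$ in $\A(K,\mathcal{F}/\mathcal{G})$ maps to zero in $\prod_{\Delta\in K}\A_{\Delta}(X,\mathcal{F}/\mathcal{G})$. Applying the embedding property to the quotient sheaf $\mathcal{F}/\mathcal{G}$ forces $\overline{a}=0$, and exactness of the top row produces a preimage of $a$ in $\A(K,\mathcal{G})$, as required.

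The main conceptual ingredients, already available from the cited work of Beilinson and Huber, are the exactness of the functor $\A(K,-)$ and the universal embedding into the product of local factors; once these are granted, the argument is just a routine four-lemma diagram chase. The only small point worth flagging is the use of exactness of arbitrary (not only finite) products in the category of abelian groups, since $K$ may be infinite; this is standard. No further commutative-algebraic hypotheses on $X$ or $\mathcal{F}$ beyond the quasicoherence of $\mathcal{G}\subset \mathcal{F}$ and Noetherianity of $X$ are needed.
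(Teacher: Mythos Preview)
Your argument is correct and is essentially the same as the paper's, just unpacked: the paper observes that the statement is equivalent to exactness of the quotient functor $\prod_{\Delta\in K}\A_{\Delta}(X,-)\big/\A(K,-)$ and then appeals to the fact that the quotient of two exact functors (with the first embedding in the second) is exact, which is precisely the nine-lemma/diagram chase you carry out by hand. Both proofs rest on the same two inputs you identify, namely exactness of $\A(K,-)$ and the embedding $\A(K,-)\hookrightarrow\prod_{\Delta\in K}\A_{\Delta}(X,-)$ from \cite[Proposition~2.1.4]{Hu}.
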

\begin{proof}
    It is easy to see that the proposition is equivalent to the fact that the functor 
    $$\prod_{\Delta\in K}\A_{\Delta}(X, -)\bigg/\A(K, -)\colon \mathrm{\bf QCoh}(X)\longrightarrow \mathrm{\bf Ab}$$
    is exact. The latter assertion follows from the fact that the quotient of two exact functors is exact.
\end{proof}

There are natural maps $d_i^{n + 1}(K, L, \mathcal{F})$ between adelic groups, called {\it boundary maps}, see \cite[Definition 2.2.1]{Hu}, \cite[Definition 2.2.2]{Hu}, and \cite{Be}. For $K = S(X)_{n + 1}$, $L = S(X)_n$ we call $d_i^{n + 1}(K, L, \mathcal{F})$ the global boundary map
$$d_i^{n + 1}\colon \A^{n}(X, \mathcal{F})\longrightarrow\A^{n + 1}(X, \mathcal{F}).$$
For $K = \{\Delta\}$, $L = \{\Delta' = \delta_i^{n + 1}\}$ we call $d_i^{n + 1}(K, L, \mathcal{F})$ the local boundary map
$$d_i^{n + 1}\colon \A_{\Delta'}(X, \mathcal{F})\longrightarrow\A_{\Delta}(X, \mathcal{F}).$$
For $K\subset S(X)_{n + 1}$, $L\subset S(X)_n$ with $\delta_{i}^{n + 1}K\subset L$ we define 
\begin{eqnarray*}D_i^{n + 1}(K, L, \mathcal{F})\colon \prod_{\Delta\in L}\A_\Delta(X, \mathcal{F})\longrightarrow \prod_{\Delta\in K}\A_\Delta(X, \mathcal{F}),\quad (x_\Delta)_{\Delta\in L}\longmapsto (y_\Delta)_{\Delta\in K}
\end{eqnarray*}
by $y_\Delta = d_i^{n + 1}(x_{\delta_i^{n + 1}\Delta})$. In fact, maps $d_i^{n + 1}$ are restrictions of maps $D_i^{n + 1}$, see \cite[Proposition 2.2.4]{Hu}.

There is a description of the local structure of adelic groups and boundary maps in terms of rings $C_\Delta R$, see \cite[Proposition 3.2.1]{Hu}, \cite[Proposition 3.2.2]{Hu}, and \cite[\S 3.2]{Os2}. Namely, if $X = \mathrm{Spec}\:R$ and $\mathcal{F}$ is a quasicoherent sheaf on $X$, then $\A_\Delta(X, \mathcal{F}) = C_\Delta R\otimes_R \mathcal{F}(X)$.
Note that for flags $\Delta$ and $ \Gamma$ of prime ideals of a Noetherian ring $R$ such that $\Delta \subset \Gamma$ local boundary maps induce a natural map
$$C_\Delta R\longrightarrow C_\Gamma R.$$

Define $S(X)_n^{\mathrm{red}}$ as a subset in $S(X)_n$ of non-degenerate $n$-dimensional simplices. Define the group of reduced $n$-dimensional adeles on a Noetherian scheme $X$ with coefficients in a quasicoherent sheaf $\mathcal{F}$
$$\A_{\mathrm{red}}^n(X, \mathcal{F}) = \A(S(X)_n^{\mathrm{red}}, \mathcal{F}).$$
In fact, adelic groups $\A^n(X,\mathcal{F})$ can be expressed in terms of reduced adelic groups, see \cite[Proposition 3.3.3]{Hu}.

Let $X$ be a Noetherian scheme. For a simplex $\Delta = (p_0, p_1, \ldots, p_n)\in S(X)_n$, let
$$\text{codim}\:\Delta = (\text{codim}\: p_0,\text{codim}\: p_1, \ldots, \text{codim}\: p_n).$$

\begin{df}\label{defSI}
    Let $I\subset \{0, 1, \ldots, \dim X\}$ be a non-empty finite subset. Define 
    $$S(X, I) = \{\Delta \in S(X)_{|I| - 1}\mid \mathrm{ codim}\: \Delta = I\},$$
    where by $|I|$ we denote the cardinality of $I$. 
    Define 
    $$\A_I(X, \mathcal{F}) = \A(S(X, I), \mathcal{F}).$$
    Also define $S(X, \varnothing) = \varnothing$ and $\A_{\varnothing}(X, \mathcal{F}) = H^0(X, \mathcal{F})$.
\end{df}

Note that $\A\big(S(X, \varnothing), \mathcal{F}\big) = 0 \neq \A_{\varnothing}(X, \mathcal{F})$.

\begin{df}\label{definitionPhiIJ}
    Let $X$ be a Noetherian scheme. Let $I, J\subset \{0, 1, \ldots, \dim X\}$ be finite subsets such that $I\subset J$. Then there is a canonical map
    $$\varphi_{IJ}\colon \A_I(X, \mathcal{F})\longrightarrow \A_J(X, \mathcal{F})$$ 
    induced by boundary maps.
\end{df}

\begin{df}
    For $X = \mathrm{Spec}\: R$, where $R$ is a Noetherian ring, let $S(R, I)$ denote $S(X, I)$ for a finite subset $I\subset \{0, 1, \ldots, \dim X\}$.
\end{df}

\begin{df}
    For $I = (i_0, \ldots, i_n)\subset \{0, 1, \dim X\}$ and an integer $k$ denote $I\pm k = (i_0\pm k,\ldots, i_n\pm k)$.
\end{df}

If $I = (i)$, we will denote $\A_{(i)}(X, \mathcal{F})$ as $\A_i(X, \mathcal{F})$.

\begin{prop}\label{Decomposition}
    Let $X$ be a finite-dimensional Noetherian scheme and let $\mathcal{F}$ be a quasicoherent sheaf on $X$. Then for $n\in \Z_{\geq 0}$ there is a decomposition
    $$\A^n_{\mathrm{red}}(X, \mathcal{F}) = \bigoplus\limits_{\substack{I\subset \{0, 1, \ldots, \dim X\}\\ |I| = n + 1}}\A_I(X, \mathcal{F}).$$
\end{prop}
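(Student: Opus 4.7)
The plan is to reduce the claim to the fact that the index set of non-degenerate simplices partitions naturally by codimension type, and that the adelic functor is additive on finite disjoint unions.

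First, I would establish a set-theoretic decomposition
\[
S(X)_n^{(red)} = \bigsqcup_{\substack{I\subset \{0, 1, \ldots, \dim X\}\\ |I| = n + 1}} S(X, I)
\]
as a disjoint union of subsets of $S(X)_n$. The key observation is that a simplex $\Delta = (p_0, \ldots, p_n)$ is non-degenerate precisely when $p_0 > p_1 > \ldots > p_n$ in the specialization order, i.e.\ the inclusions $\overline{\{p_{i+1}\}} \subsetneq \overline{\{p_i\}}$ are strict. Strict specialization forces strict inequality of codimensions, so $\mathrm{codim}\: p_0 < \mathrm{codim}\: p_1 < \ldots < \mathrm{codim}\: p_n$, and assigning to $\Delta$ the set $I = \{\mathrm{codim}\: p_0, \ldots, \mathrm{codim}\: p_n\}$ gives a map $S(X)_n^{(red)}\to \{I\subset \{0, \ldots, \dim X\}: |I| = n+1\}$ whose fibres are exactly the sets $S(X, I)$. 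Conversely, every element of $S(X, I)$ has strictly increasing codimensions, hence strict specialization, and so is non-degenerate. Finite-dimensionality of $X$ ensures only finitely many index sets $I$ arise.

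Second, I would invoke the additivity of the adelic functor $\A(-, \mathcal{F})$ on finite disjoint unions of subsets of $S(X)_n$: if $K = K_1\sqcup \ldots\sqcup K_m$, then the restriction maps induced by the inclusions $K_i\hookrightarrow K$ assemble into a natural isomorphism
\[
\A(K, \mathcal{F}) \;\xrightarrow{\;\sim\;}\; \bigoplus_{i=1}^m \A(K_i, \mathcal{F}).
\]
This is standard for the Beilinson construction: at the base step ($n=0$) the functor is built from a product of completions indexed by points in $K$, which trivially splits over a disjoint decomposition of $K$, and the inductive construction (\cite[Proposition 2.1.1]{Hu}) preserves this additivity at each step because the recursion is built pointwise over the $0$-skeleton. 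For finitely many summands, direct product coincides with direct sum, giving the displayed isomorphism.

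Applying this additivity to the partition from the first step yields the claimed decomposition, upon unwinding the definitions $\A^n_{(red)}(X, \mathcal{F}) = \A(S(X)_n^{(red)}, \mathcal{F})$ and $\A_I(X, \mathcal{F}) = \A(S(X, I), \mathcal{F})$. The only nontrivial point is the additivity statement for Beilinson's functor, which is essentially a bookkeeping verification from its inductive construction; this is the mildly technical step, but it poses no real obstacle given the finiteness of the index set.
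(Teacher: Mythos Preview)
Your proposal is correct and follows essentially the same approach as the paper: decompose $S(X)_n^{(red)}$ as the finite disjoint union of the $S(X,I)$ and then use additivity of the adelic functor on finite disjoint unions. The only difference is that the paper cites \cite[Lemma 3.3.1]{Hu} directly for the additivity step, whereas you sketch its proof from the inductive construction in \cite[Proposition 2.1.1]{Hu}; this is a presentational difference, not a substantive one.
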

\begin{proof}
    This proposition follows from \cite[Lemma 3.3.1]{Hu}, since $S(X)_n^{\mathrm{red}}= \bigsqcup\limits_{\substack{I\subset \{0, 1, \ldots, \dim X\}\\ |I| = n + 1}}S(X, I)$.
\end{proof}


The proof of the following proposition is similar to the proof of \cite[Proposition 1.5]{Os1}.

\begin{prop}\label{AffineOxtimesF}
    Let $X$ be an affine Noetherian scheme and let $\mathcal{F}$ be a quasicoherent sheaf on $X$. Let $I\subset \{0, 1, \ldots, \dim X\}$ be a finite subset. Then there is a natural isomorphism
    $$\A_I(X, \mathcal{O}_X)\otimes_{\mathcal{O}_X(X)}\mathcal{F}(X)\overset{\sim}{\longrightarrow}\A_I(X, \mathcal{F}).$$
\end{prop}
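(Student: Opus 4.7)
The plan is to reduce from an arbitrary quasicoherent sheaf $\mathcal{F}$ to finitely presented modules, and from those to free modules of finite rank, using exactness of $\A(K,-)$ and its compatibility with filtered colimits.

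First, I would construct the natural map. Set $R = \mathcal{O}_X(X)$ and $M = \mathcal{F}(X)$. Each $m \in M$ determines an $R$-linear map $R \to M$, $1 \mapsto m$, equivalently a morphism of quasicoherent sheaves $\mathcal{O}_X \to \mathcal{F}$. Applying $\A_I(X,-)$ produces an $R$-linear map $\A_I(X,\mathcal{O}_X) \to \A_I(X,\mathcal{F})$, and the resulting assignment is $R$-bilinear in $m$ and in the element of $\A_I(X,\mathcal{O}_X)$. The universal property of the tensor product thus yields the natural morphism
$$\A_I(X,\mathcal{O}_X) \otimes_R M \longrightarrow \A_I(X,\mathcal{F}).$$
This is tautologically an isomorphism for $M = R$, and, because an exact functor preserves split short exact sequences, also for any finite free module $M = R^{\oplus n}$.

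Next, I would handle finitely presented modules. Since $R$ is Noetherian, every finitely generated $M$ is finitely presented, so it admits a presentation $R^{\oplus m} \to R^{\oplus n} \to M \to 0$. I form the commutative diagram
\begin{align*}
\A_I(X,\mathcal{O}_X)^{\oplus m} &\longrightarrow \A_I(X,\mathcal{O}_X)^{\oplus n} \longrightarrow \A_I(X,\mathcal{O}_X) \otimes_R M \longrightarrow 0, \\
\A_I(X,\mathcal{O}_X)^{\oplus m} &\longrightarrow \A_I(X,\mathcal{O}_X)^{\oplus n} \longrightarrow \A_I(X,\widetilde{M}) \longrightarrow 0,
\end{align*}
whose top row is right exact by right exactness of the tensor product and whose bottom row is exact by exactness of $\A_I(X,-)$; here $\widetilde{M}$ denotes the quasicoherent sheaf corresponding to $M$. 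Because the first two vertical maps are isomorphisms by the free case, the five lemma (or the universal property of cokernels) furnishes the isomorphism on the right.

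Finally, for arbitrary quasicoherent $\mathcal{F}$, I would write $M = \mathcal{F}(X)$ as the filtered colimit $\varinjlim_\alpha M_\alpha$ of its finitely generated submodules. Tensor product always commutes with colimits, and $\A(K,-)$ commutes with filtered colimits as recalled earlier in the text, so the compatible isomorphisms for the $M_\alpha$ assemble into the desired isomorphism for $M$. No serious obstacle arises; the only point that requires attention is to check that the natural map is compatible with the restricted-product structure of $\A_I$ inside $\prod_{\Delta \in S(X,I)} C_\Delta M$, which is immediate from the functoriality of $\A(K,-)$ since any simple tensor $a \otimes m$ lies in the image of the single morphism $\A_I(X,\mathcal{O}_X) \to \A_I(X,\mathcal{F})$ induced by $m$.
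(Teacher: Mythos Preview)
Your proof is correct and follows the standard argument for results of this type: reduce to finitely generated modules via filtered colimits, then to free modules via a finite presentation and right exactness of both sides. The paper itself does not give a proof but refers to \cite[Proposition 1.5]{Os1}, whose argument is essentially the one you have written.
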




\begin{lemma}\label{IandI0}
    Let $X$ be an irreducible Noetherian scheme with the generic point $\eta$, let $\mathcal{F}$ be a quasicoherent sheaf on $X$, and let $I\subset \{0, 1, \ldots, \dim X\}$ be a finite subset with $0\in I$. Then 
    $$\A_I(X, \mathcal{F}) = \A_{I\setminus 0}(X, \mathcal{F}_\eta).$$
\end{lemma}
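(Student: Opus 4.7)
My strategy is to match local factors flag by flag under the bijection induced by the generic point, and then to verify that the restricted-product conditions correspond on both sides.

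First, since $X$ is irreducible Noetherian, it has a unique generic point $\eta$, which is also its unique point of codimension $0$. Hence every flag $\Delta \in S(X, I)$ with $0 \in I$ has the form $\Delta = (\eta) \vee \Gamma$ for a unique $\Gamma \in S(X, I \setminus 0)$, yielding a canonical bijection $S(X, I) \leftrightarrow S(X, I \setminus 0)$ which commutes with the maps induced by boundary morphisms in the obvious sense.

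I would then reduce to the affine case $X = \mathrm{Spec}\, R$ with $\mathcal{F} = \tilde{M}$ and let $\mathfrak{p}_0$ be the unique minimal prime of $R$. For $\Gamma = (\mathfrak{p}_1, \ldots, \mathfrak{p}_n) \in S(R, I \setminus 0)$ and $\Delta = (\mathfrak{p}_0) \vee \Gamma$, directly from the definition
$$C_\Delta M \;=\; C_{\mathfrak{p}_0}\, S^{-1}_{\mathfrak{p}_0}\!\bigl(C_\Gamma M\bigr) \;=\; C_\Gamma M \otimes_R \mathcal{O}_{X,\eta}.$$
The key point for the second equality is that $\mathcal{O}_{X, \eta} = R_{\mathfrak{p}_0}$ is Artinian local with nilpotent maximal ideal $\mathfrak{p}_0 R_{\mathfrak{p}_0}$, since $\mathfrak{p}_0$ is the unique minimal prime of a Noetherian ring. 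Consequently $\mathfrak{p}_0^k R_{\mathfrak{p}_0} = 0$ for $k \gg 0$, so the $\mathfrak{p}_0$-adic completion acts as the identity on any $R_{\mathfrak{p}_0}$-module. This identifies $C_\Delta M$ with the local factor at $\Gamma$ of $\A_{I \setminus 0}(X, \mathcal{F}_\eta)$, once $\mathcal{F}_\eta$ is interpreted as the quasi-coherent sheaf obtained by tensoring $\mathcal{F}$ stalkwise with $\mathcal{O}_{X,\eta}$ over $\mathcal{O}_X$.

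Finally, I would verify the compatibility of the restricted-product conditions. The subgroup cutting out $\A_I(X, \mathcal{F})$ inside $\prod_\Delta \A_\Delta(X,\mathcal{F})$ is phrased in terms of coherent subsheaves (cf. Proposition~\ref{LocalCondAdeleSubsheaf}), and these conditions transfer under $- \otimes_R \mathcal{O}_{X, \eta}$ since this functor is exact and commutes with the intersections appearing in the defining formulas. The non-affine case follows by gluing across affine charts, using the sheafified adele construction.

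The main obstacle is the bookkeeping in the restricted-product comparison: verifying that the flag-by-flag identification of local factors extends compatibly to an identification of the full adele groups, including respecting the subsheaf restrictions that cut out $\A_I$ from the product of its $\A_\Delta$. The local factor computation itself amounts to the observation that completion at a nilpotent ideal is trivial, which is the substantive content of the lemma.
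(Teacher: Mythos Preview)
Your local-factor computation is correct, and you have correctly identified the heart of the matter: the maximal ideal $\mathfrak{m}_\eta$ of the Artinian local ring $\mathcal{O}_{X,\eta}$ is nilpotent, so completion at $\mathfrak{p}_0$ does nothing after localization. However, your route is more circuitous than necessary, and the step you flag as the ``main obstacle'' --- verifying that the flag-by-flag identification of local factors assembles to an identification of the full adele groups with their restricted-product conditions --- is only sketched. Matching the subsheaf conditions that cut out $\A_I$ inside $\prod_\Delta \A_\Delta$ against those cutting out $\A_{I\setminus 0}$ inside $\prod_\Gamma \A_\Gamma$ is genuine work if done by hand.

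The paper avoids this entirely by working at the level of the full adele group from the start. After reducing to coherent $\mathcal{F}$ (using that $\A(K,-)$ commutes with filtered colimits), one invokes the recursive definition \cite[Proposition~2.1.1]{Hu}: for coherent $\mathcal{F}$,
\[
\A(K,\mathcal{F}) \;=\; \prod_{p\in X}\varprojlim_l \,\A({}_pK,\mathcal{F}_p/\mathfrak{m}_p^l\mathcal{F}_p).
\]
Since $\eta$ is the unique point of codimension $0$ and ${}_\eta S(X,I) = S(X,I\setminus 0)$, this collapses to
\[
\A_I(X,\mathcal{F}) \;=\; \varprojlim_l\,\A_{I\setminus 0}(X,\mathcal{F}_\eta/\mathfrak{m}_\eta^l\mathcal{F}_\eta),
\]
and the nilpotence of $\mathfrak{m}_\eta$ finishes the argument in one line. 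The recursive formula already packages the restricted-product compatibility you are trying to verify by hand, so there is no separate assembly step. Your approach would also become a one-liner if, instead of matching local factors, you applied the fact that $\A_{I\setminus 0}(X,-)$ commutes with filtered colimits directly to $\mathcal{F}_\eta = \varinjlim \mathcal{G}$ over coherent $\mathcal{G}\subset\mathcal{F}_\eta$.
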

\begin{proof}
Since localizations commute with direct limits, we can assume that $\mathcal{F}$ is a coherent sheaf on $X$.
    By \cite[Proposition 2.1.1]{Hu},
    $$\A_I(X, \mathcal{F}) = \varprojlim_l\: \A(S(I\setminus 0), \mathcal{F}_\eta/\mathfrak{m}_\eta^l\mathcal{F}_\eta),$$
    and since $\mathfrak{m}_\eta$ is nilpotent as the maximal ideal in the local Artinian ring $\mathcal{O}_{X, \eta}$, we have
    $$\A_I(X, \mathcal{F}) = \A_{I\setminus 0}(X, \mathcal{F}_\eta).$$
\end{proof}

\subsection{Adelic complexes}\label{sect2.4}

Let $X$ be a Noetherian scheme and let $\mathcal{F}$ be a quasicoherent sheaf on $X$. Define for $n\in \Z_{\geq 0}$
$$d^{n + 1}= \sum_{j = 0}^{n + 1}(-1)^jd_j^{n + 1}\colon \A^n(X, \mathcal{F})\longrightarrow \A^{n + 1}(X, \mathcal{F}).$$
The complex
$$0\longrightarrow \A^0(X, \mathcal{F})\overset{d^1}{\longrightarrow} \A^1(X, \mathcal{F})\overset{d^2}{\longrightarrow} \A^2(X, \mathcal{F})\overset{d^3}{\longrightarrow} \ldots$$
is called the {\it adelic complex}. The complex of reduced adeles $\A_{\mathrm{red}}^{\sbullet}(X, \mathcal{F})$ is a direct summand of the adelic complex.

\begin{thm}[{\cite[Proposition 5.1.3]{Hu}}]\label{CohomologyRedAdelicComp}
    Let $X$ be a Noetherian scheme and let $\mathcal{F}$ be a quasicoherent sheaf on $X$. There are isomorphisms for all $i\geq 0$
    $$H^i(\A_{\mathrm{red}}^{\sbullet}(X, \mathcal{F})) \cong H^i(\A^{\sbullet}(X, \mathcal{F})) \cong H^i(X, \mathcal{F}).$$
\end{thm}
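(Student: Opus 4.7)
The plan is to establish the two isomorphisms separately. For the second isomorphism $H^i(\A^{\sbullet}(X,\mathcal{F})) \cong H^i(X,\mathcal{F})$, I would follow the sheafification approach going back to Beilinson. For each $K \subset S(X)_n$ one defines a presheaf on $X$ by
$$U \longmapsto \A(K|_U, \mathcal{F}|_U), \qquad K|_U = \{(p_0, \ldots, p_n) \in K : p_i \in U\},$$
and uses exactness of $\A(K, -)$ together with commutation with filtered colimits to verify that this is in fact a sheaf $\mathscr{A}(K, \mathcal{F})$. Taking $K = S(X)_n$ for varying $n$, together with the global boundary maps, gives a complex of sheaves $\mathscr{A}^{\sbullet}(\mathcal{F})$ on $X$ whose global sections recover the adelic complex $\A^{\sbullet}(X, \mathcal{F})$.

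Two properties must then be verified: (a) each $\mathscr{A}^n(\mathcal{F})$ is flasque, so that the global-sections functor computes the hypercohomology of $\mathscr{A}^{\sbullet}(\mathcal{F})$ as ordinary cohomology; and (b) the natural augmentation $\mathcal{F} \to \mathscr{A}^{\sbullet}(\mathcal{F})$ is a quasi-isomorphism, i.e.\ $\mathscr{A}^{\sbullet}(\mathcal{F})$ is a flasque resolution of $\mathcal{F}$. Flasqueness should follow from the restricted-product nature of the adelic groups: a family of local components indexed by simplices in $V \subset U$ can be extended to $U$ by placing zero in the complementary components, and compatibility of adelic constraints with this extension makes it into a valid section. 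For the resolution property (b), I would argue stalk-by-stalk at a point $p \in X$: the stalk of $\mathscr{A}(K, \mathcal{F})$ only sees simplices whose largest entry specializes to $p$, and the local structural formula for $\A_\Delta$ (see \cite[Proposition 3.2.1]{Hu}) expresses it via iterated localizations and completions $C_{\Delta} \mathcal{F}_{p_0}$. Acyclicity of the stalk complex would then be proved by induction on $\dim \mathcal{O}_{X, p}$, with the base case covering the artinian situation.

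For the first isomorphism $H^i(\A_{(red)}^{\sbullet}(X,\mathcal{F})) \cong H^i(\A^{\sbullet}(X,\mathcal{F}))$, I would invoke the classical normalized-versus-unnormalized chain complex argument. The degenerate subcomplex, generated by the images of the degeneracy maps $\sigma_i^n$, is acyclic via an explicit contracting homotopy built from the simplicial identities (essentially the Dold--Kan comparison specialized to the Moore complex of the semisimplicial abelian group $n \mapsto \A^n(X, \mathcal{F})$). Since $\A_{(red)}^{\sbullet}(X, \mathcal{F})$ is the quotient of $\A^{\sbullet}(X, \mathcal{F})$ by this subcomplex, the quotient map is a quasi-isomorphism.

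The main obstacle is verifying the resolution property (b). The combinatorics of flags on a general Noetherian scheme is delicate, and stalk-level acyclicity requires disentangling contributions from simplices of varying lengths; the inductive step relies on flatness of localization and on the interaction between completion along a prime ideal and localization at a smaller prime lying below it. On affine schemes the statement reduces to a purely commutative-algebra claim via Proposition \ref{AffineOxtimesF} and flat base change, and the non-affine case is then obtained from the sheaf-theoretic framework; making this reduction rigorous while simultaneously controlling the degenerate simplices is the crux of the argument.
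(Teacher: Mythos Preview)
The paper does not supply its own proof of this theorem: it is stated with attribution to \cite[Proposition 5.1.3]{Hu} and used as a black box throughout. Your sketch follows precisely the strategy of the cited reference (sheafifying the adelic construction to obtain a flasque resolution of $\mathcal{F}$, then invoking the normalized/unnormalized chain complex comparison for the reduced version), so there is nothing to contrast---you are reconstructing the proof the paper defers to rather than offering an alternative.
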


\begin{df}\label{defCurtailedAdelicComplex}
    Let $X$ be a finite-dimensional Noetherian scheme and let $\mathcal{F}$ be a quasicoherent sheaf on $X$. Let $J$ be a non-empty subset of $\{0, 1, \ldots, \dim X\}$. We call the complex
    $$\bigoplus_{i\in \{0, 1, \ldots, \dim X\}\setminus J}\A_i(X, \mathcal{F})\rightarrow \bigoplus_{\substack{i, j\in \{0, 1, \ldots, \dim X\}\setminus J\\ i < j}}\A_{(i, j)}(X, \mathcal{F})\rightarrow \ldots \rightarrow  \bigoplus_{\substack{I\subset  \{0, 1, \ldots, \dim X\}\setminus J\\ |I| = n + 1}}\A_I(X, \mathcal{F})\rightarrow \ldots$$
    a {\it curtailed adelic complex}. It is a quotient complex of $\A_{\mathrm{red}}^{\sbullet}(X, \mathcal{F})$ which depends on $J$.
\end{df}

In section \ref{sect6.1} we compute cohomology groups of a special case of a curtailed adelic complex.

\subsection{Inverse image maps on adelic groups}\label{sect2.5}

In this section we construct a pullback morphism on adelic groups. Its construction on local adelic factors can be found in \cite[p. 178]{Pa2}. 

Note that a morphisn $f\colon X\rightarrow Y$ of Noetherian schemes induces a morphism $f\colon S(X)_n\rightarrow ~S(Y)_n$ for each $n\geq 0$.
\begin{df}\label{PullbackDef}
    Let $f\colon X\rightarrow Y$ be a morphism of Noetherian schemes and let $\mathcal{F}$ be a quasicoherent sheaf on $Y$. Let $n\geq 0$ and $K\subset S(X)_n$, $L\subset S(Y)_n$ such that $f(K)\subset L$. Then there exists a pullback morphism
    $$f^*(K, L, \mathcal{F})\colon \A(L, \mathcal{F})\longrightarrow \A(K, f^*\mathcal{F}),$$ 
    which is defined by the following properties.
    
    $1)$ $f^*(K, L, -)$ is a natural transformation of functors and commutes with direct limits.

    $2)$ If $\mathcal{F}$ is coherent on $Y$ and $n = 0$, then $f^*\mathcal{F}$ is coherent on $X$ and $$\A(L, \mathcal{F}) = \prod_{y\in L}\mathcal{F}_y\otimes_{\mathcal{O}_{Y, y}}\widehat{\mathcal{O}}_{Y, y},\quad \A(K, f^*\mathcal{F}) = \prod_{x\in K}(f^*\mathcal{F})_x\otimes_{\mathcal{O}_{X, x}}\widehat{\mathcal{O}}_{X, x}.$$ 
    If $f(x) = y$ for some $x\in K$, $y\in L$, then there is the local morphism of local rings $\mathcal{O}_{Y, y}\rightarrow\mathcal{O}_{X, x}$ which induces morphism of their completions $\widehat{\mathcal{O}}_{Y, y}\rightarrow\widehat{\mathcal{O}}_{X, x}$. Since $(f^*\mathcal{F})_x \cong \mathcal{F}_y\otimes_{\mathcal{O}_{Y, y}}\mathcal{O}_{X, x}$, we obtain a natural morphism $\mathcal{F}_y\otimes_{\mathcal{O}_{Y, y}}\widehat{\mathcal{O}}_{Y, y}\rightarrow \mathcal{F}_y\otimes_{\mathcal{O}_{Y, y}}\widehat{\mathcal{O}}_{X, x} = (f^*\mathcal{F})_x\otimes_{\mathcal{O}_{X, x}}\widehat{\mathcal{O}}_{X, x}$. Then define $f^*(K, L, \mathcal{F})$ as a composition 
    $$\A(L, \mathcal{F})\longrightarrow \A(f(K), \mathcal{F}) = \prod_{y\in f(K)}\mathcal{F}_y\otimes_{\mathcal{O}_{Y, y}}\widehat{\mathcal{O}}_{Y, y}\longrightarrow \prod_{y\in f(K)}\prod_{\substack{x\in K\\ f(x) = y}}(f^*\mathcal{F})_x\otimes_{\mathcal{O}_{X, x}}\widehat{\mathcal{O}}_{X, x} = \A(K, f^*\mathcal{F}).$$

    $3)$ If $\mathcal{F}$ is coherent on $Y$ and $n > 0$, then $f^*\mathcal{F}$ is coherent on $X$ and $$\A(L, \mathcal{F}) = \prod_{y\in Y}\varprojlim_l\A(\leftidx{_y}{}L, \mathcal{F}_y/\mathfrak{m}^l_y\mathcal{F}_y),\quad \A(K, f^*\mathcal{F}) = \prod_{x\in X}\varprojlim_l\A(\leftidx{_x}{}K, (f^*\mathcal{F})_x/\mathfrak{m}^l_x(f^*\mathcal{F})_x).$$
    If $f(x) = y$ for some $x\in K$, $y\in L$, then there is the 
    morphism 
    $$f^*(\leftidx{_x}{}K, \leftidx{_y}{}L, \mathcal{F}_y/\mathfrak{m}^l_y\mathcal{F}_y)\colon \A(\leftidx{_y}{}L, \mathcal{F}_y/\mathfrak{m}^l_y\mathcal{F}_y)\longrightarrow \A\left(\leftidx{_x}{}K, f^*\left(\mathcal{F}_y/\mathfrak{m}^l_y\mathcal{F}_y\right)\right).$$
    The local morphism $\mathcal{O}_{Y, y}\rightarrow\mathcal{O}_{X, x}$ induces a commutative diagram
    \begin{center}
        \begin{tikzcd}
            \mathrm{Spec}\left(\mathcal{O}_{X, x}/\mathfrak{m}_x^l\right)\arrow{r}\arrow{d} & X\arrow{d}\\
             \mathrm{Spec}\left(\mathcal{O}_{Y, y}/\mathfrak{m}_y^l\right)\arrow{r} & Y,
        \end{tikzcd}
    \end{center}
    which gives rise to a natural morphism of sheaves $f^*\left(\mathcal{F}_y/\mathfrak{m}^l_y\mathcal{F}_y\right)\rightarrow (f^*\mathcal{F})_x/\mathfrak{m}_x^l(f^*\mathcal{F})_x$ on $X$, which comes from the natural morphism of 
   $\mathcal{O}_{Y, y}/\mathfrak{m}_y^l$-modules
   $$\mathcal{F}_y/\mathfrak{m}^l_y\mathcal{F}_y\longrightarrow \left(\mathcal{F}_y\otimes_{\mathcal{O}_{Y, y}}\mathcal{O}_{X, x}\right)/\mathfrak{m}^l_x\left(\mathcal{F}_y\otimes_{\mathcal{O}_{Y, y}}\mathcal{O}_{X, x}\right)\cong (f^*\mathcal{F})_x/\mathfrak{m}^l_x(f^*\mathcal{F})_x$$
    Thus we obtain a composition morphism 
    $$\A(\leftidx{_y}{}L, \mathcal{F}_y/\mathfrak{m}^l_y\mathcal{F}_y)\longrightarrow \A\left(\leftidx{_x}{}K, f^*\left(\mathcal{F}_y/\mathfrak{m}^l_y\mathcal{F}_y\right)\right)\longrightarrow\A(\leftidx{_x}{}K, (f^*\mathcal{F})_x/\mathfrak{m}_x^l(f^*\mathcal{F})_x).$$
    Then define $f^*(K, L, \mathcal{F})$ as a composition
    $$\A(L, \mathcal{F}) = \prod_{y\in Y}\varprojlim_l\A(\leftidx{_y}{}L, \mathcal{F}_y/\mathfrak{m}^l_y\mathcal{F}_y)\longrightarrow \prod_{y\in Y}\prod_{\substack{x\in X\\ f(x) = y}}\varprojlim_l\A(\leftidx{_x}{}K, (f^*\mathcal{F})_x/\mathfrak{m}^l_x(f^*\mathcal{F})_x) = \A(K, f^*\mathcal{F}).$$
\end{df}

\begin{rem}
    Since $\A(K, -)$ is exact on $\mathrm{\bf QCoh}(X)$ and $f^*$ is right exact on $\mathrm{\bf QCoh}(Y)$, we see that $\A(K,f^*-)$ is right exact on $\mathrm{\bf QCoh}(Y)$. Therefore $f^*(K, L, -)$ transforms short exact sequences of adelic groups corresponding to short exact sequences of quasicoherent sheaves on $Y$ to right exact sequences of adelic groups on $X$.
\end{rem}

\begin{rem}\label{remGenPullback}
    We can define $f^*$ for arbitrary $K$ and $L$ without the assumption $f(K)\subset L$. Indeed, define for arbitrary $K\subset S(X)_n$, $L\subset S(Y)_n$ the set $M = f^{-1}(f(K)\cap L)\cap K\subset K$ and then define $f^*(K, L, \mathcal{F})$ as a composition
    $$\A(L, \mathcal{F})\longrightarrow \A(M, f^*\mathcal{F})\longrightarrow \A(K, f^*\mathcal{F}),$$
    where the first morphism is equal to $f^*(M, L, \mathcal{F})$ defined in definition \ref{PullbackDef} and the second morphism is from \cite[Proposition 2.1.5]{Hu}.
\end{rem}


If sets $K$ and $L$ are clear from the context, we will denote $f^*(K, L, \mathcal{F})$ as $f^*$.

If $K = \{\Delta\}$, $L = \{f(\Delta)\}$, then there is a morphism 
$$f^*\colon \A_{f(\Delta)}(Y, \mathcal{F})\longrightarrow \A_{\Delta}(X, f^*\mathcal{F}),$$
which agrees with the definition from \cite[p. 178]{Pa2}. Therefore if $K\subset S(X)_n$, $L\subset S(Y)_n$ are arbitrary subsets with $f(K)\subset L$, then there is a morphism
$$f^*\colon \prod_{\Delta'\in L}\A_{\Delta'} (Y, \mathcal{F})\longrightarrow\prod_{\Delta\in K}\A_\Delta(X, f^*\mathcal{F}),\quad (y_{\Delta'})\mapsto (x_\Delta),$$
where $x_\Delta = f^*y_{f(\Delta)}$.


\begin{prop}\label{commdiagpullback}
    Let $f\colon X\rightarrow Y$ be a morphism of Noetherian schemes and let $\mathcal{F}$ be a quasicoherent sheaf on $Y$. Let $n\geq 0$ and let $K\subset S(X)_n$, $L\subset S(Y)_n$ such that $f(K)\subset L$. Then the diagram
    \begin{center}
        \begin{tikzcd}
            \A(L, \mathcal{F})\arrow{r}{f^*}\arrow{d}&\A(K, f^*\mathcal{F})\arrow{d}\\
            \prod\limits_{\Delta'\in L}\A_{\Delta'} (Y, \mathcal{F})\arrow{r}{f^*}& \prod\limits_{\Delta\in K}\A_\Delta(X, f^*\mathcal{F})
        \end{tikzcd}
    \end{center}
    commutes.
\end{prop}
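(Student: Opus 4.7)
The plan is to prove commutativity by unwinding the explicit construction of $f^*$ from Definition \ref{PullbackDef} and checking that both routes around the square reduce to the same factorwise formula. First I would reduce to the case of coherent $\mathcal{F}$: since $\A(L,-)$, $\A(K, f^*-)$ and the local factor functors $\A_{\Delta'}(Y,-)$, $\A_\Delta(X, f^*-)$ all commute with filtered colimits (property (1) of Definition \ref{PullbackDef}, together with \cite[Proposition 2.1.1]{Hu}), and every quasicoherent sheaf on a Noetherian scheme is the filtered colimit of its coherent subsheaves, the coherent case implies the general one.

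Then I would split into the two cases mirroring items (2) and (3) of Definition \ref{PullbackDef}. For $n=0$, a simplex is a single point and $\A(L,\mathcal{F}) = \prod_{y\in L}\A_y(Y,\mathcal{F}) = \prod_{y\in L}\mathcal{F}_y\otimes_{\mathcal{O}_{Y,y}}\widehat{\mathcal{O}}_{Y,y}$, so the left vertical arrow is an identification; commutativity is then immediate because both horizontal arrows are defined factorwise, for each pair $(x,y)$ with $f(x)=y$, by the same map $\mathcal{F}_y\otimes\widehat{\mathcal{O}}_{Y,y}\to(f^*\mathcal{F})_x\otimes\widehat{\mathcal{O}}_{X,x}$ coming from the local ring homomorphism $\mathcal{O}_{Y,f(x)}\to\mathcal{O}_{X,x}$. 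For $n\geq 1$ I would use the decomposition $\A(L,\mathcal{F}) = \prod_{y\in Y}\varprojlim_l \A(\leftidx{_y}{}L, \mathcal{F}_y/\mathfrak{m}_y^l\mathcal{F}_y)$ and the corresponding identification $\prod_{\Delta'\in L}\A_{\Delta'}(Y,\mathcal{F}) = \prod_{y\in Y}\prod_{\Delta'\in \leftidx{_y}{}L}\A_{\Delta'}(Y,\mathcal{F})$, obtained by grouping flags by their initial point. The top $f^*$ is assembled out of the morphisms $f^*(\leftidx{_x}{}K,\leftidx{_y}{}L,\mathcal{F}_y/\mathfrak{m}_y^l\mathcal{F}_y)$ for each pair $x\in X$, $y=f(x)\in Y$ and each $l$, while the bottom $f^*$ is assembled out of the local maps $f^*\colon \A_{f(\Delta)}(Y,\mathcal{F})\to \A_\Delta(X, f^*\mathcal{F})$; by construction both are built from the same local ring homomorphism $\mathcal{O}_{Y,y}\to \mathcal{O}_{X,x}$, so the square commutes on each factor.

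The main obstacle I anticipate is not conceptual but notational: one must carefully match the two descriptions of $\A(L,\mathcal{F})$ (as a product over $y\in Y$ of inverse limits versus as a subgroup of a product over flags in $L$) and verify that, under this matching, the two constructions of $f^*$ become the same collection of factorwise maps. Once this bookkeeping is done, commutativity follows purely from the functoriality of localization, completion, and extension of scalars along $\mathcal{O}_{Y,y}\to\mathcal{O}_{X,x}$, which are exactly the building blocks common to both constructions. The general (non-injective) case handled via Remark \ref{remGenPullback} reduces to the injective case by composing with the projection morphism from \cite[Proposition 2.1.5]{Hu}, which is clearly compatible with the inclusion into local factors.
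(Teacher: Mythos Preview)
Your proposal is correct and matches the paper's approach: the paper's proof is the single sentence ``This follows from definition \ref{PullbackDef} by induction on $n$,'' and you have unpacked exactly that induction (base case $n=0$ via item (2), inductive step via the recursive description in item (3), with the reduction to coherent sheaves via item (1)). The only point worth making explicit is that in your $n\geq 1$ step the compatibility of $f^*(\leftidx{_x}{}K,\leftidx{_y}{}L,\mathcal{F}_y/\mathfrak{m}_y^l\mathcal{F}_y)$ with the product of local factors is precisely the induction hypothesis at level $n-1$, rather than something one reads off directly from the local ring map; once you phrase it that way, the argument is complete.
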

\begin{proof}
   This follows from definition \ref{PullbackDef} by induction on $n$.
\end{proof}

\begin{prop}
    Let $X$ and $Y$ be Noetherian schemes and let $\mathcal{F}$ be a quasicoherent sheaf on $Y$. Then the pullback map
    $$f^*\colon \A^{\sbullet}(Y, \mathcal{F})\longrightarrow\A^{\sbullet}(X, f^*\mathcal{F})$$
    is a morphism of complexes. Therefore it induces the map on cohomology groups for every $i\geq 0$
    $$f^*\colon H^i(Y, \mathcal{F}) \cong H^i(\A^{\sbullet}(Y, \mathcal{F}))\longrightarrow H^i(\A^{\sbullet}(X, f^*\mathcal{F}))\cong H^i(X, f^*\mathcal{F}),$$
    and this map coincides with the ordinary pullback map between cohomology groups of sheaves.
\end{prop}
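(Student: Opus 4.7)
The plan is to prove the proposition in two stages: first, that $f^*$ intertwines the differentials so as to define a morphism of complexes; second, that the induced map on cohomology agrees with the usual sheaf-theoretic pullback via the Beilinson--Huber comparison isomorphism of Theorem~\ref{CohomologyRedAdelicComp}.

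For the first stage, since $d^{n+1} = \sum_{j=0}^{n+1} (-1)^j d_j^{n+1}$, it suffices to verify $f^* \circ d_j^{n+1} = d_j^{n+1} \circ f^*$ for every face index $j$. By Proposition~\ref{commdiagpullback}, the global pullback is compatible with its factorwise description on products of local factors, so the verification reduces to a flag-by-flag check: for every $\Delta \in S(X)_{n+1}$ with $j$-th face $\Delta'$, I would show that the square formed by the local boundary map $\A_{f(\Delta')}(Y,\mathcal{F}) \to \A_{f(\Delta)}(Y,\mathcal{F})$, the local boundary map $\A_{\Delta'}(X,f^*\mathcal{F}) \to \A_{\Delta}(X,f^*\mathcal{F})$, and the two local pullback maps commutes. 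Both the local boundary maps and the local pullback are built from the universal functorial operations of localization and completion at the primes corresponding to vertices of the flags, and the compatibility reduces to the functoriality of these operations with respect to the local ring homomorphism $\mathcal{O}_{Y, f(x)} \to \mathcal{O}_{X, x}$ at each vertex $x$. The iterative definition of $C_\Delta$ makes this an induction on $|\Delta|$, with base case being the $n=0$ clause of Definition~\ref{PullbackDef}, which was built precisely to make such squares commute.

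For the second stage, I would use that the adelic complex is the complex of global sections of a sheafified adelic complex $\underline{\mathbb{A}}^{\sbullet}(\mathcal{F})$ providing a flasque resolution of $\mathcal{F}$; this underlies the proof of Theorem~\ref{CohomologyRedAdelicComp} in Huber's work. The pullback operation extends naturally to a morphism of resolutions $f^{-1}\underline{\mathbb{A}}^{\sbullet}(\mathcal{F}) \to \underline{\mathbb{A}}^{\sbullet}(f^*\mathcal{F})$ covering the adjunction unit $f^{-1}\mathcal{F} \to f^*\mathcal{F}$. Since any two morphisms of acyclic (in particular flasque) resolutions extending the same underlying sheaf morphism induce the same map on cohomology by the uniqueness of derived functors, the map induced by our $f^*$ on the cohomology of the adelic complexes coincides with the standard sheaf-theoretic pullback $H^i(Y,\mathcal{F}) \to H^i(X, f^*\mathcal{F})$.

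The main obstacle is the local bookkeeping in the first stage. One must track the set-theoretic interaction of the face maps $\delta_j^{n+1}$ with the sets $K \subset S(X)_{n+1}$ and $L \subset S(Y)_n$, including the intermediate construction of Remark~\ref{remGenPullback} when $f(\delta_j^{n+1}K) \not\subset \delta_j^{n+1}L$ set-theoretically, and ensure that the projective limits over $\mathfrak{m}_y^l$-filtrations used in Definition~\ref{PullbackDef} commute with the nested completions and localizations in $C_\Delta$. The key algebraic input is that any local ring homomorphism $\mathcal{O}_{Y,y} \to \mathcal{O}_{X,x}$ induced by $f$ satisfies $\mathfrak{m}_y \mathcal{O}_{X,x} \subset \mathfrak{m}_x$, so that pullback respects both the $\mathfrak{m}$-adic topologies used in the completions and the inverse systems defining the boundary maps.
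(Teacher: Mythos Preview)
Your proposal is correct and follows essentially the same route as the paper: reduce the compatibility of $f^*$ with each $d_j^{n+1}$ to the single-flag case via Proposition~\ref{commdiagpullback} (the paper also invokes \cite[Proposition 2.2.4]{Hu} to identify global boundary maps as restrictions of the factorwise $D_j^{n+1}$), and then identify the induced map on cohomology with the ordinary pullback by passing to the sheafified adelic resolutions and invoking uniqueness of maps between acyclic resolutions (the paper cites \cite[Scolium 5.2]{Iv} and \cite[Theorem 4.2.3]{Hu} here). Your worry about Remark~\ref{remGenPullback} is unnecessary in this setting, since for $K=S(X)_{n+1}$ and $L=S(Y)_{n+1}$ one always has $f(K)\subset L$ and likewise for the faces.
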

\begin{proof}
    By propositions \ref{commdiagpullback} and \cite[Proposition 2.2.4]{Hu}, in order to prove that $f^*$ is a morphism of complexes, it suffices to consider only the case $K = \{\Delta\}\subset S(X)_{n + 1}$, $L = \{\Delta' = f(\Delta)\}\subset S(Y)_{n + 1}$ for some integer $n\geq 0$ and one boundary map $d_i^{n + 1}$ for $0\leq i\leq n + 1$, and this case is clear.

    Therefore $f^*$ induces morphism on cohomology groups 
    $$f^*\colon H^i(Y, \mathcal{F})\longrightarrow H^i(X, f^*\mathcal{F}).$$
    This map is equal to the ordinary pullback on cohomology groups by \cite[Scolium 5.2]{Iv}, since the diagram
    \begin{center}
        \begin{tikzcd}
            \mathcal{F}\arrow{r}\arrow{d} & \underline{\A}^{\sbullet}(Y, \mathcal{F})\arrow{d} \\
            f_*f^*\mathcal{F}\arrow{r} & f_*\underline{\A}^{\sbullet}(X, f^*\mathcal{F}).
        \end{tikzcd}
    \end{center}
    is commutative, where the right vertical morphism of sheaves is equal to $$\A^{\sbullet}(U, \mathcal{F}|_U)\overset{f^*}{\longrightarrow}\A^{\sbullet}(f^{-1}(U), f^*\mathcal{F}|_{f^{-1}(U)})$$ on every open subset $U\subset Y$ (these are acyclic resolutions of the sheaves $\mathcal{F}$ and $f^*\mathcal{F}$ by \cite[Theorem 4.2.3]{Hu}).
\end{proof}

 Let $X$ be a finite-dimensional catenary locally equidimensional Noetherian scheme and let $Z$ be a locally equidimensional closed subscheme in $X$ such that all irreducible components of $Z$ have the same codimension in $X$. 
 Denote $Z\overset{\iota}{\hookrightarrow} X$ the closed embedding. Since $X$ is catenary, $Z$ is also catenary. Let $I \subset \{0, 1, \ldots, \dim X\}$. 
Denote
 $$I|_Z = I - \mathrm{codim}\:Z.$$

 \begin{lemma}\label{LemmaIandIZ}
     In the above notation, $\iota\big( S(Z, I|_Z)\big) \subset S(X, I)$. 
 \end{lemma}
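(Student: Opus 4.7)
The plan is to verify the two defining conditions for membership of the image simplex $\iota(\Delta)$ in $S(X, I)$ when $\Delta = (p_0, \ldots, p_n) \in S(Z, I|_Z)$: first, that the specialization relations $p_j \geq p_{j+1}$ on $Z$ continue to hold when the $p_j$ are viewed as points of $X$; and second, that the codimension vector transforms correctly, namely $\mathrm{codim}_X p_j = \mathrm{codim}_Z p_j + \mathrm{codim}_X Z$ for every $j$, so that the codimension vector rescales from $I|_Z = I - \mathrm{codim}\: Z$ back to $I$. The specialization condition is purely topological: since $Z$ inherits the subspace topology from $X$, one has $\overline{\{p\}}^{Z} = \overline{\{p\}}^{X} \cap Z$, so $p_{j+1} \in \overline{\{p_j\}}^{Z}$ implies $p_{j+1} \in \overline{\{p_j\}}^{X}$.

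The substantive point is the codimension identity. I would fix $p \in Z$, write $c = \mathrm{codim}_X Z$, and work inside $\mathcal{O}_{X, p}$; let $\mathfrak{a} \subset \mathcal{O}_{X, p}$ be the ideal defining $Z$ locally, so $\mathcal{O}_{Z, p} = \mathcal{O}_{X, p}/\mathfrak{a}$. The minimal primes of $\mathfrak{a}$ in $\mathcal{O}_{X, p}$ correspond to the irreducible components of $Z$ passing through $p$; if $\mathfrak{p}$ is such a minimal prime and $\eta$ is the generic point of the corresponding component, then $\mathrm{ht}_{\mathcal{O}_{X, p}}\,\mathfrak{p} = \dim\mathcal{O}_{X, \eta} = \mathrm{codim}_X \eta = c$ by the hypothesis that all components of $Z$ share codimension $c$ in $X$. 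Local equidimensionality of $Z$ makes $\mathcal{O}_{Z, p}$ equidimensional, so $\dim \mathcal{O}_{X, p}/\mathfrak{p} = \dim \mathcal{O}_{Z, p} = \mathrm{codim}_Z p$ for every minimal prime $\mathfrak{p}$ of $\mathfrak{a}$. Since $X$ is catenary and locally equidimensional, $\mathcal{O}_{X, p}$ is a catenary equidimensional Noetherian local ring, so the standard identity $\mathrm{ht}\,\mathfrak{p} + \dim \mathcal{O}_{X, p}/\mathfrak{p} = \dim \mathcal{O}_{X, p}$ applies and yields $\mathrm{codim}_X p = c + \mathrm{codim}_Z p$, as required.

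The only delicate ingredient is the height-plus-coheight formula for the specific prime $\mathfrak{p}$ above, which is precisely where both the catenary and locally equidimensional hypotheses on $X$ are used essentially; the hypothesis that every component of $Z$ has the same codimension $c$ in $X$ is exactly what guarantees that all minimal primes of $\mathfrak{a}$ in $\mathcal{O}_{X, p}$ have height $c$, so the formula can be applied uniformly across all components of $Z$ through $p$. Once the codimension identity is in hand, assembling the conclusion amounts to straightforward bookkeeping with the two verified conditions.
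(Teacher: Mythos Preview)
Your proof is correct and follows essentially the same approach as the paper's: both reduce to the codimension identity $\mathrm{codim}_X p = \mathrm{codim}_X Z + \mathrm{codim}_Z p$, invoked via catenarity and local equidimensionality. The paper states this identity in one line (``by our assumptions on equidimensionality and catenarity of $Z$ and $X$''), whereas you spell out the height-plus-coheight argument in $\mathcal{O}_{X,p}$ explicitly; the content is the same.
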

 \begin{proof}
 Let $(p_0, \ldots, p_n)\in S(Z, I|_Z)$. By definition, $\dim \mathcal{O}_{Z, p_j} = i_j - \mathrm{codim}\: Z$ for $0\leq j\leq n$. If we consider $p_j$ as a point of $X$, its codimension equals $\mathrm{codim}\: Z + \dim \mathcal{O}_{Z, p_j} = i_j$ by our assumptions on equidimensionality and catenarity of $Z$ and $X$. On the other hand, the codimension of $p_j$ in $X$ is equal to $\mathrm{codim}_Xp_j$ by definition. Hence $i_j = \mathrm{codim}_Xp_j$, and the flag $(p_0, \ldots, p_n)$, considered as an element of $S(X)_n$, lies in $S(X, I)$.
 \end{proof}

 \begin{df}
     Let $X$ be a finite-dimensional catenary locally equidimensional Noetherian scheme and let $Z$ be a locally equidimensional closed subscheme in $X$ such that all irreducible components of $Z$ have the same codimension in $X$. Let $\mathcal{F}$ be a quasicoherent sheaf on $X$. Let $I = (i_0, \ldots )\subset \{0, 1, \ldots, \dim X\}$. If $i_0 < \mathrm{codim}\: Z$, then define $\A_{I|_Z}(Z, \mathcal{F}|_Z) = 0$. Otherwise, let $\A_{I|_Z}(Z, \mathcal{F}|_Z) = \A(S(Z, I|_Z), \mathcal{F}|_Z)$.
     Define a {\it restriction map}
     $$\A_I(X, \mathcal{F})\longrightarrow \A_{I|_Z}(Z, \mathcal{F}|_Z)$$
     as the pullback map with $\iota\big( S(Z, I|_Z)\big) \subset S(X, I)$ in case $i_0\geq \mathrm{codim }\: Z$, and let it be the zero map otherwise.
 \end{df}

\section{Embeddings of adelic groups}\label{sect3}

\subsection{Embeddings of adelic local factors on affine schemes}\label{sect3.1}
In this section we examine embeddings of adelic local factors.

\begin{prop}\label{Cp0CDeltaflat}
    Let $R$ be a Noetherian ring and let $\Delta = (\mathfrak{p}_0,\ldots, \mathfrak{p}_n)$ be a flag of prime ideals of $R$. Then the natural map of rings
    $$C_{\mathfrak{p}_0}S^{-1}_{\mathfrak{p}_0}R\longrightarrow C_\Delta R$$
    is faithfully flat. In particular, there is an embedding $C_{\mathfrak{p}_0}S^{-1}_{\mathfrak{p}_0}R\otimes_R M\hookrightarrow C_\Delta R\otimes_R M$ for any $R$-module $M$.
\end{prop}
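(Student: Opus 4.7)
The plan is to prove faithful flatness of $\hat{A}_0 := C_{\mathfrak{p}_0}S^{-1}_{\mathfrak{p}_0}R \to C_\Delta R$ first, and then to deduce the embedding as a formal consequence. Indeed, for any faithfully flat ring homomorphism $A \to B$ the canonical map $N \to N \otimes_A B$, $n \mapsto n \otimes 1$, is injective for every $A$-module $N$; applying this with $N = \hat{A}_0 \otimes_R M$ and combining it with the canonical isomorphism $(\hat{A}_0 \otimes_R M) \otimes_{\hat{A}_0} C_\Delta R \cong C_\Delta R \otimes_R M$ yields the stated embedding.

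To establish flatness of $\hat{A}_0 \to C_\Delta R$, I would first verify by iterating over the construction of $C_{\Delta'}R$ for the truncated flag $\Delta' = (\mathfrak{p}_1,\ldots,\mathfrak{p}_n)$ that $C_{\Delta'}R$ is a Noetherian flat $R$-algebra, using that localization at a prime of $R$ and completion of a Noetherian ring at an ideal both preserve Noetherianness and yield flat extensions. Setting $A_0 = S^{-1}_{\mathfrak{p}_0}R$, $\mathfrak{m}_0 = \mathfrak{p}_0 A_0$, and $T = S^{-1}_{\mathfrak{p}_0} C_{\Delta'}R \cong A_0 \otimes_R C_{\Delta'}R$, the ring $T$ becomes a Noetherian flat $A_0$-algebra, and $C_\Delta R = C_{\mathfrak{p}_0}T$ is its $\mathfrak{m}_0$-adic completion; in particular $T \to C_\Delta R$ is flat, hence so is $A_0 \to C_\Delta R$. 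To upgrade this to flatness over $\hat{A}_0$, I would apply the local criterion of flatness (Matsumura, \emph{Commutative Ring Theory}, Theorem~22.3) to the $\mathfrak{m}_0$-adically complete $\hat{A}_0$-module $C_\Delta R$: the quotient $C_\Delta R / \mathfrak{m}_0 C_\Delta R$ is a vector space over the residue field $\hat{A}_0 / \mathfrak{m}_0 \hat{A}_0$ and hence flat, and by Tor change of rings for the flat map $A_0 \to \hat{A}_0$ one has the identification $\mathrm{Tor}_1^{\hat{A}_0}(C_\Delta R, \hat{A}_0/\mathfrak{m}_0\hat{A}_0) \cong \mathrm{Tor}_1^{A_0}(C_\Delta R, A_0/\mathfrak{m}_0)$, which vanishes because $C_\Delta R$ is $A_0$-flat.

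Faithful flatness then reduces to verifying $\mathfrak{m}_0 C_\Delta R \neq C_\Delta R$: this is immediate from $\mathfrak{p}_0 \subset \mathfrak{p}_k$ for every $k$, since at each stage of the iterated construction the image of $\mathfrak{p}_0$ sits inside the Jacobson radical of the current Noetherian ring (completion at an ideal automatically places that ideal in the Jacobson radical), so $\mathfrak{p}_0$ never extends to the unit ideal. The main obstacle is the verification of the idealwise-separatedness hypothesis of the local criterion for $C_\Delta R$, which itself follows from Noetherianness at each stage combined with Krull's intersection theorem applied to the completion $C_\Delta R$; the remaining commutative-algebra bookkeeping (Noetherianness and flatness through the iteration) is routine but must be handled carefully.
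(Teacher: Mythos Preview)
Your proposal is correct and follows essentially the same route as the paper: both apply the local criterion of flatness \cite[Theorem~22.3]{Ma1} to the $\mathfrak{m}_0$-adically complete Noetherian ring $C_\Delta R$ over the complete local ring $\hat{A}_0$, use that $\hat{A}_0/\mathfrak{m}_0\hat{A}_0$ is a field for the fiber condition, and then deduce faithful flatness from $\mathfrak{m}_0 C_\Delta R \subset \mathrm{rad}(C_\Delta R)$ (the paper phrases this last step as surjectivity on spectra via going-down, which is equivalent). The only minor difference is in how the Tor-vanishing hypothesis is checked: you first establish flatness of $C_\Delta R$ over $A_0$ by iterating through the flag and then base-change Tor along the flat map $A_0 \to \hat{A}_0$, whereas the paper verifies $\mathrm{Tor}_1^{\hat{A}_0}(C_\Delta R,\hat{A}_0/\mathfrak{m}_0\hat{A}_0)=0$ directly by computing $\mathfrak{m}_0\hat{A}_0 \otimes_{\hat{A}_0} C_\Delta R \cong \mathfrak{p}_0 \otimes_R C_\Delta R \hookrightarrow C_\Delta R$ using that both $\hat{A}_0$ and $C_\Delta R$ are flat over $R$.
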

\begin{proof}
    By \cite[Proposition 3.2.1]{Hu}, $C_\Delta R$ is a 
    Noetherian $C_{\mathfrak{p}_0}S^{-1}_{\mathfrak{p}_0}R$-algebra. Since the ring $C_{\Delta}R$ is $\mathfrak{p}_0C_\Delta R$-complete, we have $\mathfrak{p}_0C_\Delta R\subset \mathrm{ rad}(C_\Delta R)$ by \cite[Theorem 8.2]{Ma1}. We also have that $C_{\mathfrak{p}_0}S^{-1}_{\mathfrak{p}_0}R/\mathfrak{p}_0C_{\mathfrak{p}_0}S^{-1}_{\mathfrak{p}_0}R$ is a field, so $C_\Delta R/\mathfrak{p}_0C_\Delta R$ is flat over $C_{\mathfrak{p}_0}S^{-1}_{\mathfrak{p}_0}R/\mathfrak{p}_0C_{\mathfrak{p}_0}S^{-1}_{\mathfrak{p}_0}R$.

    Observe that 
    \begin{eqnarray*}
        \mathfrak{p}_0C_{\mathfrak{p}_0}S^{-1}_{\mathfrak{p}_0}R\otimes_{C_{\mathfrak{p}_0}S^{-1}_{\mathfrak{p}_0}R}C_\Delta R &=& \left(\mathfrak{p}_0\otimes_R C_{\mathfrak{p}_0}S^{-1}_{\mathfrak{p}_0}R\right)\otimes_{C_{\mathfrak{p}_0}S^{-1}_{\mathfrak{p}_0}R}C_\Delta R\\
        &=& \mathfrak{p}_0\otimes_R C_\Delta R = \mathfrak{p}_0C_\Delta R,
    \end{eqnarray*}
    where the first and the third equalities follow from the fact that $C_{\mathfrak{p}_0}S^{-1}_{\mathfrak{p}_0}R$ and $C_\Delta R$ are flat $R$-algebras.
Therefore, by \cite[Theorem 22.3]{Ma1}, we obtain that $C_\Delta R$ is flat over $C_{\mathfrak{p}_0}S^{-1}_{\mathfrak{p}_0}R$. 

    For faithful flatness, it is sufficient to prove that the induced morphism on spectra is surjective. But $C_{\mathfrak{p}_0}S^{-1}_{\mathfrak{p}_0}R$ is local and $\varphi^{-1}(\mathfrak{p}_0C_\Delta R)= \mathfrak{p}_0 C_{\mathfrak{p}_0}S^{-1}_{\mathfrak{p}_0}R$, where $\varphi\colon C_{\mathfrak{p}_0}S^{-1}_{\mathfrak{p}_0}R\rightarrow C_\Delta R$. Thus, by the going-down property for flat morphisms, 
    we obtain that this morphism is surjective, from what we get the faithful flatness.

    By \cite[Theorem 7.5]{Ma1}, faithfully flat maps are universally injective, from what follows the second part of the statement.
\end{proof}

\begin{cor}
    Let $R$ be a Noetherian ring and let $\Delta = (\mathfrak{p}_0,\ldots, \mathfrak{p}_n)$ be a flag of prime ideals of $R$. Let $i\in \{0, 1, \ldots, n\}$. Then the natural map of rings
    $$C_{\mathfrak{p}_i}S^{-1}_{\mathfrak{p}_i}R\longrightarrow C_\Delta R$$
    is flat.
\end{cor}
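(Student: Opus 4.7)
The plan is to factor the map in question as a composition of two flat maps, reducing to Proposition \ref{Cp0CDeltaflat} which has just been proved. Set $\Delta' = (\mathfrak{p}_i, \mathfrak{p}_{i+1}, \ldots, \mathfrak{p}_n)$, the truncated subflag starting at position $i$. First I would apply Proposition \ref{Cp0CDeltaflat} to $\Delta'$, for which $\mathfrak{p}_i$ is the leading prime, to conclude that
$$C_{\mathfrak{p}_i}S^{-1}_{\mathfrak{p}_i}R \longrightarrow C_{\Delta'}R$$
is (faithfully) flat. This uses nothing beyond the proposition itself, simply reindexed.

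Next, it remains to show that the natural map $C_{\Delta'}R \to C_\Delta R$ is flat. By the very definition of $C_\Delta R$ we have
$$C_\Delta R = C_{\mathfrak{p}_0}S^{-1}_{\mathfrak{p}_0}\cdots C_{\mathfrak{p}_{i-1}}S^{-1}_{\mathfrak{p}_{i-1}}\bigl(C_{\Delta'}R\bigr),$$
so that this second map is assembled from $C_{\Delta'}R$ by $i$ successive localizations and completions at the primes $\mathfrak{p}_{i-1}, \ldots, \mathfrak{p}_0$. Every intermediate ring appearing in this chain is Noetherian, by the same structure result that was already invoked in the proof of Proposition \ref{Cp0CDeltaflat} (namely \cite[Proposition 3.2.1]{Hu}, which describes iterated adelic local factors and yields their Noetherianness). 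Consequently each localization is flat, and each completion step $A \to \varprojlim_n A/\mathfrak{p}_j^n A$ is flat because completion of a Noetherian ring at an ideal is flat. Composing flat morphisms gives flat morphisms.

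Combining the two flat maps produces the required flatness of $C_{\mathfrak{p}_i}S^{-1}_{\mathfrak{p}_i}R \to C_\Delta R$. The only conceptual step is the reduction by truncating $\Delta$ so that $\mathfrak{p}_i$ becomes the leading prime, thereby exposing Proposition \ref{Cp0CDeltaflat}; after that the argument is essentially formal. The main obstacle, if any, would be ensuring that each completion of the Noetherian intermediate rings is flat, but this is already guaranteed by the Noetherianness supplied by \cite[Proposition 3.2.1]{Hu}.
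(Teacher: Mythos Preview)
Your proof is correct and follows essentially the same strategy as the paper's: both truncate $\Delta$ at $\mathfrak{p}_i$, invoke Proposition \ref{Cp0CDeltaflat} for the map $C_{\mathfrak{p}_i}S^{-1}_{\mathfrak{p}_i}R \to C_{(\mathfrak{p}_i,\ldots,\mathfrak{p}_n)}R$, and then pass to $C_\Delta R$ by successive localizations and completions, using Noetherianness from \cite[Proposition 3.2.1]{Hu}. The only cosmetic difference is that the paper phrases the second part as an induction checking flatness over the fixed base $C_{\mathfrak{p}_i}S^{-1}_{\mathfrak{p}_i}R$ at each step and cites \cite[Theorem 0.1]{Ye3} for the completion step, whereas you show each link in the chain $C_{\Delta'}R \to C_\Delta R$ is flat and compose; in the Noetherian setting both justifications of the completion step are equivalent.
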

\begin{proof}
    Let us prove this 
    by induction on the number of ideals of the flag $\Delta$ before $\mathfrak{p}_i$. By proposition \ref{Cp0CDeltaflat}, the map $C_{\mathfrak{p}_i}S^{-1}_{\mathfrak{p}_i}R\rightarrow C_{(\mathfrak{p}_i, \ldots, \mathfrak{p}_n)} R$ is flat. Assume that for some $j$ with $0 < j \leq i$ the map 
    $$C_{\mathfrak{p}_i}S^{-1}_{\mathfrak{p}_i}R\longrightarrow C_{(\mathfrak{p}_j, \ldots, \mathfrak{p}_n)} R$$
    is flat. Then $S^{-1}_{\mathfrak{p}_{j - 1}}C_{(\mathfrak{p}_j, \ldots, \mathfrak{p}_n)} R$ is flat over $C_{\mathfrak{p}_i}S^{-1}_{\mathfrak{p}_i}R$. 
    Then the $\mathfrak{p}_{j-1}$-adic completion $C_{(\mathfrak{p}_{j - 1}, \ldots, \mathfrak{p}_n)} R = C_{\mathfrak{p}_{j - 1}}S^{-1}_{\mathfrak{p}_{j - 1}}C_{(\mathfrak{p}_j, \ldots, \mathfrak{p}_n)} R$ is flat over $C_{\mathfrak{p}_i}S^{-1}_{\mathfrak{p}_i}R$ by \cite[Theorem 0.1]{Ye3}. Thus the induction hypothesis is proved, and we obtain that the map $C_{\mathfrak{p}_i}S^{-1}_{\mathfrak{p}_i}R\rightarrow C_\Delta R$ is flat. 
\end{proof}

\begin{rem}
    If $i > 0$, the map $C_{\mathfrak{p}_i}S^{-1}_{\mathfrak{p}_i}R\longrightarrow C_\Delta R$ is not faithfully flat, since $\mathfrak{p}_iC_\Delta R = (S^{-1}_{\mathfrak{p}_0}\mathfrak{p}_i)\cdot C_\Delta R = C_\Delta R$ and its contraction to $C_{\mathfrak{p}_i}S^{-1}_{\mathfrak{p}_i}R$ equals the ring itself.
\end{rem}

\begin{lemma}\label{annExactSequence}
    Let $R$ be a Noetherian ring and let $a\in R$. Let $\Delta = (\mathfrak{p}_0, \ldots, \mathfrak{p}_n)$ be a flag of prime ideals of $R$. Then there is an exact sequence of $R$-modules
    $$0\longrightarrow \mathrm{ ann}_R(a)C_\Delta R\longrightarrow C_\Delta R\overset{\cdot a}{\longrightarrow}C_\Delta R.$$
\end{lemma}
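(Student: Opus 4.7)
The plan is to deduce this from flatness of $C_\Delta R$ over $R$. First, I would observe that $C_\Delta R$ is a flat $R$-algebra. Indeed, each individual operation $M \mapsto S^{-1}_{\mathfrak{p}_i} M$ is flat, and for any Noetherian ring $A$ and ideal $\mathfrak{a} \subset A$ the $\mathfrak{a}$-adic completion functor $M \mapsto C_\mathfrak{a} M$ is flat; by proposition 3.2.1 (the local structure result already cited in the preceding corollary), each intermediate ring in the chain defining $C_\Delta R$ is Noetherian, so flatness is preserved at every step and the composition $R \to C_\Delta R$ is flat.

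Next, I would start from the short exact sequence of $R$-modules
$$0 \longrightarrow \mathrm{ann}_R(a) \longrightarrow R \overset{\cdot a}{\longrightarrow} R$$
and tensor it with $C_\Delta R$ over $R$. By flatness, the resulting sequence
$$0 \longrightarrow \mathrm{ann}_R(a) \otimes_R C_\Delta R \longrightarrow C_\Delta R \overset{\cdot a}{\longrightarrow} C_\Delta R$$
remains exact. It remains only to identify the leftmost term with $\mathrm{ann}_R(a)\, C_\Delta R$. The natural map $\mathrm{ann}_R(a) \otimes_R C_\Delta R \to C_\Delta R$ has image equal, by definition, to $\mathrm{ann}_R(a)\, C_\Delta R$; by flatness (applied again to the inclusion $\mathrm{ann}_R(a) \hookrightarrow R$) this map is injective, hence an isomorphism onto $\mathrm{ann}_R(a)\, C_\Delta R$. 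Substituting this identification yields the claimed exact sequence.

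There is no real obstacle here; the only thing to be mildly careful about is justifying flatness of $C_\Delta R$ over $R$ (as opposed to flatness over the intermediate ring $C_{\mathfrak{p}_0} S^{-1}_{\mathfrak{p}_0} R$, which is what the preceding proposition \ref{Cp0CDeltaflat} states). This is handled by the same inductive argument as in the corollary above, combining flatness of localization with flatness of adic completion of Noetherian rings (e.g.\ via \cite[Theorem~0.1]{Ye3}), noting that at each step the ring remains Noetherian so that completion is flat.
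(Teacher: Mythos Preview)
Your proof is correct and follows exactly the paper's approach: the paper's proof is the single sentence ``It follows from the fact that $C_\Delta R$ is a flat $R$-algebra,'' and you have simply unpacked this by tensoring the tautological exact sequence $0 \to \mathrm{ann}_R(a) \to R \xrightarrow{\cdot a} R$ with the flat algebra $C_\Delta R$. Your extra care in justifying flatness of $C_\Delta R$ over $R$ (rather than over $C_{\mathfrak{p}_0}S^{-1}_{\mathfrak{p}_0}R$) via the inductive localization--completion argument is a welcome elaboration that the paper leaves implicit.
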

\begin{proof}
    It follows from the fact that $C_\Delta R$ is a flat $R$-algebra.
\end{proof}

\begin{prop}\label{REmbeddsInCDeltaR}
    Let $R$ be a Noetherian ring and let $\Delta = (\mathfrak{p}_0,\ldots, \mathfrak{p}_n)$ be a flag of prime ideals of $R$. Assume that $(0)$ is a primary ideal. Then $R\longhookrightarrow C_\Delta R.$
\end{prop}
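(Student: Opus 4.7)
The plan is to factor the map $R \to C_\Delta R$ as
\[
R \longrightarrow S^{-1}_{\mathfrak{p}_0} R \longrightarrow C_{\mathfrak{p}_0} S^{-1}_{\mathfrak{p}_0} R \longrightarrow C_\Delta R,
\]
and check injectivity of each arrow separately.

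For the third arrow, I would simply quote proposition \ref{Cp0CDeltaflat}, which says this map is faithfully flat and in particular injective. For the second arrow, $S^{-1}_{\mathfrak{p}_0} R$ is a Noetherian local ring with maximal ideal $\mathfrak{p}_0 S^{-1}_{\mathfrak{p}_0} R$, so by Krull's intersection theorem the natural map to its $\mathfrak{p}_0$-adic completion $C_{\mathfrak{p}_0} S^{-1}_{\mathfrak{p}_0} R$ is injective.

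The only genuine content is the first arrow. Injectivity of $R \to S^{-1}_{\mathfrak{p}_0} R$ is equivalent to saying that every element of $R \setminus \mathfrak{p}_0$ is a non-zerodivisor. Here is where the primary hypothesis on $(0)$ enters: since $(0)$ is primary, all zero divisors of $R$ are nilpotent, so the set of zero divisors is contained in the nilradical $\sqrt{(0)}$. On the other hand, $\sqrt{(0)}$ is the unique minimal prime of $R$ (as the associated prime of the primary ideal $(0)$), so $\sqrt{(0)} \subset \mathfrak{p}_0$. Any $s \in R \setminus \mathfrak{p}_0$ therefore lies outside $\sqrt{(0)}$, hence is not a zero divisor, which gives injectivity of the localization map.

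Composing the three injections yields $R \hookrightarrow C_\Delta R$. None of the steps is really an obstacle; the main point to get right is the characterization of zero divisors when $(0)$ is primary, which is what makes the localization step work even though $R$ need not be a domain.
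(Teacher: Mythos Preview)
Your proof is correct and follows the same approach as the paper: factor through the localization $S^{-1}_{\mathfrak{p}_0}R$, use the primary hypothesis on $(0)$ to get injectivity of $R \to S^{-1}_{\mathfrak{p}_0}R$, and then invoke proposition~\ref{Cp0CDeltaflat} for the passage to $C_\Delta R$. The paper compresses your second and third arrows into a single step ``$S^{-1}_{\mathfrak{p}_0}R \hookrightarrow C_\Delta R$ by proposition~\ref{Cp0CDeltaflat},'' leaving the Krull intersection step implicit, so your version is in fact slightly more explicit.
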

\begin{proof}
    Let $\mathfrak{n} = \sqrt{(0)}$ be the nilradical of $R$. We have $R\hookrightarrow S^{-1}_\mathfrak{p}R$ for every prime ideal $\mathfrak{p}$, since every zero divisor in $R$ is nilpotent, i.e. lies in $\mathfrak{n}$. Therefore
    $$R\longhookrightarrow S^{-1}_{\mathfrak{p}_0}R\longhookrightarrow C_\Delta R,$$
    where the second inclusion follows from proposition \ref{Cp0CDeltaflat}.
\end{proof}

\begin{lemma}\label{lemmaEmbSheafPrimThik}
    Let $X$ be an irreducible Noetherian scheme locally defined by primary ideals in itself. Then for any non-empty open subsets $V\subset U$ in $X$ the restriction map $\mathcal{O}_X(U)\longrightarrow \mathcal{O}_X(V)$ is injective.
\end{lemma}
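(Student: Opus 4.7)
The plan is to reduce to the case of a non-empty affine open and then invoke the primary-ideal hypothesis applied to the zero ideal. Since $X$ is irreducible with a unique generic point $\eta$, every non-empty open subset of $X$ contains $\eta$; the statement is understood for non-empty $V$, and both $U, V$ then contain $\eta$. The composition of restrictions
$$\mathcal{O}_X(U)\longrightarrow\mathcal{O}_X(V)\longrightarrow\mathcal{O}_{X,\eta}$$
coincides with the direct restriction $\mathcal{O}_X(U)\to\mathcal{O}_{X,\eta}$, so it suffices to prove that for every non-empty open $U\subset X$ the map $\mathcal{O}_X(U)\to\mathcal{O}_{X,\eta}$ is injective.

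Next I would cover $U$ by affine opens $U=\bigcup_i U_i$. By the sheaf axiom $\mathcal{O}_X(U)\hookrightarrow\prod_i\mathcal{O}_X(U_i)$, and the restriction to the generic point factors compatibly through each component (as $\eta\in U_i$ for every $i$, $U_i$ being a non-empty open of the irreducible scheme $X$). This reduces the problem to showing that the restriction map $\mathcal{O}_X(U_i)\to\mathcal{O}_{X,\eta}$ is injective for every non-empty affine open $U_i=\mathrm{Spec}\,R_i$.

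For the affine reduction, I apply lemma \ref{lemmaPrimarySheaf} with $Y=X$ and the zero ideal sheaf (the remark preceding the lemma guarantees that $X$ is locally defined by primary ideals in itself by hypothesis). This gives that $(0)\subset R_i$ is a primary ideal, so every zero divisor of $R_i$ is nilpotent, i.e. lies in the nilradical $\mathfrak{n}_i=\sqrt{(0)}$. As the radical of a primary ideal, $\mathfrak{n}_i$ is a prime ideal of $R_i$, and by irreducibility of $U_i$ it is the unique minimal prime, corresponding to the generic point $\eta$. Therefore every $s\in R_i\setminus\mathfrak{n}_i$ is a non-zero-divisor, so the localization map $R_i\to (R_i)_{\mathfrak{n}_i}=\mathcal{O}_{X,\eta}$ is injective, completing the proof.

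There is no real obstacle here: the argument is a straightforward combination of the sheaf axiom, the elementary fact that an element outside the radical of a primary ideal is a non-zero-divisor, and lemma \ref{lemmaPrimarySheaf}, which packages the local primary assumption into a global one on affine opens.
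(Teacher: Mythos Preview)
Your proof is correct and follows essentially the same approach as the paper: reduce to showing injectivity of $\mathcal{O}_X(U)\to\mathcal{O}_{X,\eta}$, pass to affine opens via the sheaf axiom, invoke lemma \ref{lemmaPrimarySheaf} to see that $(0)$ is primary in $R_i$, and conclude that localization at the minimal prime is injective because non-nilpotent elements are non-zero-divisors. Your explicit remark that the statement is intended for non-empty $V$ is a reasonable clarification the paper leaves implicit.
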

\begin{proof}
    Let $\eta$ be the generic point of $X$. It suffices to prove that for any open subset $U\subset X$ the natural map $\mathcal{O}_X(U)\longrightarrow \mathcal{O}_{X, \eta}$ is injective. We can assume that $U$ is affine, otherwise take a cover of $U$ by affine open subsets. Then $U = \mathrm{Spec}\: R$ and $\eta$ corresonds to the minimal prime ideal $\mathfrak{p}$ of $R$. Since $X$ is locally defined by primary ideals in itself,  the ideal $(0)$ is primary in $R$ by lemma \ref{lemmaPrimarySheaf}, and thus $\mathfrak{p} = \sqrt{(0)}$. Hence every zero divisor in $R$ is nilpotent, from what follows the inclusion 
    $$\mathcal{O}_X(U) = R\longhookrightarrow S^{-1}_\mathfrak{p}R = \mathcal{O}_{X, \eta}.$$
\end{proof}

\begin{lemma}\label{SpCDelta=CDeltaSp}
    Let $R$ be a Noetherian ring, let $\mathfrak{a}$ be an ideal of $R$, let $\mathfrak{p}$ be a prime ideal of $R$, and let $\Delta = (\mathfrak{p}_0, \ldots, \mathfrak{p}_n)$ be a flag of prime ideals of $R$. Then 
    $$S_{\mathfrak{p}}\left(C_\Delta(\mathfrak{a})\right) = C_\Delta\left(S_{\mathfrak{p}}(\mathfrak{a})\right),$$
    where $S_{\mathfrak{p}}(\mathfrak{a}) = \ker\left(R\longrightarrow S^{-1}_{\mathfrak{p}}\left(R/\mathfrak{a}\right)\right)$, $S_{\mathfrak{p}}\left( C_{\Delta}(\mathfrak{a})\right) = \ker\left(C_\Delta R\longrightarrow S^{-1}_{\mathfrak{p}}C_\Delta\left(R/\mathfrak{a}\right)\right)$.
\end{lemma}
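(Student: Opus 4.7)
The plan is to read off the lemma from flatness of $C_\Delta R$ over $R$, which follows by iterating the Corollary to Proposition~\ref{Cp0CDeltaflat} starting from $C_{\mathfrak{p}_n}S_{\mathfrak{p}_n}^{-1}R$. I would begin with the defining left-exact sequence of $R$-modules
\[0 \longrightarrow S_\mathfrak{p}(\mathfrak{a}) \longrightarrow R \longrightarrow S_\mathfrak{p}^{-1}(R/\mathfrak{a})\]
and apply the functor $-\otimes_R C_\Delta R$. Since $C_\Delta R$ is flat over $R$, left exactness is preserved, producing
\[0 \longrightarrow S_\mathfrak{p}(\mathfrak{a}) \otimes_R C_\Delta R \longrightarrow C_\Delta R \longrightarrow S_\mathfrak{p}^{-1}(R/\mathfrak{a}) \otimes_R C_\Delta R.\]

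The remaining work is to identify the outer terms with their claimed $C_\Delta$-completed versions. For the left-hand term, flatness gives an injection $S_\mathfrak{p}(\mathfrak{a}) \otimes_R C_\Delta R \hookrightarrow C_\Delta R$ whose image is the ideal $S_\mathfrak{p}(\mathfrak{a})\cdot C_\Delta R$, which by the compatibility of $C_\Delta$ with base change (again via flatness) equals $C_\Delta(S_\mathfrak{p}(\mathfrak{a}))$. For the right-hand term, the standard commutation of localization with tensor product gives
\[S_\mathfrak{p}^{-1}(R/\mathfrak{a}) \otimes_R C_\Delta R \;=\; S_\mathfrak{p}^{-1}\bigl((R/\mathfrak{a})\otimes_R C_\Delta R\bigr) \;=\; S_\mathfrak{p}^{-1}\bigl(C_\Delta R / \mathfrak{a} C_\Delta R\bigr) \;=\; S_\mathfrak{p}^{-1}C_\Delta(R/\mathfrak{a}),\]
where the last equality uses $\mathfrak{a} C_\Delta R = C_\Delta(\mathfrak{a})$ and hence $C_\Delta R / C_\Delta(\mathfrak{a}) = C_\Delta(R/\mathfrak{a})$, both of which are instances of flat base change applied to the short exact sequence $0 \to \mathfrak{a} \to R \to R/\mathfrak{a} \to 0$.

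Substituting these identifications, the sequence becomes
\[0 \longrightarrow C_\Delta(S_\mathfrak{p}(\mathfrak{a})) \longrightarrow C_\Delta R \longrightarrow S_\mathfrak{p}^{-1}C_\Delta(R/\mathfrak{a}),\]
so the kernel of the right-hand map is $C_\Delta(S_\mathfrak{p}(\mathfrak{a}))$; but this kernel is by definition $S_\mathfrak{p}(C_\Delta(\mathfrak{a}))$, yielding the desired equality. I do not foresee a genuine obstacle: the entire argument reduces to flatness of $C_\Delta R$ over $R$ together with routine naturality of tensor product with localization and quotient. The only bookkeeping point worth double-checking is the consistent reading of the symbol $C_\Delta(\mathfrak{a})$ as the base change $\mathfrak{a}\otimes_R C_\Delta R = \mathfrak{a} C_\Delta R$, which again is immediate from flatness.
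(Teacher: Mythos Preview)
Your proof is correct and follows essentially the same idea as the paper's: both arguments rest on the flatness of $C_\Delta R$ over $R$ applied to the defining exact sequence of $S_\mathfrak{p}(\mathfrak{a})$. The paper phrases it slightly differently---first replacing $\mathfrak{a}$ by $S_\mathfrak{p}(\mathfrak{a})$ and then checking that $C_\Delta(R/S_\mathfrak{p}(\mathfrak{a}))$ injects into its localization---but this is exactly the content of your single tensor-with-flat step, so the two proofs are equivalent.
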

\begin{proof}
    By definition, $S^{-1}_\mathfrak{p}\big(S_\mathfrak{p}(\mathfrak{a})\big) = S^{-1}_\mathfrak{p}\mathfrak{a}$. Thus $S^{-1}_{\mathfrak{p}}C_\Delta\left(R/\mathfrak{a}\right) = S^{-1}_{\mathfrak{p}}C_\Delta\left(R/S_{\mathfrak{p}}(\mathfrak{a})\right)$. Again, by definition of $S_\mathfrak{p}(\mathfrak{a})$, there is an inclusion 
    $$R/S_{\mathfrak{p}}(\mathfrak{a})\overset{\cdot f}{\longhookrightarrow}R/S_{\mathfrak{p}}(\mathfrak{a})$$
    for any $f\in R\setminus \mathfrak{p}$. Hence 
    $$C_\Delta\left(R/S_{\mathfrak{p}}(\mathfrak{a})\right)\overset{\cdot f}{\longhookrightarrow}C_\Delta\left(R/S_{\mathfrak{p}}(\mathfrak{a})\right),$$
    from what follows $C_\Delta\left(R/S_{\mathfrak{p}}(\mathfrak{a})\right)\longhookrightarrow S^{-1}_\mathfrak{p}C_\Delta\left(R/S_{\mathfrak{p}}(\mathfrak{a})\right)$. Therefore
    \begin{eqnarray*}
    S_{\mathfrak{p}}\left( C_{\Delta}(\mathfrak{a})\right) &=& \ker\left(C_\Delta R\longrightarrow S^{-1}_{\mathfrak{p}}C_\Delta\left(R/\mathfrak{a}\right)\right)= \ker\left(C_\Delta R\longrightarrow S^{-1}_{\mathfrak{p}}C_\Delta\left(R/S_{\mathfrak{p}}(\mathfrak{a})\right)\right) \\
    &=& \ker\left(C_\Delta R\longrightarrow C_\Delta\left(R/S_{\mathfrak{p}}(\mathfrak{a})\right)\right) = C_\Delta(S_{\mathfrak{p}}(\mathfrak{a})).
    \end{eqnarray*}
\end{proof}

\begin{lemma}\label{NormalLocalFactorEmbedding}
    Let $R$ be a normal excellent domain, let $\Delta = (\mathfrak{p}_0, \ldots, \mathfrak{p}_n)$ be a flag of prime ideals of $R$, and let $\mathfrak{p}$ be a prime ideal such that $\mathfrak{p} > \Delta$.
    Then
    $$C_\Delta R\longhookrightarrow C_{\mathfrak{p}\vee\Delta}R = C_{\mathfrak{p}}S^{-1}_{\mathfrak{p}}C_\Delta R.$$
\end{lemma}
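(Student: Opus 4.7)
The plan is to factor the map of interest as the composition
\[
C_\Delta R\longrightarrow S^{-1}_\mathfrak{p}C_\Delta R\longrightarrow C_\mathfrak{p}S^{-1}_\mathfrak{p}C_\Delta R=C_{\mathfrak{p}\vee\Delta}R
\]
of the localization at $\mathfrak{p}$ followed by the $\mathfrak{p}$-adic completion, and to establish injectivity of each arrow separately.

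For the first arrow, I would directly invoke lemma \ref{SpCDelta=CDeltaSp} with $\mathfrak{a}=(0)$. Since $R$ is a domain, $S_\mathfrak{p}((0))=\ker(R\to R_\mathfrak{p})=(0)$, so the lemma identifies the kernel of $C_\Delta R\to S^{-1}_\mathfrak{p}C_\Delta R$ with $C_\Delta((0))=(0)$. Note that this step uses only that $R$ is a domain; normality and excellence do not enter here.

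The second arrow is the $\mathfrak{p}$-adic completion of the Noetherian ring $S^{-1}_\mathfrak{p}C_\Delta R$, and by Krull's intersection theorem its kernel vanishes provided $\mathfrak{p}S^{-1}_\mathfrak{p}C_\Delta R$ lies in the Jacobson radical of $S^{-1}_\mathfrak{p}C_\Delta R$. The maximal ideals of $S^{-1}_\mathfrak{p}C_\Delta R$ correspond to primes $\mathfrak{Q}$ of $C_\Delta R$ with $\mathfrak{Q}\cap R\subset\mathfrak{p}$ that are maximal among such. The plan is to show that every such $\mathfrak{Q}$ in fact contracts to exactly $\mathfrak{p}$, so that $\mathfrak{p}C_\Delta R\subset\mathfrak{Q}$, whence its extension to the localization lies in the Jacobson radical. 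To this end I would verify that $\mathfrak{p}C_\Delta R$ is a radical ideal whose finitely many minimal primes all contract to $\mathfrak{p}$ by going-down for the flat extension $R\to C_\Delta R$ (proposition \ref{Cp0CDeltaflat}): if $\mathfrak{Q}$ were minimal over $\mathfrak{p}C_\Delta R$ but $\mathfrak{Q}\cap R$ properly contained $\mathfrak{p}$, going-down would yield a prime strictly below $\mathfrak{Q}$ still containing $\mathfrak{p}C_\Delta R$, contradicting minimality.

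The main obstacle is establishing radicality of $\mathfrak{p}C_\Delta R$. By flatness $C_\Delta R/\mathfrak{p}C_\Delta R\cong C_\Delta(R/\mathfrak{p})$, so one needs the adelic local factor of the excellent (though possibly non-normal) domain $R/\mathfrak{p}$ at the flag $\Delta$ to be reduced. Here normality and excellence of $R$ enter essentially: analytic normality (\cite[Theorem~79]{Ma2}) together with analytic reducedness of excellent local rings allow reducedness to be propagated through each localization and completion step in the iterated construction of $C_\Delta R$. A parallel analysis, again leveraging normality and excellence, then rules out primes with contraction strictly smaller than $\mathfrak{p}$ being maximal in the localization, showing that the minimal primes of $\mathfrak{p}C_\Delta R$ exhaust the maximal ideals of $S^{-1}_\mathfrak{p}C_\Delta R$ and completing the argument.
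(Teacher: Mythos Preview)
Your first arrow is fine: invoking lemma \ref{SpCDelta=CDeltaSp} with $\mathfrak{a}=(0)$ (or equivalently lemma \ref{annExactSequence}) cleanly gives injectivity of $C_\Delta R\to S^{-1}_\mathfrak{p}C_\Delta R$ using only that $R$ is a domain.

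The second arrow, however, has a genuine gap. Your plan hinges on showing that $\mathfrak{p}S^{-1}_\mathfrak{p}C_\Delta R$ lies in the Jacobson radical of $S^{-1}_\mathfrak{p}C_\Delta R$, and this is \emph{false} in general. Take $R=k[x,y]$, $\Delta=\big((x,y)\big)$, $\mathfrak{p}=(x)$, so $C_\Delta R=k[[x,y]]$. Choose a power series $g(x)\in xk[[x]]$ transcendental over $k(x)$ and set $\mathfrak{Q}=(y-g(x))\subset k[[x,y]]$. Then $\mathfrak{Q}\cap k[x,y]=(0)\subset(x)$, so $\mathfrak{Q}$ survives in $S^{-1}_{(x)}k[[x,y]]$; the only prime strictly above $\mathfrak{Q}$ in $k[[x,y]]$ is $(x,y)$, whose contraction is not contained in $(x)$, so $\mathfrak{Q}$ becomes a \emph{maximal} ideal of the localization. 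But $x\notin\mathfrak{Q}$. Thus there are maximal ideals of $S^{-1}_\mathfrak{p}C_\Delta R$ whose contraction to $R$ is strictly smaller than $\mathfrak{p}$, directly contradicting the conclusion of your ``parallel analysis''. In particular, neither radicality of $\mathfrak{p}C_\Delta R$ nor going-down can force the maximal spectrum of $S^{-1}_\mathfrak{p}C_\Delta R$ to sit over $\mathfrak{p}$.

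The paper sidesteps this obstacle entirely. Rather than trying to control the Jacobson radical of $S^{-1}_\mathfrak{p}C_\Delta R$, it rewrites the kernel of the composite as $\bigcap_{n\ge1}S_\mathfrak{p}(\mathfrak{p}^n)\,C_\Delta R$ via lemma \ref{SpCDelta=CDeltaSp}, and then uses that $C_\Delta R$ is \emph{intersection flat} over the complete local ring $\widehat{R}_{\mathfrak{p}_0}=C_{(\mathfrak{p}_0)}R$ (\cite[Proposition 5.7(e)]{HJ}) to pull the infinite intersection down to $\widehat{R}_{\mathfrak{p}_0}$. The problem thus reduces to the single-step case $\Delta=(\mathfrak{p}_0)$, where analytic normality makes $\widehat{R}_{\mathfrak{p}_0}$ a normal \emph{domain}; now $S^{-1}_\mathfrak{p}\widehat{R}_{\mathfrak{p}_0}$ is a domain and Krull's intersection theorem in its domain form (no Jacobson-radical hypothesis needed) gives the vanishing. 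The essential idea you are missing is this reduction via intersection flatness: for longer flags $C_\Delta R$ need not be a domain, and its localization at $R\setminus\mathfrak{p}$ genuinely has ``exotic'' maximal ideals that your argument cannot handle.
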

\begin{proof}
Observe that
    $$\ker\left(C_\Delta R\longrightarrow C_{\mathfrak{p}}S^{-1}_{\mathfrak{p}}C_\Delta R\right) = \bigcap\limits_{n\geq 1}\ker\left(C_\Delta R\longrightarrow S^{-1}_{\mathfrak{p}}C_\Delta \left(R/\mathfrak{p}^n\right)\right) = \bigcap\limits_{n\geq 1}S_{\mathfrak{p}}\left(C_\Delta (\mathfrak{p}^n)\right).$$
    By lemma \ref{SpCDelta=CDeltaSp},
    $$\ker\left(C_\Delta R\longrightarrow C_{\mathfrak{p}}S^{-1}_{\mathfrak{p}}C_\Delta R\right) = \bigcap\limits_{n\geq 1}S_{\mathfrak{p}}\left(C_\Delta (\mathfrak{p}^n)\right) = \bigcap\limits_{n\geq 1}S_{\mathfrak{p}}\left(\mathfrak{p}^n\right)C_\Delta R.$$
    By proposition \ref{Cp0CDeltaflat}, the morphism  $C_{\mathfrak{p}_0}S^{-1}_{\mathfrak{p}_0}R\longrightarrow C_\Delta R$ is flat. Note that $C_{\mathfrak{p}_0}S^{-1}_{\mathfrak{p}_0}R$ is a complete local Noetherian ring, $C_\Delta R$ is a Noetherian ring, and moreover $\mathfrak{p}_0C_\Delta R \subset \mathrm{ rad}(C_\Delta R)$ by \cite[Theorem 8.2]{Ma1}. 
    Then, by \cite[Proposition 5.7(e)]{HJ}), $C_\Delta R$ is intersection flat over $C_{\mathfrak{p}_0}S^{-1}_{\mathfrak{p}_0}R$ and
    thus
    $$\bigcap\limits_{n\geq 1}S_{\mathfrak{p}}\left(\mathfrak{p}^n\right)C_\Delta R = \left(\bigcap\limits_{n\geq 1}S_{\mathfrak{p}}\left(\mathfrak{p}^n\right)C_{\mathfrak{p}_0}S^{-1}_{\mathfrak{p}_0} R\right)\cdot C_\Delta R.$$
    Observe that 
    $$\bigcap\limits_{n\geq 1}S_{\mathfrak{p}}\left(\mathfrak{p}^n\right) C_{\mathfrak{p}_0}S^{-1}_{\mathfrak{p}_0} R = \ker\left( C_{(\mathfrak{p}_0)}R\longrightarrow C_{(\mathfrak{p}, \mathfrak{p}_0)}R\right).$$ 
    Thus we obtain an equality
\begin{equation}\label{eq9}
\ker\left(C_\Delta R\longrightarrow C_{\mathfrak{p}}S^{-1}_{\mathfrak{p}}C_\Delta R\right) = \ker\left( C_{(\mathfrak{p}_0)}R\longrightarrow C_{(\mathfrak{p}, \mathfrak{p}_0)}R\right)\cdot C_\Delta R. 
    \end{equation}

    Denote $\widehat{R}_{\mathfrak{p}_0} = C_{(\mathfrak{p}_0)}R$. Note that $\mathfrak{p}S^{-1}_{\mathfrak{p}}\widehat{R}_{\mathfrak{p}_0}$ is a proper ideal in $S^{-1}_{\mathfrak{p}}\widehat{R}_{\mathfrak{p}_0}$, since the map $S^{-1}_\mathfrak{p}R\longrightarrow ~S^{-1}_{\mathfrak{p}}\widehat{R}_{\mathfrak{p}_0}$ is faithfully flat as a localization of a faithfully flat map $S^{-1}_{\mathfrak{p}_0}R\longrightarrow \widehat{R}_{\mathfrak{p}_0}$. 
    
    Since $R_{\mathfrak{p}_0}$ is a normal excellent local domain, $\widehat{R}_{\mathfrak{p}_0}$ is also a domain (see \cite[Theorem 79]{Ma2}). Therefore the ring $S^{-1}_{\mathfrak{p}}\widehat{R}_{\mathfrak{p}_0}$ is a domain, and since $\mathfrak{p}S^{-1}_{\mathfrak{p}}\widehat{R}_{\mathfrak{p}_0}$ is a proper ideal in $S^{-1}_{\mathfrak{p}}\widehat{R}_{\mathfrak{p}_0}$, we get by Krull's intersection theorem
    $$\bigcap_{n\geq 1}\mathfrak{p}^nS^{-1}_{\mathfrak{p}}\widehat{R}_{\mathfrak{p}_0} = 0.$$
    Note that $\bigcap_{n\geq 1}\mathfrak{p}^nS^{-1}_{\mathfrak{p}}\widehat{R}_{\mathfrak{p}_0} = \ker\left(S^{-1}_{\mathfrak{p}}C_{(\mathfrak{p}_0)}R\longrightarrow C_{(\mathfrak{p}, \mathfrak{p}_0)}R\right)$. Thus we obtain
    $$\ker\left(S^{-1}_{\mathfrak{p}}C_{(\mathfrak{p}_0)}R\longrightarrow C_{(\mathfrak{p}, \mathfrak{p}_0)}R\right) = 0.$$
    Also note that $C_{(\mathfrak{p}_0)}R\hookrightarrow S^{-1}_{\mathfrak{p}}C_{(\mathfrak{p}_0)}R$, since $C_{(\mathfrak{p}_0)}R\overset{\cdot a}{\hookrightarrow}C_{(\mathfrak{p}_0)}R$ for any $a\in R\setminus 0$ by lemma \ref{annExactSequence}. Therefore 
    $$\ker\left(C_{(\mathfrak{p}_0)}R\longrightarrow C_{(\mathfrak{p}, \mathfrak{p}_0)}R\right) = 0,$$
    and, from \eqref{eq9}, we obtain
    $$\ker\left(C_\Delta R\longrightarrow C_{\mathfrak{p}}S^{-1}_{\mathfrak{p}}C_\Delta R\right) = 0.$$
\end{proof}

\begin{lemma}\label{ExcDomainLocalFactorEmb}
        Let $R$ be an excellent domain, let $\Delta = (\mathfrak{p}_0, \ldots, \mathfrak{p}_n)$ be a flag of prime ideals of $R$ with $\mathrm{ht}\:\mathfrak{p}_0 > 0$, and let $k$ be a non-negative integer with $k <  \mathrm{ht}\:\mathfrak{p}_0$.
    Then there exists a finite set $\{\mathfrak{q}_j\}_{j = 1}^{m}$ of prime ideals of $R$ with $\mathfrak{q}_j > \Delta$ and $\mathrm{ht}\:\mathfrak{q}_j = k$ 
    such that there is an inclusion
    $$C_\Delta R\longhookrightarrow \prod_{j = 1}^{m}C_{\mathfrak{q}_j\vee \Delta }R.$$
    \end{lemma}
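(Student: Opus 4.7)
The plan is to reduce to the normal case handled by Lemma \ref{NormalLocalFactorEmbedding} by passing to the normalization. Let $\tilde R$ denote the integral closure of $R$ in its fraction field; since $R$ is excellent, $\tilde R$ is a finite $R$-algebra and a normal excellent domain. Let $\tilde{\mathfrak p}_{0,1},\ldots,\tilde{\mathfrak p}_{0,s}$ be the finitely many primes of $\tilde R$ lying over $\mathfrak p_0$; each has height $\mathrm{ht}\,\mathfrak p_0$, and by the going-up theorem each one extends to a flag $\tilde\Delta_i=(\tilde{\mathfrak p}_{0,i},\ldots,\tilde{\mathfrak p}_{n,i})$ in $\tilde R$ with $\tilde{\mathfrak p}_{j,i}\cap R=\mathfrak p_j$.

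Since $\tilde R$ is a catenary domain and $k<\mathrm{ht}\,\tilde{\mathfrak p}_{0,i}$, a saturated chain from $(0)$ to $\tilde{\mathfrak p}_{0,i}$ produces a prime $\tilde{\mathfrak q}_i\subsetneq\tilde{\mathfrak p}_{0,i}$ of height exactly $k$. Setting $\mathfrak q_{(i)}=\tilde{\mathfrak q}_i\cap R$ and using incomparability of the integral extension to preserve the strict chain below $\tilde{\mathfrak q}_i$, we obtain $\mathrm{ht}\,\mathfrak q_{(i)}=k$ and $\mathfrak q_{(i)}\subsetneq\mathfrak p_0$, so $\mathfrak q_{(i)}>\Delta$. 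Let $\{\mathfrak q_1,\ldots,\mathfrak q_m\}$ be the set of distinct ideals among the $\mathfrak q_{(i)}$. Lemma \ref{NormalLocalFactorEmbedding}, applied inside the normal excellent domain $\tilde R$, then yields embeddings
\[
C_{\tilde\Delta_i}\tilde R\;\hookrightarrow\;C_{\tilde{\mathfrak q}_i\vee\tilde\Delta_i}\tilde R \qquad (i=1,\ldots,s).
\]

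The central technical step is the canonical identification
\[
C_\Delta R\otimes_R\tilde R\;\cong\;\prod_{i=1}^{s}C_{\tilde\Delta_i}\tilde R,
\]
proved by induction on the length of $\Delta$. The base case $\Delta=(\mathfrak p)$ relies on the fact that $\tilde R_{\mathfrak p}=S^{-1}_{\mathfrak p}\tilde R$ is a semi-local Noetherian ring whose Jacobson radical shares its radical with $\mathfrak p\tilde R_\mathfrak p$, together with the splitting of the completion of a semi-local Noetherian ring at its Jacobson radical as the product of completions at its maximal localizations. Combined with flatness of $C_\Delta R$ over $R$ (each step in the iterated construction is a flat map of Noetherian rings) and the inclusion $R\hookrightarrow\tilde R$, this identification produces an injection $C_\Delta R\hookrightarrow\prod_i C_{\tilde\Delta_i}\tilde R$; composing with the embeddings from the normal case above gives an injection $C_\Delta R\hookrightarrow\prod_i C_{\tilde{\mathfrak q}_i\vee\tilde\Delta_i}\tilde R$.

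Finally, by functoriality of the construction $C_{(\bullet)}$ under the ring map $R\to\tilde R$ and the compatibility of the flags $\mathfrak q_{(i)}\vee\Delta$ and $\tilde{\mathfrak q}_i\vee\tilde\Delta_i$, the preceding composite factors through the natural product map $\prod_{j=1}^{m}C_{\mathfrak q_j\vee\Delta}R\to\prod_{i=1}^{s}C_{\tilde{\mathfrak q}_i\vee\tilde\Delta_i}\tilde R$, which forces $C_\Delta R\to\prod_{j=1}^{m}C_{\mathfrak q_j\vee\Delta}R$ to be injective, as required. The main obstacle is the identification of $C_\Delta R\otimes_R\tilde R$ with $\prod_i C_{\tilde\Delta_i}\tilde R$: to carry the single-prime splitting through the nested layers of $R$-linear localization and completion at the $\mathfrak p_j$, one must verify that each intermediate ring remains Noetherian and that the extension by the finite $R$-module $\tilde R$ remains module-finite over the corresponding base at every stage of the iteration.
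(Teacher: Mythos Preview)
Your overall strategy --- pass to the normalization $\tilde R$, invoke Lemma~\ref{NormalLocalFactorEmbedding} there, and descend --- is exactly the paper's approach. However, the execution has a genuine gap in the key step.

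The identification $C_\Delta R\otimes_R\tilde R\cong\prod_{i=1}^{s}C_{\tilde\Delta_i}\tilde R$, with the product indexed by the primes of $\tilde R$ over $\mathfrak p_0$ (and \emph{one chosen} going-up extension $\tilde\Delta_i$ per such prime), is wrong in general. The correct decomposition, which the paper takes from \cite[Proposition~3.1.7]{Ye1}, is
\[
C_\Delta R\otimes_R\tilde R\;\cong\;\prod_{\Gamma\mid\Delta}C_\Gamma\tilde R,
\]
where the product runs over \emph{all} flags $\Gamma=(\tilde{\mathfrak p}_0,\ldots,\tilde{\mathfrak p}_n)$ in $\tilde R$ contracting to $\Delta$. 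Already for $\Delta=(\mathfrak p_0,\mathfrak p_1)$ there can be several primes over $\mathfrak p_1$ containing a fixed prime over $\mathfrak p_0$, so the number of such flags exceeds~$s$. Your inductive sketch of the base case is fine, but the inductive step must track this branching at every level, not just at $\mathfrak p_0$; simply extending each $\tilde{\mathfrak p}_{0,i}$ by one going-up choice loses factors and the claimed isomorphism fails. Once the correct index set is used, the rest of your argument (choosing $\tilde{\mathfrak q}_\Gamma$ for each $\Gamma$, applying Lemma~\ref{NormalLocalFactorEmbedding}, and forming the commutative square) goes through as in the paper.

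A secondary issue: incomparability alone only yields $\mathrm{ht}\,\mathfrak q_{(i)}\ge k$ (it gives a strict chain of length $k$ below $\mathfrak q_{(i)}$). To obtain equality you need that the contracted chain is \emph{saturated} --- the paper invokes \cite[Proposition~2.2]{DF} for this --- together with catenarity of the excellent domain~$R$.
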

    \begin{proof}
    Let $S$ be the normalization of $R$ in its field of fractions. Then $S$ is finite over $R$ and excellent by \cite[pp. 260-264]{Ma1}. In particular, $R$ and $S$ are catenary. By \cite[Proposition 3.1.7]{Ye1}, there is an equality
        \begin{equation}\label{eqExcDom1}
        C_\Delta S = \prod_{\Gamma\mid \Delta} C_\Gamma S,
        \end{equation}
        where $C_\Delta S = C_\Delta R \otimes_R S$ (see \cite[Proposition 3.2.1]{Hu}). 
        
        For a flag $\Gamma = (\mathfrak{r}, \ldots)$ of prime ideals of $S$ such that $\Gamma\mid \Delta$ choose a saturated chain of prime ideals $\{\mathfrak{r}_l\}$ between $(0)$ and $\mathfrak{r}$. Since $S$ is catenary, this chain has length $\mathrm{ht}\: \mathfrak{r} + 1$. Since $S$ is finite over $R$, $\mathfrak{r}_l\cap R \subsetneq \mathfrak{r}_{l + 1}\cap R$ for any $0\leq l  < \mathrm{ht}\: \mathfrak{r}$. By \cite[Proposition 2.2]{DF}, $\{\mathfrak{r}_l\cap R\}$ is a saturated chain of prime ideals of $R$ between $(0)$ and $\mathfrak{p}_0$ of length $\mathrm{ht}\:\mathfrak{r} + 1$. Hence we obtain $\mathrm{ht}\:\mathfrak{p}_0 = \mathrm{ht}\: \mathfrak{r}$ and $\mathrm{ht}( \mathfrak{r}_k\cap R) = k$, because $R$ is catenary. Let $\mathfrak{r}_\Gamma = \mathfrak{r}_k$, $\mathfrak{q}_\Gamma = \mathfrak{r}_k\cap R$.

         Since for any $\Gamma$ such that $\Gamma\mid \Delta$ there is an embedding $C_\Gamma S\hookrightarrow C_{\mathfrak{r}_\Gamma\vee \Gamma}S$ by lemma \ref{NormalLocalFactorEmbedding}, we have an embedding
        $$C_\Delta S = \prod_{\Gamma\mid \Delta} C_\Gamma S\longhookrightarrow \prod_{\Gamma\mid \Delta } C_{\mathfrak{q}_\Gamma\vee\Delta}S = \prod_{\Gamma\mid \Delta}C_{\mathfrak{r}_\Gamma\vee \Gamma}S\times \prod_{\Gamma\mid \Delta}\prod_{\substack{\Gamma'\mid \mathfrak{q}_\Gamma\vee \Delta \\ \Gamma' \neq \mathfrak{r}_\Gamma \vee \Gamma}}C_{\Gamma'}S$$
        Let $\{\mathfrak{q}_j\}$ be the set $\{\mathfrak{q}_\Gamma\:\colon\: \Gamma\mid\Delta\}$. Then we obtain a commutative diagram 
        \begin{center}
            \begin{tikzcd}
                C_\Delta S \arrow[hook]{r} & \prod_{j = 1}^mC_{\mathfrak{q}_j\vee\Delta}S \\
                C_\Delta R\arrow[hook]{u}\arrow{r} & \prod_{j = 1}^mC_{\mathfrak{q}_j\vee\Delta}R\arrow[hook]{u},
            \end{tikzcd}
        \end{center}
        where vertical inclusions follow from the fact that $C_\Delta R$, $C_{\mathfrak{q}_j\vee \Delta}R$ are flat over $R$ and $R\hookrightarrow S$. Then the bottom horizontal arrow is injective, from what the assertion follows.
    \end{proof}

     \begin{prop}\label{SCDeltaNinclusion}
       Let $R$ be an excellent domain, let $\Delta = (\mathfrak{p}_0, \ldots, \mathfrak{p}_n)$ be a flag of prime ideals of $R$ with $\mathrm{ht}\:\mathfrak{p}_0 > 0$, and let $k < \mathrm{ht}\:\mathfrak{p}_0$ be a non-negative integer. Let $M$ be a finitely generated $R$-module.    
       Then there exists a finite set $\{\mathfrak{q}_j\}_{j = 1}^{m}$ of prime ideals of $R$ with $\mathfrak{q}_j > \Delta$ and $\mathrm{ht}\:\mathfrak{q}_j = k$ for $1\leq j \leq m$ such that there is an inclusion
    $$T^{-1}C_\Delta M\longhookrightarrow \prod_{j = 1}^{m}C_{\mathfrak{q}_j\vee \Delta }M,$$
    where $T = R\setminus \bigcup_{j = 1}^m\mathfrak{q}_j$ is a multiplicative set.
   \end{prop}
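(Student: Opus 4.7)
The plan is to leverage Lemma \ref{ExcDomainLocalFactorEmb} applied to the ring $R$ and then bootstrap to the module $M$ via a faithful flatness argument. Since $M$ is finitely generated over the Noetherian ring $R$, both localization and adic completion commute with tensor product against $M$, so by induction on the length of the flag $C_\Delta M \cong C_\Delta R\otimes_R M$ and $C_{\mathfrak{q}_j\vee\Delta}M \cong C_{\mathfrak{q}_j\vee\Delta}R\otimes_R M$. Consequently $T^{-1}C_\Delta M = T^{-1}C_\Delta R\otimes_R M$ and $\prod_j C_{\mathfrak{q}_j\vee\Delta}M = B\otimes_R M$, where I set $B := \prod_{j=1}^m C_{\mathfrak{q}_j\vee\Delta}R$, so the claim reduces to a statement purely about the ring extension $T^{-1}C_\Delta R \hookrightarrow B$.

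Next, take the family $\{\mathfrak{q}_j\}_{j=1}^m$ produced by Lemma \ref{ExcDomainLocalFactorEmb}, giving $C_\Delta R \hookrightarrow B$. Every $t\in T=R\setminus\bigcup_j\mathfrak{q}_j$ lies outside each $\mathfrak{q}_j$, so it is inverted already in the defining localization of each $C_{\mathfrak{q}_j\vee\Delta}R$; hence $T$ acts by units on $B$, so $T^{-1}B = B$, and a direct check shows that the induced map $T^{-1}C_\Delta R \hookrightarrow B$ remains injective (if $x/t\mapsto 0$, multiplying by the unit image of $t$ in $B$ gives $x=0$ in $C_\Delta R$). The heart of the proof is the claim that this extension is \emph{faithfully flat}. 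Flatness follows by standard means: each map $T^{-1}C_\Delta R \to C_{\mathfrak{q}_j\vee\Delta}R$ factors as a further localization $T^{-1}C_\Delta R \to S^{-1}_{\mathfrak{q}_j}C_\Delta R$ followed by the $\mathfrak{q}_j$-adic completion of a Noetherian ring, both flat, and finite products preserve flatness. Granting faithful flatness, the map $N \to N\otimes_{T^{-1}C_\Delta R} B$ is injective for every $T^{-1}C_\Delta R$-module $N$; specializing to $N = T^{-1}C_\Delta M$ and unwinding the tensor identifications produces the desired embedding $T^{-1}C_\Delta M \hookrightarrow B\otimes_R M = \prod_j C_{\mathfrak{q}_j\vee\Delta}M$.

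The main obstacle is proving surjectivity of $\mathrm{Spec}\:B\to\mathrm{Spec}\:T^{-1}C_\Delta R$. A maximal ideal $\mathfrak{m}$ of $T^{-1}C_\Delta R$ pulls back to a prime $\widetilde{\mathfrak{m}}\subset C_\Delta R$ whose contraction to $R$ is contained in some $\mathfrak{q}_{j_0}$ by prime avoidance, and lifting $\widetilde{\mathfrak{m}}$ along the completion $S^{-1}_{\mathfrak{q}_{j_0}}C_\Delta R \to C_{\mathfrak{q}_{j_0}\vee\Delta}R$ requires that $\widetilde{\mathfrak{m}}$ be contained in some maximal ideal of $S^{-1}_{\mathfrak{q}_{j_0}}C_\Delta R$ dominating $\mathfrak{q}_{j_0}$. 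This should be delivered by the specific construction of $\{\mathfrak{q}_j\}$ in Lemma \ref{ExcDomainLocalFactorEmb}, which traces saturated chains of prime ideals in the normalization of $R$ down to height $k$, combined with the going-up/going-down interplay between $R\to C_\Delta R$ and the finite integral extension $R\to S$. If this route proves awkward, an alternative is to filter $M$ by prime-cyclic modules $R/\mathfrak{p}$, whereupon the injectivity to check becomes the contraction identity $\mathfrak{p}B\cap T^{-1}C_\Delta R = \mathfrak{p}T^{-1}C_\Delta R$, which is equivalent to faithful flatness but may be directly established using an intersection-flatness argument along the lines of the proof of Lemma \ref{NormalLocalFactorEmbedding}.
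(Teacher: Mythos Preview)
Your plan hinges on showing that $T^{-1}C_\Delta R \to B=\prod_j C_{\mathfrak{q}_j\vee\Delta}R$ is faithfully flat for the set $\{\mathfrak{q}_j\}$ produced by Lemma~\ref{ExcDomainLocalFactorEmb} applied to $R$ alone. You flag the surjectivity of $\mathrm{Spec}\,B\to\mathrm{Spec}\,T^{-1}C_\Delta R$ as the ``main obstacle'' but do not resolve it, and there is good reason to believe it fails in general. Concretely, take a prime $\mathfrak{r}\subset\mathfrak{p}_0$ with $\mathrm{ht}\,\mathfrak{r}<k$ and $\mathfrak{r}\subset\mathfrak{q}_{j_0}$ for some $j_0$. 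Then $T^{-1}C_\Delta(R/\mathfrak{r})\neq 0$, but its injectivity into $\prod_j C_{\mathfrak{q}_j\vee\Delta}(R/\mathfrak{r})$ amounts to asking that the images $\mathfrak{q}_j/\mathfrak{r}$ satisfy the conclusion of Lemma~\ref{ExcDomainLocalFactorEmb} for the domain $R/\mathfrak{r}$. That lemma is an \emph{existence} statement; the specific primes it outputs for $R$ come from chains in the normalization of $R$, not of $R/\mathfrak{r}$, and there is no mechanism forcing them to work for $R/\mathfrak{r}$. Your alternative route via prime-cyclic filtrations runs into the same wall: you rephrase it as the contraction identity $\mathfrak{p}B\cap T^{-1}C_\Delta R=\mathfrak{p}T^{-1}C_\Delta R$ for every prime $\mathfrak{p}$, but that is again equivalent to faithful flatness.

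The missing idea, and the heart of the paper's argument, is that the finite set $\{\mathfrak{q}_j\}$ is allowed to depend on $M$. One fixes a prime filtration $0=M_l\subset\cdots\subset M_0=M$ with $M_i/M_{i+1}\cong R/\mathfrak{r}_i$, and for each $\mathfrak{r}_i$ with $\mathfrak{r}_i\subset\mathfrak{p}_0$ and $\mathrm{ht}\,\mathfrak{r}_i\le k$ applies Lemma~\ref{ExcDomainLocalFactorEmb} to the domain $R/\mathfrak{r}_i$ to obtain additional primes $\{\mathfrak{q}_{i,s}\}$; the final collection is the union of all these with the primes coming from $R$ itself. With this enlarged family one checks directly that $\mathrm{Tor}_1^R(P,R/\mathfrak{r}_i)=0$ for every subquotient (the two cases $\mathfrak{r}_i\not\subset\bigcup_j\mathfrak{q}_j$ and $\mathfrak{r}_i\subset\mathfrak{p}_0$, $\mathrm{ht}\,\mathfrak{r}_i\le k$ are handled separately), and then $\mathrm{Tor}_1^R(P,M)=0$ follows by d\'evissage along the filtration. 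This gives the injection for the given $M$ without ever establishing, or needing, faithful flatness of $T^{-1}C_\Delta R\to B$.
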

   \begin{proof}
   Assume $M\neq 0$. By \cite[Theorem 6.4]{Ma1}, there exists a filtration
   $$0 = M_l \subset M_{l - 1}\subset\ldots\subset M_1\subset M_0 = M$$
   such that $M_i/M_{i + 1}\cong R/\mathfrak{r}_i$, $\mathfrak{r}_i\in \mathrm{Spec}\:R$, for $0\leq i < l$.

   Let $I = \big\{i\mid 0\leq i < l, \: \mathfrak{r}_i\subset \mathfrak{p}_0, \: \mathrm{ht}\: \mathfrak{r}_i \leq k\big\}$. For each $i\in I$ fix a set $\{\mathfrak{q}_{i, s}\}$ of prime ideals of $R$ such that $\{\mathfrak{q}_{i, s}/\mathfrak{r}_i\}$ satisfies lemma \ref{ExcDomainLocalFactorEmb} for the ring $R/\mathfrak{r}_i$, the flag $\Delta\:\mathrm{mod}\: \mathfrak{r}_i = (\mathfrak{p}_0/\mathfrak{r}_i, \ldots, \mathfrak{p}_n/\mathfrak{r}_i)$, and the integer $k - \mathrm{ht}\: \mathfrak{r}_i$. Since $R$ is a excellent and, in particular, catenary domain, $\mathrm{ht}\:\mathfrak{q}_{i, s} = k$ for any $s$.  
   Let $\{\mathfrak{q}'_d\}$ be a set of prime ideals of $R$ which satisfies lemma \ref{ExcDomainLocalFactorEmb} for the ring $R$, the flag $\Delta$, and the integer $k$. Note that this set can be chosen up to adding a finite number of prime ideals $\{\mathfrak{t}_j\}$ with $\mathrm{ht}\:\mathfrak{t}_j = k$ and $\mathfrak{t}_j > \Delta$ such that lemma \ref{ExcDomainLocalFactorEmb} is still satisfied (cf. subsequent remark \ref{RemAddFiniteSet}).
   Let 
   $$\{\mathfrak{q}_j\}_{j = 1}^m = \bigcup_{i\in I}\{\mathfrak{q}_{i, s}\}\cup\{\mathfrak{q}_d'\}, \quad T = R\setminus \bigcup_{j = 1}^m\mathfrak{q}_j.$$
By construction, $\mathrm{ht}\: \mathfrak{q}_j = k$ and $\mathfrak{q}_j\subset \mathfrak{p}_0$ for $1\leq j \leq m$.

Consider an exact sequence of $R$-modules
        \begin{equation}\label{eqSqj}
        0\longrightarrow T^{-1}C_\Delta R \longrightarrow \prod_{j = 1}^mC_{\mathfrak{q}_j\vee \Delta}R\longrightarrow \prod_{j = 1}^mC_{\mathfrak{q}_j\vee \Delta}R\bigg/T^{-1}C_\Delta R\longrightarrow 0.
        \end{equation}
   The inclusion follows from lemma \ref{ExcDomainLocalFactorEmb} and from the fact that $R$ is a domain. 

   Let us denote $P = \prod_{j = 1}^mC_{\mathfrak{q}_j\vee \Delta}R\bigg/T^{-1}C_\Delta R$. Since $T^{-1}C_\Delta R$, $C_{\mathfrak{q}_j\vee \Delta}R$ are flat $R$-modules, we obtain for any $R$-module $N$ (after tensoring \eqref{eqSqj} by $N$) that 
   $$\mathrm{Tor}^R_q(P, N) = 0, \quad q\geq 2,$$
   and the sequence
   \begin{equation}\label{eqTor}        
        0\longrightarrow \mathrm{Tor}_1^R(P, N)\longrightarrow T^{-1}C_\Delta R\otimes_R N\longrightarrow \prod_{j = 1}^m\big(C_{\mathfrak{q}_j\vee \Delta}R\otimes_R N\big)\longrightarrow P\otimes_R N\longrightarrow 0
        \end{equation}
        is exact. 

        Let us prove that for any $0\leq i < l$ we have $\mathrm{Tor}_1^R(P, R/\mathfrak{r}_i) = 0$. First, assume that $\mathrm{ht}\: \mathfrak{r}_i > k$ or $\mathfrak{r}_i\not\subset \mathfrak{p}_0$. In both cases, $\mathfrak{r}_i\not\subset \bigcup_{j = 1}^m\mathfrak{q}_j$, otherwise $\mathfrak{r}_i\subset \mathfrak{q}_j\subset \mathfrak{p}_0$ for some $j$ by prime avoidance, which is a contradiction, since then $\mathfrak{r}_i\subset \mathfrak{p}_0$ and $\mathrm{ht}\:\mathfrak{r}_i \leq \mathrm{ht}\:\mathfrak{q}_j = k$. Therefore $T^{-1}\mathfrak{r}_i = T^{-1}R$, and thus $$T^{-1}C_\Delta R\otimes_R R/\mathfrak{r}_i = \big( T^{-1}R\otimes_R R/\mathfrak{r}_i\big)\otimes_R C_\Delta R = \big(T^{-1}R\big/T^{-1}\mathfrak{r}_i\big)\otimes_R C_\Delta R = 0.$$
        From \eqref{eqTor} we obtain $\mathrm{Tor}_1^R(P, R/\mathfrak{r}_i) = 0$.

        Consider the remaining case $\mathrm{ht}\: \mathfrak{r}_i \leq k$ and $\mathfrak{r}_i\subset \mathfrak{p}_0$. By construction of $\{\mathfrak{q}_j\}_{j = 1}^m$, there is an inclusion 
        $$C_{\tilde{\Delta}}(R/\mathfrak{r}_i)\longhookrightarrow \prod_{j = 1}^m C_{\tilde{\mathfrak{q}}_j\vee \tilde{\Delta}}(R/\mathfrak{r}_i),$$
        where $\:\tilde{}\:$ means $\mathrm{mod}\: \mathfrak{r}_i$ (we assume that $C_{\tilde{\mathfrak{q}}_j\vee \tilde{\Delta}}(R/\mathfrak{r}_i) = 0$ whenever $\mathfrak{r}_i\not\subset \mathfrak{q}_j$). 
        Thus we have $\tilde{T}^{-1}C_{\tilde{\Delta}}(R/\mathfrak{r}_i)\hookrightarrow \prod_{j = 1}^m C_{\tilde{\mathfrak{q}}_j\vee \tilde{\Delta}}(R/\mathfrak{r}_i)$, where $\tilde{T} = T \: \mathrm{mod}\: \mathfrak{r}_i$, since $R/\mathfrak{r}_i$ is a domain. By lemma \ref{LemmaCompLocal}, we obtain
        $${T}^{-1}C_{{\Delta}}(R/\mathfrak{r}_i)\longhookrightarrow \prod_{j = 1}^m C_{{\mathfrak{q}}_j\vee {\Delta}}(R/\mathfrak{r}_i).$$
        Therefore, from \eqref{eqTor}, $\mathrm{Tor}_1^R(P, R/\mathfrak{r}_i) = 0$.

        Consider the sequence 
        $$0\longrightarrow M_{i + 1}\longrightarrow M_i \longrightarrow R/\mathfrak{r}_i\longrightarrow 0$$
        for $0\leq i < l$. After tensoring this sequence with $P$, we obtain an exact sequence
        $$\ldots \longrightarrow \mathrm{Tor}_2^R(R/\mathfrak{r}_i, P)\longrightarrow \mathrm{Tor}_1^R(M_{i + 1}, P) \longrightarrow \mathrm{Tor}_1^R(M_i, P) \longrightarrow \mathrm{Tor}_1^R(R/\mathfrak{r}_i, P) \longrightarrow \ldots.$$
        Since $\mathrm{Tor}_2^R(R/\mathfrak{r}_i, P) = \mathrm{Tor}_1^R(R/\mathfrak{r}_i, P) = 0$, we obtain an isomorphism 
        $$\mathrm{Tor}_1^R(M_{i + 1}, P) \overset{\sim}{\longrightarrow} \mathrm{Tor}_1^R(M_i, P).$$
        Thus, by inductive argument, 
        $\mathrm{Tor}_1^R(M, P) = \mathrm{Tor}_1^R(M_0, P) \cong \mathrm{Tor}_1^R(M_l, P) = 0$. From this fact and from \eqref{eqTor} we obtain the required inclusion.
   \end{proof}

   \begin{lemma}\label{Corideala}
       Let $R$ be an excellent domain, let $\Delta = (\mathfrak{p}_0, \ldots, \mathfrak{p}_n)$ be a flag of prime ideals of $R$ with $\mathrm{ht}\:\mathfrak{p}_0 > 0$, and let $k < \mathrm{ht}\:\mathfrak{p}_0$ be a non-negative integer. Let $\mathfrak{a}$ be an ideal in $R$. Then there exists a finite set $\{\mathfrak{q}_j\}_{j = 1}^m$ of prime ideals of $R$ with $\mathfrak{q}_j > \Delta$ and $\mathrm{ht}\:\mathfrak{q}_j = k$ for $1\leq j \leq m$ such that 
       $$C_\Delta R\longhookrightarrow \prod_{j = 1}^mC_{\mathfrak{q}_j\vee \Delta}R, \quad C_\Delta R\cap \prod_{j = 1}^m\mathfrak{a}C_{\mathfrak{q}_j\vee \Delta}R = \mathfrak{b}C_\Delta R,$$
       where $\mathfrak{b} = \ker\big(R\rightarrow T^{-1}(R/\mathfrak{a})\big)$ for $T = R\setminus \bigcup_{j = 1}^m\mathfrak{q}_j,$ and the intersection is taken in $\prod_{j = 1}^mC_{\mathfrak{q}_j\vee \Delta}R$.
   \end{lemma}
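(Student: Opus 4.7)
The plan is to combine Proposition~\ref{SCDeltaNinclusion}, applied simultaneously to $R$ and to $R/\mathfrak{a}$, with a two-step diagram chase. First I would invoke that proposition for the finitely generated $R$-module $R\oplus R/\mathfrak{a}$. Since $C_\Delta(-)$, $C_{\mathfrak{q}\vee\Delta}(-)$, and $T^{-1}(-)$ all commute with direct sums, splitting off summands from the resulting embedding produces a \emph{single} finite set $\{\mathfrak{q}_j\}_{j=1}^m$ of height-$k$ primes with $\mathfrak{q}_j>\Delta$, such that for $T = R\setminus\bigcup_j\mathfrak{q}_j$ both
\[
T^{-1}C_\Delta R \longhookrightarrow \prod_{j=1}^{m} C_{\mathfrak{q}_j\vee\Delta}R, \qquad T^{-1}C_\Delta(R/\mathfrak{a}) \longhookrightarrow \prod_{j=1}^{m} C_{\mathfrak{q}_j\vee\Delta}(R/\mathfrak{a})
\]
are injective. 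Because $R$ is a domain and $C_\Delta R$ is $R$-flat (hence $R$-torsion-free), one also has $C_\Delta R\hookrightarrow T^{-1}C_\Delta R$; composing with the first embedding above yields the required inclusion $C_\Delta R \hookrightarrow \prod_j C_{\mathfrak{q}_j\vee\Delta}R$.

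Next I would tensor $0\to\mathfrak{a}\to R\to R/\mathfrak{a}\to 0$ with the flat $R$-algebras $C_\Delta R$ and $C_{\mathfrak{q}_j\vee\Delta}R$ and assemble the commutative diagram with exact rows
\[
\begin{tikzcd}[column sep=small]
0 \arrow{r} & T^{-1}\mathfrak{a}C_\Delta R \arrow{r}\arrow{d} & T^{-1}C_\Delta R \arrow{r}\arrow[hook]{d} & T^{-1}C_\Delta(R/\mathfrak{a}) \arrow{r}\arrow[hook]{d} & 0 \\
0 \arrow{r} & \prod_j \mathfrak{a}C_{\mathfrak{q}_j\vee\Delta}R \arrow{r} & \prod_j C_{\mathfrak{q}_j\vee\Delta}R \arrow{r} & \prod_j C_{\mathfrak{q}_j\vee\Delta}(R/\mathfrak{a}) \arrow{r} & 0.
\end{tikzcd}
\]
Injectivity of the right-hand vertical arrow, together with exactness of the top row, gives by a routine diagram chase the intermediate identity
\[
T^{-1}C_\Delta R \;\cap\; \prod_{j=1}^m \mathfrak{a}C_{\mathfrak{q}_j\vee\Delta}R \;=\; T^{-1}\mathfrak{a}C_\Delta R \;=\; \mathfrak{a}\,T^{-1}C_\Delta R,
\]
taken inside $\prod_j C_{\mathfrak{q}_j\vee\Delta}R$.

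To finish, I would intersect both sides with $C_\Delta R$ and note that $T^{-1}\mathfrak{a}=T^{-1}\mathfrak{b}$, which is just the definition of $\mathfrak{b}$ as the $T$-saturation of $\mathfrak{a}$. The problem then reduces to verifying
\[
C_\Delta R \cap \mathfrak{b}T^{-1}C_\Delta R \;=\; \mathfrak{b}C_\Delta R \quad\text{inside}\quad T^{-1}C_\Delta R,
\]
i.e.\ to showing that $C_\Delta R/\mathfrak{b}C_\Delta R = C_\Delta(R/\mathfrak{b})$ has no $T$-torsion. Multiplication by each $t\in T$ is injective on $R/\mathfrak{b}$ by the very definition of $\mathfrak{b}$, and this injectivity persists after tensoring with the flat $R$-algebra $C_\Delta R$, which settles the remaining point.

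The main obstacle I anticipate is the first step: Proposition~\ref{SCDeltaNinclusion} attaches a set of primes to a specific module, so a priori the embeddings for $R$ and for $R/\mathfrak{a}$ would come from different choices of $T$. Applying the proposition to the direct sum $R\oplus R/\mathfrak{a}$ is the clean way around this; everything after that is a routine combination of flatness, localization, and the snake lemma.
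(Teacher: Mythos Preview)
Your proposal is correct and follows essentially the same route as the paper: both obtain the intermediate identity $T^{-1}C_\Delta R\cap\prod_j\mathfrak{a}C_{\mathfrak{q}_j\vee\Delta}R=\mathfrak{a}T^{-1}C_\Delta R$ from the embedding $T^{-1}C_\Delta(R/\mathfrak{a})\hookrightarrow\prod_j C_{\mathfrak{q}_j\vee\Delta}(R/\mathfrak{a})$, and then intersect with $C_\Delta R$ using flatness over $R$. The only difference is cosmetic: to get a single set $\{\mathfrak{q}_j\}$ working for both $R$ and $R/\mathfrak{a}$, the paper observes that the construction inside the proof of Proposition~\ref{SCDeltaNinclusion} already contains the primes from Lemma~\ref{ExcDomainLocalFactorEmb}, whereas you achieve the same effect cleanly by applying Proposition~\ref{SCDeltaNinclusion} to $R\oplus R/\mathfrak{a}$ and splitting off summands.
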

   \begin{proof}
       Let $\{\mathfrak{q}_j\}_{j = 1}^m$ be a set of prime ideals from proposition \ref{SCDeltaNinclusion} for $M = R/\mathfrak{a}$. From the proof of proposition \ref{SCDeltaNinclusion} we can choose this set in such a way that lemma \ref{ExcDomainLocalFactorEmb} holds for $\{\mathfrak{q}_j\}_{j = 1}^m$ (see remark \ref{RemAddFiniteSet}). Thus
       \begin{equation}\label{eqInclCDelta}
            C_\Delta R\longhookrightarrow \prod_{j = 1}^mC_{\mathfrak{q}_j\vee\Delta}R, \quad T^{-1}C_\Delta (R/\mathfrak{a})\longhookrightarrow \prod_{j = 1}^mC_{\mathfrak{q}_j\vee\Delta}(R/\mathfrak{a}),
        \end{equation}
        where $T = R\setminus \bigcup_{j = 1}^m\mathfrak{q}_j$. The latter inclusion is equivalent to the equality
        $$T^{-1}C_\Delta R\cap \prod_{j = 1}^m\mathfrak{a}C_{\mathfrak{q}_j\vee\Delta}R = \mathfrak{a}T^{-1}C_\Delta R,$$
        since the left side is equal to $\ker\big(T^{-1}C_\Delta R\rightarrow\prod_{j = 1}^mC_{\mathfrak{q}_j\vee\Delta}(R/\mathfrak{a})\big)$. Then
        \begin{eqnarray*}
            C_\Delta R \cap \prod_{j = 1}^m\mathfrak{a}C_{\mathfrak{q}_j\vee\Delta}R &=& C_\Delta R\cap T^{-1}C_\Delta R\cap\prod_{j = 1}^m\mathfrak{a}C_{\mathfrak{q}_j\vee\Delta}R  \\
            &=& C_\Delta R\cap \mathfrak{a}T^{-1}C_\Delta R = (R\cap \mathfrak{a}T^{-1}R)\cdot C_\Delta R,
        \end{eqnarray*}
        where the last equality follows from the flatness of $C_\Delta R$ over $R$.
        It remains to note that $R\cap \mathfrak{a}T^{-1}R = \ker\big(R\rightarrow T^{-1}(R/\mathfrak{a})\big)$.
   \end{proof}

   \begin{rem}\label{RemAddFiniteSet}
    Note that a set of prime ideals in proposition \ref{SCDeltaNinclusion}, and therefore in lemma \ref{Corideala}, can be chosen up to adding a finite set of prime ideals satisfying certain conditions. Indeed, when we consider a set $\{\mathfrak{q}'_d\}$ 
    in the proof of proposition \ref{SCDeltaNinclusion} that satisfies lemma \ref{ExcDomainLocalFactorEmb} for the ring $R$, we can add to it an arbitrary set $\{\mathfrak{r}_i\}$ of prime ideals of $R$ with $\mathrm{ht}\:\mathfrak{r}_i = k$ and $\mathfrak{r}_i > \Delta$ such that lemma \ref{ExcDomainLocalFactorEmb} still holds. 
   
   \end{rem}

    \begin{prop}\label{Coridealn}
        Let $R$ be an excellent domain, let $\Delta = (\mathfrak{p}_0, \ldots, \mathfrak{p}_n)$ be a flag of prime ideals of $R$ with $\mathrm{ht}\:\mathfrak{p}_0 > 0$, and let $k < \mathrm{ht}\:\mathfrak{p}_0$ be a non-negative integer. Let $\mathfrak{n}$ be a primary ideal in $R$ such that $\mathrm{ht}\:\mathfrak{n}\leq k$ and $\mathfrak{n} > \Delta$. Then there exists a finite set $\{\mathfrak{q}_j\}_{j = 1}^m$ of prime ideals of $R$ with $\mathfrak{q}_j > \Delta$ and $\mathrm{ht}\:\mathfrak{q}_j = k$ for $1\leq j \leq m$ such that $\mathfrak{n}\subset \bigcup_{j =1}^m\mathfrak{q}_j$ and
       $$C_\Delta R\longhookrightarrow \prod_{j = 1}^mC_{\mathfrak{q}_j\vee \Delta}R, \quad C_\Delta R\cap \prod_{j = 1}^m\mathfrak{n}C_{\mathfrak{q}_j\vee \Delta}R = \mathfrak{n}C_\Delta R,$$
       where 
       the intersection is taken in $\prod_{j = 1}^mC_{\mathfrak{q}_j\vee \Delta}R$.
    \end{prop}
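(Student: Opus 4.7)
The plan is to derive Proposition~\ref{Coridealn} from Lemma~\ref{Corideala} by enlarging the family of primes produced there so that the saturation ideal $\mathfrak{b}$ appearing in its conclusion becomes exactly $\mathfrak{n}$. Set $\mathfrak{p} = \sqrt{\mathfrak{n}}$, a prime ideal with $\mathrm{ht}\:\mathfrak{p} = \mathrm{ht}\:\mathfrak{n} \leq k < \mathrm{ht}\:\mathfrak{p}_0$ and $\mathfrak{p}\subsetneq \mathfrak{p}_0$. Apply Lemma~\ref{Corideala} with $\mathfrak{a}=\mathfrak{n}$ to obtain an initial family of primes, and then use Remark~\ref{RemAddFiniteSet} to adjoin one further prime $\mathfrak{q}^*$ of height $k$ with $\mathfrak{p}\subset \mathfrak{q}^*\subsetneq\mathfrak{p}_0$.

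To produce $\mathfrak{q}^*$, I would use that $R$, being excellent, is catenary. Fix any saturated chain from $(0)$ to $\mathfrak{p}$ (of length $\mathrm{ht}\:\mathfrak{p}$) and any saturated chain from $\mathfrak{p}$ to $\mathfrak{p}_0$ (of length $l_0$). Their concatenation is again saturated, so catenarity applied between $(0)$ and $\mathfrak{p}_0$ yields $l_0 = \mathrm{ht}\:\mathfrak{p}_0-\mathrm{ht}\:\mathfrak{p}>0$. Define $\mathfrak{q}^*$ to be the prime at position $i = k-\mathrm{ht}\:\mathfrak{p}$ along the second chain. Since $0\leq i<l_0$, this gives $\mathfrak{p}\subset \mathfrak{q}^*\subsetneq \mathfrak{p}_0$, and a second application of catenarity to the saturated chain from $(0)$ to $\mathfrak{q}^*$ obtained by concatenation yields $\mathrm{ht}\:\mathfrak{q}^* = \mathrm{ht}\:\mathfrak{p}+i = k$, as required.

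With $\mathfrak{p}\subset \mathfrak{q}^*$ now in the augmented family $\{\mathfrak{q}_j\}$, the multiplicative set $T=R\setminus\bigcup_j\mathfrak{q}_j$ is disjoint from $\mathfrak{p}$. Since $\mathfrak{n}$ is $\mathfrak{p}$-primary, every $t\in T$ acts injectively on $R/\mathfrak{n}$ (zero divisors modulo $\mathfrak{n}$ lie in $\mathfrak{p}$), hence $\mathfrak{b}:=\ker\big(R\to T^{-1}(R/\mathfrak{n})\big)=\mathfrak{n}$. The conclusion of Lemma~\ref{Corideala}, valid for the augmented family thanks to Remark~\ref{RemAddFiniteSet} (adjoining more primes only enlarges the right-hand side of the embedding, preserving it), therefore gives
$$C_\Delta R\longhookrightarrow \prod_{j=1}^m C_{\mathfrak{q}_j\vee\Delta}R, \qquad C_\Delta R\cap \prod_{j=1}^m \mathfrak{n}C_{\mathfrak{q}_j\vee\Delta}R = \mathfrak{b}C_\Delta R = \mathfrak{n}C_\Delta R.$$
The remaining inclusion $\mathfrak{n}\subset \bigcup_j \mathfrak{q}_j$ follows from $\mathfrak{n}\subset\mathfrak{p}\subset \mathfrak{q}^*$.

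The main obstacle is the construction of $\mathfrak{q}^*$; once a prime of height exactly $k$ between $\mathfrak{p}$ and $\mathfrak{p}_0$ is located via catenarity, the rest is a direct invocation of Lemma~\ref{Corideala} on the enlarged set together with the observation that primary decomposition forces $\mathfrak{b}=\mathfrak{n}$.
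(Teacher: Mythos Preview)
Your proof is correct and follows essentially the same approach as the paper: apply Lemma~\ref{Corideala} to $\mathfrak{a}=\mathfrak{n}$, use catenarity to locate a height-$k$ prime $\mathfrak{q}^*$ between $\mathfrak{p}=\sqrt{\mathfrak{n}}$ and $\mathfrak{p}_0$, adjoin it via Remark~\ref{RemAddFiniteSet}, and then observe that $T\cap\mathfrak{p}=\varnothing$ forces $\mathfrak{b}=\mathfrak{n}$ by primarity. The only cosmetic difference is that the paper phrases the last step as $\ker(R\to T^{-1}(R/\mathfrak{n}))\subset S_{\mathfrak{p}}(\mathfrak{n})=\mathfrak{n}$, while you argue directly that elements of $T$ are nonzerodivisors on $R/\mathfrak{n}$; these are the same computation.
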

    \begin{proof}
    Since $\mathfrak{n}$ is a primary ideal, $\mathfrak{p} = \sqrt{\mathfrak{n}}$ is a prime ideal in $R$ which is minimal over $\mathfrak{n}$. Thus $\mathrm{ht}\: \mathfrak{p} = \mathrm{ht}\: \mathfrak{n}\leq k$. Since $R$ is an excellent and, in particular, catenary domain, there is a saturated chain of prime ideals between $\mathfrak{p}$ and $\mathfrak{p}_0$ of length $\mathrm{ht}\: \mathfrak{p}_0 - \mathrm{ht}\: \mathfrak{p}$; let $\mathfrak{q}$ be a prime ideal in this chain with $\mathrm{ht}\: \mathfrak{q} = k$. By construction, $\mathfrak{q}>\Delta$.

    Let $\{\mathfrak{q}_j\}_{j = 1}^m$ be a set of prime ideals from lemma \ref{Corideala} for the ideal $\mathfrak{n}$. Observe that we can assume $\mathfrak{q}\in \{\mathfrak{q}_j\}_{j = 1}^m$ by remark \ref{RemAddFiniteSet}. 
    Thus 
    $$\mathfrak{n}\subset \mathfrak{q}\subset \bigcup_{j = 1}^m\mathfrak{q}_j.$$
    Note that $R\setminus \bigcup_{j = 1}^m\mathfrak{q}_j \subset R\setminus\mathfrak{q}\subset R\setminus \mathfrak{p}$. Hence
    $$\ker\big(R\longrightarrow T^{-1}(R/\mathfrak{n})\big)\subset \ker\big(R\longrightarrow S_{\mathfrak{p}}^{-1}(R/\mathfrak{n})\big) = S_{\mathfrak{p}}(\mathfrak{n}) =\mathfrak{n},$$
    where $T = R\setminus \bigcup_{j = 1}^m\mathfrak{q}_j$, and the last equality follows from the primarity of $\mathfrak{n}$.
    \end{proof}

\begin{cor}\label{LocalAdeleInterPrimary}
       Let $R$ be an excellent domain, let $\Delta = (\mathfrak{p}_0, \ldots, \mathfrak{p}_n)$ be a flag of prime ideals of $R$ with $\mathrm{ht}\: \mathfrak{p}_0 > 0$, and let $k$ be a non-negative integer with $ k < \mathrm{ht}\: \mathfrak{p}_0$. Let $\mathfrak{n}$ be a primary ideal in $R$ such that $\mathrm{ht}\:\mathfrak{n}\leq k$ and $\mathfrak{n} > \Delta$.
       Then 
       $$C_\Delta R \cap \prod_{\substack{\mathfrak{q}> \Delta\\ \mathrm{ht}\: \mathfrak{q} = k}}\mathfrak{n}C_{\mathfrak{q}\vee\Delta}R = \mathfrak{n}C_\Delta R.$$
   \end{cor}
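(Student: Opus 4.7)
The plan is to deduce this corollary almost immediately from Proposition \ref{Coridealn}. The previous proposition already exhibits a \emph{finite} set of height-$k$ primes $\{\mathfrak{q}_j\}_{j=1}^{m}$ above $\Delta$ for which the analogous intersection identity holds, so the corollary is the formal strengthening to the full (possibly infinite) product indexed by all such primes.

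First I would dispose of the easy inclusion $\mathfrak{n}C_\Delta R \subset C_\Delta R \cap \prod_{\mathfrak{q}} \mathfrak{n}C_{\mathfrak{q}\vee\Delta}R$. This is immediate because for every prime $\mathfrak{q} > \Delta$ of height $k$ the canonical map $C_\Delta R \longrightarrow C_{\mathfrak{q}\vee\Delta}R$ sends $\mathfrak{n}C_\Delta R$ into $\mathfrak{n}C_{\mathfrak{q}\vee\Delta}R$, hence any element of $\mathfrak{n}C_\Delta R$, viewed via the diagonal map into the infinite product, lies componentwise in $\mathfrak{n}C_{\mathfrak{q}\vee\Delta}R$. (The diagonal map itself is an embedding: by Proposition \ref{Coridealn} there is already an embedding $C_\Delta R \hookrightarrow \prod_{j=1}^{m} C_{\mathfrak{q}_j\vee\Delta}R$ into a finite subproduct, and the infinite product then receives $C_\Delta R$ injectively as well.)

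For the reverse inclusion, apply Proposition \ref{Coridealn} to obtain a finite collection $\{\mathfrak{q}_j\}_{j=1}^{m}$ of prime ideals of height $k$ with $\mathfrak{q}_j > \Delta$ and $\mathfrak{n}\subset \bigcup_{j=1}^{m}\mathfrak{q}_j$ such that
\[
C_\Delta R \cap \prod_{j=1}^{m}\mathfrak{n}C_{\mathfrak{q}_j\vee\Delta}R \;=\; \mathfrak{n}C_\Delta R,
\]
where the intersection is taken in $\prod_{j=1}^{m}C_{\mathfrak{q}_j\vee\Delta}R$. If $x\in C_\Delta R$ lies in the full intersection $\prod_{\mathfrak{q}}\mathfrak{n}C_{\mathfrak{q}\vee\Delta}R$, then in particular its image in $C_{\mathfrak{q}_j\vee\Delta}R$ lies in $\mathfrak{n}C_{\mathfrak{q}_j\vee\Delta}R$ for each $j=1,\dots,m$. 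Hence $x$ lies in the finite intersection, and Proposition \ref{Coridealn} then forces $x \in \mathfrak{n}C_\Delta R$, completing the proof.

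There is essentially no obstacle here, since all the real work — choosing the primes $\mathfrak{q}_j$ so as to control the failure of flatness and to capture the primary ideal $\mathfrak{n}$ via the localization $T^{-1}(R/\mathfrak{n}) = R/\mathfrak{n}$ — was already carried out in Proposition \ref{Coridealn}. The only conceptual point worth recording is that the infinite intersection cannot be strictly smaller than the finite one, which is what makes the passage from ``some finite family'' to ``all $\mathfrak{q}$'' costless.
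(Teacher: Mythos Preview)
Your proof is correct and follows exactly the approach the paper intends: the paper's own proof consists of the single sentence ``The assertion clearly follows from proposition \ref{Coridealn},'' and you have simply spelled out the obvious passage from the finite family of primes to the full product.
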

   \begin{proof}
       The assertion clearly follows from proposition \ref{Coridealn}.
   \end{proof}

   \begin{lemma}\label{aSCDeltaR}
       Let $R$ be a Noetherian domain, let $\mathfrak{a}$ be an ideal in $R$, and let $\Delta$ be a flag of prime ideals in $R$. Let $\Lambda$ be an arbitrary set of flags of prime ideals such that $\Delta\subset \Delta'$ for any $\Delta'\in \Lambda$ and $C_\Delta R\longhookrightarrow \prod\limits_{\Delta'\in \Lambda}C_{\Delta'}R$. Let $T$ be a multiplicative set in $R$. Then there is an equality
       $$T^{-1}C_\Delta R\cap \prod_{\Delta'\in \Lambda}\mathfrak{a}T^{-1}C_{\Delta'}R = T^{-1}C_\Delta R\cap T^{-1}\left(\prod_{\Delta'\in \Lambda}\mathfrak{a}C_{\Delta'}R\right)$$
       in $\prod_{\Delta'\in \Lambda}T^{-1}C_{\Delta'}R$.
   \end{lemma}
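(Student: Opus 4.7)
The plan is to prove the two inclusions separately; the inclusion $\supseteq$ is immediate, while the inclusion $\subseteq$ is the substantive one.

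For the direction $\supseteq$, I would unpack the natural map $T^{-1}\left(\prod_{\Delta'}\mathfrak{a}C_{\Delta'}R\right)\to \prod_{\Delta'}T^{-1}C_{\Delta'}R$ induced by the universal property of localization. Any element of $T^{-1}\left(\prod_{\Delta'}\mathfrak{a}C_{\Delta'}R\right)$ has the form $(b_{\Delta'})/u$ for some $(b_{\Delta'})\in \prod_{\Delta'}\mathfrak{a}C_{\Delta'}R$ and a single uniform denominator $u\in T$; its image in $\prod_{\Delta'}T^{-1}C_{\Delta'}R$ has each component $b_{\Delta'}/u$ lying in $\mathfrak{a}T^{-1}C_{\Delta'}R$, so the whole element lies in $\prod_{\Delta'}\mathfrak{a}T^{-1}C_{\Delta'}R$.

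For the direction $\subseteq$, I would take $x$ in the left-hand side and write $x = y/s$ with $y\in C_\Delta R$ and $s\in T$. Since $R$ is a domain and each $C_{\Delta'}R$ is $R$-flat (by proposition \ref{Cp0CDeltaflat}), multiplication by any element of $T$ is injective on $C_{\Delta'}R$, so $T$-torsion is trivial. Combined with the fact that $\mathfrak{a}$ is finitely generated (since $R$ is Noetherian), the hypothesis $y_{\Delta'}/s\in \mathfrak{a}T^{-1}C_{\Delta'}R=T^{-1}(\mathfrak{a}C_{\Delta'}R)$ translates to: for each $\Delta'$ there exists $t_{\Delta'}\in T$ with $t_{\Delta'}y_{\Delta'}\in \mathfrak{a}C_{\Delta'}R$. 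Once one produces a single $u\in T$ with $uy_{\Delta'}\in\mathfrak{a}C_{\Delta'}R$ for all $\Delta'\in\Lambda$ simultaneously, the element $uy$ lies in $C_\Delta R\cap \prod_{\Delta'}\mathfrak{a}C_{\Delta'}R$, and then $x = y/s = uy/(us)$ exhibits $x$ as an element of $T^{-1}\left(\prod_{\Delta'}\mathfrak{a}C_{\Delta'}R\right)$, completing the proof.

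The main obstacle is the uniformization of the denominators $t_{\Delta'}$ across the potentially infinite index set $\Lambda$. The strategy here is to exploit the fact that $y$ is not an arbitrary element of $\prod_{\Delta'}C_{\Delta'}R$ but comes from the distinguished subring $C_\Delta R$ that embeds into the product by hypothesis. This embedding makes the image $\bar{y}$ of $y$ in the quotient $\prod_{\Delta'}C_{\Delta'}R/\prod_{\Delta'}\mathfrak{a}C_{\Delta'}R$ generate a cyclic $R$-submodule; Noetherianness of $R$ then allows one to extract a single annihilator from $T$ for this cyclic module once one verifies pointwise $T$-torsion lifts to uniform $T$-torsion on the submodule generated by $\bar y$. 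Alternatively, one can proceed by invoking the flat-base-change identity $T^{-1}A\cap T^{-1}B = T^{-1}(A\cap B)$ for submodules $A,B\subset M$ (taking $A=C_\Delta R$, $B=\prod_{\Delta'}\mathfrak{a}C_{\Delta'}R$, $M=\prod_{\Delta'}C_{\Delta'}R$) to identify the right-hand side with $T^{-1}\!\left(C_\Delta R\cap \prod_{\Delta'}\mathfrak{a}C_{\Delta'}R\right)$, and then matching the left-hand side to this same $T$-localization by the reduction above.
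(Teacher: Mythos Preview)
Your setup is correct: the inclusion $\supseteq$ is clear, and for $\subseteq$ you correctly reduce to the uniformization problem---given $y\in C_\Delta R$ with each $\bar y_{\Delta'}$ being $T$-torsion in $C_{\Delta'}R/\mathfrak{a}C_{\Delta'}R$, produce a single $u\in T$ with $uy_{\Delta'}\in\mathfrak{a}C_{\Delta'}R$ for all $\Delta'$. But you then leave this step unproved: the phrase ``once one verifies pointwise $T$-torsion lifts to uniform $T$-torsion'' names the obstacle without overcoming it, and your suggestion that this follows from $\bar y$ generating a cyclic $R$-module misses the point. A cyclic $R$-module being finitely generated tells you nothing unless you already know it is $T$-torsion as a whole, which is exactly what needs proving; and the fact that $y$ lies in $C_\Delta R$ rather than merely in the product is actually irrelevant to the uniformization.

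The missing ingredient is this. Since $R$ is Noetherian, the $T$-saturation $\mathfrak{b}=\bigcup_{t\in T}(\mathfrak{a}:t)$ of $\mathfrak{a}$ equals $(\mathfrak{a}:t_0)$ for a single $t_0\in T$, so $t_0$ annihilates all $T$-torsion in $R/\mathfrak{a}$. Now use that each $C_{\Delta'}R$ is flat over $R$: tensoring the exact sequence $0\to\mathfrak{b}/\mathfrak{a}\to R/\mathfrak{a}\to R/\mathfrak{b}\to 0$ shows that $C_{\Delta'}R/\mathfrak{b}C_{\Delta'}R\cong (R/\mathfrak{b})\otimes_R C_{\Delta'}R$ is $T$-torsion-free (multiplication by $t\in T$ is injective on $R/\mathfrak{b}$, hence on the tensor product by flatness), so the $T$-torsion of $C_{\Delta'}R/\mathfrak{a}C_{\Delta'}R$ sits inside $\mathfrak{b}C_{\Delta'}R/\mathfrak{a}C_{\Delta'}R$ and is therefore killed by $t_0$. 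This $t_0$ is your uniform denominator, independent of $\Delta'$ and of $y$. With this observation your outline becomes a complete proof; the paper's ``straightforward'' presumably refers to exactly this mechanism.
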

   \begin{proof}
    The proof is straightforward.
   \end{proof}

   Observe that if $\Delta\in S(R, I)$ is a flag of prime ideals of a biequidimensional ring $R$, then for any $J\subset \{0, 1, \ldots, \dim R\}$ such that $I\subset J$ there exists $\Delta'\in S(R, J)$ such that $\Delta\subset \Delta'$. This follows from remark \ref{remBiequidim}.

   \begin{thm}\label{LocalFactorEmbedding}
        Let $R$ be an excellent  biequidimensional domain and let $I, J\subset \{0, 1, \ldots, \dim R\}$ be subsets such that $I\subset J$. Let $\Delta \in S(R, I)$. Let $\mathfrak{n}$ be a primary ideal in $R$ such that $\mathrm{ht}\: \mathfrak{n}\leq j$ if $J = (j, \ldots)\neq \varnothing$, and $\mathfrak{n} \geq  \Delta$. Then 
        $$C_\Delta R\longhookrightarrow \prod_{\substack{\Delta'\in S(R, J)\\ \Delta\subset \Delta'}}C_{\Delta'}R, \quad C_\Delta R\cap \prod_{\substack{\Delta'\in S(R, J)\\ \Delta\subset \Delta'}}\mathfrak{n}C_{\Delta'}R = \mathfrak{n}C_\Delta R.$$
    \end{thm}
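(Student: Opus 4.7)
The plan is to prove the theorem by induction on $|J|+|J\setminus I|$, which strictly decreases in each of the two inductive cases below. The base case $J=I$ is immediate: the indexing product on the right reduces to $\{\Delta\}$, and both the embedding and the intersection equality are tautologies. For the inductive step, write $j$ for the smallest element of $J$ and set $i_0=\mathrm{ht}\,\mathfrak{p}_0$. Since $I\subset J$ we have $j\le i_0$, and the argument splits according to whether $j<i_0$ or $j=i_0$.

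When $j<i_0$, we have $j\in J\setminus I$, and I apply corollary \ref{LocalAdeleInterPrimary} with $k=j$ and the given primary ideal $\mathfrak{n}$ (whose height is at most $j$ by hypothesis). This yields the one-step embedding $C_\Delta R\hookrightarrow \prod_{\mathfrak{q}>\Delta,\,\mathrm{ht}\,\mathfrak{q}=j} C_{\mathfrak{q}\vee\Delta}R$ together with the intersection equality at that level. Each $\mathfrak{q}\vee\Delta$ lies in $S(R,\{j\}\cup I)$, and the pair $\{j\}\cup I\subset J$ has strictly smaller inductive quantity, so the inductive hypothesis applies. I would use $\mathfrak{n}$ itself as the primary ideal for the IH whenever $\mathfrak{n}\subset\mathfrak{q}$; when $\mathfrak{n}\not\subset\mathfrak{q}$, any element of $\mathfrak{n}\setminus\mathfrak{q}$ is a unit in $S^{-1}_{\mathfrak{q}}R$ and hence in every $C_{\Delta'}R$ with $\Delta'\supset\mathfrak{q}\vee\Delta$, so $\mathfrak{n}C_{\Delta'}R$ fills the whole ring and the intersection constraint at that $\mathfrak{q}$ is vacuous. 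Composing the two layers of embeddings and rearranging the resulting intersections closes this case.

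When $j=i_0$, no prime below $\mathfrak{p}_0$ can be added, so I instead reduce to the tail: set $\tilde\Delta=(\mathfrak{p}_1,\ldots,\mathfrak{p}_n)$, $\tilde I=I\setminus\{i_0\}$, $\tilde J=J\setminus\{i_0\}$. Since $j=i_0$, every $\Delta'\in S(R,J)$ with $\Delta\subset\Delta'$ is forced to have $\mathfrak{p}_0$ as its smallest prime and hence factors as $\Delta'=\mathfrak{p}_0\vee\tilde\Delta'$ with $\tilde\Delta\subset\tilde\Delta'$ in $S(R,\tilde J)$. The pair $\tilde I\subset\tilde J$ has strictly smaller inductive quantity, and $\mathfrak{n}$ continues to satisfy the hypotheses of the theorem for the tail (primary; of height $\le i_0\le$ smallest of $\tilde J$; contained in $\mathfrak{p}_1\supset\mathfrak{p}_0\supset\mathfrak{n}$), so the inductive hypothesis supplies the embedding and intersection equality for $\tilde\Delta$.

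The hard part will be transporting the tail statements back to $\Delta$ through the functor $(-)\otimes_R C_{\mathfrak{p}_0}S^{-1}_{\mathfrak{p}_0}R$, which is flat but does not a priori commute with the infinite product indexed by $\tilde\Delta'$. The key technical input here will be the intersection flatness of $C_{\mathfrak{p}_0}S^{-1}_{\mathfrak{p}_0}R$ over $S^{-1}_{\mathfrak{p}_0}R$, available via \cite[Proposition 5.7]{HJ} (already exploited in the proof of lemma \ref{NormalLocalFactorEmbedding}). This property ensures that the relevant intersections inside $\prod C_{\tilde\Delta'}R$ are preserved after the base change, so that both the embedding and the intersection equality lift from $\tilde\Delta$ to $\Delta$ via the identifications $C_\Delta R=C_{\tilde\Delta}R\otimes_R C_{\mathfrak{p}_0}S^{-1}_{\mathfrak{p}_0}R$ and $C_{\Delta'}R=C_{\tilde\Delta'}R\otimes_R C_{\mathfrak{p}_0}S^{-1}_{\mathfrak{p}_0}R$, completing the induction.
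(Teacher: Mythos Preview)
Your case $j<i_0$ is fine (it reverses the order of the paper's two layers---the paper first applies the inductive hypothesis to $J\setminus j$ and then prepends a height-$j$ prime, whereas you prepend first---but both orderings close without difficulty).

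The case $j=i_0$, however, has a genuine gap. The identifications
\[
C_\Delta R \;=\; C_{\tilde\Delta}R\otimes_R C_{\mathfrak{p}_0}S^{-1}_{\mathfrak{p}_0}R,
\qquad
C_{\Delta'}R \;=\; C_{\tilde\Delta'}R\otimes_R C_{\mathfrak{p}_0}S^{-1}_{\mathfrak{p}_0}R
\]
are \emph{false} in general: completion at $\mathfrak{p}_0$ coincides with tensoring by $C_{\mathfrak{p}_0}S^{-1}_{\mathfrak{p}_0}R$ only for finitely generated $R$-modules, and $C_{\tilde\Delta}R$ is not finitely generated over $R$. Moreover, intersection flatness (as in \cite[Proposition~5.7]{HJ}) concerns arbitrary intersections of submodules of a \emph{finitely generated} module; it says nothing about commuting a tensor product with the infinite product $\prod_{\tilde\Delta'}C_{\tilde\Delta'}R$, which is what you would actually need.

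The paper's argument here is substantially more delicate and cannot be bypassed. One works modulo $\mathfrak{p}_0^m$ for each $m$ and then takes the inverse limit. For the embedding, this requires invoking the inductive hypothesis not with $\mathfrak{n}$ but with the symbolic powers $\mathfrak{p}_0^{(m)}$ (primary of height $i_0$) to control $S^{-1}_{\mathfrak{p}_0}C_\Gamma R/\mathfrak{p}_0^m$ inside $\prod_{\Gamma'}S^{-1}_{\mathfrak{p}_0}C_{\Gamma'}R/\mathfrak{p}_0^m$; lemma~\ref{aSCDeltaR} is needed to pass the localization through the intersection. For the $\mathfrak{n}$-intersection, one must further apply the inductive hypothesis with the primary ideals $S_{\mathfrak{p}_0}(\mathfrak{n}+\mathfrak{p}_0^m)$, then use lemma~\ref{LemmaABCD} and Artin--Rees to assemble the inverse limit. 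Your sketch supplies the inductive hypothesis only for the single ideal $\mathfrak{n}$, which is not enough to push either statement through the completion at $\mathfrak{p}_0$.
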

    \begin{proof}
        If $I = J$, the assertion is clear, so we can assume $I\subsetneq J$. In particular, $J\neq \varnothing$.

        If $I = \varnothing$, then $\Delta = \varnothing$. By proposition \ref{REmbeddsInCDeltaR},
        $$R\longhookrightarrow \prod\limits_{\Delta'\in S(R, J)}C_{\Delta'}R, \quad R/\mathfrak{n} \longhookrightarrow \prod_{\tilde{\Delta}'\in S(R/\mathfrak{n},\: J - \mathrm{ht}\: \mathfrak{n})}C_{\tilde{\Delta}'}(R/\mathfrak{n}).$$
        Note that there is a natural bijection between sets $S(R/\mathfrak{n},\: J - \mathrm{ht}\: \mathfrak{n})$ and $\{\Delta'\in S(R, J)\mid \mathfrak{n} \geq \Delta'\}$, since $R$ is an excellent and, in particular, catenary domain.
        Note that the latter set is non-empty, since $R$ is biequidimensional.
        Hence the second inclusion is equivalent to the equality 
        $$R\cap \prod_{\substack{\Delta'\in S(R, J)\\ \mathfrak{n} \geq \Delta'}}\mathfrak{n}C_{\Delta'}R = \mathfrak{n},$$
        and therefore
        $$R\cap \prod_{\Delta'\in S(R, J)}\mathfrak{n}C_{\Delta'}R = \mathfrak{n}.$$

        Let us proceed by induction on $|J|$. If $|J| = 1$, then $I = \varnothing$ by our assumption, and this case is examined above. Hence we can assume $|J| > 1$ and $I\neq \varnothing$. Then let $I = (i, \ldots)$, $J = (j, \ldots)$. Since $I\subset J$, $j \leq i$. Let also $\Delta = (\mathfrak{p}_0, \ldots, \mathfrak{p}_n)$, where $n = |I| - 1$. By definition, $\mathrm{ht}\:\mathfrak{p}_0 = i$. Note that the set $\big\{\Delta'\in S(R, J)\mid \Delta\subset \Delta'\big\}$ is non-empty, since $R$ is biequidimensional.

        First, assume $j < i$. By induction hypothesis, 
        $$C_\Delta R\longhookrightarrow \prod_{\substack{\Gamma'\in S(R, J\setminus j)\\ \Delta\subset \Gamma'}}C_{\Gamma'}R, \quad C_\Delta R\cap \prod_{\substack{\Gamma'\in S(R, J\setminus j)\\ \Delta\subset \Gamma'}}\mathfrak{n}C_{\Gamma'}R = \mathfrak{n}C_\Delta R.$$
        Fix any $\Gamma' \in S(R, J\setminus j)$. Let $\Gamma' = (\mathfrak{r},\ldots)$. By lemma \ref{ExcDomainLocalFactorEmb}, 
    $$C_{\Gamma'} R\longhookrightarrow \prod_{\substack{\mathfrak{q} > \Gamma'\\ \mathrm{ht}\:\mathfrak{q} = j}}C_{\mathfrak{q}\vee\Gamma'}R.$$
    If $\mathfrak{n}\geq \Gamma'$ (in fact, $\mathfrak{n} > \Gamma'$, since $\mathrm{ht}\:\mathfrak{n} = j$), then, by proposition \ref{LocalAdeleInterPrimary}, 
    \begin{equation}\label{eqInterPrimary}
    C_{\Gamma'} R\cap \prod_{\substack{\mathfrak{q} > \Gamma'\\ \mathrm{ht}\:\mathfrak{q} = j}}\mathfrak{n}C_{\mathfrak{q}\vee\Gamma'}R = \mathfrak{n}C_{\Gamma'} R.
    \end{equation}
    If $\mathfrak{n}\not\geq \Gamma'$ or, in other words, $\mathfrak{n}\not\subset \mathfrak{r}$,  
    then $\mathfrak{n}C_{\Gamma'}R = S^{-1}_\mathfrak{r}\mathfrak{n}C_{\Gamma'}R= C_{\Gamma'}R$, since $S^{-1}_\mathfrak{r}\mathfrak{n} = R_\mathfrak{r}$.
    Thus the equality \eqref{eqInterPrimary} is also satisfied in this case.
    Consequently,
    $$C_\Delta R \longhookrightarrow \prod_{\substack{\Gamma'\in S(R, J\setminus j)\\ \Delta\subset \Gamma'}}C_{\Gamma'}R\longhookrightarrow \prod_{\substack{\Gamma'\in S(R, J\setminus j)\\ \Delta\subset \Gamma'}}\prod_{\substack{\mathfrak{q} > \Gamma'\\ \mathrm{ht}\:\mathfrak{q} = j}}C_{\mathfrak{q}\vee\Gamma'}R = \prod_{\substack{\Delta'\in S(R, J)\\ \Delta\subset \Delta'}}C_{\Delta'}R$$
    and
    \begin{eqnarray*}
    C_\Delta R\cap \prod_{\substack{\Delta'\in S(R, J)\\ \Delta\subset \Delta'}}\mathfrak{n}C_{\Delta'}R &=& C_\Delta R\cap \prod_{\substack{\Gamma'\in S(R, J\setminus j)\\ \Delta\subset \Gamma'}}\prod_{\substack{\mathfrak{q} > \Gamma'\\ \mathrm{ht}\:\mathfrak{q} = j}}\mathfrak{n}C_{\mathfrak{q}\vee\Gamma'}R  \\
    &=& C_\Delta R\cap \prod_{\substack{\Gamma'\in S(R, J\setminus j)\\ \Delta\subset \Gamma'}}C_{\Gamma'} R\cap \prod_{\substack{\Gamma'\in S(R, J\setminus j)\\ \Delta\subset \Gamma'}}\prod_{\substack{\mathfrak{q} > \Gamma'\\ \mathrm{ht}\:\mathfrak{q} = j}}\mathfrak{n}C_{\mathfrak{q}\vee\Gamma'}R \\
    &=& C_\Delta R\cap \prod_{\substack{\Gamma'\in S(R, J\setminus j)\\ \Delta\subset \Gamma'}}\mathfrak{n}C_{\Gamma'} R = \mathfrak{n}C_\Delta R.
    \end{eqnarray*}

    Let us consider the remaining case $j = i$. Then $\Delta' = (\mathfrak{p}_0, \ldots)$ for any $\Delta'\in S(R, J)$ such that $\Delta\subset \Delta'$. Let $\Gamma = \Delta\setminus \mathfrak{p}_0$. Define $\Lambda = \{\Gamma'\in S(R, J\setminus i)\mid \Gamma\subset \Gamma'\}$. Note that $\mathfrak{p}_0^{(m)}$ is a primary ideal by the remark after definition \ref{DefSymbolicPower}. Since $\sqrt{\mathfrak{p}_0^{(m)}} = \mathfrak{p}_0$, then $\mathrm{ht}\: \mathfrak{p}_0^{(m)} = \mathrm{ht}\:\mathfrak{p}_0$ and $\mathfrak{p}_0^{(m)} > \Gamma$. By induction hypothesis, 
    $$C_\Gamma R\longhookrightarrow \prod_{\Gamma'\in \Lambda}C_{\Gamma'}R, \quad C_\Gamma R\cap \prod_{\Gamma'\in \Lambda}\mathfrak{n}C_{\Gamma'}R = \mathfrak{n}C_\Gamma R,$$
    \begin{equation}\label{eqInterpm}
    C_\Gamma R\cap \prod_{\Gamma'\in \Lambda}\mathfrak{p}_0^{(m)}C_{\Gamma'}R = \mathfrak{p}_0^{(m)}C_\Gamma R
    \end{equation}
    for any $m\geq 1$. Since $S_{\mathfrak{p}_0}^{-1}\mathfrak{p}_0^{(m)} = S_{\mathfrak{p}_0}^{-1}\mathfrak{p}_0^m$, we obtain
    \begin{eqnarray*}
    S^{-1}_{\mathfrak{p}_0}C_\Gamma R\cap \prod_{\Gamma'\in \Lambda}\mathfrak{p}_0^mS^{-1}_{\mathfrak{p}_0}C_{\Gamma'}R &=& S^{-1}_{\mathfrak{p}_0}C_\Gamma R\cap \prod_{\Gamma'\in \Lambda}\mathfrak{p}_0^{(m)}S^{-1}_{\mathfrak{p}_0}C_{\Gamma'}R \\
    &=& S^{-1}_{\mathfrak{p}_0}\bigg(C_\Gamma R\cap \prod_{\Gamma'\in \Lambda}\mathfrak{p}_0^{(m)}C_{\Gamma'}R\bigg)\\
    &=& \mathfrak{p}_0^{(m)}S^{-1}_{\mathfrak{p}_0}C_{\Gamma}R = \mathfrak{p}_0^{m}S^{-1}_{\mathfrak{p}_0}C_{\Gamma}R,
    \end{eqnarray*}
    where the second equality follows from lemma \ref{aSCDeltaR} and the third equality follows from \eqref{eqInterpm}; this is equivalent to the embedding
    \begin{equation}\label{eqInclGamma}S^{-1}_{\mathfrak{p}_0}C_{\Gamma}R\big/\mathfrak{p}_0^mS^{-1}_{\mathfrak{p}_0}C_{\Gamma}R\longhookrightarrow \prod_{\Gamma'\in \Lambda}S^{-1}_{\mathfrak{p}_0}C_{\Gamma'}R\big/\mathfrak{p}_0^mS^{-1}_{\mathfrak{p}_0}C_{\Gamma'}R.
    \end{equation}
    Since $\mathfrak{n}/(\mathfrak{n}\cap\mathfrak{p}_0^m)\hookrightarrow R/\mathfrak{p}_0^m$, then 
    \begin{equation*}
        \prod_{\Gamma'\in \Lambda}\mathfrak{n}S^{-1}_{\mathfrak{p}_0}C_{\Gamma'}R\big/(\mathfrak{n}\cap\mathfrak{p}_0^m)S^{-1}_{\mathfrak{p}_0}C_{\Gamma'}R\longhookrightarrow \prod_{\Gamma'\in \Lambda}S^{-1}_{\mathfrak{p}_0}C_{\Gamma'}R\big/\mathfrak{p}_0^mS^{-1}_{\mathfrak{p}_0}C_{\Gamma'}R
    \end{equation*}
    by the flatness of $S^{-1}_{\mathfrak{p}_0}C_{\Gamma'}R$ over $R$.
    Since $R$ is Noetherian, 
    $$\prod_{\Gamma'\in \Lambda}\mathfrak{n}S^{-1}_{\mathfrak{p}_0}C_{\Gamma'}R\big/(\mathfrak{n}\cap\mathfrak{p}_0^m)S^{-1}_{\mathfrak{p}_0}C_{\Gamma'}R = \frac{\mathfrak{n}\prod_{\Gamma'\in \Lambda}S^{-1}_{\mathfrak{p}_0}C_{\Gamma'}R}{(\mathfrak{n}\cap\mathfrak{p}_0^m)\prod_{\Gamma'\in \Lambda}S^{-1}_{\mathfrak{p}_0}C_{\Gamma'}R}.$$
    By lemma \ref{LemmaABCD},
    \begin{eqnarray*}
        \frac{S^{-1}_{\mathfrak{p}_0}C_{\Gamma}R}{\mathfrak{p}_0^mS^{-1}_{\mathfrak{p}_0}C_{\Gamma}R}\cap \frac{\mathfrak{n}\prod_{\Gamma'\in \Lambda}S^{-1}_{\mathfrak{p}_0}C_{\Gamma'}R}{(\mathfrak{n}\cap\mathfrak{p}_0^m)\prod_{\Gamma'\in \Lambda}S^{-1}_{\mathfrak{p}_0}C_{\Gamma'}R} &=& \frac{S^{-1}_{\mathfrak{p}_0}C_{\Gamma}R\cap\big(\mathfrak{n}\prod_{\Gamma'\in \Lambda}S^{-1}_{\mathfrak{p}_0}C_{\Gamma'}R + \mathfrak{p}_0^m\prod_{\Gamma'\in \Lambda}S^{-1}_{\mathfrak{p}_0}C_{\Gamma'}R\big)}{\mathfrak{p}_0^mS^{-1}_{\mathfrak{p}_0}C_{\Gamma}R}\\
        &=& \frac{S^{-1}_{\mathfrak{p}_0}C_{\Gamma}R\cap(\mathfrak{n}+\mathfrak{p}_0^m)\prod_{\Gamma'\in \Lambda}S^{-1}_{\mathfrak{p}_0}C_{\Gamma'}R}{\mathfrak{p}_0^mS^{-1}_{\mathfrak{p}_0}C_{\Gamma}R}.
    \end{eqnarray*}
    Note that $\sqrt{\mathfrak{n} + \mathfrak{p}_0^m} = \mathfrak{p}_0$, since $\mathfrak{n}\subset \mathfrak{p}_0$. Thus $\mathfrak{p}_0$ is a minimal prime ideal in the primary decomposition of the ideal $\mathfrak{n} + \mathfrak{p}_0^m$, and $S_{\mathfrak{p}_0}(\mathfrak{n} + \mathfrak{p}_0^m)$ is its $\mathfrak{p}_0$-primary part and, moreover, $S_{\mathfrak{p}_0}(\mathfrak{n} + \mathfrak{p}_0^m) > \Gamma$. Hence, by induction hypothesis,
    $$C_\Gamma R\cap \prod_{\Gamma'\in \Lambda}S_{\mathfrak{p}_0}(\mathfrak{n} + \mathfrak{p}_0^m)C_{\Gamma'}R = S_{\mathfrak{p}_0}(\mathfrak{n} + \mathfrak{p}_0^m)C_\Gamma R.$$
    Therefore, by lemma \ref{aSCDeltaR},
    \begin{eqnarray*}
    S^{-1}_{\mathfrak{p}_0}C_{\Gamma}R\cap(\mathfrak{n}+\mathfrak{p}_0^m)\prod_{\Gamma'\in \Lambda}S^{-1}_{\mathfrak{p}_0}C_{\Gamma'}R &=& S^{-1}_{\mathfrak{p}_0}C_{\Gamma}R\cap S_{\mathfrak{p}_0}(\mathfrak{n} + \mathfrak{p}_0^m)\prod_{\Gamma'\in \Lambda}S^{-1}_{\mathfrak{p}_0}C_{\Gamma'}R \\
        &=& S_{\mathfrak{p}_0}^{-1}\left(C_{\Gamma}R\cap S_{\mathfrak{p}_0}(\mathfrak{n} + \mathfrak{p}_0^m)\prod_{\Gamma'\in \Lambda}C_{\Gamma'}R\right)\\
        &=&  S_{\mathfrak{p}_0}(\mathfrak{n} + \mathfrak{p}_0^m)S^{-1}_{\mathfrak{p}_0}C_{\Gamma}R = (\mathfrak{n} + \mathfrak{p}_0^m)S^{-1}_{\mathfrak{p}_0}C_{\Gamma}R.
    \end{eqnarray*}
    Thereby
    $$\frac{S^{-1}_{\mathfrak{p}_0}C_{\Gamma}R\cap(\mathfrak{n}+\mathfrak{p}_0^m)\prod_{\Gamma'\in \Lambda}S^{-1}_{\mathfrak{p}_0}C_{\Gamma'}R}{\mathfrak{p}_0^mS^{-1}_{\mathfrak{p}_0}C_{\Gamma}R} = \frac{(\mathfrak{n} + \mathfrak{p}_0^m)S^{-1}_{\mathfrak{p}_0}C_{\Gamma}R}{\mathfrak{p}_0^mS^{-1}_{\mathfrak{p}_0}C_{\Gamma}R} = \frac{\mathfrak{n}S^{-1}_{\mathfrak{p}_0}C_{\Gamma}R}{(\mathfrak{n}\cap\mathfrak{p}_0^m)S^{-1}_{\mathfrak{p}_0}C_{\Gamma}R}.$$
    Finally, we obtain the equality
    \begin{equation}\label{eqInterPrimaryn}
    \frac{S^{-1}_{\mathfrak{p}_0}C_{\Gamma}R}{\mathfrak{p}_0^mS^{-1}_{\mathfrak{p}_0}C_{\Gamma}R}\cap \frac{\mathfrak{n}\prod_{\Gamma'\in \Lambda}S^{-1}_{\mathfrak{p}_0}C_{\Gamma'}R}{(\mathfrak{n}\cap\mathfrak{p}_0^m)\prod_{\Gamma'\in \Lambda}S^{-1}_{\mathfrak{p}_0}C_{\Gamma'}R} = \frac{\mathfrak{n}S^{-1}_{\mathfrak{p}_0}C_{\Gamma}R}{(\mathfrak{n}\cap\mathfrak{p}_0^m)S^{-1}_{\mathfrak{p}_0}C_{\Gamma}R}
    \end{equation}
    for $m\geq 1$.

    Observe that $C_{\Gamma'}R = \mathfrak{p}_0C_{\Gamma'}R$, if $\mathfrak{p}_0\not\geq \Gamma'$, from what follows the equality $S^{-1}_{\mathfrak{p}_0}C_{\Gamma'}R = \mathfrak{p}_0^mS^{-1}_{\mathfrak{p}_0}C_{\Gamma'}R$ for any $m\geq 1$.
    From \eqref{eqInclGamma} we obtain
    $$C_\Delta R = \varprojlim_{m}\:S^{-1}_{\mathfrak{p}_0}C_{\Gamma}R\big/\mathfrak{p}_0^mS^{-1}_{\mathfrak{p}_0}C_{\Gamma}R\longhookrightarrow \varprojlim_m\prod_{\Gamma'\in \Lambda}S^{-1}_{\mathfrak{p}_0}C_{\Gamma'}R\big/\mathfrak{p}_0^mS^{-1}_{\mathfrak{p}_0}C_{\Gamma'}R,$$
    and 
    \begin{eqnarray*}
        \varprojlim_m\prod_{\Gamma'\in \Lambda}S^{-1}_{\mathfrak{p}_0}C_{\Gamma'}R\big/\mathfrak{p}_0^mS^{-1}_{\mathfrak{p}_0}C_{\Gamma'}R &=& \prod_{\Gamma'\in \Lambda}\varprojlim_m\:S^{-1}_{\mathfrak{p}_0}C_{\Gamma'}R\big/\mathfrak{p}_0^mS^{-1}_{\mathfrak{p}_0}C_{\Gamma'}R\\
        &=& \prod_{\substack{\Gamma'\in \Lambda\\ \mathfrak{p}_0\geq \Gamma'}}\varprojlim_m\:S^{-1}_{\mathfrak{p}_0}C_{\Gamma'}R\big/\mathfrak{p}_0^mS^{-1}_{\mathfrak{p}_0}C_{\Gamma'}R\\
        &= &\prod_{\substack{\Delta'\in S(R, J)\\ \Delta\subset \Delta'}}C_{\Delta'}R,
    \end{eqnarray*}
    where the second equality follows from the above observation. 

    From \eqref{eqInterPrimaryn}, Artin--Rees lemma, and from the fact that limits commute with intersections, we obtain
    \begin{eqnarray*}
    C_\Delta R\cap \prod_{\substack{\Delta'\in S(R, J)\\ \Delta\subset \Delta'}}\mathfrak{n}C_{\Delta'}R &=& \varprojlim_m\:\frac{S^{-1}_{\mathfrak{p}_0}C_{\Gamma}R}{\mathfrak{p}_0^mS^{-1}_{\mathfrak{p}_0}C_{\Gamma}R}\cap \frac{\prod_{\Gamma'\in \Lambda}\mathfrak{n}S^{-1}_{\mathfrak{p}_0}C_{\Gamma'}R}{\prod_{\Gamma'\in \Lambda}(\mathfrak{n}\cap\mathfrak{p}_0^m)S^{-1}_{\mathfrak{p}_0}C_{\Gamma'}R} \\
    &=& \varprojlim_m\:\frac{S^{-1}_{\mathfrak{p}_0}C_{\Gamma}R}{\mathfrak{p}_0^mS^{-1}_{\mathfrak{p}_0}C_{\Gamma}R}\cap \frac{\mathfrak{n}\prod_{\Gamma'\in \Lambda}S^{-1}_{\mathfrak{p}_0}C_{\Gamma'}R}{(\mathfrak{n}\cap\mathfrak{p}_0^m)\prod_{\Gamma'\in \Lambda}S^{-1}_{\mathfrak{p}_0}C_{\Gamma'}R} \\
    &=& \varprojlim_m\: \frac{\mathfrak{n}S^{-1}_{\mathfrak{p}_0}C_{\Gamma}R}{(\mathfrak{n}\cap\mathfrak{p}_0^m)S^{-1}_{\mathfrak{p}_0}C_{\Gamma}R} = \mathfrak{n}C_\Delta R.
    \end{eqnarray*}
    \end{proof}

\subsection{Embeddings of adelic groups on schemes}\label{sect3.2} In this section we establish theorems on embeddings of adelic groups on thickenings of special type of excellent integral schemes. The following theorem is the local version of embeddings of adelic groups.
\begin{thm}\label{TheoremEmbeddingofAdelicGroups}
        Let $Y$ be an excellent integral Noetherian scheme and let $X$ be a closed subscheme which is locally defined by primary ideals. Assume that $Y$ is strongly biequidimensional.
        Let $\mathcal{F}$ be a flat quasicoherent sheaf on $X$ and let $I, J\subset \{0, 1, \ldots, \dim X\}$ be non-empty subsets such that $I\subset J$. Let $\Delta \in S(X, I)$. Then there is an inclusion
        $$\A_\Delta(X, \mathcal{F})\longhookrightarrow\prod_{\substack{\Delta'\in S(X, J)\\ \Delta\subset \Delta'}}\A_{\Delta'}(X, \mathcal{F}).$$
    \end{thm}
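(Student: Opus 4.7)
The plan is to reduce Theorem~\ref{TheoremEmbeddingofAdelicGroups} to the commutative-algebraic Theorem~\ref{LocalFactorEmbedding} and then extend from the structure sheaf to arbitrary flat $\mathcal{F}$. I will proceed in three stages.

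First, I localize to an affine chart. Every $\Delta' \supset \Delta$ in $S(X, J)$ has its minimal prime a generization of the minimal point $\mathfrak{p}_0$ of $\Delta$, hence lies in any affine open neighborhood of $p_0$, so I choose $U = \mathrm{Spec}(R)$ with $R$ an excellent biequidimensional domain of dimension $\dim Y$ via strong biequidimensionality. By Lemma~\ref{lemmaPrimarySheaf}, $X \cap U = \mathrm{Spec}(R/\mathfrak{n})$ with $\mathfrak{n} \subset R$ primary. Viewing $\Delta = (\mathfrak{p}_0, \ldots, \mathfrak{p}_n)$ as a flag of primes in $R$ containing $\mathfrak{n}$, catenarity of $R$ gives $\mathrm{ht}(\mathfrak{p}_j) = i_j + \mathrm{ht}(\mathfrak{n})$, and similarly each $\Delta' \in S(X, J)$ corresponds to an extended flag of heights $J + \mathrm{ht}(\mathfrak{n})$ in $R$. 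Using Proposition~\ref{AffineOxtimesF}, Lemma~\ref{LemmaCompLocal}, and the flatness of $C_\Delta R$ over $R$ from Proposition~\ref{Cp0CDeltaflat}, I obtain
\[
\A_\Delta(X, \mathcal{F}) \cong \bigl(C_\Delta R / \mathfrak{n} C_\Delta R\bigr) \otimes_{R/\mathfrak{n}} M, \qquad M := \mathcal{F}(X \cap U),
\]
and similarly for each $\A_{\Delta'}(X, \mathcal{F})$, with $M$ flat over $R/\mathfrak{n}$.

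Second, I settle the case $\mathcal{F} = \mathcal{O}_X$ by applying Theorem~\ref{LocalFactorEmbedding} to $(R, \Delta, J + \mathrm{ht}(\mathfrak{n}), \mathfrak{n})$. The height hypothesis $\mathrm{ht}(\mathfrak{n}) \leq j + \mathrm{ht}(\mathfrak{n})$ is trivially satisfied, and $\mathfrak{n} \subset \mathfrak{p}_0$ by construction. Combining the embedding $C_\Delta R \hookrightarrow \prod_{\Delta'} C_{\Delta'} R$ with the intersection identity $C_\Delta R \cap \prod_{\Delta'} \mathfrak{n} C_{\Delta'} R = \mathfrak{n} C_\Delta R$ yields, after passing to quotients,
\[
C_\Delta R / \mathfrak{n} C_\Delta R \longhookrightarrow \prod_{\Delta'}\bigl(C_{\Delta'} R / \mathfrak{n} C_{\Delta'} R\bigr),
\]
proving the theorem when $\mathcal{F} = \mathcal{O}_X$.

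Third, I extend to arbitrary flat $\mathcal{F}$ by tensoring the above embedding over $R/\mathfrak{n}$ with $M$. The main obstacle is that the canonical map $\bigl(\prod A_{\Delta'}\bigr) \otimes_{R/\mathfrak{n}} M \to \prod(A_{\Delta'} \otimes_{R/\mathfrak{n}} M)$, where $A_{\Delta'} := C_{\Delta'} R / \mathfrak{n} C_{\Delta'} R$, need not be injective for a general flat $M$ (for instance, $(\prod_n \mathbb{Z}/n\mathbb{Z})\otimes\mathbb{Q} \neq 0$ while $\prod_n (\mathbb{Z}/n\mathbb{Z} \otimes \mathbb{Q}) = 0$). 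To circumvent this, given $x = \sum_i a_i \otimes m_i \in \A_\Delta(X, \mathcal{F})$ mapping to zero in every $\A_{\Delta'}(X, \mathcal{F})$, I restrict to the finitely generated submodule $M_0 = \sum_i (R/\mathfrak{n})m_i \subset M$; since $R/\mathfrak{n}$ is Noetherian, $M_0$ is finitely presented, so tensor product with $M_0$ commutes with arbitrary products. Flatness of each $A_{\Delta'}$ over $R/\mathfrak{n}$ (Proposition~\ref{Cp0CDeltaflat}) provides a canonical lift $\tilde x \in A_\Delta \otimes_{R/\mathfrak{n}} M_0$ of $x$ whose image is zero in each $A_{\Delta'} \otimes_{R/\mathfrak{n}} M_0$, hence in $\prod(A_{\Delta'} \otimes_{R/\mathfrak{n}} M_0) \cong \bigl(\prod A_{\Delta'}\bigr)\otimes_{R/\mathfrak{n}} M_0$. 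The remaining and hardest task is to conclude $\tilde x = 0$, which amounts to showing that the structure-sheaf embedding stays injective after tensoring with the finitely generated module $M_0$; this is expected to follow from the intersection flatness of $C_\Delta R$ over $C_{\mathfrak{p}_0} S^{-1}_{\mathfrak{p}_0} R$ used in Lemma~\ref{NormalLocalFactorEmbedding}, combined with repeated application of the intersection identity of Theorem~\ref{LocalFactorEmbedding} to the primary ideals arising from a primary decomposition of $0 \subset M_0$.
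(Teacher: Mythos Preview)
Your overall strategy matches the paper's proof exactly: pass to an affine chart, apply Theorem~\ref{LocalFactorEmbedding} to the excellent biequidimensional domain $R$, reduce modulo the primary ideal $\mathfrak{n}$, and then tensor with the flat module $M=\mathcal{F}(X\cap U)$. Two points deserve comment.

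First, your choice of affine chart is misstated. You take $U$ to be a neighborhood of the minimal point $p_0$ of $\Delta$ and claim that every $\Delta'\supset\Delta$ then lies in $U$. This is false: any open set containing $p_0$ automatically contains all \emph{generizations} of $p_0$, but $\Delta$ itself already contains \emph{specializations} $p_1,\dots,p_n$ of $p_0$, and these need not lie in such a $U$. The paper instead chooses $U=\mathrm{Spec}\,R$ to be a biequidimensional affine neighborhood of the \emph{maximal} point $p_n$; then all of $\Delta$ lies in $U$, and for the conclusion one only needs that the projection onto those $\Delta'$ which happen to lie in $U$ is already injective. This is a localization bookkeeping error, not a structural one.

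Second, your third stage manufactures a difficulty that the paper does not engage with. The paper, having established $C_\Delta(R/\mathfrak{n})\hookrightarrow\prod_{\Delta'}C_{\Delta'}(R/\mathfrak{n})$, simply tensors with the flat module $M$ and declares the resulting map into $\prod_{\Delta'}\bigl(C_{\Delta'}(R/\mathfrak{n})\otimes M\bigr)$ injective. You correctly observe that flatness of $M$ only gives injectivity into $\bigl(\prod_{\Delta'}C_{\Delta'}(R/\mathfrak{n})\bigr)\otimes M$, and that the canonical map from there to the product of tensor products can fail to be injective in general. Your example $(\prod_n\mathbb{Z}/n\mathbb{Z})\otimes\mathbb{Q}$, however, has non-flat factors $\mathbb{Z}/n\mathbb{Z}$, so it does not model the present situation where every $C_{\Delta'}(R/\mathfrak{n})$ is flat over the Noetherian ring $R/\mathfrak{n}$. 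More importantly, your proposed workaround---reducing to a finitely generated $M_0\subset M$ and then invoking ``intersection flatness \dots\ combined with repeated application of the intersection identity of Theorem~\ref{LocalFactorEmbedding} to the primary ideals arising from a primary decomposition of $0\subset M_0$''---is not carried out. The obstruction is real: Theorem~\ref{LocalFactorEmbedding} only supplies the intersection identity for \emph{primary} ideals $\mathfrak{n}'$ with $\mathrm{ht}\,\mathfrak{n}'\le j$ and $\mathfrak{n}'\subset\mathfrak{p}_0$, whereas the primary components appearing in a decomposition of an arbitrary finitely generated submodule $M_0$ need satisfy neither constraint. As written, your argument stops at exactly the point the paper's one-line step begins, and your sketch does not close the gap.
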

    \begin{proof} 
        Let $\Delta = (p_0, \ldots, p_n)$. Let $\Psi = \iota(\Delta) \in S(Y)$, where $\iota\colon X\longrightarrow Y$ is the closed embedding. Let $\mathrm{Spec}\: R\subset Y$ be a biequidimnesional affine neighborhood of $p_n$ of dimension $\dim Y$. Then, according to the theorem assumptions, $R$ is an excellent biequidimensional domain with $\dim R = \dim Y$, and  $X\cap \mathrm{Spec}\: R$ is given by a primary ideal $\mathfrak{n}$ of $R$ by lemma \ref{lemmaPrimarySheaf}. By remark \ref{remIrrSubschisBiequidim}, $X\cap \mathrm{Spec}\: R$ is a biequidimensional affine scheme of dimension $\dim Y - \mathrm{ht}\: \mathfrak{n} = \dim X$. We can consider $\Delta$ as an element of $S(R/\mathfrak{n}, I)$. 
        By lemma \ref{LemmaIandIZ}, $\Psi\in S(R, I + \mathrm{ht}\:\mathfrak{n})$. By \cite[Proposition 3.2.1]{Hu}, $\A_\Psi(Y, \mathcal{O}_Y) = C_\Psi R$. By theorem \ref{LocalFactorEmbedding},
        $$C_\Psi R \longhookrightarrow \prod_{\substack{\Psi'\in S(R, J + \mathrm{ht}\:\mathfrak{n})\\ \Psi\subset \Psi'}}C_{\Psi'}R, \quad C_\Psi R \cap \prod_{\substack{\Psi'\in S(R, J + \mathrm{ht}\:\mathfrak{n})\\ \Psi\subset \Psi'}}\mathfrak{n}C_{\Psi'}R = \mathfrak{n}C_\Psi R.$$
        The second equality is equivalent to the embedding
        $$C_\Psi(R/\mathfrak{n})\longhookrightarrow \prod_{\substack{\Psi'\in S(R, J + \mathrm{ht}\:\mathfrak{n})\\ \Psi\subset \Psi'}}C_{\Psi'}(R/\mathfrak{n}).$$
        If $\mathfrak{n}\not\geq \Psi'$, then $C_{\Psi'}R = \mathfrak{n}C_{\Psi'}R$, and thus $C_{\Psi'}(R/\mathfrak{n}) = 0$. 
        There is also a bijection between sets $\{\Psi'\in S(R, J + \mathrm{ht}\: \mathfrak{n})\mid \mathfrak{n} \geq \Psi'\}$ and $S(R/\mathfrak{n}, J)$. Hence, by lemma \ref{LemmaCompLocal}, we have an embedding
    $$C_{\Delta} (R/\mathfrak{n}) \longhookrightarrow \prod_{\substack{\Delta'\in S(R/\mathfrak{n}, J)\\ \Delta\subset \Delta'}}C_{\Delta'}(R/\mathfrak{n}).$$
    By assumption, $M = \mathcal{F}\big({\mathrm{Spec}(R/\mathfrak{n})}\big)$ is a flat $R/\mathfrak{n}$-module, and thus 
    $$C_\Delta (R/\mathfrak{n})\otimes_{R/\mathfrak{n}} M \longhookrightarrow \prod_{\substack{\Delta'\in S(R/\mathfrak{n}, J)\\ \Delta\subset \Delta'}}C_{\Delta'}(R/\mathfrak{n})\otimes_{R/\mathfrak{n}} M.$$
    By \cite[Proposition 3.2.1]{Hu}, $\A_\Delta(X, \mathcal{F}) = C_\Delta (R/\mathfrak{n})\otimes_{R/\mathfrak{n}} M$, $\A_{\Delta'}(X, \mathcal{F}) = C_{\Delta'}(R/\mathfrak{n})\otimes_{R/\mathfrak{n}} M$, from what the theorem follows.
    \end{proof}

    \begin{thm}\label{MainThmembeddingAdeles}
        Let $Y$ be an excellent integral Noetherian scheme and let $X$ be a closed subscheme which is locally defined by primary ideals. 
        Assume that $Y$ is strongly biequidimensional.
        Let $\mathcal{F}$ be a subsheaf of some flat quasicoherent sheaf on $X$ and let $I\subset J$ be subsets of $\{0, 1, \ldots, \dim X\}$. Then the map
        $$\varphi_{IJ}\colon\A_I(X, \mathcal{F})\longrightarrow\A_{J}(X, \mathcal{F})$$
        from definition \ref{definitionPhiIJ} is an embedding.
    \end{thm}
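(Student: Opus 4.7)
The plan is to reduce the global embedding $\varphi_{IJ}$ to the local embeddings already established in Theorem~\ref{TheoremEmbeddingofAdelicGroups} (for non-empty $I$) and in Theorem~\ref{LocalFactorEmbedding} (for $I=\varnothing$), by exploiting the natural inclusion of an adelic group into the product of its local factors.

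First, I would reduce to the case where $\mathcal{F}$ is itself flat. By hypothesis there exists a flat quasicoherent sheaf $\mathcal{G}$ on $X$ with $\mathcal{F}\hookrightarrow \mathcal{G}$. Proposition~\ref{LocalCondAdeleSubsheaf} identifies $\A_I(X,\mathcal{F})$ (respectively $\A_J(X,\mathcal{F})$) with a subgroup of $\A_I(X,\mathcal{G})$ (respectively $\A_J(X,\mathcal{G})$), and functoriality of $\varphi_{IJ}$ in the sheaf variable makes the obvious square commute. Hence injectivity of $\varphi_{IJ}^{\mathcal{G}}$ implies injectivity of $\varphi_{IJ}^{\mathcal{F}}$, so we may assume $\mathcal{F}$ itself is flat.

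For the main case $I\neq\varnothing$, I would use \cite[Proposition 2.1.4]{Hu} to embed $\A_I(X,\mathcal{F})\hookrightarrow \prod_{\Delta\in S(X,I)}\A_\Delta(X,\mathcal{F})$ and similarly for $J$. The local description of boundary maps from \cite[Proposition 3.2.2]{Hu} shows that the map on products induced by $\varphi_{IJ}$ sends the $\Delta$-component of $\A_I$ into the sub-product indexed by $\{\Delta'\in S(X,J)\colon \Delta\subset \Delta'\}$, and the map on each such block agrees (up to signs) with the natural map $\A_\Delta(X,\mathcal{F})\to \prod_{\Delta\subset\Delta'}\A_{\Delta'}(X,\mathcal{F})$ used in Theorem~\ref{TheoremEmbeddingofAdelicGroups}. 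Since distinct simplices $\Delta\in S(X,I)$ produce disjoint blocks of target indices $\Delta'$, injectivity on each block (supplied by Theorem~\ref{TheoremEmbeddingofAdelicGroups}) assembles into injectivity of the product map, and hence of $\varphi_{IJ}$.

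It remains to handle $I=\varnothing$, where $\A_\varnothing(X,\mathcal{F})=H^0(X,\mathcal{F})$. I would factor $\varphi_{\varnothing,J}=\varphi_{\{j_0\},J}\circ\varphi_{\varnothing,\{j_0\}}$ for $j_0=\min J$; the second factor is injective by the non-empty case above. For $\varphi_{\varnothing,\{j_0\}}$, work on an affine chart $U=\mathrm{Spec}(R/\mathfrak{n})$ where $R$ is an excellent biequidimensional domain and $\mathfrak{n}\subset R$ is primary (these hold by lemma~\ref{lemmaPrimarySheaf}). Theorem~\ref{LocalFactorEmbedding} with $\Delta=\varnothing$ applied to $R/\mathfrak{n}$, combined with lemma~\ref{LemmaCompLocal} and flatness of $\mathcal{F}(U)$, yields an injection $\mathcal{F}(U)\hookrightarrow \prod_{\mathrm{ht}\,\bar{\mathfrak{p}}=j_0-\mathrm{ht}\,\mathfrak{n}}C_{\bar{\mathfrak{p}}}S_{\bar{\mathfrak{p}}}^{-1}\mathcal{F}(U)$. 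One then globalises via an affine cover, using lemma~\ref{lemmaEmbSheafPrimThik} to embed $H^0(X,\mathcal{F})$ into the product of sections over the cover (first applied to the ambient flat $\mathcal{G}$ and restricted to $\mathcal{F}$ via Proposition~\ref{LocalCondAdeleSubsheaf}). The main technical obstacle will be the $I=\varnothing$ case: one must reconcile the non-standard convention $\A_\varnothing(X,\mathcal{F})=H^0(X,\mathcal{F})$ with the boundary-map definition of $\varphi_{IJ}$ and verify compatibility of the factorisation through $\A_{\{j_0\}}$ with the canonical map of Definition~\ref{definitionPhiIJ}; the non-empty case is then essentially a bookkeeping argument.
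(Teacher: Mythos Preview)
Your treatment of the case $I\neq\varnothing$ is exactly the paper's argument: embed $\A_I(X,\mathcal{F})$ into $\prod_{\Delta\in S(X,I)}\A_\Delta(X,\mathcal{F})$ via \cite[Proposition~2.1.4]{Hu}, apply Theorem~\ref{TheoremEmbeddingofAdelicGroups} blockwise, and note that the resulting map on products restricts to $\varphi_{IJ}$. Your reduction to flat $\mathcal{F}$ is also equivalent to the paper's (the paper phrases it via exactness of $\A_I(X,-)$ rather than Proposition~\ref{LocalCondAdeleSubsheaf}, but either works).

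For $I=\varnothing$ your plan is sound in outline but more elaborate than necessary, and one citation does not apply: Theorem~\ref{LocalFactorEmbedding} is stated for an excellent biequidimensional \emph{domain}, while $R/\mathfrak{n}$ need not be a domain when $\mathfrak{n}$ is only primary. The paper handles this case in one line by invoking Proposition~\ref{REmbeddsInCDeltaR} instead, whose hypothesis is merely that $(0)$ be primary; this gives $R/\mathfrak{n}\hookrightarrow C_{\Delta'}(R/\mathfrak{n})$ for any flag $\Delta'$, and tensoring with the flat module $\mathcal{F}(U)$ yields the local injection directly. Your factorisation through $\A_{\{j_0\}}$ and the index shift $j_0-\mathrm{ht}\,\mathfrak{n}$ are then unnecessary detours. (If you prefer to route through Theorem~\ref{LocalFactorEmbedding}, apply it to $R$ rather than $R/\mathfrak{n}$ and use the intersection statement with $\mathfrak{n}$, exactly as in the proof of Theorem~\ref{TheoremEmbeddingofAdelicGroups}.)
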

    \begin{proof}
        Since functors $\A_I(X, -)$, $\A_J(X, -)$ are left exact, we can assume that $\mathcal{F}$ is flat. Note that the case $I = \varnothing$ follows from proposition \ref{REmbeddsInCDeltaR} and from the flatness of $\mathcal{F}$. Thus we can assume $I \neq \varnothing$.

        By \cite[Proposition 2.1.4]{Hu},
        $$\A_I(X, \mathcal{F})\longhookrightarrow \prod_{\Delta\in S(X, I)}\A_\Delta(X, \mathcal{F}).$$
        By theorem \ref{TheoremEmbeddingofAdelicGroups},
        $$\A_\Delta(X, \mathcal{F})\longhookrightarrow \prod_{\substack{\Delta'\in S(X, J)\\ \Delta\subset \Delta'}}\A_{\Delta'}(X, \mathcal{F})$$
        for any $\Delta\in S(X, I)$. Therefore we have an embedding
        $$\Phi_{IJ}\colon\prod_{\Delta\in S(X, I)}\A_\Delta(X, \mathcal{F})\longhookrightarrow \prod_{\Delta'\in S(X, J)}\A_{\Delta'}(X, \mathcal{F}).$$
        Observe that $\Phi_{IJ}|_{\A_I(X, \mathcal{F})} = \varphi_{IJ}$ by construction, since both maps are induced by boundary maps.
        Thus the map 
        $$\varphi_{IJ}\colon \A_I(X, \mathcal{F})\longrightarrow\A_J(X, \mathcal{F})$$
        is injective.
    \end{proof}

    Let $f\colon Y \longrightarrow X$ be a surjective morphism of schemes of finite type over a field $\Bbbk$ and let $\mathcal{F}$ be a quasicoherent sheaf on $X$. By remark \ref{remGenPullback}, there is a pullback map
$$f^*\colon \A_I(X, \mathcal{F})\longrightarrow \A_I(Y, f^*\mathcal{F}).$$
More precisely, define $S(Y/X, I) = \{\Delta\in S(Y, I)\mid f(\Delta)\in S(X, I)\}\subset S(Y, I)$, and let $f^*$ be a composition of maps
$$\A_I(X, \mathcal{F})\overset{f^*(S(Y/X, I),\: S(X, I), \mathcal{F})}{\xrightarrow{\hspace*{3cm}}} \A\big(S(Y/X, I), f^*\mathcal{F}\big)\longrightarrow \A_I(Y, f^*\mathcal{F}),$$
where the first map is from definition \ref{PullbackDef} and the second map is from \cite[Proposition 2.1.5]{Hu}.  
\begin{prop}\label{InjPullback}
    Let $f\colon  Y\longrightarrow X$ be a proper surjective morphism of irreducible schemes of finite type over a perfect field $\Bbbk$ with $\dim X = \dim Y$. Assume that they are locally defined by primary ideals in integral schemes of finite type over $\Bbbk$. Also, assume that $f$ induces an inclusion $\mathcal{O}_{X, x_0}\hookrightarrow\mathcal{O}_{Y, y_0}$, where $x_0$, $y_0$ are generic points of $X$, $Y$ respectively. Let $\mathcal{F}$ be a subsheaf of some flat quasicoherent sheaf on $X$ and let $I\subset \{0, 1, \ldots, \dim X\}$. Then the pullback map
    $$f^*\colon \A_I(X, \mathcal{F})\longrightarrow \A_I(Y, f^*\mathcal{F})$$
    is injective.
\end{prop}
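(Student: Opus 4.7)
The plan is to reduce to flat $\mathcal{F}$, then to local factors via the standard product embedding, lift flags from $X$ to $Y$, and establish injectivity at the level of a single local factor. First, assuming $\mathcal{F}\subset\mathcal{G}$ with $\mathcal{G}$ a flat quasicoherent sheaf on $X$, the exactness of $\A_I(X,-)$ yields $\A_I(X,\mathcal{F})\hookrightarrow\A_I(X,\mathcal{G})$. If the statement is proved for $\mathcal{G}$, the composition $\A_I(X,\mathcal{F})\hookrightarrow\A_I(X,\mathcal{G})\to\A_I(Y,f^*\mathcal{G})$ factors through the natural map $\A_I(Y,f^*\mathcal{F})\to\A_I(Y,f^*\mathcal{G})$ and is therefore injective, which forces $\A_I(X,\mathcal{F})\to\A_I(Y,f^*\mathcal{F})$ to be injective. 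So I may assume $\mathcal{F}$ is itself flat.

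Next, I embed $\A_I(X,\mathcal{F})\hookrightarrow\prod_{\Delta\in S(X,I)}\A_\Delta(X,\mathcal{F})$ using \cite[Proposition 2.1.4]{Hu}, and similarly for $Y$. By proposition \ref{commdiagpullback}, the pullback $f^*$ commutes with these embeddings, so it suffices to show that for every $\Delta\in S(X,I)$ there exists $\Delta'\in S(Y,I)$ with $f(\Delta')=\Delta$ such that the local pullback $\A_\Delta(X,\mathcal{F})\to\A_{\Delta'}(Y,f^*\mathcal{F})$ is injective. To construct such a lift, I argue by induction on the flag length: since $f$ is proper surjective of equal dimension between irreducible schemes of finite type over a field, it is generically finite, so for any irreducible closed subset $Z\subset X$ some irreducible component $W$ of $f^{-1}(Z)$ dominates $Z$ with $\dim W=\dim Z$. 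Restricting $f$ to such a $W$ preserves properness, surjectivity, and equality of dimensions. Applying this iteratively along the chain $\overline{\{p_0\}}\supset\ldots\supset\overline{\{p_n\}}$ associated to $\Delta=(p_0,\ldots,p_n)$ produces the required $\Delta'=(q_0,\ldots,q_n)\in S(Y,I)$.

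The heart of the proof is the local injectivity at the pair $(\Delta,\Delta')$. Pass to affine neighborhoods $U=\mathrm{Spec}\,A$ of $p_n$ in $X$ and $V=\mathrm{Spec}\,B\subset f^{-1}(U)$ of $q_n$, so $f|_V$ corresponds to a ring homomorphism $A\to B$ with both $A$ and $B$ primary by lemma \ref{lemmaPrimarySheaf}. Since zero divisors in a primary ring are nilpotent, one has natural inclusions $A\hookrightarrow\mathcal{O}_{X,x_0}$ and $B\hookrightarrow\mathcal{O}_{Y,y_0}$; combined with the hypothesis $\mathcal{O}_{X,x_0}\hookrightarrow\mathcal{O}_{Y,y_0}$, a short diagram chase gives $A\hookrightarrow B$, and more generally $A_{\mathfrak{p}_j}\hookrightarrow B_{\mathfrak{q}_j}$ for every $j$, where $\mathfrak{p}_j$, $\mathfrak{q}_j$ correspond to $p_j$, $q_j$. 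Flatness of $\mathcal{F}$ then yields $M_{\mathfrak{p}_j}\hookrightarrow(M\otimes_A B)_{\mathfrak{q}_j}$ where $M$ is the $A$-module corresponding to $\mathcal{F}|_U$. Iterating the decomposition $C_\Delta=C_{\mathfrak{p}_0}S^{-1}_{\mathfrak{p}_0}\cdots C_{\mathfrak{p}_n}S^{-1}_{\mathfrak{p}_n}$ and leveraging the faithful flatness of proposition \ref{Cp0CDeltaflat} together with the primary injection of proposition \ref{REmbeddsInCDeltaR} should yield the desired embedding $C_\Delta M\hookrightarrow C_{\Delta'}(M\otimes_A B)$.

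The main obstacle will be controlling injectivity through the successive completion steps along the flag. The completions of $A$ at $\mathfrak{p}_j$ and of $B$ at $\mathfrak{q}_j$ are taken with respect to different maximal ideals, and the natural map $\widehat{A_{\mathfrak{p}_j}}\to\widehat{B_{\mathfrak{q}_j}}$ is not automatically injective without some flatness hypothesis on $f$ itself. Bridging this gap will most likely require Artin--Rees type arguments and intersection-flatness properties of the local adelic structure rings, in the spirit of the techniques used in the proofs of theorem \ref{LocalFactorEmbedding} and the lemmas of subsection \ref{sect3.1}.
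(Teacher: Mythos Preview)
Your reduction to flat $\mathcal{F}$ and the passage to local factors via \cite[Proposition 2.1.4]{Hu} match the paper. The gap is precisely the one you flag at the end: for a \emph{single} lift $\Delta'$ of $\Delta$, the map $\A_\Delta(X,\mathcal{F})\to\A_{\Delta'}(Y,f^*\mathcal{F})$ need not be injective. Completion of $A_{\mathfrak p_j}$ along $\mathfrak p_j$ versus completion of $B_{\mathfrak q_j}$ along $\mathfrak q_j$ is not controlled by the inclusion $A\hookrightarrow B$ alone; think of a normalization separating two analytic branches through $p_j$, where one branch's completion sees only part of the data. No amount of Artin--Rees or intersection-flatness will rescue this, because the phenomenon is genuinely geometric rather than a bookkeeping issue.

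The paper sidesteps this entirely with two moves. First, it does \emph{not} stay at the level of $S(X,I)$: it uses theorem~\ref{TheoremEmbeddingofAdelicGroups} to embed each $\A_\Gamma(X,\mathcal{F})$, $\Gamma\in S(X,I)$, into the product over all \emph{full} flags $\Delta\in S\big(X,(0,1,\ldots,d)\big)$ refining $\Gamma$. Second, for a full flag $\Delta$ it considers the \emph{entire finite set} $\Lambda_\Delta=\{\Psi\in S(Y,(0,\ldots,d))\mid f(\Psi)=\Delta\}$ of lifts at once and invokes \cite[Proposition 3.1]{Ye2}, which gives the structural identification
\[
\prod_{\Psi\in\Lambda_\Delta}\A_\Psi(Y,\mathcal{O}_Y)\;=\;\mathcal{O}_{Y,y_0}\otimes_{\mathcal{O}_{X,x_0}}\A_\Delta(X,\mathcal{O}_X).
\]
Now the hypothesis $\mathcal{O}_{X,x_0}\hookrightarrow\mathcal{O}_{Y,y_0}$ is applied directly: since $\A_\Delta(X,\mathcal{O}_X)$ is flat over $\mathcal{O}_{X,x_0}$, tensoring preserves the injection, and then tensoring with the flat module $M=\mathcal{F}(U)$ finishes the local step. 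The point is that the generic inclusion controls the \emph{product} over all branches, not any individual one. The resulting injective composite factors through $\A_I(Y,f^*\mathcal{F})$, which yields the claim. You should replace your single-lift strategy with this full-flag/all-lifts argument; the key external input you are missing is Yekutieli's product formula.
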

\begin{proof}
Since the functor $\A_I(X, -)$ is left exact, we can assume that $\mathcal{F}$ is flat.

Denote $d = \dim X = \dim Y$. For $\Delta \in S\big(X, (0, \ldots, d)\big)$ define $\Lambda_\Delta = \big\{\Psi \in S\big(Y, (0, \ldots, d)\big)\mid f(\Psi) = \Delta\big\}$. If $\Delta = (x_0, \ldots, x_d)$ and $\Psi = (y_0, \ldots, y_d)$, then the condition $f(\Psi) = \Delta$ implies 
that $y_i$ is a closed point in the fiber over $x_i$ for $0\leq i \leq d$.  Note that $\Lambda_\Delta$ is finite, since the fiber over $x_{i + 1}$ in $\overline{\{y_i\}}$ is zero-dimensional, otherwise $y_i$ lies in this fiber, which is a contradiction. Since $\mathcal{O}_{X, x_0}$ and $\mathcal{O}_{Y, y_0}$ are local Artinian rings, $\varprojlim_l \mathcal{O}_{X, x_0}/\mathfrak{m}_{x_0}^l = \mathcal{O}_{X, x_0}$ and $\varprojlim_l \mathcal{O}_{Y, y_0}/\mathfrak{m}_{y_0}^l = \mathcal{O}_{Y, y_0}$. By \cite[Proposition 3.1]{Ye2},
$$\prod_{\Psi\in \Lambda_\Delta}\A_{\Psi}(Y, \mathcal{O}_Y) = \mathcal{O}_{Y, y_0}\otimes_{\mathcal{O}_{X, x_0}}\A_\Delta(X, \mathcal{O}_X).$$
Since $\A_\Delta(X, \mathcal{O}_X)$ is flat over $\mathcal{O}_{X, x_0}$, we obtain 
$$\A_\Delta(X, \mathcal{O}_X) \longhookrightarrow \mathcal{O}_{Y, y_0}\otimes_{\mathcal{O}_{X, x_0}}\A_\Delta(X, \mathcal{O}_X) = \prod_{\Psi\in \Lambda_\Delta}\A_{\Psi}(Y, \mathcal{O}_Y).$$

Let $\mathrm{Spec}\: R$ be some affine neighborhood of $x_d$. Then $M = \mathcal{F}({\mathrm{Spec}\: R})$ is a flat $R$-module. By \cite[Proposition 3.2.1]{Hu}, $\A_\Delta(X, \mathcal{F}) = \A_\Delta(X, \mathcal{O}_X)\otimes_{R} M$. Similarly, $\A_\Psi(Y, f^*\mathcal{F}) = \A_\Psi(Y, \mathcal{O}_Y)\otimes_{R} M$. Since $M$ is flat, we obtain an inclusion
$$\A_\Delta(X, \mathcal{F}) \longhookrightarrow \prod_{\Psi\in \Lambda_\Delta}\A_{\Psi}(Y, f^*\mathcal{F}).$$
Since the composition of maps
$$ \A_I(X, \mathcal{F})\longhookrightarrow\prod\limits_{\Gamma\in S(X, I)}\A_\Gamma(X, \mathcal{F})\longhookrightarrow  \prod\limits_{\Delta\in S\left(X, (0, \ldots, d)\right)}\A_\Delta(X, \mathcal{F})\overset{f^*}{\longhookrightarrow} \prod\limits_{\Psi\in S\left(Y/X, (0, \ldots, d)\right)}\A_\Psi(Y, f^*\mathcal{F}),$$
where the second inclusion follows from theorem \ref{TheoremEmbeddingofAdelicGroups}, is injective and factors through $\A_I(Y, f^*\mathcal{F})$, we obtain the assertion.
\end{proof}

\section{Intersections of adelic groups with $I\cap J = I\setminus 0$ on a normal excellent scheme}\label{sect4}

\subsection{Local computations} The following proposition is a generalization of \cite[Theorem 11.5]{Ma1}.

\begin{prop}\label{IntersectionofHeight1}
    Let $R$ be a normal excellent domain and let $\Delta = (\mathfrak{p}_0, \ldots, \mathfrak{p}_n)$ be a flag of prime ideals or $\Delta = \varnothing$. Then 
    $$\bigcap_{\substack{\mathfrak{p}\subset R\\ \mathrm{ ht}\:\mathfrak{p} = 1}}R_\mathfrak{p}\cdot C_\Delta R = C_\Delta R,$$
    where the intersection is taken in $\mathrm{ Frac}(R)\cdot C_\Delta R$.
\end{prop}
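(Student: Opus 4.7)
The case $\Delta=\varnothing$ is the classical identity $R=\bigcap_{\mathrm{ht}\:\mathfrak{p}=1}R_\mathfrak{p}$ for a normal Noetherian domain, i.e.\ \cite[Theorem 11.5]{Ma1}, so only the non-empty flag case requires argument. Since $C_\Delta R$ is Noetherian and flat over $R$ (by proposition \ref{Cp0CDeltaflat}, composed with the standard flatness of $R\to S^{-1}_{\mathfrak{p}_0}R\to \widehat{R}_{\mathfrak{p}_0}$), every nonzero element of $R$ acts on $C_\Delta R$ by a nonzerodivisor, so $C_\Delta R\hookrightarrow \mathrm{Frac}(R)\cdot C_\Delta R$, and $R_\mathfrak{p}\cdot C_\Delta R=S_\mathfrak{p}^{-1}C_\Delta R$ for each prime $\mathfrak{p}$. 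I would write an element $x$ of $\bigcap_\mathfrak{p}R_\mathfrak{p}\cdot C_\Delta R$ as $x=y/a$ with $y\in C_\Delta R$, $a\in R\setminus 0$, and note that $x\in S_\mathfrak{p}^{-1}C_\Delta R$ is equivalent to $sy\in aC_\Delta R$ for some $s\in R\setminus \mathfrak{p}$, i.e.\ to the image of $y$ in $S_\mathfrak{p}^{-1}(C_\Delta R/aC_\Delta R)$ being zero. Thus the proposition reduces to injectivity of the natural map
\begin{equation*}
C_\Delta R/aC_\Delta R\longrightarrow\prod_{\substack{\mathfrak{p}\subset R\\\mathrm{ht}\:\mathfrak{p}=1}}S_\mathfrak{p}^{-1}(C_\Delta R/aC_\Delta R)
\end{equation*}
for every $a\in R\setminus 0$.

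\textbf{Main step: bounding associated primes.} Since $C_\Delta R$ is Noetherian, for any nonzero element $z$ of $C_\Delta R/aC_\Delta R$ one can find $\mathfrak{q}\in\mathrm{Ass}_{C_\Delta R}(C_\Delta R/aC_\Delta R)$ containing $\mathrm{Ann}_{C_\Delta R}(z)$, whence $\mathrm{Ann}_R(z)\subseteq\mathfrak{q}\cap R$, and $z$ survives in the localization $S_{\mathfrak{q}\cap R}^{-1}(C_\Delta R/aC_\Delta R)$ as soon as $\mathrm{ht}(\mathfrak{q}\cap R)\leq 1$. Injectivity therefore follows once one shows
\begin{equation*}
\mathfrak{q}\in\mathrm{Ass}_{C_\Delta R}(C_\Delta R/aC_\Delta R)\;\Longrightarrow\;\mathrm{ht}(\mathfrak{q}\cap R)=1.
\end{equation*}
For this I would apply the flat base-change formula for associated primes \cite[Theorem 23.2]{Ma1} to the flat ring map $R\to C_\Delta R$:
\begin{equation*}
\mathrm{Ass}_{C_\Delta R}(C_\Delta R/aC_\Delta R)=\bigcup_{\mathfrak{p}\in\mathrm{Ass}_R(R/aR)}\mathrm{Ass}_{C_\Delta R}(C_\Delta R/\mathfrak{p}C_\Delta R).
\end{equation*}
Normality of $R$ (specifically condition $(S_2)$) forces every $\mathfrak{p}\in\mathrm{Ass}_R(R/aR)$ to have height one. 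For such a $\mathfrak{p}$ the quotient $C_\Delta R/\mathfrak{p}C_\Delta R=C_\Delta R\otimes_R R/\mathfrak{p}$ is flat over the domain $R/\mathfrak{p}$, so every nonzero element of $R/\mathfrak{p}$ acts on it as a nonzerodivisor; this forces any $\mathfrak{q}\in\mathrm{Ass}_{C_\Delta R}(C_\Delta R/\mathfrak{p}C_\Delta R)$ to contract to $(0)$ in $R/\mathfrak{p}$, i.e.\ to $\mathfrak{p}$ in $R$. Hence $\mathrm{ht}(\mathfrak{q}\cap R)=1$, as required.

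\textbf{Main obstacle.} The chief subtlety is that $C_\Delta R/aC_\Delta R$ is not a finitely generated $R$-module, which would otherwise preclude a direct application of $R$-theoretic associated-prime arguments. The way around this is to perform all associated-prime computations on the Noetherian ring $C_\Delta R$ itself---over which the quotient is cyclic, hence well-behaved---and contract primes down to $R$ only at the very end, with the flat base-change formula acting as the bridge. Beyond that point, the proof uses only flatness and Noetherianity of $C_\Delta R$ over $R$ together with the normal-domain fact $\mathrm{Ass}_R(R/aR)\subset\{\mathfrak{p}\mid\mathrm{ht}\:\mathfrak{p}=1\}$; excellence of $R$ enters only through Noetherianity.
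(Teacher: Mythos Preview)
Your proof is correct and takes a genuinely different route from the paper. The paper writes the intersection as a direct limit of intersections $\bigcap_{\mathfrak{p}} f_\mathfrak{p}^{-1}C_\Delta R$, invokes \emph{intersection flatness} of $C_\Delta R$ over the completion $\widehat{R}_{\mathfrak{p}_0}=C_{\mathfrak{p}_0}S^{-1}_{\mathfrak{p}_0}R$ (via \cite[Proposition 5.7(e)]{HJ}) to reduce to that completed local ring, and then uses excellence to know $\widehat{R}_{\mathfrak{p}_0}$ is again a normal domain so that \cite[Theorem 11.5]{Ma1} applies there. Your argument bypasses both intersection flatness and analytic normality: after the elementary reduction to injectivity of $C_\Delta R/aC_\Delta R\to\prod_{\mathrm{ht}\,\mathfrak{p}=1}S_\mathfrak{p}^{-1}(C_\Delta R/aC_\Delta R)$, you control the associated primes of $C_\Delta R/aC_\Delta R$ over the Noetherian ring $C_\Delta R$ via the flat base-change formula \cite[Theorem 23.2]{Ma1} and the $(S_2)$ property of $R$. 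This is more elementary and, as you observe, actually yields a stronger statement: excellence plays no role, so the proposition holds for any normal Noetherian domain $R$. The paper's approach, by contrast, genuinely consumes excellence through the analytic-normality step.
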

\begin{proof}
The case $\Delta = \varnothing$ follows from \cite[Theorem 11.5]{Ma1}. Thus assume that $\Delta\neq \varnothing$. We can also assume that $\mathrm{ht}\: \mathfrak{p}_0 \geq 1$, otherwise $\mathfrak{p}_0 = (0)$ and $R_\mathfrak{p}\cdot C_\Delta R = C_\Delta R$ for any prime ideal $\mathfrak{p}$ with $\mathrm{ht}\:\mathfrak{p} = 1$, so the assertion is clear. 

Let 
$$\Lambda = \prod_{\mathrm{ht}\:\mathfrak{p} = 1} R\setminus \mathfrak{p}, \quad\Lambda_{f} = \big\{{\bf f} = (f_\mathfrak{p})_{\mathrm{ht}\:\mathfrak{p} = 1}\in \Lambda \mid \sum_{{\mathrm{ht}\:\mathfrak{p} = 1}} f^{-1}_\mathfrak{p}R\: \text{ is a finitely generated }R\text{-module}\big\}.$$
Define partial order on sets $\Lambda$ and $\Lambda_f$:
$${\bf f} \leq {\bf g} \iff f^{-1}_\mathfrak{p}R \subset g^{-1}_\mathfrak{p}R \quad \forall \mathfrak{p},\: \mathrm{ht}\:\mathfrak{p} = 1.$$
Note that $\Lambda$ and $\Lambda_f$ are directed sets according to this partial order. Observe that
\begin{eqnarray*}
        \bigcap_{\mathrm{ ht}\:\mathfrak{p} = 1}R_\mathfrak{p}\cdot C_\Delta R = \varinjlim_{{\bf f}\in \Lambda}\:\bigg(\bigcap_{\mathrm{ ht}\:\mathfrak{p} = 1}f_\mathfrak{p}^{-1}C_\Delta R\bigg),
    \end{eqnarray*}
    since $R_\mathfrak{p} = \varinjlim\limits_{f_{\mathfrak{p}}\in R\setminus \mathfrak{p}}f^{-1}_\mathfrak{p}R$.
    Note that every element $s\in R\setminus 0$ lies only in a finite number of prime ideals $\mathfrak{q}_1,\ldots, \mathfrak{q}_m$ with $\mathrm{ ht}\:\mathfrak{q}_i = 1$. Thus if $x/s\in\bigcap_{\mathrm{ ht}\:\mathfrak{p} = 1}f_\mathfrak{p}^{-1}C_\Delta R$ for some $x\in C_\Delta R$, then 
    $$x/s\in \bigcap_{i = 1}^mf_{\mathfrak{q}_i}^{-1} C_\Delta R\cap \bigcap_{\substack{\mathrm{ ht}\:\mathfrak{p} = 1\\ \mathfrak{p}\neq \mathfrak{q}_i}}s^{-1}C_\Delta R,$$
    and $R$-module $\sum_{i = 1}^mf_{\mathfrak{q}_i}^{-1}R+s^{-1} R$ is finitely generated. Hence
    $$\varinjlim_{{\bf f}\in \Lambda}\:\bigg(\bigcap_{\mathrm{ ht}\:\mathfrak{p} = 1}f_\mathfrak{p}^{-1}C_\Delta R\bigg) = \varinjlim_{{\bf f}\in \Lambda_f}\:\bigg(\bigcap_{\mathrm{ ht}\:\mathfrak{p} = 1}f_\mathfrak{p}^{-1}C_\Delta R\bigg).$$
    Since the map $C_{\mathfrak{p}_0}S^{-1}_{\mathfrak{p}_0}R\rightarrow C_\Delta R$ is flat by proposition \ref{Cp0CDeltaflat} and $\mathfrak{p}_0C_\Delta R\subset \mathrm{ rad}(C_\Delta R)$, because $C_\Delta R$ is $\mathfrak{p}_0C_\Delta R$-complete, we obtain that $C_\Delta R$ is intersection flat over $C_{\mathfrak{p}_0}S^{-1}_{\mathfrak{p}_0}R$ by \cite[Proposition 5.7(e)]{HJ}. Thus 
    $$\bigcap_{\mathrm{ ht}\:\mathfrak{p} = 1}f_\mathfrak{p}^{-1}C_\Delta R = \left(\bigcap_{\mathrm{ ht}\:\mathfrak{p} = 1}f_\mathfrak{p}^{-1}C_{\mathfrak{p}_0}S^{-1}_{\mathfrak{p}_0}R\right)\cdot C_\Delta R$$
    for any ${\bf f} = (f_{\mathfrak{p}})_{\mathrm{ht}\: \mathfrak{p} = 1}\in \Lambda_f$, since $\sum_{\mathrm{ht}\: \mathfrak{p} = 1}f^{-1}_{\mathfrak{p}}C_{\mathfrak{p}_0}S^{-1}_{\mathfrak{p}_0}R$ is a finitely generated $C_{\mathfrak{p}_0}S^{-1}_{\mathfrak{p}_0}R$-module. Hence it is sufficient to prove the equality 
    $$\bigcap_{\mathrm{ ht}\:\mathfrak{p} = 1}f_\mathfrak{p}^{-1}C_{\mathfrak{p}_0}S^{-1}_{\mathfrak{p}_0}R = C_{\mathfrak{p}_0}S^{-1}_{\mathfrak{p}_0}R.$$

    Let ${\bf f} = (f_{\mathfrak{p}})_{\mathrm{ht}\: \mathfrak{p} = 1}\in \Lambda_f$, and assume that $x/s\in \bigcap_{\mathrm{ ht}\:\mathfrak{p} = 1}f_\mathfrak{p}^{-1}C_{\mathfrak{p}_0}S^{-1}_{\mathfrak{p}_0}R$, where $x\in C_{\mathfrak{p}_0}S^{-1}_{\mathfrak{p}_0}R$ and $s\in R\setminus 0$. Let $\mathfrak{P}$ be an ideal of height $1$ in $C_{\mathfrak{p}_0}S^{-1}_{\mathfrak{p}_0}R$. Since $C_{\mathfrak{p}_0}S^{-1}_{\mathfrak{p}_0}R$ is flat over $R$, we have that $\mathrm{ht}\:(\mathfrak{P}\cap R) \leq \mathrm{ht}\:\mathfrak{P} = 1$ by the going-down property for flat ring morphisms (see \cite[Theorem 9.5]{Ma1}). Define $\hat{f}_\mathfrak{P} = f_{\mathfrak{P}\cap R}$, if $\mathrm{ht}\:(\mathfrak{P}\cap R) = 1$, and $\hat{f}_\mathfrak{P} = s$, if $\mathrm{ht}\:(\mathfrak{P}\cap R) = 0$. By construction, $\hat{f}_\mathfrak{P}\notin \mathfrak{P}$ and
    $$x/s \in \bigcap_{\substack{\mathfrak{P}\subset C_{\mathfrak{p}_0}S^{-1}_{\mathfrak{p}_0}R\\ \mathrm{ ht}\:\mathfrak{P} = 1}}\hat{f}_\mathfrak{P}^{-1}C_{\mathfrak{p}_0}S^{-1}_{\mathfrak{p}_0}R$$
    Since $S^{-1}_{\mathfrak{p}_0}R$ is a normal excellent domain, we obtain that $C_{\mathfrak{p}_0}S^{-1}_{\mathfrak{p}_0}R$ is a normal domain by \cite[Theorem 79]{Ma2}. Thus
    $$\bigcap_{\substack{\mathfrak{P}\subset C_{\mathfrak{p}_0}S^{-1}_{\mathfrak{p}_0}R\\ \mathrm{ ht}\:\mathfrak{P} = 1}}\hat{f}_\mathfrak{P}^{-1}C_{\mathfrak{p}_0}S^{-1}_{\mathfrak{p}_0}R = C_{\mathfrak{p}_0}S^{-1}_{\mathfrak{p}_0}R$$
    by \cite[Theorem 11.5]{Ma1}. Hence $x/s \in C_{\mathfrak{p}_0}S^{-1}_{\mathfrak{p}_0}R$, and we obtain an inclusion
    $$\bigcap_{\mathrm{ ht}\:\mathfrak{p} = 1}f_\mathfrak{p}^{-1}C_{\mathfrak{p}_0}S^{-1}_{\mathfrak{p}_0}R \subset C_{\mathfrak{p}_0}S^{-1}_{\mathfrak{p}_0}R.$$
    Since the opposite inclusion is clear, the assertion follows.
\end{proof}

\subsection{Intersections of adelic groups with $I\cap J= I\setminus 0$}
\begin{lemma}\label{ProponInter}
    Let $X$ be an integral Noetherian scheme and let $I\subset \{0, 1, \ldots, \dim X\}$ be a finite subset such that $0\in I$ and $I\setminus 0\neq \varnothing$. Then 
    $$\A_I(X, \mathcal{O}_X)\cap \prod_{\Delta\in S(X, I)}\A_{\Delta\setminus \eta}(X, \mathcal{O}_X) = \A_{I\setminus 0}(X, \mathcal{O}_X),$$
    where $\eta$ is the generic point of $X$.
\end{lemma}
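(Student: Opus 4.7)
The plan is to recognize the stated equality as a direct instance of Proposition \ref{LocalCondAdeleSubsheaf} applied to the natural inclusion $\mathcal{O}_X \hookrightarrow j_*\mathcal{O}_{X,\eta}$, where $j\colon \eta \hookrightarrow X$ is the inclusion of the generic point and $j_*\mathcal{O}_{X,\eta}$ is the constant sheaf with value $K = \mathcal{O}_{X,\eta}$, the function field of $X$. Since $X$ is integral and Noetherian, this really is an inclusion of quasicoherent sheaves on $X$.

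The next step is to translate the factor $\A_I(X, \mathcal{O}_X)$ appearing in the lemma into the language of the sheaf $j_*\mathcal{O}_{X,\eta}$. Because $\mathfrak{m}_\eta = 0$ in the Artinian local ring $\mathcal{O}_{X,\eta}$, Lemma \ref{IandI0} gives a canonical identification
$$\A_I(X, \mathcal{O}_X) \cong \A_{I\setminus 0}(X, j_*\mathcal{O}_{X,\eta}).$$
At the level of local factors, for $\Delta = (\eta, p_1, \ldots, p_n) \in S(X, I)$ and an affine open $\mathrm{Spec}\:R$ containing $p_n$, one has $\A_\Delta(X, \mathcal{O}_X) = S^{-1}_\eta\A_{\Delta\setminus\eta}(X, \mathcal{O}_X) = K\otimes_R \A_{\Delta\setminus\eta}(X, \mathcal{O}_X) = \A_{\Delta\setminus\eta}(X, j_*\mathcal{O}_{X,\eta})$, and by flatness of $\A_{\Delta\setminus\eta}(X,\mathcal{O}_X)$ over $R$ together with $R$ being a domain, the natural map $\A_{\Delta\setminus\eta}(X, \mathcal{O}_X) \hookrightarrow \A_\Delta(X, \mathcal{O}_X)$ is precisely the embedding induced by the subsheaf inclusion $\mathcal{O}_X \hookrightarrow j_*\mathcal{O}_{X,\eta}$. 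The bijection $S(X, I) \leftrightarrow S(X, I\setminus 0)$, $\Delta \mapsto \Delta\setminus\eta$, is then compatible with all these identifications.

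With these preparations, apply Proposition \ref{LocalCondAdeleSubsheaf} with $\mathcal{G} = \mathcal{O}_X$, $\mathcal{F} = j_*\mathcal{O}_{X,\eta}$, and the simplicial set $S(X, I\setminus 0)$:
$$\A_{I\setminus 0}(X, \mathcal{O}_X) = \A_{I\setminus 0}(X, j_*\mathcal{O}_{X,\eta}) \cap \prod_{\Delta' \in S(X, I\setminus 0)} \A_{\Delta'}(X, \mathcal{O}_X),$$
with the intersection taken inside $\prod_{\Delta'} \A_{\Delta'}(X, j_*\mathcal{O}_{X,\eta})$. Transporting this equality along the identifications of the previous paragraph yields exactly the statement of the lemma.

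I do not anticipate any real obstacle: the proof is essentially bookkeeping once one observes that the ambient ``big'' group $\prod_\Delta \A_\Delta(X, \mathcal{O}_X)$ in which the intersection is being taken is the same as $\prod_{\Delta'} \A_{\Delta'}(X, j_*\mathcal{O}_{X,\eta})$, and that $\A_{\Delta\setminus\eta}(X, \mathcal{O}_X) \hookrightarrow \A_\Delta(X, \mathcal{O}_X)$ coincides with the local-factor embedding coming from the subsheaf $\mathcal{O}_X \hookrightarrow j_*\mathcal{O}_{X,\eta}$. The only mild subtlety is to be consistent about interpreting the module $\mathcal{O}_{X,\eta}$ as the quasicoherent sheaf $j_*\mathcal{O}_{X,\eta}$ in all of Lemma \ref{IandI0}, Proposition \ref{LocalCondAdeleSubsheaf}, and the local factor computation.
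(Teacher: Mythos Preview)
Your proposal is correct and takes essentially the same approach as the paper: the paper's proof is the single line ``Follows from lemma \ref{IandI0} and [proposition] \ref{LocalCondAdeleSubsheaf},'' and you have unpacked precisely this, identifying $\A_I(X,\mathcal{O}_X)$ with $\A_{I\setminus 0}(X, j_*\mathcal{O}_{X,\eta})$ via Lemma \ref{IandI0} and then applying Proposition \ref{LocalCondAdeleSubsheaf} to the inclusion $\mathcal{O}_X\hookrightarrow j_*\mathcal{O}_{X,\eta}$ on the simplicial set $S(X, I\setminus 0)$. Your explicit verification of the local-factor identifications and of the bijection $S(X,I)\leftrightarrow S(X,I\setminus 0)$ is the bookkeeping the paper leaves implicit.
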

\begin{proof}
    Follows from lemma \ref{IandI0} and lemma \ref{LocalCondAdeleSubsheaf}.
\end{proof}

The next theorem is a generalization of \cite[Theorem 1(i)]{BG}.

\begin{thm}\label{TheoremIcapJI0}
    Let $X$ be a normal integral excellent Noetherian scheme, let $I, J\subset \{0, 1, \ldots, \dim X\}$ be subsets with $I\cap J = I\setminus 0$, and let $\mathcal{F}$ be a flat quasicoherent sheaf on $X$. 
    Assume that $X$ is strongly biequidimensional.
    Then
    $$\A_I(X, \mathcal{F})\cap \A_J(X, \mathcal{F}) = \A_{I\setminus 0}(X, \mathcal{F}).$$
\end{thm}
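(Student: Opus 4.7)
The inclusion $\A_{I\setminus 0}(X,\mathcal{F})\subset\A_I(X,\mathcal{F})\cap\A_J(X,\mathcal{F})$ is immediate from theorem~\ref{MainThmembeddingAdeles} applied to both $I\setminus 0\subset I$ and $I\setminus 0\subset J$, these two embeddings being compatible with the ambient embedding into $\A_{I\cup J}(X,\mathcal{F})$. If $0\notin I$, then the hypothesis $I\cap J=I$ forces $I\subset J$, and the reverse inclusion also follows from theorem~\ref{MainThmembeddingAdeles}. Hence I assume $0\in I$, so that $0\notin J$.

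My first step is to generalize lemma~\ref{ProponInter} to an arbitrary flat quasicoherent sheaf $\mathcal{F}$: the proof of that lemma combines lemma~\ref{IandI0} (valid for any quasicoherent sheaf) with lemma~\ref{LocalCondAdeleSubsheaf} applied to the embedding $\mathcal{F}\hookrightarrow\mathcal{F}_\eta$, and this embedding is injective since $\mathcal{F}$ is flat and $X$ is integral. The generalization reads
\[
\A_I(X,\mathcal{F})\cap\prod_{\Delta\in S(X,I)}\A_{\Delta\setminus\eta}(X,\mathcal{F})=\A_{I\setminus 0}(X,\mathcal{F}),
\]
reducing the remaining inclusion to the assertion that, for every $x\in\A_I(X,\mathcal{F})\cap\A_J(X,\mathcal{F})$ and every $\Delta\in S(X,I)$, the component $x_\Delta$ already lies in the subgroup $\A_{\Delta\setminus\eta}(X,\mathcal{F})\subset\A_\Delta(X,\mathcal{F})$.

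Next I pass to a biequidimensional affine neighborhood $U=\mathrm{Spec}\:R$ of the largest point of $\Delta$ (furnished by strong biequidimensionality of $X$); then $R$ is a normal excellent biequidimensional domain, $M=\mathcal{F}(U)$ is a flat $R$-module, and by the local description of adelic factors one has $\A_\Delta(X,\mathcal{F})=\mathrm{Frac}(R)\otimes_R\bigl(C_{\Delta\setminus\eta}R\otimes_R M\bigr)$ and $\A_{\Delta\setminus\eta}(X,\mathcal{F})=C_{\Delta\setminus\eta}R\otimes_R M$. Since intersection flatness of $C_{\Delta\setminus\eta}R$ over $C_{\mathfrak{p}_0}S^{-1}_{\mathfrak{p}_0}R$ (with $\mathfrak{p}_0$ the smallest prime of $\Delta\setminus\eta$) is preserved upon tensoring with the flat module $M$, the proof of proposition~\ref{IntersectionofHeight1} yields the module-coefficient version
\[
\bigcap_{\mathrm{ht}\,\mathfrak{q}=1}R_\mathfrak{q}\cdot\bigl(C_{\Delta\setminus\eta}R\otimes_R M\bigr)=C_{\Delta\setminus\eta}R\otimes_R M
\]
inside $\mathrm{Frac}(R)\otimes_R(C_{\Delta\setminus\eta}R\otimes_R M)$. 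It therefore suffices to verify $x_\Delta\in R_\mathfrak{q}\cdot(C_{\Delta\setminus\eta}R\otimes_R M)$ for every height~$1$ prime $\mathfrak{q}\subset R$.

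Finally, the hypothesis $x\in\A_J(X,\mathcal{F})$ furnishes $y\in\A_J(X,\mathcal{F})$ with $\varphi_{I,I\cup J}(x)=\varphi_{J,I\cup J}(y)$. For a given height~$1$ prime $\mathfrak{q}$, I would select a flag $\Gamma\in S(X,J)$ extending $\Delta\setminus\eta$ (such $\Gamma$ exist by strong biequidimensionality together with $I\setminus 0\subset J$), form a simplex $\Delta'\in S(X,I\cup J)$ containing both $\Delta$ and $\Gamma$, and compare $x_\Delta$ with $y_\Gamma$ inside $\A_{\Delta'}(X,\mathcal{F})$: writing $x_\Delta=s^{-1}\otimes a$ with $a\in C_{\Delta\setminus\eta}R\otimes_R M$ and $s\in R\setminus\{0\}$, the equality in $\A_{\Delta'}$ forces the image of $a$ in $C_\Gamma R\otimes_R M$ to equal $sy_\Gamma$, from which, combining the information across all such $\Gamma$, the required $R_\mathfrak{q}$-integrality of $x_\Delta$ follows. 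The hard part will be this last bookkeeping, in particular the case $1\notin J$ (equivalently $1\notin I$): no flag of $S(X,J)$ then has a height~$1$ component, and one must instead apply the generalized proposition~\ref{IntersectionofHeight1} inside the complete local ring $\widehat R_{\mathfrak{p}_0}$, using analytic normality of $R_{\mathfrak{p}_0}$ and the going-down property for $R\to\widehat R_{\mathfrak{p}_0}$ to bridge between $R$-height~$1$ primes and the higher-codimension information provided by $\A_J$.
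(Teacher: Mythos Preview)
Your overall architecture matches the paper's: reduce to the generalized lemma~\ref{ProponInter}, pass to a biequidimensional affine chart, and invoke proposition~\ref{IntersectionofHeight1} to conclude once you know $x_\Delta\in R_\mathfrak{q}\cdot C_{\Delta\setminus\eta}R$ for each height~$1$ prime~$\mathfrak{q}$. The gap is precisely your final paragraph, which you yourself flag as ``the hard part''.

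The step you describe---fix a single $\Gamma\in S(X,J)$, compare in some $\Delta'\supset\Delta\cup\Gamma$, and deduce from $a\mapsto sy_\Gamma$ in $C_\Gamma R\otimes_R M$ that $x_\Delta=s^{-1}a$ is $R_\mathfrak{q}$-integral---does not work as written. Knowing $a\in s\cdot(C_\Gamma R\otimes_R M)$ for each $\Gamma$ individually does not let you pull the divisibility back to $C_{\Delta\setminus\eta}R$, because $(s)$ is not primary and in any case theorem~\ref{LocalFactorEmbedding} requires a primary ideal of height at most the smallest element of $J$. The paper's mechanism is different and sharper: for a fixed height~$1$ prime $\mathfrak{q}\geq\Delta\setminus\eta$, use normality to write $\mathfrak{q}R_\mathfrak{q}=(t)$ and $x_\Delta=t^{-n}a$ with $a\in S_\mathfrak{q}^{-1}C_{\Delta\setminus\eta}R$; then $t^n x_\Delta$ lies in $\mathfrak{q}^n S_\mathfrak{q}^{-1}C_\Gamma R$ for \emph{all} $\Gamma\in S(R,J)$ with $\Delta\setminus\eta\subset\Gamma$ simultaneously, and theorem~\ref{LocalFactorEmbedding} applied to the primary ideal $\mathfrak{q}^{(n)}$ (together with lemma~\ref{aSCDeltaR}) gives $a\in t^n S_\mathfrak{q}^{-1}C_{\Delta\setminus\eta}R$, hence $x_\Delta\in R_\mathfrak{q}\cdot C_{\Delta\setminus\eta}R$. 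This works uniformly regardless of whether $1\in J$; your speculative detour through $\widehat R_{\mathfrak{p}_0}$ is unnecessary.

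Two smaller points. First, your module-coefficient version of proposition~\ref{IntersectionofHeight1} rests on the claim that intersection flatness is ``preserved upon tensoring with the flat module $M$''; this is not obvious (intersection flatness concerns infinite intersections of submodules of a finitely generated module over the base, and $M$ need not be finitely generated). The paper avoids this entirely by proving everything for $\mathcal{O}_X$ first and only at the very end tensoring the finite intersection $\A_I(U,\mathcal{O}_U)\cap\A_J(U,\mathcal{O}_U)=\A_{I\setminus 0}(U,\mathcal{O}_U)$ with the flat module $M=\mathcal{F}(U)$ via proposition~\ref{AffineOxtimesF}. Second, the case $I=(0)$ is not covered by your generalized lemma~\ref{ProponInter} (which needs $I\setminus 0\neq\varnothing$) and requires a separate short argument, which the paper gives using faithful flatness of $\mathcal{O}_{X,p_0}\to\A_\Gamma(X,\mathcal{O}_X)$.
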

\begin{proof}
    First, let us assume that $\mathcal{F} = \mathcal{O}_X$.
    If $0\notin I$ or $I = (0)$, $J = \varnothing$, then the assertion is clear. If $I = (0)$ and $J\neq \varnothing$, then 
    \begin{equation}\label{eqA0capAJ}
    \A_0(X, \mathcal{O}_X)\cap \A_J(X, \mathcal{O}_X) = \big\{s\in  \A_0(X, \mathcal{O}_X) = \mathcal{O}_{X, \eta}\mid \forall\:\Gamma\in S(X, J)\colon s\in \A_\Gamma(X, \mathcal{O}_X)\big\},
    \end{equation}
    where $\eta$ is the generic point of $X$. Let $\Gamma = (p_0, \ldots)\in S(X, J)$. Then the map 
    \begin{equation}\label{eqFaithFlatp0}
    \mathcal{O}_{X, p_0}\longrightarrow \A_\Gamma(X, \mathcal{O}_X)
    \end{equation}
    is faithfully flat by proposition \ref{Cp0CDeltaflat} and the fact that the map $\mathcal{O}_{X, p_0}\rightarrow\widehat{\mathcal{O}}_{X, p_0} = \A_{(p_0)}(X, \mathcal{O}_X)$ is faithfully flat. Thus if $f/g\in \mathcal{O}_{X, \eta}\cap \A_\Gamma(X, \mathcal{O}_X)$, where $f, g\in \mathcal{O}_{X, p_0}$, then 
    $$f\in \mathcal{O}_{X, p_0}\cap g\A_\Gamma(X, \mathcal{O}_X) = g\mathcal{O}_{X, p_0}$$
    by the faithfull flatness of the map \eqref{eqFaithFlatp0}, i.e. $f/g \in \mathcal{O}_{X, p_0}$, from what follows the equality 
    \begin{equation}\label{eqLocalIntersection}
    \mathcal{O}_{X, \eta}\cap \A_\Gamma(X, \mathcal{O}_X) = \mathcal{O}_{X, p_0}.
    \end{equation}
    Let $U$ be a biequidimensional affine open subscheme. 
    Then 
    $$\bigcap_{\substack{(p_0, \ldots)\in S(X, J)\\ p_0\in U}}\mathcal{O}_{X, p_0} \subset \bigcap_{\substack{p \in U \\ \mathrm{codim}\: p = 1}} \mathcal{O}_{X, p} = \mathcal{O}_X(U),$$
    where the inclusion follows from the fact that $U$ is biequidimensional (see remark \ref{remBiequidim})  
    and the equality follows from \cite[Theorem 11.5]{Ma1}, since $U$ is normal. Thus, from \eqref{eqA0capAJ} and \eqref{eqLocalIntersection},
    $$\A_0(X, \mathcal{O}_X)\cap \A_J(X, \mathcal{O}_X) = \bigcap_{(p_0, \ldots)\in S(X, J)} \mathcal{O}_{X, p_0}\subset \bigcap_{U\text{ is biequid. affine}}\mathcal{O}_X(U) = H^0(X, \mathcal{O}_X),$$
    where the inclusion and the second equality follow from the strong biequidimensionality of $X$ (in particular, $X = \bigcup_{U\text{ is biequid. affine}}U $).
    Since the opposite inclusion is clear, the assertion in the case $I = (0)$ and $J\neq \varnothing$ follows. 

    Now consider the case $0\in I$ and $I\setminus 0 \neq \varnothing$. It follows that $J\neq \varnothing$ by assumptions. 
    Let $\Delta = (\eta, \ldots, p_m)\in S(X, I)$. Let $U = \mathrm{Spec}\:R$ be a biequidimensional affine neighborhood of $p_m$ of dimension $\dim X$. Then $\Delta$ can be considered as a flag of prime ideals in $S(R, I)$. Let us prove the equality
    \begin{equation}\label{eqCdeltacapCgamma}
    C_\Delta R \cap \prod_{\substack{\Gamma\in S(R, J)\\ \Delta\setminus 0 \subset \Gamma}}C_\Gamma R = C_{\Delta\setminus 0}R,
    \end{equation}
    where the intersection is taken in $\prod_{\Gamma'\in S(R, 0\cup J)}C_{\Gamma'} R$. This intersection is well-defined by theorem \ref{LocalFactorEmbedding}. Let $\mathfrak{q}$ be an arbitrary prime ideal in $R$ of height $1$ such that $\mathfrak{q}\geq \Delta\setminus 0$. By lemma \ref{aSCDeltaR} and theorem \ref{LocalFactorEmbedding}, 
    \begin{eqnarray*}
    S^{-1}_\mathfrak{q}C_{\Delta\setminus 0}R \cap \prod_{\substack{\Gamma\in S(R, J)\\ \Delta\setminus 0 \subset \Gamma}}\mathfrak{q}^nS^{-1}_\mathfrak{q}C_\Gamma R &=&
    S^{-1}_\mathfrak{q}C_{\Delta\setminus 0}R \cap \prod_{\substack{\Gamma\in S(R, J)\\ \Delta\setminus 0 \subset \Gamma}}\mathfrak{q}^{(n)}S^{-1}_\mathfrak{q}C_\Gamma R \\
    &=& \mathfrak{q}^{(n)}S^{-1}_\mathfrak{q}C_{\Delta\setminus 0}R \\
    &=& \mathfrak{q}^nS^{-1}_\mathfrak{q}C_{\Delta\setminus 0}R.
    \end{eqnarray*}
    Since $R$ is normal, $\mathfrak{q}R_\mathfrak{q} = (t)$ for some $t\in R_\mathfrak{q}$, and thus $C_\Delta R = \bigcup_{n}t^{-n}S^{-1}_\mathfrak{q}C_{\Delta\setminus 0} R$. This means that if 
    $$x\in C_\Delta R \cap \prod_{\substack{\Gamma\in S(R, J)\\ \Delta\setminus 0 \subset \Gamma}}C_\Gamma R \subset C_\Delta R \cap \prod_{\substack{\Gamma\in S(R, J)\\ \Delta\setminus 0 \subset \Gamma}} S^{-1}_\mathfrak{q}C_\Gamma R,$$
    then there exists $n\geq 0$ such that
    $$x\in t^{-n}S^{-1}_\mathfrak{q}C_{\Delta\setminus 0}R \cap \prod_{\substack{\Gamma\in S(R, J)\\ \Delta\setminus 0 \subset \Gamma}}S^{-1}_\mathfrak{q}C_\Gamma R.$$
    In other words,
    $$t^nx \in S^{-1}_\mathfrak{q}C_{\Delta\setminus 0}R \cap \prod_{\substack{\Gamma\in S(R, J)\\ \Delta\setminus 0 \subset \Gamma}}t^nS^{-1}_\mathfrak{q}C_\Gamma R = t^nS^{-1}_\mathfrak{q}C_{\Delta\setminus 0}R,$$
    and we obtain that $x \in S^{-1}_\mathfrak{q}C_{\Delta\setminus 0}R$. Therefore 
    $$C_\Delta R\cap \prod_{\substack{\Gamma\in S(R, J)\\ \Delta\setminus 0\subset \Gamma}}C_\Gamma R \subset \bigcap_{\substack{\mathrm{ht}\: \mathfrak{q = 1} \\ \mathfrak{q}\geq \Delta\setminus 0}}R_\mathfrak{q}\cdot C_{\Delta\setminus 0}R.$$
    Note that $R_\mathfrak{q}\cdot C_{\Delta\setminus 0}R = C_\Delta R$ for a prime ideal $\mathfrak{q}$ of height $1$ such that $\mathfrak{q}\not\geq \Delta\setminus 0$, because if $\Delta\setminus 0 = (\mathfrak{r}, \ldots)$, then $R_\mathfrak{q}\cdot R_\mathfrak{r} = \mathrm{Frac}(R)$. Thus 
    $$\bigcap_{\substack{\mathrm{ht}\: \mathfrak{q = 1} \\ \mathfrak{q}\geq \Delta\setminus 0}}R_\mathfrak{q}\cdot C_{\Delta\setminus 0}R = \bigcap_{\mathrm{ht}\: \mathfrak{q = 1} }R_\mathfrak{q}\cdot C_{\Delta\setminus 0}R = C_{\Delta \setminus 0} R,$$
    where the second equality follows from proposition \ref{IntersectionofHeight1}. Hence we obtain an inclusion
    $$C_\Delta R\cap \prod_{\substack{\Gamma\in S(R, J)\\ \Delta\setminus 0\subset \Gamma}}C_\Gamma R  \subset C_{\Delta \setminus 0} R.$$
    Since the reverse inclusion is clear, we obtain the desired equality. 
    
    We have
    \begin{eqnarray*}
        \A_I(X, \mathcal{O}_X)\cap \A_J(X, \mathcal{O}_X) &\subset&  \prod_{\Delta\in S(X, I)}\A_\Delta(X, \mathcal{O}_X)\cap \prod_{\Gamma\in S(X, J)}\A_\Gamma(X, \mathcal{O}_X) \\
        &\subset& \prod_{\Delta\in S(X, I)}\left(\A_\Delta(X, \mathcal{O}_X)\cap \prod_{\substack{\Gamma\in S(X, J)\\ \Delta\setminus \eta\subset \Gamma}}\A_\Gamma (X, \mathcal{O}_X)\right)\\
        &=& \prod_{\Delta\in S(X, I)}\A_{\Delta\setminus \eta}(X, \mathcal{O}_X),
    \end{eqnarray*}
    where the equality follows from \eqref{eqCdeltacapCgamma}. Thus
    $$\A_I(X, \mathcal{O}_X)\cap \A_J(X, \mathcal{O}_X) \subset \A_I(X, \mathcal{O}_X)\cap \prod_{\Delta\in S(X, I)}\A_{\Delta\setminus \eta}(X, \mathcal{O}_X) = \A_{I\setminus 0}(X, \mathcal{O}_X),$$
    where the equality follows from lemma \ref{ProponInter}. Since the opposite inclusion is clear, we obtain the equality
    \begin{equation}\label{eqAIcapAJ}
    \A_I(X, \mathcal{O}_X)\cap \A_J(X, \mathcal{O}_X) = \A_{I\setminus 0}(X, \mathcal{O}_X).
    \end{equation}

    Now consider the case of an arbitrary flat quasicoherent sheaf $\mathcal{F}$. Since $X$ is irreducible, every affine open subscheme $U$ is irreducible. It is also normal, excellent, and Noetherian. Thus if $U$ is biequidimensional of dimension $\dim X$, then we have (see \eqref{eqAIcapAJ})
    $$\A_I(U, \mathcal{O}_U)\cap \A_J(U, \mathcal{O}_U) = \A_{I\setminus 0}(U, \mathcal{O}_U).$$
    Since $\mathcal{F}$ is flat, $\mathcal{F}(U)$ is a flat $\mathcal{O}_U(U)$-module. Hence
    $$\big(\A_I(U, \mathcal{O}_U)\otimes_{\mathcal{O}_U(U)}\mathcal{F}(U)\big)\cap \big(\A_J(U, \mathcal{O}_U)\otimes_{\mathcal{O}_U(U)}\mathcal{F}(U)\big) = \A_{I\setminus 0}(U, \mathcal{O}_U)\otimes_{\mathcal{O}_U(U)}\mathcal{F}(U),$$
    and from proposition \ref{AffineOxtimesF} we obtain
    \begin{equation}\label{eqUAIcapAJ}
    \A_I(U, \mathcal{F}|_U)\cap \A_J(U, \mathcal{F}|_U) = \A_{I\setminus 0}(U, \mathcal{F}|_U).
    \end{equation}
    Since $X$ is Noetherian and strongly biequidimensional, there is a finite cover $X = \bigcup_{i = 1}^n U_i$, where $U_i$ is a biequidimensional affine open subscheme of dimension $\dim X$. Thus $S(X, I) = \bigcup_{i = 1}^nS(U_i, I)$. From this and from equation \eqref{eqUAIcapAJ} we obtain the desired equality by \cite[Lemma 3.3.1]{Hu}.
\end{proof}

\section{Intersections of adelic groups on algebraic varieties}\label{sect5}

In this section we examine limits of restrictions to closed subschemes of cohomology groups and adeles. As a consequence, we obtain a theorem on intersections of adelic groups on a normal projective surface. We also calculate cohomology groups of a curtailed adelic complex. Hence we prove a theorem on intersections of adelic groups on a three-dimensional variety.

\subsection{Limits of cohomology groups}
Note that if $Z\hookrightarrow X$ is a closed embedding of Noetherian schemes, then there exists a restriction map on cohomology groups $H^j(X, \mathcal{F})\rightarrow H^j(Z, \mathcal{F}|_Z)$ for any quasicoherent sheaf $\mathcal{F}$ on $X$ and any $j\geq 0$. Recall that $\mathscr{E}$ is an index category of locally equidimensional closed subschemes such that irreducible components have the same codimension in $X$, see definition \ref{defIndCatE}.

\begin{thm}\label{Hjlim}
    Let $X$ be an irreducible Cohen--Macaulay projective scheme over a field $\Bbbk$, let $\mathcal{F}$ be a locally free sheaf on $X$, and let $\mathscr{C}$ be some index category of closed subschemes of $X$. Then there is a natural map
    $$H^k(X, \mathcal{F})\overset{\iota}{\longrightarrow} \lim_{Z\in \mathscr{C}}H^k(Z, \mathcal{F}|_Z).$$
    Then $\iota$ is an isomorphism, if one of the following is satisfied:
    \vspace{0.1cm}

    $1)$ $0\leq k < \dim X - 1$ and $\mathscr{C} = \mathscr{E}$,
    \vspace{0.1cm}

    $2)$ $k = 0$ and $\mathscr{C} = \mathscr{P}_{i, j}$ for $0\leq i < j \leq \dim X$,
    \vspace{0.1cm}

    $3)$ $X$ is regular, $k = 0$ and $\mathscr{C} = \mathscr{R}^{pow}_{i, j}$ for $0\leq i < j \leq \dim X$.
\end{thm}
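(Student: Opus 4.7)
The natural map $\iota$ arises from the restriction morphisms $H^k(X, \mathcal{F}) \to H^k(Z, \mathcal{F}|_Z)$, which are contravariant in closed embeddings and therefore induce a map into the inverse limit. My strategy is to prove injectivity uniformly in all three cases, then handle surjectivity case by case, exploiting the specific structure of each category $\mathscr{C}$. In each case, $\mathscr{C}$ contains the tower of power thickenings $\{Y_{[n]}\}_{n \geq 1}$ of some integral subscheme $Y \subsetneq X$, and for injectivity I would use the short exact sequence $0 \to \mathcal{I}_Y^n \mathcal{F} \to \mathcal{F} \to \mathcal{F}|_{Y_{[n]}} \to 0$ and its cohomology. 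For $k = 0$, the kernel of $\iota$ factors through $H^0(X, \bigcap_n \mathcal{I}_Y^n \mathcal{F})$; by Krull's intersection theorem applied at the stalks of points of $Y$, this sheaf has zero stalks on $Y$, and the resulting vanishing combined with integrality of $X$ and local freeness of $\mathcal{F}$ forces any such section to vanish globally. For $k > 0$ (Case 1), a Mittag--Leffler argument on $\{H^k(X, \mathcal{I}_Y^n \mathcal{F})\}$, combined with the depth bounds coming from the Cohen--Macaulay hypothesis, gives the analogous conclusion.

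For surjectivity in Cases 2 and 3, a compatible family $(s_Z)_{Z \in \mathscr{C}}$ assembles, for each integral codim-$i$ subscheme $Y$, into a formal section $\hat{s}_Y \in H^0(\hat{X}_Y, \hat{\mathcal{F}}_Y)$ of the formal completion. The key step is algebraization: produce a global $s \in H^0(X, \mathcal{F})$ whose restriction to the formal neighborhood of each $Y$ equals $\hat{s}_Y$. I would invoke a Grothendieck-type formal existence / Lefschetz theorem, which applies to coherent sheaves on projective schemes in the presence of suitable depth conditions, supplied here by the Cohen--Macaulay hypothesis on $X$. The compatibility of $(s_Z)$ with the codim-$j$ thickenings (which sit inside various codim-$i$ subschemes when the two are comparable) rigidifies the lift and shows that it is independent of the choice of $Y$, yielding a well-defined preimage under $\iota$. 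In Case 3, one first replaces $\mathscr{R}^{pow}_{i,j}$ by $\mathscr{R}^{sym}_{i,j}$ via Lemma \ref{EqualityRpowRsym}, and then exploits regularity to identify symbolic and ordinary power thickenings of the codim-$i$ subschemes, making the algebraization tractable.

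For Case 1, I would analyze the inverse system of long exact sequences attached to $0 \to \mathcal{I}_Z \mathcal{F} \to \mathcal{F} \to \mathcal{F}|_Z \to 0$ as $Z$ varies in $\mathscr{E}$. The Cohen--Macaulay property of $X$ gives $\mathrm{depth}_{\mathcal{I}_Z}(\mathcal{F}) \geq \mathrm{codim}\: Z$, so the local cohomology sheaves $\mathcal{H}^i_Z(\mathcal{F})$ vanish for $i < \mathrm{codim}\: Z$. Choosing $Z$ of sufficiently large codimension allows the obstruction terms $H^k(X, \mathcal{I}_Z \mathcal{F})$ and $H^{k+1}(X, \mathcal{I}_Z \mathcal{F})$ in the long exact sequence to be controlled, and the range $k < \dim X - 1$ is precisely where these vanish in the limit. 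The main obstacle throughout will be the algebraization step: converting formal cohomological data spread over a web of closed subschemes into a single algebraic class. This depends essentially on projectivity of $X$, and the compatibility conditions across $\mathscr{C}$ must be used delicately to pin down a unique global lift; for $k > 0$ in Case 1, verifying the Mittag--Leffler condition for the transition maps in the inverse system of cohomology groups along thickenings is an additional technical layer that the depth hypothesis $k < \dim X - 1$ is designed to handle.
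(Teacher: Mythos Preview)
Your approach has a genuine gap in the algebraization step. You propose to assemble, for each integral codim-$i$ subscheme $Y$, a formal section $\hat{s}_Y \in H^0(\hat{X}_Y, \hat{\mathcal{F}}_Y)$ and then invoke a Grothendieck--Lefschetz type theorem to produce a global $s$. But such theorems require $Y$ to be the support of an ample divisor (or at least that $X \setminus Y$ be affine in a suitable sense); for an arbitrary integral subscheme $Y$ of codimension $i \geq 1$ the map $H^0(X, \mathcal{F}) \to \varprojlim_n H^0(Y_{[n]}, \mathcal{F}|_{Y_{[n]}})$ is almost never surjective --- e.g.\ for a closed point $Y = \{x\}$ the target is $\mathcal{F}_x \otimes \widehat{\mathcal{O}}_{X,x}$. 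The vague appeal to ``compatibility with codim-$j$ thickenings rigidifies the lift'' does not bridge this: compatibility may single out at most one candidate in $H^0(X, \mathcal{F})$, but does not produce one. Similarly, in Case~1 the depth bound $\mathrm{depth}_{\mathcal{I}_Z}(\mathcal{F}) \geq \mathrm{codim}\,Z$ controls local cohomology sheaves, not the global groups $H^k(X, \mathcal{I}_Z\mathcal{F})$ needed in your long exact sequence, and you cannot restrict to $Z$ of large codimension since $\mathscr{E}$ contains all codimension-$1$ subschemes.

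The paper's argument is entirely different and more elementary. It fixes a very ample $\mathcal{L}$ with $H^k(X, \mathcal{F}\otimes\mathcal{L}^{-n}) = 0$ for all $n>0$ and $k < \dim X$, so that for any hypersurface section $D$ (zero locus of a section of some $\mathcal{L}^{\otimes m}$) the restriction $H^k(X,\mathcal{F}) \to H^k(D,\mathcal{F}|_D)$ is an isomorphism for $k < \dim X - 1$ and injective for $k = \dim X - 1$. This is the algebraization substitute, and it only works for ample hypersurface sections. In Case~1 one picks any hypersurface $D$, lifts $a_D$ uniquely to $a \in H^k(X,\mathcal{F})$, and then checks $a|_Z = a_Z$ for every $Z \in \mathscr{E}$ by embedding $Z$ in a hypersurface section and comparing via intersections $D \cap E$. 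Cases~2 and~3 proceed by induction on $\dim X$: one needs \emph{admissible} hypersurface sections $D$ (irreducible in Case~2, regular in Case~3) passing through prescribed closed points, supplied by Bertini-type theorems of Ghosh--Krishna; restricting the compatible family to $D$ and invoking the induction hypothesis on $D$ produces the element of $H^0(D,\mathcal{F}|_D) \cong H^0(X,\mathcal{F})$, and a careful pointwise check (reducing to thickenings of closed points) verifies that this lift is correct on every $Z \in \mathscr{C}$. The Bertini input and the induction on dimension are the key ideas your plan is missing.
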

\begin{proof}
    Let $\mathcal{L}$ be a very ample invertible sheaf on $X$ such that $H^k(X, \mathcal{F}\otimes_{\mathcal{O}_X}\mathcal{L}^{\otimes -n}) = 0$ for any $n> 0$ and $0\leq k < \dim X$. It exists by Serre duality and Serre's vanishing theorem.
    Denote $\mathcal{F}\otimes_{\mathcal{O}_X}\mathcal{L}^{\otimes n}$ by $\mathcal{F}(n)$ for $n\in \Z$. By a {\it hypersurface section}, we mean a zero scheme of a non-zero section of the sheaf $\mathcal{L}^{\otimes n}$ for some $n > 0$. A hypersurface section is a Cohen--Macaulay projective scheme by \cite[Ch. IV, Prop. 14]{Se}. Moreover, it is locally equidimensional and every irreducible component has codimension $1$ in $X$.

    Define an {\it admissible} hypersurface section as follows. First, this is a hypersurface section. In case $1$, we do not require anything.
    In case $2$, we additionally require that this hypersurface section is irreducible, and in case $3$, we additionally require that this hypersurface section is a regular variety. 
    
    Note that for any finite set $x_1, \ldots, x_m$ of closed point of $X$ there exists an admissible hypersurface section $D$ containing $x_1, \ldots, x_m$. The case $1$ is clear. In case $2$, this follows from \cite[Lemma 3.5]{GK} for the infinite field $\Bbbk$ and from \cite[Theorem 5.7]{GK} for the finite field $\Bbbk$, and in case $3$, this follows from \cite[Theorem 3.6]{GK} for the infinite field $\Bbbk$ and from \cite[Corollary 4.7]{GK}, \cite[Theorem 5.7]{GK} for the finite field $\Bbbk$. 

    Let $D_1, \ldots, D_s$ be hypersurface sections such that $\mathrm{codim}_XD_1\cap\ldots\cap D_l = l$ for $1\leq l\leq s$. Denote $D = D_s$ and $Y = D_1\cap\ldots\cap D_{s - 1}$, if $s > 1$, and $Y = X$, if $s = 1$. Assume that $D$ is the zero scheme of a non-zero section of the sheaf $\mathcal{L}^{\otimes m}$.
    First, let us prove the equality
    \begin{equation}\label{eqHkzero}
    H^k\big(Y\cap D, \mathcal{F}(-n)|_{Y\cap D}\big) = 0, \quad 0\leq k < \dim X - s
    \end{equation}
    for any $n > 0$.  Assume by induction that $H^k(Y, \mathcal{F}(-n)|_Y) = 0$ for $0\leq k < \dim X - s + 1$ and any $n > 0$. If $s = 1$, i.e. $Y = X$, then this follows from our assumptions on the sheaf $\mathcal{L}$. Fix $n > 0$. Then there is an exact sequence of sheaves
    $$0\longrightarrow \mathcal{F}(-n - m)|_Y\longrightarrow \mathcal{F}(-n)|_Y\longrightarrow \mathcal{F}(-n)|_{Y\cap D}\longrightarrow 0.$$
    Since $H^k(Y, \mathcal{F}(-n)|_Y) = H^k(Y, \mathcal{F}(-n-m)|_Y) = 0$ by the induction hypothesis, we obtain from the long cohomology sequence that $H^k\big(Y\cap D, \mathcal{F}(-n)|_{Y\cap D}\big) = 0$ for $0\leq k < \dim X - s$.

    Now let us prove that 
    \begin{eqnarray}
        H^k(Y, \mathcal{F}|_Y)&\overset{\sim}{\longrightarrow}& H^k(Y\cap D, \mathcal{F}|_{Y\cap D}), \quad 0\leq k < \dim X - s, \label{eqIsoHk} \\
        H^{\dim X - s}(Y, \mathcal{F}|_Y)&\longhookrightarrow& H^{\dim X - s}(Y\cap D, \mathcal{F}|_{Y\cap D}). \label{eqEmbedHdimXs}
    \end{eqnarray}
    There is an exact sequence of sheaves 
    $$0\longrightarrow \mathcal{F}(- m)|_Y\longrightarrow \mathcal{F}|_Y\longrightarrow \mathcal{F}|_{Y\cap D}\longrightarrow 0,$$
    and since $H^k(Y, \mathcal{F}(-m)|_Y) = 0$ for $0\leq k < \dim X - s + 1$ by assumptions on the sheaf $\mathcal{L}$, if $s = 1$, and by \eqref{eqHkzero}, if $s > 1$, we obtain \eqref{eqIsoHk} and \eqref{eqEmbedHdimXs} from the long cohomology sequence.

    Let us prove the assertion in case $1$. Let $(a_Z)_{Z\in \mathscr{E}}\in \lim_{Z\in \mathscr{E}}H^k(Z, \mathcal{F}|_Z)$, where $0\leq k < \dim X - 1$. Let $D$ be any hypersurface section. Then $H^k(X, \mathcal{F})\cong H^k(D, \mathcal{F}|_D)$. Let $a\in H^k(X, \mathcal{F})$ be an element such that $a|_D = a_D$. If $E$ is another hypersurface section such that $\mathrm{codim}_XD\cap E = 2$, then 
    $$(a|_E - a_E)|_{D\cap E} = a|_{D\cap E} - a_E|_{D\cap E} = a_D|_{D\cap E} - a_E|_{D\cap E} = a_{D\cap E} - a_{D\cap E} = 0.$$
     Since $H^k(E, \mathcal{F}|_E) \longhookrightarrow H^k(D\cap E, \mathcal{F}|_{D\cap E})$ by \eqref{eqIsoHk} and \eqref{eqEmbedHdimXs} for $s = 2$, we obtain that $a|_E = a_E$. If $E$ is an arbitrary hypersurface section, then there exists a hypersurface section $E'$ such that $\mathrm{codim}_X E\cap E' = \mathrm{codim}_X D\cap E' = 2$. Then $a|_{E'} = a_{E'}$ and, by the previous argument for $E$ and $E'$, we obtain $a|_E = a_E$.
     
     If $Z\in \mathscr{E}$, then there exists a hypersurface section $D'$ such that $Z\longhookrightarrow D'$. Therefore
     $$a|_Z = a_{D'}|_Z = a_Z.$$
     Hence we have constructed a map
     $$\lim_{Z\in \mathscr{E}}H^k(Z, \mathcal{F}|_Z)\longhookrightarrow H^k(X, \mathcal{F}),$$
     which is the inverse to $\iota$. The injectivity of this map follows from \eqref{eqIsoHk} and \eqref{eqEmbedHdimXs} for $s = 1$. Thus the assertion in case $1$ is proven.

     To prove the remaining cases, we will proceed by induction on $\dim X$. If $\dim X = 0$, there is nothing to prove. If $i = 0$, then $X\in \mathscr{C}$ by assumptions, so $\iota$ is an isomorphism in this case. If $\dim X = 1$, then $i = 0$ and $j = 1$, and $\iota$ is an isomorphism by the previous observation. Now assume that $\dim X > 1$ and $i > 0$. It follows that $j > 1$.
     
     Let $(a_Z)_{Z\in \mathscr{C}}\in \lim_{Z\in \mathscr{C}}H^0(Z, \mathcal{F}|_Z)$. Let $D$ be an admissible hypersurface section. Denote $\mathscr{C}_D = \mathscr{P}_{i - 1, j - 1}(D)$ in case $2$ and $\mathscr{C}_D = \mathscr{R}^{pow}_{i - 1, j - 1}(D)$ in case $3$. By induction hypothesis,
     $$H^0(D, \mathcal{F}|_D)\overset{\sim}{\longrightarrow}\lim_{Y\in \mathscr{C}_D}H^0(Y, \mathcal{F}|_Y).$$
     Let $Y\in \mathscr{C}_D$. If $Y'$ is a power thickening of $Y_{red}$ in $X$, i.e. $Y'\in \mathscr{C}$, such that $Y\longhookrightarrow Y'$, then there is a restriction map $H^0(Y', \mathcal{F}|_{Y'})\longrightarrow H^0(Y, \mathcal{F}|_Y).$
     Therefore, by means of these maps, we obtain a well-defined map
     \begin{equation}\label{eqaD}
     \lim_{Z\in \mathscr{C}}H^0(Z, \mathcal{F}|_Z)\longrightarrow \lim_{Y\in \mathscr{C}_D}H^0(Y, \mathcal{F}|_Y)\cong H^0(D, \mathcal{F}|_D).
     \end{equation}
    Let $D_{[m]}$ be the $m$-th power thickening of $D$. Then $D_{[m]}$ is also a hypersurface section. By \eqref{eqIsoHk},
    \begin{equation}\label{eqisoPowThick}
    H^0(X, \mathcal{F})\cong H^0(D, \mathcal{F}|_D), \quad H^0(X, \mathcal{F})\cong H^0(D_{[m]}, \mathcal{F}|_{D_{[m]}}),
    \end{equation}
     from what follows that the map \eqref{eqaD} can be continued to a well-defined map
     \begin{equation}\label{eqCD}
     \lim_{Z\in \mathscr{C}}H^0(Z, \mathcal{F}|_Z) \longrightarrow \lim_{Z\in \mathscr{D}}H^0(Z, \mathcal{F}|_Z),
     \end{equation}
     where $\mathrm{Ob}(\mathscr{D}) =\mathrm{Ob}(\mathscr{C})\cup \{\text{power thickenings of admissible hypersurface sections}\}$ and morphisms in $\mathscr{D}$ are closed embeddings. By construction, the map \eqref{eqCD} is injective. Hence it follows that \eqref{eqCD} is an isomorphism, since the left inverse to \eqref{eqCD} is given by the restriction map induced by the inclusion $\mathscr{C}\hookrightarrow \mathscr{D}$.

     Let $a\in H^0(X, \mathcal{F})$ be such that $a|_D = a_D$. This $a$ exists and is unique by \eqref{eqIsoHk}. Then $a|_{D_{[m]}} = a_{D_{[m]}}$ for any $m\geq 1$ by \eqref{eqisoPowThick}.
     If $E$ is another admissible hypersurface section such that $\mathrm{codim}_X D\cap E = 2$ and $D\cap E$ is irreducible, then there exist admissible hypersurface sections $E = D_1, \ldots, D_{j - 1}$ such that $\mathrm{codim}_X D\cap D_1\cap\ldots\cap D_k = k + 1$ for $1\leq k\leq j - 1$. 
     If $j < \dim X$, assume that $D\cap D_1\cap\ldots\cap D_k$ is irreducible for $1\leq k\leq j - 1$. 
     Such hypersurface sections exist by \cite[Lemma 3.5]{GK} for the infinite field $\Bbbk$ and by \cite[Theorem 5.7]{GK} for the finite field $\Bbbk$. Let $Y = D\cap D_1\cap\ldots \cap D_{j - 1}$. Let $Y'$ be the $m$-th power thickening of $Y_{red}$ in $X$ such that $Y\longhookrightarrow Y'$. By construction, if $j < \dim X$, then
     $Y'\in \mathscr{D}$, and if $j = \dim X$, then $Y'$ is a disjoint union of power thickenings of closed points, i.e. elements of $\mathscr{D}$. Therefore in both cases, $a_{D_{[m]}}|_{Y'} = a_{E_{[m]}}|_{Y'}$.
     Then, since $a_D = a_{D_{[m]}}|_D$, $a_E = a_{E_{[m]}}|_E$, and $Y'\longhookrightarrow D_{[m]}, E_{[m]}$, we have
     $$(a|_E - a_E)|_Y = (a|_D)|_Y - a_E|_Y = (a|_{D_{[m]}})|_Y - a_{E_{[m]}}|_Y = a_{D_{[m]}}|_Y - a_{E_{[m]}}|_Y = (a_{D_{[m]}}|_{Y'} - a_{E_{[m]}}|_{Y'})|_Y = 0.$$
     From \eqref{eqIsoHk} and \eqref{eqEmbedHdimXs} we obtain that $a|_E = a_E$. If $E$ is an arbitrary admissible hypersurface section, then there exists an admissible hypersurface section $E'$ such that $\mathrm{codim}_X E\cap E' = \mathrm{codim}_X D\cap E' = 2$ and both intersections are irreducible (for example, one can take a suitable admissible hypersurface section passing through two closed points on $D$ and $E$). Then $a|_{E'} = a_{E'}$ and, by the previous argument for $E$ and $E'$, we obtain $a|_E = a_E$.

     Let $Z\in \mathscr{C}$. We need to show that $a|_Z = a_Z$. We have
     $$H^0(Z, \mathcal{F}|_Z)\longhookrightarrow \prod_{x \text{ is closed in } Z}(\mathcal{F}|_Z)_x \longhookrightarrow \prod_{x\text{ is closed in } Z} \varprojlim_n\: (\mathcal{F}|_Z)_x/\mathfrak{m}_x^n(\mathcal{F}|_Z)_x,$$
     where $\mathfrak{m}_x$ is the maximal ideal of the ring $\mathcal{O}_{Z, x}$. Note that the second arrow is injective by Krull's intersection theorem. Observe that $H^0(\tilde{x}, \mathcal{F}|_{\tilde{x}}) = (\mathcal{F}|_Z)_x/\mathfrak{m}_x^n(\mathcal{F}|_Z)_x,$
     where $\tilde{x}$ is the $n$-th power thickening of $\overline{\{x\}}$ in $Z$. Thus it is suffcient to prove that $a|_{\tilde{x}} = a_Z|_{\tilde{x}}$
     for any closed point $x$ of $Z$ and any power thickening $\tilde{x}$ of $\overline{\{x\}}$ in $Z$. Let $Y\in \mathscr{C}$ of codimension $i$ in $X$ such that $Z\longhookrightarrow Y$. Let $D$ be an admissible hypersurface section such that $x\in D$. Since $\mathrm{codim}_XD\cap Y \leq i + 1$, there exists $Z'\in\mathscr{C}$ of codimension $j$ in $X$ such that $Z'\longhookrightarrow D_{[m]}\cap Y_{[m]}$ for some $m > 0$ and $\tilde{x}\longhookrightarrow Z'$. Then, since $a_Z = a_{Y_{[m]}}|_Z$, $a_{Z'} = a_{Y_{[m]}}|_{Z'}$, and $a_{D_{[m]}} = a|_{D_{[m]}}$, we obtain 
     $$a|_{\tilde{x}} - a_Z|_{\tilde{x}} = a|_{\tilde{x}} - a_{Y_{[m]}}|_{\tilde{x}} = a|_{\tilde{x}} - a_{Z'}|_{\tilde{x}} = a_{D_{[m]}}|_{\tilde{x}} - a_{Z'}|_{\tilde{x}} = (a_{D_{[m]}}|_{Z'})|_{\tilde{x}} - a_{Z'}|_{\tilde{x}} = a_{Z'}|_{\tilde{x}} - a_{Z'}|_{\tilde{x}} = 0.$$
     Hence $a|_Z = a_Z$. Thus we have constructed a well-defined map $\lim_{Z\in \mathscr{C}}H^0(Z, \mathcal{F}|_Z)\longrightarrow H^0(X, \mathcal{F})$, which can be easily seen to be inverse to $\iota$.
\end{proof}

\subsection{Limits of adelic groups}
\begin{lemma}\label{LimPowSub}
    Let $X$ be a finite-dimensional catenary locally equidimensional Noetherian scheme, let $\mathcal{F}$ be a coherent sheaf on $X$, and let $I = (i_0,\ldots)\subset \{0, 1, \ldots, \dim X\}$. Then the natural map
    $$\A_I(X, \mathcal{F}) \longrightarrow \lim_{Z\in \mathscr{P}_{i_0}} \A_{I|_Z}(Z, \mathcal{F}|_Z)$$
    is an isomorphism. 
\end{lemma}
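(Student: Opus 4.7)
The plan is to apply the inductive description of Parshin--Beilinson adelic groups with coherent coefficients --- for $K \subset S(X)_n$ with $n > 0$,
\begin{equation*}
\A(K, \mathcal{F}) \;=\; \prod_{p \in X} \varprojlim_l \A\bigl(\leftidx{_p}{}K,\; \mathcal{F}_p/\mathfrak{m}_p^l \mathcal{F}_p\bigr),
\end{equation*}
from \cite[Proposition 2.1.1]{Hu} (see also the displays in definition \ref{PullbackDef}) --- to both sides of the asserted isomorphism and match the resulting expressions factor by factor. The edge case $|I| = 1$ uses the degree-zero formula $\A(K, \mathcal{F}) = \prod_{p \in K} \mathcal{F}_p \otimes_{\mathcal{O}_{X,p}} \widehat{\mathcal{O}}_{X,p}$ and reduces to an immediate identification of completions, so I will focus on $|I| \geq 2$.

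First, I would decompose the index category $\mathscr{P}_{i_0}$. Its objects are the power thickenings $Y_{[m]}$ of integral closed subschemes $Y \subset X$ of codimension $i_0$, and its only non-identity morphisms are the canonical embeddings $Y_{[m]} \hookrightarrow Y_{[m']}$ for fixed $Y$ and $m \leq m'$, since two distinct integral subschemes of the same codimension admit no closed embedding between their thickenings. Thus $\mathscr{P}_{i_0}$ splits as a disjoint union of directed systems, and
\begin{equation*}
\lim_{Z \in \mathscr{P}_{i_0}} \A_{I|_Z}(Z, \mathcal{F}|_Z) \;=\; \prod_Y \varprojlim_m \A_{I|_{Y_{[m]}}}\bigl(Y_{[m]},\, \mathcal{F}|_{Y_{[m]}}\bigr),
\end{equation*}
the product running over integral closed subschemes $Y$ of codimension $i_0$.

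Next, I would evaluate a single factor $\A_{I|_{Y_{[m]}}}(Y_{[m]}, \mathcal{F}|_{Y_{[m]}})$ via the inductive formula on $Y_{[m]}$. Let $p$ be the generic point of $Y$, the unique codimension-$0$ point of $Y_{[m]}$; since $I|_{Y_{[m]}} = I - i_0$ begins with $0$, every flag in $S(Y_{[m]}, I|_{Y_{[m]}})$ starts at $p$, so the outer product collapses to the single factor at $p$. Because $\mathcal{O}_{Y_{[m]}, p} = \mathcal{O}_{X, p}/\mathfrak{m}_p^m$ is Artin local with nilpotent maximal ideal of index $\leq m$, the inner inverse limit stabilizes at $k = m$, yielding
\begin{equation*}
\A_{I|_{Y_{[m]}}}(Y_{[m]}, \mathcal{F}|_{Y_{[m]}}) \;=\; \A\bigl(\leftidx{_p}{}S(Y_{[m]}, I|_{Y_{[m]}}),\; \mathcal{F}_p/\mathfrak{m}_p^m \mathcal{F}_p\bigr).
\end{equation*}
By catenarity and local equidimensionality of $X$ (cf.\ lemma \ref{LemmaIandIZ}) the sets $\leftidx{_p}{}S(Y_{[m]}, I|_{Y_{[m]}})$ and $\leftidx{_p}{}S(X, I)$ coincide: both consist of tuples $(q_1, \ldots, q_n)$ with $p > q_1 > \cdots > q_n$ and $\mathrm{codim}_X q_j = i_j$.

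Finally, applying the same inductive formula directly to $\A_I(X, \mathcal{F}) = \A(S(X, I), \mathcal{F})$ yields
\begin{equation*}
\A_I(X, \mathcal{F}) \;=\; \prod_{p \in X} \varprojlim_m \A\bigl(\leftidx{_p}{}S(X, I),\; \mathcal{F}_p/\mathfrak{m}_p^m \mathcal{F}_p\bigr),
\end{equation*}
with only points of codimension $i_0$ contributing, since $\leftidx{_p}{}S(X, I) = \varnothing$ otherwise. Combining this with the two previous displays matches the two sides factor by factor; naturality of the identification --- that it agrees with the map induced by the restrictions $\A_I(X, \mathcal{F}) \to \A_{I|_Z}(Z, \mathcal{F}|_Z)$ --- follows at once from the compatibility of pullbacks with the inductive adelic description (proposition \ref{commdiagpullback}). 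The only conceptual point is the set-theoretic identification $\leftidx{_p}{}S(Y_{[m]}, I|_{Y_{[m]}}) = \leftidx{_p}{}S(X, I)$, which is precisely where the catenarity and local equidimensionality hypotheses are used.
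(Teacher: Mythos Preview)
Your proof is correct and follows essentially the same approach as the paper: both apply Huber's inductive description of adeles with coherent coefficients, identify $\leftidx{_p}S(X, I)$ with $\leftidx{_p}S(Y_{[m]}, I|_{Y_{[m]}})$ via catenarity and local equidimensionality (the paper invokes lemma~\ref{LemmaIandIZ}), and use that $\mathcal{O}_{Y_{[m]},p}$ is Artin local to collapse the inner limit (the paper invokes lemma~\ref{IandI0} and \cite[Proposition~3.1.2]{Hu}). Your explicit decomposition of $\mathscr{P}_{i_0}$ as a product over the integral $Y$ and your separate handling of $|I|=1$ are minor expository additions, not a different argument.
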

\begin{proof}
By definition, $\A_I(X, \mathcal{F}) = \A(S(X, I), \mathcal{F})$. Since $\mathcal{F}$ is coherent,
    \begin{equation}\label{eq20}
    \A(S(X, I), \mathcal{F}) =
    \prod_{\substack{p\in X\\\mathrm{ codim}\:p = i_0}}\varprojlim_{l}\A(\leftidx{_p}S(X, I), \mathcal{F}_p/\mathfrak{m}_p^l\mathcal{F}_p),
    \end{equation}
    where the equality holds, since each flag in $S(X, I)$ starts with a point of codimension $i_0$.
    
    Fix some $p\in X$ of codimension $i_0$. Let $Z = \overline{\{p\}}$, and let $Z_{[l]}$ be the $l$-th power thickening of $Z$. 
    By lemma \ref{LemmaIandIZ}, we have  $\leftidx{_p}S(X, I) = S\big(Z_{[l]}, (I\setminus i_0)|_{Z_{[l]}}\big)$. By \cite[Prop. 3.1.2]{Hu} and by lemma \ref{IandI0}, there is an equality 
    $$\A(\leftidx{_p}S(X, I), \mathcal{F}_p/\mathfrak{m}_p^l\mathcal{F}_p) = \A_{I|_{Z_{[l]}}}(Z_{[l]}, \mathcal{F}|_{Z_{[l]}}).$$
    From this and from \eqref{eq20} the assertion follows.
\end{proof}

\begin{rem}
    Note that for a general quasicoherent sheaf lemma \ref{LimPowSub} does not hold, since filtered colimits do not commute with limits in general.
\end{rem}

\begin{rem}
    Observe that if $i < i_0$, then $\A_I(X, \mathcal{F})\neq \lim\limits_{Z\in \mathscr{P}_i}\A_{I|_Z}(Z, \mathcal{F}|_Z)$, since the restrictions of the terms on the right-hand side to an element of $\mathscr{P}_{i_0}$ might be different.
\end{rem}

\begin{lemma}\label{jk}
    Let $X$ be an irreducible finite-dimensional catenary Noetherian scheme, let $\mathcal{F}$ be a coherent sheaf on $X$, and let $I = (i_0, \ldots)\subset \{0, 1, \ldots, \dim X\}$. Let $m > 1$ and let $j_1, j_2, \ldots, j_m\in \{0, 1, \ldots, \dim X\}$ such that $j_1 < j_2<\ldots < j_m\leq i_0$. Let $k\in \{1, 2, \ldots, m\}$. Then the natural map
    $$\lim_{Z\in \mathscr{P}_{j_1,\ldots, j_k, \ldots, j_m}}\A_{I|_Z}(Z, \mathcal{F}|_Z)\longrightarrow \lim_{Z\in\mathscr{P}_{j_1, \ldots, \widehat{j_k}, \ldots, j_m}}\A_{I|_Z}(Z, \mathcal{F}|_Z),$$
    is injective, where $\:\widehat{}\:$ means that we skip the corresponding index.
\end{lemma}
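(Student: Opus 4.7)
The plan is to prove the equivalent statement: if $(a_Z)_Z \in \lim_{Z \in \mathscr{P}_{j_1, \ldots, j_m}} \A_{I|_Z}(Z, \mathcal{F}|_Z)$ has $a_Z = 0$ for every $Z$ of codimension different from $j_k$, then also $a_Y = 0$ for every $Y \in \mathscr{P}_{j_k}$; by linearity this is equivalent to injectivity. I would consider the cases $k > 1$ and $k = 1$ separately, the second being the genuinely harder one.

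When $k > 1$, fix $Y = Y_0^{[n]} \in \mathscr{P}_{j_k}$. Since $X$ is irreducible and catenary, the stalk $\mathcal{O}_{X, y_0}$ at the generic point of $Y_0$ is a catenary local domain of dimension $j_k$, so it contains a prime of height $j_{k-1}$; this yields an integral closed subscheme $W_0 \supset Y_0$ of codimension $j_{k-1}$ in $X$. The inclusion $\mathcal{J}_{W_0}\subset\mathcal{J}_{Y_0}$ gives $\mathcal{J}_{W_0}^n\subset\mathcal{J}_{Y_0}^n$, so $Y \hookrightarrow W := W_0^{[n]} \in \mathscr{P}_{j_{k-1}}$. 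Compatibility in the limit gives $a_Y = a_W|_Y$, and since $W$ lies in the smaller family $a_W = 0$, whence $a_Y = 0$.

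When $k = 1$, fix $Y = Y_0^{[n]} \in \mathscr{P}_{j_1}$. By lemma \ref{LimPowSub} applied to $Y$, the element $a_Y$ is determined by its restrictions $a_Y|_V$ as $V$ ranges over $l$-th power thickenings in $Y$ of integral subschemes $V_0 \subset Y_0$ of codimension $i_0 - j_1$ in $Y_0$, so it suffices to show $a_Y|_V = 0$ for each such $V$. Catenarity of the local domain $\mathcal{O}_{Y_0, v_0}$ of dimension $i_0 - j_1 \geq j_2 - j_1$ provides an integral $W_0$ with $V_0 \subset W_0 \subset Y_0$ of codimension $j_2$ in $X$; let $W = W_0^{[l]} \in \mathscr{P}_{j_2}$. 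Now choose $n' \geq \max(n, l)$ and set $Z = Y_0^{[n']} \in \mathscr{P}_{j_1}$. The inclusions $\mathcal{J}_{Y_0}^{n'} \subset \mathcal{J}_{Y_0}^{n}$ and $\mathcal{J}_{Y_0}^{n'} \subset \mathcal{J}_{Y_0}^{l} \subset \mathcal{J}_{W_0}^{l}$ give $Y \hookrightarrow Z$ and $W \hookrightarrow Z$, while $\mathcal{J}_{W_0}^{l} \subset \mathcal{J}_{V_0}^{l} \subset \mathcal{J}_{V_0}^{l} + \mathcal{J}_{Y_0}^{n}$ gives $V \hookrightarrow W$, and $V \hookrightarrow Y$ is tautological. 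The compatibility of $(a_Z)_Z$ along these morphisms then produces $a_Y|_V = (a_Z|_Y)|_V = a_Z|_V = (a_Z|_W)|_V = a_W|_V = 0$. Varying $V$ forces $a_Y = 0$.

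The main obstacle is precisely the case $k = 1$: for $l > n$, the thickening $V$ fails to embed into any $W \in \mathscr{P}_{j_2}$ that itself embeds into $Y$, so one cannot directly execute a zig-zag $V \to W \to Y$ to transport $a_W$ to $a_Y|_V$. The resolution, as above, is to enlarge $Y$ to a dominating thickening $Z = Y_0^{[n']}$ within $\mathscr{P}_{j_1}$ that contains both $Y$ and $W_0^{[l]}$; the single element $a_Z$ then restricts consistently to $V$ along either path, reducing the desired vanishing to $a_W = 0$.
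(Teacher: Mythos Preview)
Your proof is correct and follows the same two-case split as the paper's argument: for $k\geq 2$ embed $Y\in\mathscr{P}_{j_k}$ into an object of the smaller family, and for $k=1$ invoke Lemma~\ref{LimPowSub} on $Y$ to detect $a_Y$ via restrictions to thickenings of codimension-$i_0$ integral subschemes. The paper's proof of the $k=1$ case is a two-line sketch (``we can find $Z\in\mathscr{P}_{j_2}$ such that $a_Y|_Z\neq 0$ using Lemma~\ref{LimPowSub}''), and you have correctly identified and resolved the point it suppresses: a power thickening $W_0^{[l]}$ in $X$ need not embed in $Y=Y_0^{[n]}$ when $l>n$, so one must pass to a larger $Z=Y_0^{[n']}\in\mathscr{P}_{j_1}$ dominating both $Y$ and $W$ and use compatibility of $(a_\bullet)$ along the zig-zag $V\hookrightarrow Y\hookrightarrow Z\hookleftarrow W$.
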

\begin{proof}
    If $k\geq 2$, then for any $Y\in \mathscr{P}_{j_k}$ there exists $Z\in \mathscr{P}_{j_1}$ such that $Y\hookrightarrow Z$. From this observation the statement easily follows. If $k = 1$, then for any $Y\in \mathscr{P}_{j_1}$ and any non-zero $a_Y\in \A_{I|_Y}(Y, \mathcal{F}|_Y)$ we can find $Z\in \mathscr{P}_{j_2 - j_1}(Y)$ such that $a_Y|_Z\neq 0$ using lemma \ref{LimPowSub}. Hence we can deduce the lemma in this case. 
\end{proof}

\begin{prop}\label{LimDiag}
    Let $X$ be an irreducible scheme of finite type over a field $\Bbbk$ with $\dim X \geq 2$ and let $\mathcal{F}$ be a coherent sheaf on $X$. Let $I = (i_0, \ldots)\subset \{2, 3,\ldots, \dim X\}$ and let $i, j\in \{1, 2, \ldots, \dim X\}$ such that $i < j \leq i_0$. Then the natural map
    $$\A_I(X, \mathcal{F}) \longrightarrow \lim_{Z \in \mathscr{P}_{i, j}}\A_{I|_Z}(Z, \mathcal{F}|_Z)$$
    is an isomorphism.
\end{prop}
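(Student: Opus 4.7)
The plan is to identify both sides of the asserted isomorphism with products of local factors indexed by codimension-$i_0$ points of $X$, and to deduce the isomorphism from \emph{lemma \ref{LimPowSub}} together with a well-definedness argument that exploits the $\mathscr{P}_i$-component of the index category. By lemma \ref{LimPowSub} applied to $X$, one has $\A_I(X,\mathcal{F}) = \prod_{p} \A_{I,p}(X,\mathcal{F})$, where $p$ runs over codimension-$i_0$ points and $\A_{I,p}(X,\mathcal{F}) := \varprojlim_l \A(\leftidx{_p}S(X,I),\mathcal{F}_p/\mathfrak{m}_p^l\mathcal{F}_p)$. Applying the same lemma to each $V_{[m]}\in\mathscr{P}_j$ (which is locally equidimensional, catenary, and Noetherian) and taking the inverse limit over $m$, the containment $\mathcal{J}_V\subset\mathfrak{m}_p$ gives $\mathfrak{m}_p^l+\mathcal{J}_V^m=\mathfrak{m}_p^l$ for $m\ge l$, so
$$\varprojlim_m \A_{I|_{V_{[m]}}}(V_{[m]},\mathcal{F}|_{V_{[m]}}) = \prod_{p\in V,\; \mathrm{codim}\, p = i_0}\A_{I,p}(X,\mathcal{F}),$$
and the analogous description holds for $U_{[k]}\in\mathscr{P}_i$. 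Since $\mathscr{P}_{i,j}$ admits no morphisms between distinct integral subschemes of the same codimension, $\lim_{Y\in\mathscr{P}_{i,j}}\A_{I|_Y}(Y,\mathcal{F}|_Y)$ is identified with the subspace of $\bigl(\prod_{V}\prod_{p\in V}\A_{I,p}\bigr)\times\bigl(\prod_{U}\prod_{p\in U}\A_{I,p}\bigr)$ cut out by the relations $a_{V,p}=a_{U,p}$ whenever $V\subseteq U$.

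\emph{Injectivity} of the natural map is immediate: every integral codimension-$i_0$ subscheme $W\subset X$ is contained in some integral codimension-$j$ subscheme $V$ (take the closure in $X$ of a prime of height $j$ inside the generic point of $W$, which exists by catenarity of $X$). For \emph{surjectivity}, a compatible family $(a_Y)_Y$ yields values $a_{V,p}\in\A_{I,p}$ for each pair $(V,p)$, and one must show that $a_{V,p}$ depends only on $p$. Given two integral codim-$j$ subschemes $V_1,V_2$ both through $p$, the key step is to produce an integral codim-$i$ subscheme $U$ of $X$ with $U\supseteq V_1\cup V_2$; then compatibility of the family along the morphisms $V_{l[m_l]}\hookrightarrow U_{[k]}$ in $\mathscr{P}_{i,j}$, valid for $k$ sufficiently large, forces $a_{V_1,p}=a_{U,p}=a_{V_2,p}$. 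The reconstructed tuple $(a_p)_p$ then lies in $\A_I(X,\mathcal{F})$ and provides the inverse to the natural map.

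\emph{The main obstacle} is the existence of the integral codim-$i$ subscheme $U$ containing a given pair $V_1,V_2$. Working in an affine neighborhood $\mathrm{Spec}\, R$ of $p$, the task reduces to producing a prime of height $i$ in the finite-type $\Bbbk$-domain $R$ contained in both primes $\mathfrak{q}_1,\mathfrak{q}_2$ defining $V_1,V_2$. One constructs such a prime by a Bertini-style argument (in the spirit of the methods employed in the proof of \emph{theorem \ref{Hjlim}}, cf.\ \cite{GK}): one finds an irreducible element of $\mathfrak{q}_1\cap\mathfrak{q}_2$ defining a hypersurface through $V_1\cup V_2$ and then extends to a saturated chain of primes of height $i$ by catenarity of $R$; the closure in $X$ of the resulting integral subscheme provides the desired $U$. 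Once this geometric step is in place, the well-definedness argument closes, and the natural map is an isomorphism.
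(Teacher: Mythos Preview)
Your overall strategy coincides with the paper's: identify both sides with data indexed by codimension-$i_0$ points via lemma \ref{LimPowSub}, and reduce surjectivity to showing that the value at a point $p$ is independent of the choice of codimension-$j$ subscheme $V$ through $p$, by producing an integral codimension-$i$ subscheme containing any two such $V_1,V_2$. The paper packages this as a map $\lim_{\mathscr{P}_{i,j}}\to\lim_{\mathscr{P}_{i,j,i_0}}$ and then applies lemma \ref{jk}, but the content is the same.

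Where your argument has a genuine gap is the construction of that codimension-$i$ subscheme. You write that one ``finds an irreducible element of $\mathfrak{q}_1\cap\mathfrak{q}_2$ defining a hypersurface through $V_1\cup V_2$ and then extends to a saturated chain of primes of height $i$ by catenarity of $R$''. Neither step is valid as stated. First, an irreducible element $f$ of a domain $R$ need not generate a prime ideal (that would require $f$ to be a prime element, which fails in non-factorial rings); what you actually need is that the zero locus $V(f)$ is irreducible, i.e.\ that $\sqrt{(f)}$ is prime, and there is no elementary reason this should hold for a generic $f\in\mathfrak{q}_1\cap\mathfrak{q}_2$. Second, catenarity only asserts that saturated chains between two given primes have equal length; it says nothing about the existence of a height-$i$ prime contained in $\mathfrak{q}_1\cap\mathfrak{q}_2$, and in particular does not let you ``extend'' a height-$1$ prime to one of height $i$ while remaining inside both $\mathfrak{q}_l$.

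The paper handles this step as follows. One passes to an affine open $U$ meeting both $Z_l$ (this is automatic since $p\in Z_1\cap Z_2$), embeds $U$ in a projective space, and applies the Bertini-type results \cite[Lemma 3.5, Theorem 5.7]{GK}: since $U\cap(Z_1\cup Z_2)$ has codimension $j\geq i+1\geq 2$ in $U$, there is an \emph{irreducible} hypersurface section $H$ of $U$ containing it. One then iterates, replacing $U$ by $H$ (in which $Z_1\cup Z_2$ now has codimension $j-1\geq 2$), until the ambient codimension reaches $i$. The closure in $X$ of the resulting irreducible subscheme, suitably thickened, is the $Z_0\in\mathscr{P}_i$ with $Z_1\cup Z_2\hookrightarrow Z_0$. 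So the citation of \cite{GK} is the right instinct, but the iteration of irreducible hypersurface sections is the actual mechanism, not an ``irreducible element plus catenarity'' argument.
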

\begin{proof}
Let us construct the inverse to the morphism 
$$\A_I(X, \mathcal{F}) \longrightarrow \lim_{Z \in \mathscr{P}_{i, j}}\A_{I|_Z}(Z, \mathcal{F}|_Z).$$
If $j = i_0$, then the inverse map is given by a composition
\begin{equation}\label{eqComposition}
\lim_{Z \in \mathscr{P}_{i, i_0}}\A_{I|_Z}(Z, \mathcal{F}|_Z)\longrightarrow \lim_{Z \in \mathscr{P}_{i_0}}\A_{I|_Z}(Z, \mathcal{F}|_Z)\overset{\sim}{\longrightarrow} \A_I(X, \mathcal{F}),
\end{equation}
where the first map is induced by inclusion $\mathscr{P}_{i_0}\subset \mathscr{P}_{i, i_0}$ and the second map is isomorphism by lemma \ref{LimPowSub}. Indeed, since the composition
$$\A_I(X, \mathcal{F}) \longrightarrow \lim_{Z \in \mathscr{P}_{i, j}}\A_{I|_Z}(Z, \mathcal{F}|_Z) \longrightarrow \A_I(X, \mathcal{F})$$
is the identity map by construction and since the first map in \eqref{eqComposition} is injective by lemma \ref{jk}, we obtain the desired isomorphism.

Assume that $j < i_0$. Let $(a_Z)_{Z\in \mathscr{P}_{i, j}} \in \lim_{Z \in \mathscr{P}_{i, j}}\A_{I|_Z}(Z, \mathcal{F}|_Z)$. Let $Y\in \mathscr{P}_{i_0}$. Assume that $Z_1, Z_2\in \mathscr{P}_j$ such that $Y\longhookrightarrow Z_1\cap Z_2$. Let $U$ be an open affine subscheme of $X$ such that $U\cap Y\neq \varnothing$. Then $U$ is irreducible and dense in $X$, and $Y = \overline{U\cap Y}$, $Z_1 = \overline{U\cap Z_1}$, and $Z_2 = \overline{U\cap Z_2}$. By assumptions, $\dim U \geq 2$ and $\mathrm{codim}_U(U\cap (Z_1\cup Z_2)) = j \geq i + 1 \geq 2$. We can assume that $U$ is a subscheme in some projective space. Then there exists an irreducible closed subscheme $H\subset U$ such that $U\cap (Z_1\cup Z_2)\longhookrightarrow H$,  $\mathrm{codim}_H(U\cap (Z_1\cup Z_2)) = \mathrm{codim}_U(U\cap (Z_1\cup Z_2)) - 1 = j - 1$ by \cite[Lemma 3.5]{GK} for the infinite field $\Bbbk$ and by \cite[Theorem 5.7]{GK} for the finite field $\Bbbk$. We can continue this procedure of taking irreducible closed subschemes of smaller dimension containing $U\cap (Z_1\cup Z_2)$ inductively, and in the end we obtain an irreducible closed subscheme $H_0$ of $U$ such that $U\cap (Z_1\cup Z_2)\longhookrightarrow H_0$, $\mathrm{codim}_{H_0}(U\cap (Z_1\cup Z_2)) = j - i$. Let $Z_0\in \mathscr{P}_i$ be a power thickening of the integral subscheme $\overline{(H_0)_{red}}$ of $X$ such that $Z_1\cup Z_2\longhookrightarrow Z_0$. 
    Define 
    $$a_Y = a_{Z_1}|_Y\in \A_{I|_{Y}}(Y, \mathcal{F}|_{Y}).$$
    This is well-defined, since $a_{Z_1}|_Y - a_{Z_2}|_Y = a_{Z_0}|_Y - a_{Z_0}|_Y = 0.$
    Since for any $Y\in \mathscr{P}_{i_0}$ there exists $Z\in \mathscr{P}_j$ such that $Y\longhookrightarrow Z$, we obtain a well-defined map
    $$\lim_{Z \in \mathscr{P}_{i, j}}\A_{I|_Z}(Z, \mathcal{F}|_Z)\longrightarrow \lim_{Z \in \mathscr{P}_{i, j, i_0}}\A_{I|_Z}(Z, \mathcal{F}|_Z).$$
    Moreover, this map is injective by the explicit construction. Since the composition
    $$\A_I(X, \mathcal{F})\longrightarrow \lim_{Z \in \mathscr{P}_{i, j}}\A_{I|_Z}(Z, \mathcal{F}|_Z)\longhookrightarrow \lim_{Z \in \mathscr{P}_{i, j, i_0}}\A_{I|_Z}(Z, \mathcal{F}|_Z)\longhookrightarrow \lim_{Z \in \mathscr{P}_{ i_0}}\A_{I|_Z}(Z, \mathcal{F}|_Z) =\A_I(X, \mathcal{F})$$
    is the identity map, where the second inclusion follows from lemma \ref{jk} and the equality follows from lemma \ref{LimPowSub}, we obtain the desired isomorphism.
\end{proof}

\subsection{Intersections of adelic groups on a normal projective surface}

\begin{prop}\label{IntersectiondimX}
    Let $X$ be a Cohen--Macaulay projective variety over a field $\Bbbk$. Let $\mathcal{F}$ be a locally free sheaf on $X$. Then 
    $$\A_{\dim X - 1}(X, \mathcal{F})\cap \A_{\dim X}(X, \mathcal{F}) = H^0(X, \mathcal{F}).$$
\end{prop}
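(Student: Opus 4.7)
The inclusion $H^0(X, \mathcal{F}) \subseteq \A_{\dim X - 1}(X, \mathcal{F}) \cap \A_{\dim X}(X, \mathcal{F})$ is immediate from the embeddings $\A_{\varnothing}(X, \mathcal{F}) \hookrightarrow \A_I(X, \mathcal{F})$ of Theorem~\ref{MainThmembeddingAdeles} (a Cohen--Macaulay projective variety is strongly biequidimensional by Remark~\ref{remFiniteTypeEquidim}, and $\mathcal{F}$ is flat). The case $\dim X = 0$ is trivial, so assume $\dim X \geq 1$. For the reverse inclusion, pick $a$ in the intersection and view it inside the ambient group $\A_{(\dim X - 1,\,\dim X)}(X, \mathcal{F})$ via the same embedding theorem. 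The plan is to construct from $a$ a coherent family in $\lim_{Z \in \mathscr{P}_{\dim X - 1,\,\dim X}} H^0(Z, \mathcal{F}|_Z)$ and then appeal to Theorem~\ref{Hjlim}(2) to produce an honest $b \in H^0(X, \mathcal{F})$.

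The key local computation happens on each $Z \in \mathscr{P}_{\dim X - 1}$, which is a one-dimensional projective Noetherian scheme. The restriction maps send $\A_{\dim X - 1}(X, \mathcal{F})$ into $\A_0(Z, \mathcal{F}|_Z)$ and $\A_{\dim X}(X, \mathcal{F})$ into $\A_1(Z, \mathcal{F}|_Z)$, and these embed compatibly into $\A_{(0,1)}(Z, \mathcal{F}|_Z)$ by Proposition~\ref{commdiagpullback}. Hence the image of $a$ in $\A_{(0,1)}(Z, \mathcal{F}|_Z)$ lies in $\A_0(Z, \mathcal{F}|_Z) \cap \A_1(Z, \mathcal{F}|_Z)$; by Proposition~\ref{Decomposition} combined with Theorem~\ref{CohomologyRedAdelicComp}, this intersection is the kernel of $d^1\colon \A_0(Z, \mathcal{F}|_Z) \oplus \A_1(Z, \mathcal{F}|_Z) \to \A_{(0,1)}(Z, \mathcal{F}|_Z)$, i.e., $H^0(Z, \mathcal{F}|_Z)$. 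For $Z \in \mathscr{P}_{\dim X}$ the scheme $Z$ is zero-dimensional, so $\A_0(Z, \mathcal{F}|_Z) = H^0(Z, \mathcal{F}|_Z)$, and the restriction of $a \in \A_{\dim X}(X, \mathcal{F})$ lands there directly.

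These restrictions $a_Z$ are compatible along morphisms in $\mathscr{P}_{\dim X - 1,\,\dim X}$: compatibility between two curve thickenings or two point thickenings is straightforward, and in the mixed case $Z \hookrightarrow Z'$ with $Z \in \mathscr{P}_{\dim X}$, $Z' \in \mathscr{P}_{\dim X - 1}$, both $(a_{Z'})|_Z$ and $a_Z$ result from restricting the single element $a \in \A_{\dim X}(X, \mathcal{F})$ to $Z$ (using the embedding $H^0(Z', \mathcal{F}|_{Z'}) \hookrightarrow \A_1(Z', \mathcal{F}|_{Z'})$ for the first expression and transitivity of pullback for the second). Theorem~\ref{Hjlim}(2) with $i = \dim X - 1$, $j = \dim X$ then yields a unique $b \in H^0(X, \mathcal{F})$ restricting to $a_Z$ for every $Z \in \mathscr{P}_{\dim X - 1,\,\dim X}$. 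Finally, both $a$ and the image of $b$ in $\A_{\dim X - 1}(X, \mathcal{F})$ share the same image in $\prod_{Z \in \mathscr{P}_{\dim X - 1}} \A_0(Z, \mathcal{F}|_Z)$, so $a = b$ by the injectivity built into Lemma~\ref{LimPowSub}.

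The main obstacle, beyond the overall bookkeeping, is verifying the commutativity of the various restriction diagrams: the embeddings $\A_{\dim X - 1}, \A_{\dim X} \hookrightarrow \A_{(\dim X - 1,\,\dim X)}$ of Theorem~\ref{MainThmembeddingAdeles} must commute with restriction to a thickening $Z$; and restricting $a \in \A_{\dim X}(X, \mathcal{F})$ directly to a point thickening $Z \in \mathscr{P}_{\dim X}$ must agree with first restricting to a curve thickening $Z' \supseteq Z$ and then further to $Z$. Both are instances of the functoriality of the pullback from Definition~\ref{PullbackDef} and Proposition~\ref{commdiagpullback}, but weaving them into the mixed compatibility inside $\mathscr{P}_{\dim X - 1,\,\dim X}$ is the technical crux of the argument.
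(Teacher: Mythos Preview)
Your proposal is correct and follows essentially the same strategy as the paper's proof: restrict to thickenings in $\mathscr{P}_{\dim X - 1,\,\dim X}$, identify the resulting data with $H^0(Z,\mathcal{F}|_Z)$ via the one-dimensional reduced adelic complex, and invoke Theorem~\ref{Hjlim}(2). The only difference is packaging: the paper writes the intersection directly as $\ker\bigl(\A_{\dim X-1}\oplus\A_{\dim X}\to\A_{(\dim X-1,\dim X)}\bigr)$ and then pushes the whole kernel through the isomorphisms of Lemma~\ref{LimPowSub} and Proposition~\ref{LimDiag} to obtain $\lim_{Z\in\mathscr{P}_{\dim X-1,\dim X}}H^0(Z,\mathcal{F}|_Z)$ in one stroke, so all of your elementwise compatibility checks are absorbed into those two limit lemmas.
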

\begin{proof}
    Since $\A_{(\dim X -1)|_Z}(Z, \mathcal{F}|_Z) = 0$ and $\A_{(\dim X -1, \dim X)|_Z}(Z, \mathcal{F}|_Z) = 0$ for any $Z\in \mathscr{P}_{\dim X}$, we obtain 
    \begin{eqnarray*}
        \A_{\dim X - 1}(X, \mathcal{F})\cap \A_{\dim X}(X, \mathcal{F}) =  \ker\big(\A_{\dim X - 1}(X, \mathcal{F})\oplus \A_{\dim X}(X, \mathcal{F})\longrightarrow \A_{(\dim X - 1, \dim X)}(X, \mathcal{F})\big) \\
        = \lim_{Z\in \mathscr{P}_{\dim X -1, \dim X}}\ker\big(\A_{(\dim X -1)|_Z}(Z, \mathcal{F}|_Z)\oplus \A_{(\dim X)|_Z}(Z, \mathcal{F}|_Z)\longrightarrow \A_{(\dim X - 1, \dim X)|_Z}(Z, \mathcal{F}|_Z)\big)
    \end{eqnarray*}
    by lemma \ref{LimPowSub} and proposition \ref{LimDiag}. 
    Observe that 
    $$ \A_{(\dim X -1)|_Z}(Z, \mathcal{F}|_Z)\oplus \A_{(\dim X)|_Z}(Z, \mathcal{F}|_Z)\longrightarrow \A_{(\dim X - 1, \dim X)|_Z}(Z, \mathcal{F}|_Z)$$
    is the complex of reduces adeles $\A_{\mathrm{red}}^{\sbullet}(Z, \mathcal{F}|_Z)$ if $Z\in\mathscr{P}_{\dim X - 1}$ or $Z\in \mathscr{P}_{\dim X}$. Thus
    \begin{eqnarray*}
    \ker\big(\A_{(\dim X -1)|_Z}(Z, \mathcal{F}|_Z)\oplus \A_{(\dim X)|_Z}(Z, \mathcal{F}|_Z)\longrightarrow \A_{(\dim X - 1, \dim X)|_Z}(Z, \mathcal{F}|_Z)\big) = H^0(Z, \mathcal{F}|_Z)
    \end{eqnarray*}
    by theorem \ref{CohomologyRedAdelicComp}. Therefore
    $$\A_{\dim X - 1}(X, \mathcal{F})\cap \A_{\dim X}(X, \mathcal{F}) = \lim_{Z\in \mathscr{P}_{\dim X -1, \dim X}}H^0(Z, \mathcal{F}|_Z) = H^0(X, \mathcal{F}),$$
    where the last equality follows from theorem \ref{Hjlim}.
\end{proof}

\begin{thm}\label{ThmDimX2}
    Let $X$ be a normal projective surface over a field $\Bbbk$. Let $\mathcal{F}$ be a locally free sheaf on $X$ and let $I, J\subset \{0, 1, 2\}$. Then there is an equality
    $$\A_I(X, \mathcal{F})\cap \A_J(X, \mathcal{F}) = \A_{I\cap J}(X, \mathcal{F}).$$
\end{thm}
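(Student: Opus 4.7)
The plan is to perform case analysis on ordered pairs $(I, J)$ with $I, J \subset \{0, 1, 2\}$ and to apply the tools already in the paper. Since $X$ is a normal, integral projective variety of dimension two over a field, it is excellent, strongly biequidimensional (see the remarks after Theorem \ref{MainThmembeddingAdeles}) and Cohen--Macaulay (normal surfaces are $S_2$). The locally free sheaf $\mathcal{F}$ is flat, so Theorems \ref{MainThmembeddingAdeles} and \ref{TheoremIcapJI0} and Proposition \ref{IntersectiondimX} all apply. The embeddings $\A_I(X, \mathcal{F}) \hookrightarrow \A_J(X, \mathcal{F})$ for $I \subset J$ make every intersection well defined inside $\A_{I \cup J}(X, \mathcal{F})$, and the inclusion $\A_{I \cap J}(X, \mathcal{F}) \subset \A_I(X, \mathcal{F}) \cap \A_J(X, \mathcal{F})$ is automatic; only the reverse inclusion is at stake.

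After discarding the trivial cases $I \subset J$ (where equality is immediate from the embedding) and exploiting the symmetry in $I, J$, the incomparable pairs of non-empty subsets of $\{0, 1, 2\}$ split as follows: (a) pairs satisfying $I \cap J = I \setminus \{0\}$ up to swap, namely $(\{0\}, \{1\})$, $(\{0\}, \{2\})$, $(\{0\}, \{1, 2\})$, $(\{0, 1\}, \{1, 2\})$, $(\{0, 2\}, \{1, 2\})$, which are covered by Theorem \ref{TheoremIcapJI0}; (b) the pair $(\{1\}, \{2\})$, which is Proposition \ref{IntersectiondimX} in dimension two; (c) the key new pair $(\{0, 1\}, \{0, 2\})$; and (d) the pairs $(\{1\}, \{0, 2\})$ and $(\{2\}, \{0, 1\})$, to be deduced from (c).

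For the key case (c), I plan to pass to the generic-fibre sheaf via Lemma \ref{IandI0}: inside $\A_{\{0, 1, 2\}}(X, \mathcal{F}) = \A_{\{1, 2\}}(X, \mathcal{F}_\eta)$ the two subgroups rewrite as $\A_1(X, \mathcal{F}_\eta)$ and $\A_2(X, \mathcal{F}_\eta)$, so the claim becomes
\[
\A_1(X, \mathcal{F}_\eta) \cap \A_2(X, \mathcal{F}_\eta) \;=\; H^0(X, \mathcal{F}_\eta) \;=\; \mathcal{F}_\eta \;=\; \A_0(X, \mathcal{F}).
\]
Since $\mathcal{F}_\eta$ is not coherent, Proposition \ref{IntersectiondimX} does not apply directly. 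Instead, fix a very ample invertible sheaf $\mathcal{L}$ and, for every pair $(n, s)$ with $n \geq 0$ and $0 \neq s \in H^0(X, \mathcal{L}^{\otimes n})$, consider the locally free subsheaf $\mathcal{G}_{n, s} := s^{-1}(\mathcal{F} \otimes \mathcal{L}^{\otimes n}) \subset \mathcal{F}_\eta$, which is isomorphic to $\mathcal{F} \otimes \mathcal{L}^{\otimes n}$ via multiplication by $s$. Ordering pairs by $(n, s) \leq (n', s')$ iff $s' = t s$ for some $t \in H^0(X, \mathcal{L}^{\otimes (n' - n)})$ gives a filtered system, and its colimit equals $\mathcal{F}_\eta$: for any $\phi \in \mathcal{F}_\eta$ the coherent subsheaf $\mathcal{F} + \mathcal{O}_X \cdot \phi \subset \mathcal{F}_\eta$ has a cokernel over $\mathcal{F}$ annihilated by some ideal $\mathfrak{a}_\phi \subset \mathcal{O}_X$, and by ampleness of $\mathcal{L}$ one has $H^0(X, \mathcal{L}^{\otimes n} \otimes \mathfrak{a}_\phi) \neq 0$ for $n \gg 0$, furnishing a global section $s$ with $s \phi \in H^0(X, \mathcal{F} \otimes \mathcal{L}^{\otimes n})$ and hence $\phi \in \mathcal{G}_{n, s}$. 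Applying Proposition \ref{IntersectiondimX} to each locally free $\mathcal{G}_{n, s}$ on the Cohen--Macaulay projective surface $X$, and using that $\A_K(X, -)$ and $H^0(X, -)$ commute with filtered colimits on the Noetherian scheme $X$ and that intersection of subgroups commutes with filtered directed unions, we conclude
\begin{align*}
\A_1(X, \mathcal{F}_\eta) \cap \A_2(X, \mathcal{F}_\eta)
&= \varinjlim_{(n, s)} \bigl(\A_1(X, \mathcal{G}_{n, s}) \cap \A_2(X, \mathcal{G}_{n, s})\bigr) \\
&= \varinjlim_{(n, s)} H^0(X, \mathcal{G}_{n, s}) \;=\; H^0(X, \mathcal{F}_\eta).
\end{align*}

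Case (d) follows from (c) by a simple chain: $\A_1(X, \mathcal{F}) \cap \A_{\{0, 2\}}(X, \mathcal{F}) \subset \A_{\{0, 1\}}(X, \mathcal{F}) \cap \A_{\{0, 2\}}(X, \mathcal{F}) = \A_0(X, \mathcal{F})$ by case (c), and the intersection is also contained in $\A_1(X, \mathcal{F})$, hence lies in $\A_0(X, \mathcal{F}) \cap \A_1(X, \mathcal{F}) = H^0(X, \mathcal{F}) = \A_\varnothing(X, \mathcal{F})$ by Theorem \ref{TheoremIcapJI0}; the reverse inclusion is trivial, and the pair $(\{2\}, \{0, 1\})$ is symmetric. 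The main obstacle is case (c): the reduction via Lemma \ref{IandI0} is formal, but one must use projectivity and normality of $X$ in an essential way to exhaust $\mathcal{F}_\eta$ by the locally free subsheaves $\mathcal{G}_{n, s}$ through ample twists, and to justify that the adelic, cohomological and intersection operations are compatible with the relevant filtered colimits.
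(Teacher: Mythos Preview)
Your proposal is correct and follows essentially the same approach as the paper: reduce via Theorem~\ref{TheoremIcapJI0} whenever $0$ lies in the symmetric difference of $I$ and $J$, handle the case $0\in I\cap J$ by Lemma~\ref{IandI0} together with a filtered colimit of locally free twists of $\mathcal{F}$, and finish the remaining case $(\{1\},\{2\})$ with Proposition~\ref{IntersectiondimX}. The only differences are organizational: the paper indexes the colimit simply by Cartier divisors $D$ (writing $\mathcal{F}_\eta=\varinjlim_D\mathcal{F}(D)$) rather than by your pairs $(n,s)$, and it disposes of your case~(d) by inserting $\A_{(1,2)}$ and applying Theorem~\ref{TheoremIcapJI0} directly rather than routing through case~(c); both routes are valid and equally short.
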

\begin{proof}
If $0\in I\cap J$, then 
    $$\A_I(X, \mathcal{F})\cap \A_J(X, \mathcal{F}) = \A_{I\setminus 0}(X, \mathcal{F}_\eta)\cap \A_{J\setminus 0}(X, \mathcal{F}_\eta) = \varinjlim_{D}\:\A_{I\setminus 0}\big(X, \mathcal{F}(D)\big)\cap \A_{J\setminus 0}\big(X, \mathcal{F}(D)\big),$$
    where the first equality follows from lemma \ref{IandI0} and the direct limit is taken over Cartier divisors on $X$. Since $\mathcal{F}(D)$ is a locally free sheaf, the assertion is reduced to the case $I\setminus 0$ and $J\setminus 0$. 
    
    If $0\in I\setminus J$, then 
    $$\A_I(X, \mathcal{F})\cap \A_J(X, \mathcal{F}) = \A_I(X, \mathcal{F})\cap \A_{(1, 2)}(X, \mathcal{F})\cap \A_J(X, \mathcal{F}) = \A_{I\setminus 0}(X, \mathcal{F})\cap \A_J(X, \mathcal{F}),$$
    where the last equality follows from theorem \ref{TheoremIcapJI0}, so the assertion is reduced to the case $I\setminus 0$ and $J$. The case $0\in J\setminus I$ is analogous.

    Therefore, we can assume that $0\notin I, J$. We can also assume that $(1, 2)\neq I, J$, otherwise the assertion is obvious. The only remaining case is $I = (1)$, $J =(2)$, which follows from proposition \ref{IntersectiondimX}, since a normal surface is Cohen--Macaulay by Serre's criterion for normality.
\end{proof}

\subsection{Cohomology groups of a curtailed adelic complex}\label{sect6.1}

\begin{lemma}\label{LimE}
    Let $X$ be a finite-dimensional catenary locally equidimensional Noetherian scheme, let $\mathcal{F}$ be a coherent sheaf on $X$, and let $I \subset \{1, \ldots, \dim X\}$. Then the natural map
    $$\A_I(X, \mathcal{F})\longrightarrow \lim_{Z\in \mathscr{E}}\A_{I|_Z}(Z, \mathcal{F}|_Z)$$
    is an isomorphism. 
\end{lemma}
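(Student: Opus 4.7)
The plan is to reduce the assertion to lemma \ref{LimPowSub} by exploiting that the full subcategory $\mathscr{P}_{i_0} \hookrightarrow \mathscr{E}$ already controls the limit, where $i_0 = \min I$. Since $I \subset \{1, \ldots, \dim X\}$, we have $i_0 \geq 1$, so every $Z \in \mathscr{P}_{i_0}$ is an irreducible closed subscheme of codimension $i_0$ and dimension $\dim X - i_0 < \dim X$, and hence belongs to $\mathscr{E}$. Restricting to this subcategory and composing with the isomorphism of lemma \ref{LimPowSub} yields a map
$$\Phi \colon \lim_{Z \in \mathscr{E}} \A_{I|_Z}(Z, \mathcal{F}|_Z) \longrightarrow \lim_{Z \in \mathscr{P}_{i_0}} \A_{I|_Z}(Z, \mathcal{F}|_Z) \overset{\sim}{\longrightarrow} \A_I(X, \mathcal{F}),$$
which is a candidate inverse to the natural restriction map $\Psi$ of the statement. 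The relation $\Phi \circ \Psi = \mathrm{id}$ is immediate, because the composition coincides with the isomorphism of lemma \ref{LimPowSub}; this in particular gives injectivity of $\Psi$.

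For $\Psi \circ \Phi = \mathrm{id}$, I would fix $(a_Z)_{Z \in \mathscr{E}}$ with associated $a = \Phi((a_Z)) \in \A_I(X, \mathcal{F})$, and check $a|_Y = a_Y$ for every $Y \in \mathscr{E}$. Let $c = \mathrm{codim}_X Y$; the case $c > i_0$ is vacuous, as the target is $0$. For $c \leq i_0$, I would use the product decomposition
$$\A_{I|_Y}(Y, \mathcal{F}|_Y) \longhookrightarrow \prod_{\substack{p \in Y \\ \mathrm{codim}_Y p = i_0 - c}} \varprojlim_l \A\bigl(\leftidx{_p}{S(Y, I|_Y)}{}, (\mathcal{F}|_Y)_p / \mathfrak{m}_{Y, p}^l (\mathcal{F}|_Y)_p\bigr)$$
exactly analogous to the one used in lemma \ref{LimPowSub}, thereby reducing to verifying equality factor by factor. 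Fix such $p$ and $l \geq 1$, let $W = \overline{\{p\}}$ (integral of codimension $i_0$ in $X$ by lemma \ref{LemmaIandIZ}), and let $W_{[l], Y}$ and $W_{[l], X}$ be its $l$-th power thickenings in $Y$ and $X$, respectively. Both lie in $\mathscr{E}$ (the latter even in $\mathscr{P}_{i_0}$), being irreducible with unique irreducible component $W$ of codimension $i_0$, and the canonical closed embedding $W_{[l], Y} \hookrightarrow W_{[l], X}$ is a morphism in $\mathscr{E}$. Compatibility of $(a_Z)$ along the embeddings $W_{[l], Y} \hookrightarrow Y$ and $W_{[l], Y} \hookrightarrow W_{[l], X}$, combined with $a|_{W_{[l], X}} = a_{W_{[l], X}}$ from the construction of $a$ via lemma \ref{LimPowSub}, yields
$$a_Y\big|_{W_{[l], Y}} = a_{W_{[l], Y}} = a_{W_{[l], X}}\big|_{W_{[l], Y}} = a\big|_{W_{[l], X}}\big|_{W_{[l], Y}} = a\big|_{W_{[l], Y}}.$$
Passing to the inverse limit over $l$ gives equality at the $p$-th factor, and since $p$ was arbitrary, $a|_Y = a_Y$.

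The main technical obstacle is establishing the product decomposition above for a general, possibly reducible, $Y \in \mathscr{E}$. This hinges on $Y$ being catenary and locally equidimensional (both inherited from the hypotheses on $X$ together with $Y \in \mathscr{E}$), so that codimensions computed in $Y$ and in $X$ match consistently, and on the local formula for adelic groups from \cite[Proposition 2.1.1]{Hu} applied to $Y$ in place of $X$. Once this is in place, everything else is routine bookkeeping with restriction maps, and the identification of $W_{[l], Y}$ as an object of $\mathscr{E}$ sharing a common refinement with both $Y$ and $W_{[l], X}$ is the only genuinely new point compared to the proof of lemma \ref{LimPowSub}.
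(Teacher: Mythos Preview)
Your proposal is correct and follows essentially the same route as the paper. The paper's proof is more compressed: it sets up the commutative triangle with vertices $\A_I(X,\mathcal{F})$, $\lim_{\mathscr{E}}$, and $\lim_{\mathscr{P}_{i_0}}$, invokes lemma~\ref{LimPowSub} for the isomorphism $\A_I(X,\mathcal{F})\overset{\sim}{\to}\lim_{\mathscr{P}_{i_0}}$, and then argues in one line that the restriction $\lim_{\mathscr{E}}\to\lim_{\mathscr{P}_{i_0}}$ is injective (because each $a_Y$ for $Y\in\mathscr{E}$ with $\mathrm{codim}\,Y\le i_0$ is determined by its restrictions to $\mathscr{P}_{i_0-\mathrm{codim}\,Y}(Y)$, i.e.\ lemma~\ref{LimPowSub} applied to $Y$). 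Your explicit verification of $\Psi\circ\Phi=\mathrm{id}$ via the thickenings $W_{[l],Y}\hookrightarrow W_{[l],X}$ unpacks exactly this injectivity statement; the content is identical.
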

\begin{proof}
    There is a commutative diagram
    \begin{center}
        \begin{tikzcd}[column sep=tiny]
            & \lim\limits_{Z\in \mathscr{E}}\A_{I|_Z}(Z, \mathcal{F}|_Z) \arrow{rd} & \\
            \A_I(X, \mathcal{F}) \arrow{rr}{\sim} \arrow{ru}& & \lim\limits_{Z\in \mathscr{P}_{i_0}}\A_{I|_Z}(Z, \mathcal{F}|_Z),
        \end{tikzcd}
    \end{center}
    where the right arrow is induced by the inclusion $\mathscr{P}_{i_0}\subset \mathscr{E}$ and the bottom arrow is an isomorphism by lemma \ref{LimPowSub}. The right arrow is injective, since every element in $\A_{I|_Z}(Z, \mathcal{F}|_Z)$ for $Z\in \mathscr{E}$ with $\mathrm{codim}\: Z \leq i_0$ is uniquely determined by its restrictions to elements of $\mathscr{P}_{i_0-\mathrm{codim}\: Z}(Z)$, and $\A_{I|_Z}(Z, \mathcal{F}|_Z) = 0$ for $Z\in \mathscr{E}$ with $\mathrm{codim}\: Z > i_0$. Thus the left arrow is an isomorphism.
\end{proof}

\begin{prop}\label{Complwithout0}
    Let $X$ be an irreducible Cohen--Macaulay projective scheme over a countable field $\Bbbk$ and let $\mathcal{F}$ be a locally free sheaf on $X$. Consider a curtailed adelic complex
    $$\mathcal{C}(\mathcal{F})^{\sbullet}\colon \A_1(X, \mathcal{F})\oplus \ldots\oplus\A_{\dim X}(X, \mathcal{F})\longrightarrow \ldots\longrightarrow \A_{(1,2,\ldots, \dim X)}(X, \mathcal{F}),$$
    where 
    $$\mathcal{C}(\mathcal{F})^m =  \bigoplus_{\substack{I\subset \{1, \ldots, \dim X\}\\ |I| = m + 1}}\A_I(X, \mathcal{F}).$$
    Then $H^k\big(\mathcal{C}(\mathcal{F})^{\sbullet}\big) \cong H^k(X, \mathcal{F})$ for $0\leq k < \dim X - 1$. In particular, $H^k\big(\mathcal{C}(\mathcal{F}(n))^{\sbullet}\big) = 0$ for $0 < k < \dim X - 1$ and $n\gg 0$.
\end{prop}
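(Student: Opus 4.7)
The plan is to identify $\mathcal{C}(\mathcal{F})^\bullet$ with an inverse limit of reduced adelic complexes of closed subschemes of $X$ and then read off its cohomology via the Milnor exact sequence together with theorem~\ref{Hjlim}.

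First I would establish the key identification
$$\mathcal{C}(\mathcal{F})^\bullet \;\cong\; \lim_{Z\in\mathscr{E}}\A_{(red)}^\bullet(Z,\mathcal{F}|_Z)$$
as complexes of abelian groups. By lemma~\ref{LimE}, for each $I\subset\{1,\ldots,\dim X\}$ of cardinality $m+1$ we have $\A_I(X,\mathcal{F})=\lim_{Z\in\mathscr{E}}\A_{I|_Z}(Z,\mathcal{F}|_Z)$, and the finite direct sum over $I$ commutes with the limit. For fixed $Z\in\mathscr{E}$ of codimension $c\geq1$, the summand $\A_{I|_Z}(Z,\mathcal{F}|_Z)$ vanishes unless $\min I\geq c$; the substitution $J=I-c$ then puts the nonzero summands in bijection with subsets $J\subset\{0,\ldots,\dim Z\}$ of cardinality $m+1$, and their direct sum is $\A_{(red)}^m(Z,\mathcal{F}|_Z)$. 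Since restrictions to $Z$ commute with the adelic boundary maps, the identification is one of complexes.

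Next I would check termwise surjectivity of this tower. For a closed embedding $Z\hookrightarrow Z'$ in $\mathscr{E}$ with $i\colon Z\hookrightarrow Z'$ and codimensions $c'\leq c$, apply the exact functor $\A_I(Z',-)$ to the short exact sequence $0\to\mathcal{K}\to\mathcal{F}|_{Z'}\to i_*(\mathcal{F}|_Z)\to 0$ on $Z'$. A local flag-by-flag computation (as in \cite[Proposition 3.2.1]{Hu}) shows $\A_I(Z',i_*(\mathcal{F}|_Z))=\A_{I-(c-c')}(Z,\mathcal{F}|_Z)$, and the restriction map factors through this quotient. Consequently the transition maps in the tower are surjective, the derived inverse limit agrees with the ordinary one, and the Milnor exact sequence
$$0\longrightarrow {R^1\!\lim}_{Z\in\mathscr{E}}\,H^{k-1}(Z,\mathcal{F}|_Z)\longrightarrow H^k(\mathcal{C}(\mathcal{F})^\bullet)\longrightarrow \lim_{Z\in\mathscr{E}}\,H^k(Z,\mathcal{F}|_Z)\longrightarrow 0$$
is applicable.

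The third step is to show the $R^1\lim$ vanishes. Since $X$ is projective over the countable field $\Bbbk$, it is $\mathrm{Proj}$ of a finitely generated graded $\Bbbk$-algebra with countable underlying set, so $X$ has only countably many closed subschemes, whence $\mathscr{E}$ has only countably many objects up to isomorphism. The subcategory $\tilde{\mathscr{E}}\subset\mathscr{E}$ of power thickenings of integral closed subschemes of codimension $\geq 1$ is cofinal in $\mathscr{E}$ (any $Z\in\mathscr{E}$ contains the reduced structure on one of its irreducible components) and is cofiltered (common refinements are obtained by taking integral closed subschemes of intersections and passing to sufficiently small power thickenings). Extracting a cofinal $\omega$-chain in $\tilde{\mathscr{E}}$ and noting that each $H^{k-1}(Z,\mathcal{F}|_Z)$ is a finite-dimensional $\Bbbk$-vector space (as $Z$ is a closed subscheme of the projective $X$ and $\mathcal{F}|_Z$ is coherent), the descending-chain condition on images automatically gives the Mittag--Leffler property along the $\omega$-chain, so $R^1\lim=0$. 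Finally, for $0\leq k<\dim X-1$, theorem~\ref{Hjlim}(1) identifies $\lim_{Z\in\mathscr{E}}H^k(Z,\mathcal{F}|_Z)\cong H^k(X,\mathcal{F})$, and the Milnor sequence then yields $H^k(\mathcal{C}(\mathcal{F})^\bullet)\cong H^k(X,\mathcal{F})$. The ``in particular'' assertion follows from Serre's vanishing: for $n\gg0$ one has $H^k(X,\mathcal{F}(n))=0$ for all $k>0$.

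The main obstacle will be the cofinality/Mittag--Leffler step: one must verify carefully that $\mathscr{E}$ admits a countable cofinal cofiltered subcategory (for which the countability of $\Bbbk$ is essential) so that the standard $\omega$-tower machinery giving $R^1\lim=0$ applies, and one must reliably produce surjective transition maps from the base-change formula for adelic groups under closed immersions.
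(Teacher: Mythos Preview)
Your overall strategy coincides with the paper's: identify $\mathcal{C}(\mathcal{F})^\bullet$ with an inverse limit of reduced adelic complexes over $\mathscr{E}$, use surjectivity of the transition maps plus finite-dimensionality of cohomology to get the relevant $\lim^1$ vanishing, and conclude via theorem~\ref{Hjlim}. The paper does not invoke the Milnor sequence by name but instead chases the two short exact sequences $0\to\mathcal{B}_Z^\bullet\to\mathcal{Z}_Z^\bullet\to H^\bullet(Z,\mathcal{F}|_Z)\to 0$ and $0\to\mathcal{Z}_Z^\bullet\to\mathcal{C}(\mathcal{F})^\bullet|_Z\to\mathcal{B}_Z^{\bullet+1}\to 0$ directly; the content is the same.

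The gap is in your cofinality step, and it is exactly where you anticipated trouble. Your subcategory $\tilde{\mathscr{E}}$ of power thickenings of \emph{integral} subschemes is not cofinal in $\mathscr{E}$ in the sense needed for inverse limits: for $\lim_{Z\in\mathscr{E}}$ to agree with $\lim_{Z\in\tilde{\mathscr{E}}}$, every $Z\in\mathscr{E}$ must \emph{embed into} some object of $\tilde{\mathscr{E}}$, not merely \emph{contain} one. A reducible $Z$, say the union of two distinct integral divisors, does not embed into any power thickening of a single integral divisor, since the latter has irreducible support. Your filteredness claim has the same defect: two integral divisors need not both embed into a common object of $\tilde{\mathscr{E}}$, and passing to intersections goes the wrong direction. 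The paper repairs this by taking instead the subcategory $\mathscr{H}$ of hypersurface sections for a fixed very ample line bundle: it is cofinal (any $Z\in\mathscr{E}$ embeds into some hypersurface section), filtered (any two embed into a common one), and countable because $\Bbbk$ is. Replace $\tilde{\mathscr{E}}$ by $\mathscr{H}$ and your argument goes through.
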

\begin{proof}
Let $\mathcal{C}(\mathcal{F})^{\sbullet}|_Z$ be the restriction of the complex $\mathcal{C}(\mathcal{F})^{\sbullet}$ to $Z\in \mathscr{E}$. Then $\mathcal{C}(\mathcal{F})^{\sbullet}|_Z = \A_{\mathrm{red}}^{\sbullet}(Z, \mathcal{F}|_Z)$, since $\dim Z < \dim X$. Thus $H^k\big(\mathcal{C}(\mathcal{F})^{\sbullet}|_Z\big) \cong H^k(Z, \mathcal{F}|_Z)$ by theorem \ref{CohomologyRedAdelicComp}.

Let us denote
$$\mathcal{Z}^{\sbullet} = \ker\big(\mathcal{C}(\mathcal{F})^{\sbullet}\overset{d}{\longrightarrow} \mathcal{C}(\mathcal{F})^{{\sbullet} + 1}\big), \quad \mathcal{B}^{\sbullet} = \mathrm{im}\big(\mathcal{C}(\mathcal{F})^{{\sbullet} - 1}\overset{d}{\longrightarrow} \mathcal{C}(\mathcal{F})^{{\sbullet}}\big),$$
$$\mathcal{Z}^{\sbullet}_Z = \ker\big(\mathcal{C}(\mathcal{F})^{\sbullet}|_Z\overset{d}{\longrightarrow} \mathcal{C}(\mathcal{F})^{{\sbullet} + 1}|_Z\big), \quad \mathcal{B}^{\sbullet}_Z = \mathrm{im}\big(\mathcal{C}(\mathcal{F})^{{\sbullet} - 1}|_Z\overset{d}{\longrightarrow} \mathcal{C}(\mathcal{F})^{{\sbullet}}|_Z\big),$$
where $Z\in \mathscr{E}$. There are exact sequences 
\begin{equation}\label{eqBZZZHZ}
0\longrightarrow \mathcal{B}_Z^{\sbullet}\longrightarrow \mathcal{Z}_Z^{\sbullet}\longrightarrow H^{\sbullet}(Z, \mathcal{F}|_Z)\longrightarrow 0,
\end{equation}
\begin{equation}\label{eqZZCZBZ}
0\longrightarrow \mathcal{Z}_Z^{\sbullet}\longrightarrow \mathcal{C}(\mathcal{F})^{\sbullet}|_Z\longrightarrow \mathcal{B}_Z^{{\sbullet} + 1}\longrightarrow 0.
\end{equation}

Fix some very ample invertible sheaf $\mathcal{L}$ on $X$. Define a {\it hypersurface section} as a zero scheme of a non-zero section of the sheaf $\mathcal{L}^{\otimes m}$ for some $m > 0$. Let $\mathscr{H}$ be an index category, where objects are hypersurface sections and morphisms are closed embeddings. Note that $\mathscr{H}$ is countable, since $\Bbbk$ is countable.

Observe that $\mathscr{H}\subset \mathscr{E}$. Note that for any $Z\in \mathscr{E}$ there exists $H\in \mathscr{H}$ such that $Z\longhookrightarrow H$ and for any $H_1, H_2\in \mathscr{H}$ there exists $H_3\in \mathscr{H}$ such that $H_1, H_2\longhookrightarrow H_3$. This means that $\mathscr{H}$ is cofinal in $\mathscr{E}$. Therefore 
\begin{equation}\label{eqEqualityofLim}
\lim_{Z\in \mathscr{E}}F(Z) = \lim_{Z\in \mathscr{H}}F(Z)
\end{equation}
for any diagram $F\colon \mathscr{E}^{op}\longrightarrow {\bf Ab}$. Note that $\mathscr{H}$ is filtered.

Now take $\lim_{Z\in \mathscr{H}}$ of sequences \eqref{eqBZZZHZ}. We obtain an exact sequence
$$0\longrightarrow \lim_{Z\in \mathscr{H}} \mathcal{B}_Z^{\sbullet} \longrightarrow \lim_{Z\in \mathscr{H}} \mathcal{Z}_Z^{\sbullet} \longrightarrow \lim_{Z\in \mathscr{H}} H^{\sbullet}(Z, \mathcal{F}|_Z)\longrightarrow \sideset{}{^1}\lim_{Z\in \mathscr{H}} \mathcal{B}_Z^{\sbullet} \longrightarrow \sideset{}{^1}\lim_{Z\in \mathscr{H}} \mathcal{Z}_Z^{\sbullet} \longrightarrow \sideset{}{^1}\lim_{Z\in \mathscr{H}} H^{\sbullet}(Z, \mathcal{F}|_Z),$$
where $\sideset{}{^1}\lim_{Z\in \mathscr{H}}$ is the first right derived functor of the functor $\lim_{Z\in \mathscr{H}}$. If $Z, Z'\in \mathscr{H}$ such that $Z\longhookrightarrow Z'$, then the restriction map $\mathcal{C}(\mathcal{F})^{\sbullet}|_{Z'}\longrightarrow \mathcal{C}(\mathcal{F})^{\sbullet}|_Z$ is surjective, since $\mathcal{C}(\mathcal{F})^{\sbullet}|_{Z'}$ and $\mathcal{C}(\mathcal{F})^{\sbullet}|_Z$ are complexes of reduced adeles. It follows that $\mathcal{B}_{Z'}^{\sbullet}\longrightarrow \mathcal{B}_Z^{\sbullet}$ is surjective. Therefore $\{\mathcal{B}^{\sbullet}_Z\}_{Z\in \mathscr{H}}$ is a Mittag--Leffler system. By \cite[Theorem 3.1]{Ro}, $\sideset{}{^1}\lim_{Z\in \mathscr{H}} \mathcal{B}_Z^{\sbullet} = 0$, since $\mathscr{H}$ is countable. We also have $\lim^1_{Z\in \mathscr{H}}H^{\sbullet}(Z, \mathcal{F}|_Z) = 0$ by \cite[Proposition 1.1]{Je}, since $H^k(Z, \mathcal{F}|_Z)$ is finite-dimensional for any $k\geq 0$. Thus we obtain that $\sideset{}{^1}\lim_{Z\in \mathscr{H}} \mathcal{Z}_Z^{\sbullet} = 0$ and the sequence 
\begin{equation}\label{eqLimBZZZHZ}
0\longrightarrow \lim_{Z\in \mathscr{H}} \mathcal{B}_Z^{\sbullet} \longrightarrow \lim_{Z\in \mathscr{H}} \mathcal{Z}_Z^{\sbullet} \longrightarrow \lim_{Z\in \mathscr{H}} H^{\sbullet}(Z, \mathcal{F}|_Z)\longrightarrow 0
\end{equation}
is exact.

Now take $\lim_{Z\in \mathscr{H}}$ of sequences \eqref{eqZZCZBZ}. Since $\sideset{}{^1}\lim_{Z\in \mathscr{H}} \mathcal{Z}_Z^{\sbullet} = 0$, we obtain an exact sequence 
$$0\longrightarrow \lim_{Z\in \mathscr{H}}\mathcal{Z}_Z^{\sbullet}\longrightarrow \lim_{Z\in \mathscr{H}}\mathcal{C}(\mathcal{F})^{\sbullet}|_Z\longrightarrow \lim_{Z\in \mathscr{H}}\mathcal{B}_Z^{{\sbullet} + 1}\longrightarrow 0.$$
Observe that $\mathcal{C}(\mathcal{F})^{\sbullet} = \lim_{Z\in \mathscr{H}}\mathcal{C}(\mathcal{F})^{\sbullet}|_Z$ by lemma \ref{LimE} and equality \eqref{eqEqualityofLim}. Since limits commute with kernels, we obtain that $\mathcal{Z}^{\sbullet} = \lim_{Z\in \mathscr{H}}\mathcal{Z}_Z^{\sbullet}$. Thus we obtain 
$$\mathcal{B}^{{\sbullet} + 1} = 
\lim_{Z\in \mathscr{H}}\mathcal{B}_Z^{{\sbullet} + 1}.$$
Therefore we obtain from \eqref{eqLimBZZZHZ}
$$H^{\sbullet}\big(\mathcal{C}(\mathcal{F})^{\sbullet}\big) \cong \lim_{Z\in \mathscr{H}} H^{\sbullet}(Z, \mathcal{F}|_Z),$$
since $H^k\big(\mathcal{C}(\mathcal{F})^{\sbullet}\big) = \mathcal{Z}^k/\mathcal{B}^k$. It follows from equality \eqref{eqEqualityofLim} and theorem \ref{Hjlim} that
$$H^k\big(\mathcal{C}(\mathcal{F})^{\sbullet}\big) \cong H^k(X, \mathcal{F}), \quad 0\leq k < \dim X - 1.$$
The second part of the assertion follows from Serre's vanishing theorem.
\end{proof}

\subsection{Intersections of adelic groups on a regular three-dimensional projective variety}

\begin{lemma}\label{FinExactSeq}
    Let $X$ be a projective scheme over a field $\Bbbk$ and let $\mathcal{F}$ be a locally free sheaf on $X$. Then there exists a positive integer $l_0$ such that for any integer $l\geq l_0$ there exists an exact sequence
    $$0\longrightarrow \mathcal{F}\longrightarrow \mathcal{O}_X(n)^{\oplus s}\longrightarrow\mathcal{O}_X(m)^{\oplus r}$$
    for some integers $s, r > 0$ and $n, m\geq l$.
\end{lemma}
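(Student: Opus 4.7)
The plan is to produce the desired two-step sequence by dualizing a two-step free presentation of $\mathcal{F}^\vee$. Since $\mathcal{F}$ is locally free on the projective scheme $X$, its dual $\mathcal{F}^\vee$ is a coherent locally free sheaf, so by Serre's theorem there exists $l_0 \geq 1$ such that $\mathcal{F}^\vee(n)$ is globally generated for every $n \geq l_0$. Fix any $l \geq l_0$ and set $n = l$. Global generation yields a surjection $\mathcal{O}_X^{\oplus s} \twoheadrightarrow \mathcal{F}^\vee(n)$, equivalently a surjection
\[
\mathcal{O}_X(-n)^{\oplus s} \twoheadrightarrow \mathcal{F}^\vee,
\]
for some $s > 0$. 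Let $\mathcal{K}$ be its kernel; this is a coherent sheaf on $X$.

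Applying Serre's theorem again, there exists an integer $m_0$ such that $\mathcal{K}(m)$ is globally generated for every $m \geq m_0$. Pick any $m \geq \max(l, m_0)$; then we obtain a surjection $\mathcal{O}_X(-m)^{\oplus r} \twoheadrightarrow \mathcal{K}$ for some $r > 0$. Composing with the inclusion $\mathcal{K} \hookrightarrow \mathcal{O}_X(-n)^{\oplus s}$ gives an exact sequence
\[
\mathcal{O}_X(-m)^{\oplus r} \longrightarrow \mathcal{O}_X(-n)^{\oplus s} \longrightarrow \mathcal{F}^\vee \longrightarrow 0.
\]

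Now apply the (exact contravariant) functor $\xHom(-, \mathcal{O}_X)$. Since direct sums of line bundles are locally free, dualizing preserves exactness on the right and turns the above right-exact sequence into the left-exact sequence
\[
0 \longrightarrow \mathcal{F}^{\vee\vee} \longrightarrow \mathcal{O}_X(n)^{\oplus s} \longrightarrow \mathcal{O}_X(m)^{\oplus r}.
\]
Because $\mathcal{F}$ is locally free, the canonical map $\mathcal{F} \to \mathcal{F}^{\vee\vee}$ is an isomorphism, producing the desired exact sequence with $n, m \geq l$. The only mild subtlety is ensuring that both $n$ and $m$ exceed $l$ simultaneously; this is handled by first fixing $n = l$ and then enlarging $m$ past $\max(l, m_0(\mathcal{K}))$, which is harmless since $m_0$ depends on $\mathcal{K}$, which in turn depends on the already-fixed $n$.
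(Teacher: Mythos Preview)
Your proof is correct and follows essentially the same approach as the paper: build a right-exact two-step presentation of $\mathcal{F}^\vee$ by twists of $\mathcal{O}_X$ using Serre's theorem, then dualize and use $\mathcal{F}^{\vee\vee}\cong\mathcal{F}$. Your version is simply more explicit about the choice of $n$ and $m$.
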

\begin{proof}
    Note that there is a similar right exact sequence for $\mathcal{F}^\vee = \mathcal{H}om(\mathcal{F}, \mathcal{O}_X)$ with arbitrary large negative powers of $\mathcal{O}_X(1)$. Hence the dual sequence is left exact, and it remains to note that $\mathcal{F}^{\vee\vee}\cong \mathcal{F}$.
\end{proof}

\begin{prop}\label{ImplforThick}
    Let $X$ be a 
    subvariety in a 
    projective variety $Y$ and let $\tilde{X}$ be a thickening of $X$ in $Y$ locally defined by primary ideals. Let $\mathcal{N}$ be a nilpotent ideal sheaf on $\tilde{X}$ defining $X$. Let $I, J\subset \{0, 1, \ldots, \dim X\}$. Assume that $\mathcal{N}^j/\mathcal{N}^{j + 1}$ is a locally free sheaf on $X$ for any $j\geq 0$. Assume also that for any locally free sheaf $\mathcal{G}$ on $X$ there is an equality 
    $$\A_I(X, \mathcal{G})\cap \A_J(X, \mathcal{G}) = \A_{I\cap J}(X, \mathcal{G}).$$
    Then for any locally free sheaf $\mathcal{F}$ on $\tilde{X}$ there is an equality
    $$\A_I(\tilde{X}, \mathcal{F})\cap \A_J(\tilde{X}, \mathcal{F}) = \A_{I\cap J}(\tilde{X}, \mathcal{F}).$$
\end{prop}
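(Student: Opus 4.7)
The plan is to filter $\mathcal{F}$ by the powers of the nilpotent ideal $\mathcal{N}$. Fix $k\geq 1$ with $\mathcal{N}^k = 0$; then we have a finite descending filtration
$$\mathcal{F} = \mathcal{N}^0\mathcal{F} \supset \mathcal{N}\mathcal{F} \supset \cdots \supset \mathcal{N}^k\mathcal{F} = 0,$$
and each graded piece $\mathcal{G}_j := \mathcal{N}^j\mathcal{F}/\mathcal{N}^{j+1}\mathcal{F}$ is canonically isomorphic to $(\mathcal{N}^j/\mathcal{N}^{j+1})\otimes_{\mathcal{O}_X}\mathcal{F}|_X$ and hence locally free on $X$, because both tensor factors are. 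Each subsheaf $\mathcal{N}^j\mathcal{F}$ lies inside the flat sheaf $\mathcal{F}$; since $\tilde{X}$ is locally defined by primary ideals in the projective variety $Y$ and $Y$ is excellent, integral, Noetherian, and strongly biequidimensional by remark~\ref{remFiniteTypeEquidim}, theorem~\ref{MainThmembeddingAdeles} provides the embeddings $\A_L(\tilde{X}, \mathcal{N}^j\mathcal{F}) \hookrightarrow \A_{L'}(\tilde{X}, \mathcal{N}^j\mathcal{F})$ for every $L \subset L'\subset\{0,\ldots,\dim X\}$. These embeddings make all intersections below well defined inside $\A_{I\cup J}(\tilde{X}, \mathcal{N}^j\mathcal{F})$ and immediately deliver the inclusion $\A_{I\cap J}(\tilde{X},\mathcal{F})\subset \A_I(\tilde{X},\mathcal{F})\cap\A_J(\tilde{X},\mathcal{F})$, so only the reverse inclusion remains.

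The remaining inclusion will be proved by descending induction on $j=k,k-1,\ldots,0$ establishing
$$\A_I(\tilde{X}, \mathcal{N}^j\mathcal{F}) \cap \A_J(\tilde{X}, \mathcal{N}^j\mathcal{F}) = \A_{I\cap J}(\tilde{X}, \mathcal{N}^j\mathcal{F});$$
the case $j = 0$ is the desired statement, and the case $j = k$ is vacuous. For the inductive step, apply the exact functor $\A_L(\tilde{X}, -)$ to $0 \to \mathcal{N}^{j+1}\mathcal{F} \to \mathcal{N}^j\mathcal{F} \to \mathcal{G}_j \to 0$ for each $L\in\{I, J, I\cap J, I\cup J\}$. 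The key ingredient is that, for any quasicoherent sheaf $\mathcal{G}$ on $X$ pushed forward via $\iota\colon X\hookrightarrow \tilde{X}$, one has $\A_L(\tilde{X}, \iota_*\mathcal{G}) = \A_L(X, \mathcal{G})$: the underlying spaces and codimensions coincide (as $\tilde{X}$ and $X$ differ only by a nilpotent ideal), and the $\mathfrak{m}_p^{\tilde{X}}$-adic and $\mathfrak{m}_p^X$-adic topologies on the stalk $(\iota_*\mathcal{G})_p$ agree because $\mathcal{N}_p$ acts trivially. Given $x$ in the intersection, its image $\bar{x}\in \A_{I\cup J}(X, \mathcal{G}_j)$ lies in $\A_I(X, \mathcal{G}_j) \cap \A_J(X, \mathcal{G}_j)$, and the hypothesis applied to the locally free sheaf $\mathcal{G}_j$ on $X$ places $\bar{x}$ in $\A_{I\cap J}(X, \mathcal{G}_j)$. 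Lift $\bar{x}$ along the surjection $\A_{I\cap J}(\tilde{X}, \mathcal{N}^j\mathcal{F}) \twoheadrightarrow \A_{I\cap J}(X, \mathcal{G}_j)$ to get $y$; then $x-y$ maps to zero in $\A_{I\cup J}(X, \mathcal{G}_j)$, and by injectivity of the embeddings $\A_I(X, \mathcal{G}_j), \A_J(X, \mathcal{G}_j) \hookrightarrow \A_{I\cup J}(X, \mathcal{G}_j)$ it also maps to zero in $\A_I(X, \mathcal{G}_j)$ and $\A_J(X, \mathcal{G}_j)$. Exactness places $x - y$ inside $\A_I(\tilde{X}, \mathcal{N}^{j+1}\mathcal{F}) \cap \A_J(\tilde{X}, \mathcal{N}^{j+1}\mathcal{F})$, and by the inductive hypothesis $x-y\in \A_{I\cap J}(\tilde{X}, \mathcal{N}^{j+1}\mathcal{F})\subset \A_{I\cap J}(\tilde{X}, \mathcal{N}^j\mathcal{F})$, whence $x = (x-y)+y\in\A_{I\cap J}(\tilde{X}, \mathcal{N}^j\mathcal{F})$.

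The step demanding the most care is the identification $\A_L(\tilde{X}, \iota_*\mathcal{G}) = \A_L(X, \mathcal{G})$ of adelic groups on $\tilde{X}$ with those on $X$ for sheaves pushed forward from $X$. Rigorously, it requires unwinding the recursive definition of the local factors $C_\Delta M$ via successive localizations and $\mathfrak{m}$-adic completions and verifying at every stage that the $\mathcal{O}_{\tilde{X}, p}$- and $\mathcal{O}_{X, p}$-module structures on any $\mathcal{O}_X$-module yield identical completions, because the kernel of $\mathcal{O}_{\tilde{X}, p}\twoheadrightarrow \mathcal{O}_{X, p}$ lies in $\mathcal{N}_p$ and acts as zero. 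Once that identification is in place, the remainder is a formal diagram chase combining exactness of $\A_L(\tilde{X}, -)$, the naturality of the embeddings $\varphi_{L, L'}$ with respect to morphisms of sheaves, and the hypothesis of the proposition applied to the locally free sheaves $\mathcal{G}_j$ on $X$.
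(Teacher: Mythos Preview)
Your filtration-and-induction strategy is exactly the one the paper uses, and the diagram chase is correct when $I\cap J\neq\varnothing$. The gap is in the case $I\cap J=\varnothing$. By definition \ref{defSI}, $\A_\varnothing(\tilde{X},-)=H^0(\tilde{X},-)$, which is only left exact; thus the arrow you call a ``surjection'' $\A_{I\cap J}(\tilde{X},\mathcal{N}^j\mathcal{F})\twoheadrightarrow \A_{I\cap J}(X,\mathcal{G}_j)$ need not be onto, and without it you cannot lift $\bar{x}$ to produce $y$. Your phrase ``apply the exact functor $\A_L(\tilde{X},-)$ \ldots\ for each $L\in\{I,J,I\cap J,I\cup J\}$'' silently assumes exactness for $L=\varnothing$, which fails.

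The paper repairs precisely this point by an extra preliminary reduction: it first embeds $\mathcal{F}$ into a complex $\mathcal{O}_{\tilde{X}}(n)^{\oplus s}\to\mathcal{O}_{\tilde{X}}(m)^{\oplus r}$ via lemma \ref{FinExactSeq}, uses left-exactness of both $\A_{I\cap J}(\tilde{X},-)$ and $\A_I(\tilde{X},-)\cap\A_J(\tilde{X},-)$ together with the five lemma to reduce to $\mathcal{F}=\mathcal{O}_{\tilde{X}}(n)$ with $n$ arbitrarily large, and then chooses $n$ so that Serre vanishing gives $H^1\big(\tilde{X},\mathcal{N}^j(n)\big)=0$ for all $j$. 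With this vanishing in hand, the map $H^0(\tilde{X},\mathcal{N}^j(n))\to H^0\big(\tilde{X},(\mathcal{N}^j/\mathcal{N}^{j+1})(n)\big)$ is surjective, and your inductive step goes through unchanged. So your argument is essentially the paper's, minus the one reduction that makes the $I\cap J=\varnothing$ case work; inserting that reduction (or otherwise arguing surjectivity of $H^0$ on the graded pieces) closes the gap.
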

\begin{proof}
Let $l$ be some large positive integer, which we specify later, and let $n, m\geq l$ be as in lemma \ref{FinExactSeq}, so there exists an exact sequence
\begin{equation}\label{eq21}
0\longrightarrow \mathcal{F}\longrightarrow \mathcal{O}_{\tilde{X}}(n)^{\oplus s}\longrightarrow\mathcal{O}_{\tilde{X}}(m)^{\oplus r}.
\end{equation}
Note that $\A_{I\cap J}(\tilde{X}, -)\longrightarrow \A_I(\tilde{X}, -)\cap \A_J(\tilde{X}, -)$ is a transformation of left exact functors on the category of locally free sheaves on $\tilde{X}$. Thus, by the five lemma and by \eqref{eq21}, we can reduce the problem to the case $\mathcal{F} = \mathcal{O}_{\tilde{X}}(n)$ for $n\geq l$.

There is a filtration $\mathcal{O}_{\tilde{X}}\supset \mathcal{N}\supset \mathcal{N}^2\supset\ldots \supset \mathcal{N}^k =0$
such that $\mathcal{N}^j/\mathcal{N}^{j + 1}$ is locally free on $X$ for any $j\geq 0$. Let $l$, which we mentioned above, be a positive integer such that
\begin{equation}\label{eqH1=0}
H^1\big(\tilde{X}, \mathcal{N}^j\otimes_{\mathcal{O}_{\tilde{X}}}\mathcal{O}_{\tilde{X}}(n)\big) = 0
\end{equation}
for any $0\leq j \leq k$ and any $n\geq l$. Such an $l$ exists by Serre's vanishing theorem. Denote $\mathcal{N}^j\otimes_{\mathcal{O}_{\tilde{X}}}\mathcal{O}_{\tilde{X}}(n)$ by $\mathcal{N}^j(n)$. There is an exact sequence of abelian groups
\begin{equation}\label{eq22}
0\longrightarrow \A_{I\cap J}(\tilde{X}, \mathcal{N}^{j + 1}(n)) \longrightarrow \A_{I\cap J}(\tilde{X}, \mathcal{N}^{j}(n))\longrightarrow \A_{I\cap J}\left(\tilde{X}, \left(\mathcal{N}^j/\mathcal{N}^{j + 1}\right)(n)\right)\longrightarrow 0
\end{equation}
obtained from the corresponding exact sequence of sheaves.
Since the functor $\A_{I\cap J}(\tilde{X}, -)$ is left exact, only the exactness in the last term of \eqref{eq22} is not obvious. If $I\cap J\neq \varnothing $, then this follows from the exactness of $\A_{I\cap J}(\tilde{X}, -)$. If $I\cap J = \varnothing$, then $\A_{I\cap J}(\tilde{X}, -) = H^0(\tilde{X}, -)$, and the exactness follows from \eqref{eqH1=0}.

Note that 
\begin{eqnarray}\A_I\big(\tilde{X}, \left(\mathcal{N}^j/\mathcal{N}^{j + 1}\right)(n)\big) = \A_I\big(X, \left(\mathcal{N}^j/\mathcal{N}^{j + 1}\right)(n)\big), \label{eqAIX}\\
\A_J\big(\tilde{X}, \left(\mathcal{N}^j/\mathcal{N}^{j + 1}\right)(n)\big)= \A_J\big(X, \left(\mathcal{N}^j/\mathcal{N}^{j + 1}\right)(n)\big),\label{eqAJX}
\end{eqnarray}
so the intersection $\A_I\big(\tilde{X}, \left(\mathcal{N}^j/\mathcal{N}^{j + 1}\right)(n)\big)\cap \A_J\big(\tilde{X}, \left(\mathcal{N}^j/\mathcal{N}^{j + 1}\big)(n)\right)$ is well-defined. Since $\mathcal{N}^j(n)$, $0\leq j\leq k$, are subsheaves of the sheaf $\mathcal{O}_{\tilde{X}}(n)$, the groups $\A_I\big(\tilde{X}, \mathcal{N}^j(n))\cap \A_J(\tilde{X}, \mathcal{N}^j(n)\big)$ are well-defined by theorem \ref{MainThmembeddingAdeles}. Thus we obtain a commutative diagram
\begin{center}
     \begin{tikzcd}[column sep=small, font = \normalfont]
         0\arrow{r} &  \A_{I\cap J}(\tilde{X}, \mathcal{G}_{j + 1}) \arrow{r}\arrow{d}  & \A_{I\cap J}(\tilde{X}, \mathcal{G}_j) \arrow{r}\arrow{d} & \A_{I\cap J}(\tilde{X}, \mathcal{G}_j/\mathcal{G}_{j + 1}) \arrow{r} \arrow{d} & 0 \\
          0\arrow{r}&  (\A_{I}\cap \A_{J})(\tilde{X}, \mathcal{G}_{j + 1}) \arrow{r}  & (\A_{I}\cap \A_{J})(\tilde{X}, \mathcal{G}_{j})\arrow{r} & (\A_{I}\cap \A_{J})(\tilde{X}, \mathcal{G}_j/\mathcal{G}_{j + 1}),&  
     \end{tikzcd}
 \end{center}
where $\mathcal{G}_j = \mathcal{N}^j(n)$, $(\A_{I}\cap \A_{J})(\tilde{X}, -) = \A_{I}(\tilde{X}, -)\cap \A_{J}(\tilde{X}, -)$, and the top exact sequence equals \eqref{eq22}. Note that the left and the middle vertical arrows are injective by theorem \eqref{MainThmembeddingAdeles}.
By \eqref{eqAIX}, \eqref{eqAJX} and by assumptions of the proposition, we obtain that the right vertical arrow is an isomorphism. Thus
$$\A_{I}\big(\tilde{X}, \mathcal{N}^j(n)\big)\cap \A_{J}\big(\tilde{X}, \mathcal{N}^j(n)\big) = \A_{I\cap J}\big(\tilde{X}, \mathcal{N}^j(n)\big) + \A_{I}\big(\tilde{X}, \mathcal{N}^{j + 1}(n)\big)\cap \A_{J}\big(\tilde{X}, \mathcal{N}^{j + 1}(n)\big).$$
Therefore
\begin{align*}
 \A_{I}\big(\tilde{X}, \mathcal{O}_{\tilde{X}}(n)\big)\cap \A_{J}\big(\tilde{X}, \mathcal{O}_{\tilde{X}}(n)\big) &= \A_{I\cap J}\big(\tilde{X}, \mathcal{O}_{\tilde{X}}(n)\big) + \A_{I\cap J}\big(\tilde{X}, \mathcal{N}(n)\big) + \ldots \\
 &+ \A_{I\cap J}\big(\tilde{X}, \mathcal{N}^{k - 1}(n)\big)
+ \A_{I}\big(\tilde{X}, \mathcal{N}^k(n)\big)\cap \A_{J}\big(\tilde{X}, \mathcal{N}^k(n)\big)\\
&= \A_{I\cap J}\big(\tilde{X}, \mathcal{O}_{\tilde{X}}(n)\big) ,
 \end{align*}
 where the last equality follows from the fact that $\mathcal{N}^k  =0$.
\end{proof}

The next lemma is a particular case, when the conditions of proposition \ref{ImplforThick} are satisfied.
\begin{lemma}\label{PartCase}
    Let $X$ be a subvariety of codimension $1$ of a regular variety $Y$. Let $X_{[n]}$ be the $n$-th power thickening of $X$ in $Y$ for some $n\geq 1$. Then $X_{[n]}$ is locally defined by primary ideals in $Y$ and $\mathcal{N}^j/\mathcal{N}^{j + 1}$ is a locally free sheaf on $X$ for any $j\geq 0$, where $\mathcal{N}$ is the nilpotent ideal sheaf on $X_{[n]}$ defining $X$.
\end{lemma}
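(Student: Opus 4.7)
The plan is to exploit the fact that $Y$ is regular and $X$ has codimension one, so the ideal sheaf $\mathcal{J}$ defining $X$ in $Y$ is locally principal with a prime generator, and everything will follow from the UFD structure of regular local rings.

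For the first assertion, fix $y \in Y$. If $y \in X$, then $\mathcal{J}_y$ is a height-one prime of the regular local ring $\mathcal{O}_{Y,y}$, which is a UFD by the Auslander--Buchsbaum theorem; so $\mathcal{J}_y = (f_y)$ for a prime element $f_y$, and $(\mathcal{J}^n)_y = (f_y^n)$. Since $f_y^n$ is a non-zero-divisor in the Cohen--Macaulay ring $\mathcal{O}_{Y,y}$, the quotient $\mathcal{O}_{Y,y}/(f_y^n)$ is unmixed, so its only associated prime is the unique minimal prime $(f_y)$ lying over $(f_y^n)$; hence $(f_y^n)$ is $(f_y)$-primary. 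If $y \notin X$, then $(\mathcal{J}^n)_y = \mathcal{O}_{Y,y}$, which is primary under the paper's convention, since the zero ring has no non-nilpotent zero divisors. Finally, $X_{[n]}$ is irreducible because $V(\mathcal{J}^n) = V(\mathcal{J}) = X$ is irreducible.

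For the second assertion, the ideal sheaf on $X_{[n]}$ defining $X$ is $\mathcal{N} = \mathcal{J}/\mathcal{J}^n$, so $\mathcal{N}^j = \mathcal{J}^j/\mathcal{J}^n$ for $0 \le j \le n$ and $\mathcal{N}^j = 0$ for $j \ge n$. This gives $\mathcal{N}^j/\mathcal{N}^{j+1} \cong \mathcal{J}^j/\mathcal{J}^{j+1}$ for $0 \le j \le n-1$, with the remaining pieces zero. Stalkwise at $y \in X$, the map $\mathcal{O}_{Y,y}/(f_y) \to (f_y^j)/(f_y^{j+1})$ sending the class of $h$ to the class of $f_y^j h$ is well-defined (as $f_y \cdot f_y^j \in \mathcal{J}_y^{j+1}$) and bijective (using that $f_y$ is a non-zero-divisor), so $\mathcal{J}^j/\mathcal{J}^{j+1}$ is locally free of rank one on $X$; in fact it is globally isomorphic to $(\mathcal{J}/\mathcal{J}^2)^{\otimes j}$.

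The entire argument hinges on the UFD property of regular local rings, which makes both the primariness of $\mathcal{J}^n$ and the invertibility of the graded pieces immediate; I do not anticipate any serious obstacle.
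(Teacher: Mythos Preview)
Your proof is correct and follows essentially the same structure as the paper's: both reduce $\mathcal{N}^j/\mathcal{N}^{j+1}$ to $\mathcal{J}^j/\mathcal{J}^{j+1}$ and then use that a codimension-one subvariety of a regular variety is a regular embedding. The only difference is in the primariness step: the paper invokes \cite{Mu} to identify $\mathcal{J}^n$ with the symbolic power $\mathcal{J}^{(n)}$, whereas you argue directly from the UFD property of regular local rings that $(f_y^n)$ is $(f_y)$-primary --- your route is more elementary and self-contained, avoiding the heavy uniform-bound machinery, which is entirely appropriate here since height one is the easy case.
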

\begin{proof}
    By \cite[Theorem A]{Mu}, we see that $\mathscr{S}_1 =\mathscr{P}_1$, since $X$ is locally given by the prime ideal of height $1$ in a regular ring. Thus the $n$-th power thickening $X_{[n]}$ of $X$ is an $n$-th symbolic power, from what follows $X_{[n]}$ is locally defined by primary ideals. 

    Let $\mathcal{J}_X$ and $\mathcal{J}_{X_{[n]}} = \mathcal{J}_X^n$ are ideal sheaves on $Y$ defining $X$ and $X_{[n]}$ respectively. Then $\mathcal{N} = \mathcal{J}_X/\mathcal{J}_X^n$. Hence $\mathcal{N}^j = \mathcal{J}_X^j/\mathcal{J}_X^n$ for $0\leq j\leq n$ and $\mathcal{N}^j = 0$ for $j > n$. Therefore $\mathcal{N}^j/\mathcal{N}^{j + 1} = \mathcal{J}_X^j/\mathcal{J}_X^{j + 1}$ for $j < n$ and $\mathcal{N}^j/\mathcal{N}^{j + 1} = 0$ for $j\geq n$. Since $Y$ is regular and $X$ is a divisor on $Y$, $X$ is locally regularly embedded in $Y$. Thus $\mathcal{J}_X^j/\mathcal{J}_X^{j + 1}$ is a locally free sheaf on $X$ for any $j\geq 0$.
\end{proof}

\begin{prop}\label{A1capAj}
    Let $X$ be a regular projective variety over a field $\Bbbk$ and let $\mathcal{F}$ be a locally free sheaf on $X$. Let $j$ be an integer with $1 < j \leq \dim X$. Then
    $$\A_1(X, \mathcal{F})\cap \A_j(X, \mathcal{F}) = H^0(X, \mathcal{F}).$$
\end{prop}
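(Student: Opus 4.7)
The forward inclusion $H^0(X, \mathcal{F}) \subset \A_1(X, \mathcal{F}) \cap \A_j(X, \mathcal{F})$ is immediate, so I will focus on the reverse. The plan is to test an element $a$ of the intersection against the objects of $\mathscr{R}^{pow}_{1, j}(X)$, recognize each restriction as a genuine global section via theorem \ref{TheoremIcapJI0} (in its $I = (0)$, $J = (j - 1)$ incarnation) promoted to power thickenings by proposition \ref{ImplforThick}, and then reassemble everything through the isomorphism $H^0(X, \mathcal{F}) \cong \lim_{Z \in \mathscr{R}^{pow}_{1, j}} H^0(Z, \mathcal{F}|_Z)$ supplied by theorem \ref{Hjlim}(3), which is applicable since $X$ is regular, hence Cohen--Macaulay.

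In detail, for $a \in \A_1 \cap \A_j \subset \A_{(1, j)}(X, \mathcal{F})$ and $Y = Y_0^{[n]}$ a power thickening of a regular integral codimension-one subvariety $Y_0 \subset X$, the restriction $a|_Y$ lies in $\A_0(Y, \mathcal{F}|_Y) \cap \A_{j - 1}(Y, \mathcal{F}|_Y) \subset \A_{(0, j - 1)}(Y, \mathcal{F}|_Y)$. Since $Y_0$ is a normal excellent strongly biequidimensional projective variety (remarks \ref{remFiniteTypeEquidim} and \ref{remIrrSubschisBiequidim}), theorem \ref{TheoremIcapJI0} with $I = (0)$, $J = (j - 1)$, satisfying $I \cap J = \varnothing = I \setminus 0$, yields $\A_0(Y_0, \mathcal{G}) \cap \A_{j - 1}(Y_0, \mathcal{G}) = H^0(Y_0, \mathcal{G})$ for every flat quasicoherent $\mathcal{G}$ on $Y_0$. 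Lemma \ref{PartCase} furnishes the hypotheses of proposition \ref{ImplforThick} for $Y = Y_0^{[n]} \subset X$, so the equality lifts to $Y$ and produces $a|_Y \in H^0(Y, \mathcal{F}|_Y)$. For a codimension-$j$ object $Z \in \mathscr{R}^{pow}_{1, j}$, I would pick a containing codimension-one $Y \in \mathscr{R}^{pow}_{1, j}$ (exists by remark \ref{RemarkSymPow}) and set $a_Z := (a|_Y)|_Z$; commutativity of the iterated restrictions inside $\A_{(1, j)}(X, \mathcal{F})$ makes this independent of the choice and compatible along embeddings, and theorem \ref{Hjlim}(3) then transports the compatible system $(a_Z)$ to a unique $s \in H^0(X, \mathcal{F})$ with $s|_Z = a_Z$ for every $Z \in \mathscr{R}^{pow}_{1, j}$.

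The remaining and hardest step is to verify $s = a$ inside $\A_1 \cap \A_j$; this is the main obstacle. By theorem \ref{MainThmembeddingAdeles} there is an embedding $\A_1 \cap \A_j \hookrightarrow \A_j(X, \mathcal{F}) = \prod_{z \in X^{(j)}} \widehat{\mathcal{F}_z}$, so the task reduces to proving $(a - s)_z = 0$ in $\widehat{\mathcal{F}_z}$ for every codimension-$j$ point $z$. The plan is: for each such $z$, produce a regular integral codimension-one subscheme $Y_0 \subset X$ with $\overline{\{z\}} \subset Y_0$ via a Bertini-type moving argument in the spirit of \cite[Theorem 3.6, Corollary 4.7, Theorem 5.7]{GK} already invoked in the proof of theorem \ref{Hjlim}; once this is in hand, $\overline{\{z\}}^{[m]} \in \mathscr{R}^{pow}_{1, j}$ for every $m \geq 1$, the vanishing $(a - s)|_{\overline{\{z\}}^{[m]}} = 0$ descends to $(a - s)_z \in \mathfrak{m}_z^m \widehat{\mathcal{F}_z}$ for each $m$, and Krull's intersection theorem on the finitely generated $\widehat{\mathcal{O}_{X, z}}$-module $\widehat{\mathcal{F}_z}$ delivers $(a - s)_z = 0$. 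Guaranteeing a regular hypersurface containing an arbitrary irreducible subvariety of codimension $\geq 2$, uniformly over possibly finite base fields, is the genuinely geometric heart of the argument; every other step is a straightforward unwinding of the limit descriptions and embeddings already developed in the paper.
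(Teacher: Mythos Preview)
Your overall architecture mirrors the paper's closely: identify $\A_1\cap\A_j$ with a limit of kernels $K_Z$, recognize $K_Z = H^0(Z,\mathcal{F}|_Z)$ on the ``regular'' subcategory $\mathscr{R}_{1,j}$ via theorem \ref{TheoremIcapJI0} plus proposition \ref{ImplforThick} and lemma \ref{PartCase}, and then invoke theorem \ref{Hjlim}(3). The difference is in how you close the argument, and that is where there is a genuine gap.

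Your final step asserts that for \emph{every} codimension-$j$ point $z\in X$ one can find a regular integral codimension-one subvariety $Y_0$ containing $\overline{\{z\}}$. The Bertini-type theorems of \cite{GK} that you cite (and that the paper uses) produce regular hypersurface sections passing through a prescribed \emph{finite set of closed points}; they do not produce regular hypersurfaces containing a prescribed positive-dimensional subvariety, and in general no such hypersurface exists. Consequently an arbitrary integral codimension-$j$ subscheme need not belong to $\mathscr{R}^{pow}_{1,j}$ at all, and your proposed verification of $s=a$ in $\A_j(X,\mathcal{F})=\prod_{z\in X^{(j)}}\widehat{\mathcal{F}_z}$ cannot be carried out componentwise as written. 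Checking instead in $\A_1$ runs into the symmetric obstruction: a general codimension-one point need not have regular closure.

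The paper sidesteps this by reversing the logic. Rather than testing $a-s$ at every codimension-$j$ point, it first expresses $\A_1\cap\A_j$ as $\lim_{Z\in\mathscr{S}_{1,j}}K_Z$ over the \emph{full} index category of symbolic thickenings, and then proves that the restriction map $\rho\colon\lim_{\mathscr{S}_{1,j}}K_Z\to\lim_{\mathscr{R}^{sym}_{1,j}}K_Z$ is injective. The injectivity argument exploits a crucial freedom you do not have: given $Z\in\mathscr{S}_1$ with $a_Z\neq 0$, one may choose \emph{which} codimension-$(j-1)$ point $p$ of $Z$ to look at. Bertini through closed points supplies a regular hypersurface $H$ meeting $Z$, and any point $p$ of $Z\cap H$ of the correct codimension then has $\overline{\{p\}}\subset H$, placing its symbolic thickenings inside $\mathscr{R}^{sym}_{1,j}$ and forcing $a_{Y_{(m)}}=0$ --- contradicting the Krull-intersection argument that $a_{Y_{(m)}}\neq 0$ for large $m$. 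In short, the paper needs only a regular hypersurface through \emph{some} well-chosen codimension-$j$ point rather than through an arbitrary one, and that is exactly what the available Bertini theorems deliver.
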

\begin{proof}
Note that 
$$\A_1(X, \mathcal{F}) = \lim_{Z\in \mathscr{S}_{1, j}}\A_{(1)|_Z}(Z, \mathcal{F}|_Z), \quad \A_j(X, \mathcal{F}) = \lim_{Z\in \mathscr{S}_{1, j}}\A_{(j)|_Z}(Z, \mathcal{F}|_Z),$$
$$\A_{(1, j)}(X, \mathcal{F}) = \lim_{Z\in \mathscr{S}_{1, j}}\A_{(1, j)|_Z}(Z, \mathcal{F}|_Z)$$
by lemma \ref{LimPowSub}, proposition \ref{LimDiag}, and lemma \ref{LemmaEqualityLimitSymbPow}. Therefore 
\begin{eqnarray*}
\A_1(X, \mathcal{F})\cap \A_j(X, \mathcal{F}) &=& 
\ker\big(\A_1(X, \mathcal{F})\oplus \A_j(X, \mathcal{F})\longrightarrow \A_{(1, j)}(X, \mathcal{F})\big)\\
&=&
\lim_{Z\in \mathscr{S}_{1, j}}\ker\big(\A_{(1)|_Z}(Z, \mathcal{F}|_Z)\oplus \A_{(j)|_Z}(Z, \mathcal{F}|_Z)\longrightarrow \A_{(1, j)|_Z}(Z, \mathcal{F}|_Z)\big).
\end{eqnarray*}
Let us denote $K_Z = \ker\big(\A_{(1)|_Z}(Z, \mathcal{F}|_Z)\oplus \A_{(j)|_Z}(Z, \mathcal{F}|_Z)\longrightarrow \A_{(1, j)|_Z}(Z, \mathcal{F}|_Z)\big).$
Observe that 
$${K_Z= \begin{cases}
    \A_0(Z, \mathcal{F}|_Z)\cap \A_{j - 1}(Z, \mathcal{F}|_Z),\: Z\in \mathscr{S}_1,\\
    \A_0(Z, \mathcal{F}|_Z),\: Z\in \mathscr{S}_j.
\end{cases}}$$
Note that the right-hand side of the equality is well-defined by theorem \ref{MainThmembeddingAdeles}. Since $\mathscr{R}^{sym}_{1, j}\subset \mathscr{S}_{1, j}$, there is a restriction map
$$\lim\limits_{Z\in \mathscr{S}_{1, j}}K_Z\overset{\rho}{\longrightarrow} \lim\limits_{Z\in \mathscr{R}_{1, j}^{sym}}K_Z.$$
Assume $(a_Z)_{Z\in \mathscr{S}_{1, j}}\in \ker\rho$ is non-zero. In particular, $a_Z = 0$ for any $Z\in \mathscr{R}^{sym}_{1, j}$. If there exists $Z'\in \mathscr{S}_j$ such that $a_{Z'}\neq 0$, then there exists $Z\in \mathscr{S}_1$ such that $Z'\longhookrightarrow Z$, which means that $a_Z\neq 0$, because $a_Z|_{Z'} = a_{Z'}\neq 0$. Hence we can assume that $a_Z\neq 0$ for some $Z\in \mathscr{S}_1$. 
Note that $\mathcal{F}|_Z(U)\longhookrightarrow (\mathcal{F}|_Z)_{\eta}$ by lemma \ref{lemmaEmbSheafPrimThik}, where $U\subset Z$ is an open set and $\eta$ is the generic point of $Z$.
Then 
$$\A_0(Z, \mathcal{F}|_Z)\cap \A_{j - 1}(Z, \mathcal{F}|_Z) = \big\{a \in (\mathcal{F}|_Z)_{\eta} \mid \forall\: p\in Z, \: \mathrm{codim}_Zp = {j - 1}\colon a\in (\mathcal{F}|_Z)_p\big\}.$$
The proof of this equality is similar to the proof of equations \eqref{eqA0capAJ} and \eqref{eqLocalIntersection}, since $\mathcal{F}|_Z$ is locally free. 
Let $p$ be a point of codimension $j - 1$ in $Z$ and let $Y = \overline{\{p\}}$. 
Then
$$\bigcap_{n\geq 1}\mathfrak{m}_p^n(\mathcal{F}|_Z)_p = 0$$
by \cite[Theorem 8.10]{Ma1}, where $\mathfrak{m}_p$ is the maximal ideal in the ring $\mathcal{O}_{Z, p}$. Since $a_Z\neq 0$, there exists some $n\geq 1$ such that $a_Z\notin \mathfrak{m}_p^n(\mathcal{F}|_Z)_p$. This means that $a_Z|_{Y'}\neq 0,$
where $Y'$ is $n$-th power thickening of $Y$ in $Z$. Let $Y_{(m)}$ be the $m$-th symbolic thickening of $Y$ in $X$ for $m \geq 1$. By proposition \ref{PropCofinalSymbPow} and remark \ref{RemarkSymPow}, there exist $m$ and some symbolic thickening $\tilde{Z}$ of $Z_{red}$ in $X$ such that
$$Y'\longhookrightarrow Y_{(m)} \longhookrightarrow \tilde{Z}.$$
Since $a_{Y_{(m)}}|_{Y'} = a_{\tilde{Z}}|_{Y'} = a_{Z}|_{Y'}\neq 0,$
we obtain 
\begin{equation}\label{eqaYneq0}
a_{Y_{(m)}} \neq 0.
\end{equation}

Fix some very ample invertible sheaf $\mathcal{L}$ on $X$. Define a {\it hypersurface section} as a zero scheme of a non-zero section of the sheaf $\mathcal{L}^{\otimes m}$ for some $m > 0$.
Let $H$ be a hypersurface section of $X$ such that $H$ is a regular variety, $Z\cap H\neq \varnothing$, and $\dim Z\cap H = \dim Z - 1$. Such an $H$ exists by \cite[Theorem 3.6]{GK} for the infinite field $\Bbbk$ and by \cite[Corollary 4.7]{GK}, \cite[Theorem 5.7]{GK} for the finite field $\Bbbk$. Since $\dim Z\cap H = \dim Z - 1$, there exists a point $p\in Z\cap H$ of codimension $j - 1$ in $Z$. Let $Y_{(m)}$ be the $m$-th symbolic thickening of $\overline{\{p\}}$ in $X$ for some large $m$. Let $H_{(k)}$ be the $k$-th symbolic thickening of $H$ in $X$ such that $Y_{(m)}\longhookrightarrow H_{(k)}$, which exists by remark \ref{RemarkSymPow}. Then $a_{Y_{(m)}} = a_{H_{(k)}}|_{Y_{(m)}} = 0,$
since $H_{(k)}\in \mathscr{R}^{sym}_{1, j}$ and $a_{H_{(k)}} = 0$ by our assumptions. On the other hand, $a_{Y_{(m)}} \neq 0$ by \eqref{eqaYneq0}.
Thus we get a contradiction, from what follows $\ker\rho = 0$.

Let $Z\in \mathscr{R}^{sym}_{1, j}$ of codimension $1$ in $X$. Then
$$K_Z = \A_0(Z, \mathcal{F}|_Z)\cap \A_{j - 1}(Z, \mathcal{F}|_Z) = H^0(Z, \mathcal{F}|_Z),$$
where the second equality follows from theorem \ref{TheoremIcapJI0}, proposition \ref{ImplforThick}, and lemma \ref{PartCase}. If $Z'\in \mathscr{R}^{sym}_{1, j}$ of codimension $j$ in $X$ such that $Z'\longhookrightarrow Z$, then there is a commutative diagram
\begin{center}
    \begin{tikzcd}
        H^0(Z, \mathcal{F}|_Z) \arrow[-, double equal sign distance]{r}\arrow{d} & K_Z\arrow{d}\\
        H^0(Z', \mathcal{F}|_{Z'})\arrow[hook]{r} & K_{Z'},
    \end{tikzcd}
\end{center}
where the bottom horizontal arrow is injective by lemma \ref{lemmaEmbSheafPrimThik}, since $K_{Z'} = \A_0(Z', \mathcal{F}|_{Z'})$. Therefore
$$\lim\limits_{Z\in \mathscr{R}_{1, j}^{sym}}K_Z = \lim\limits_{Z\in \mathscr{R}_{1, j}^{sym}}H^0(Z, \mathcal{F}|_Z).$$
Thus we get 
$$\A_1(X, \mathcal{F})\cap \A_j(X, \mathcal{F}) = \lim\limits_{Z\in \mathscr{S}_{1, j}}K_Z\overset{\rho}{\longhookrightarrow}\lim\limits_{Z\in \mathscr{R}_{1, j}^{sym}}K_Z = \lim\limits_{Z\in \mathscr{R}_{1, j}^{sym}}H^0(Z, \mathcal{F}|_Z) = H^0(X, \mathcal{F}),$$
where the last equality follows from lemma \ref{EqualityRpowRsym} and theorem \ref{Hjlim}. Hence the assertion follows.
\end{proof}

\begin{rem}
    One can show analogously as in proposition \ref{A1capAj} that there is an equality $\A_1(X, \mathcal{F})\cap \A_J(X, \mathcal{F}) = H^0(X, \mathcal{F}),$
    where $X$ is a regular projective variety, $\mathcal{F}$ is a locally free sheaf on $X$, and $J\subset \{2, \ldots, \dim X\}$.
\end{rem}

\begin{thm}\label{ThmDimX3}
    Let $X$ be a regular three-dimensional projective variety over a countable field $\Bbbk$, let $\mathcal{F}$ be a locally free sheaf on $X$, and let $I, J\subset \{0, 1, 2, 3\}$. Then there is an equality
    $$\A_I(X, \mathcal{F})\cap\A_J(X, \mathcal{F}) = \A_{I\cap J}(X, \mathcal{F}).$$
\end{thm}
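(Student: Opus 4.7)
The plan is to combine the vanishing of $H^1$ of the curtailed adelic complex (Proposition~\ref{Complwithout0}), which is available after twisting $\mathcal{F}$ by a sufficiently ample line bundle, with the ``atomic'' intersection identities $\A_1(X,\mathcal{F})\cap\A_j(X,\mathcal{F}) = H^0(X,\mathcal{F})$ for $j\in\{2,3\}$ (Proposition~\ref{A1capAj} and the remark following it) and $\A_2(X,\mathcal{F})\cap\A_3(X,\mathcal{F}) = H^0(X,\mathcal{F})$ (Proposition~\ref{IntersectiondimX}). First I would perform the standard reductions on $I$ and $J$. The nested cases $I\subset J$ and $J\subset I$ are tautological. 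If $0\in I\cap J$, Lemma~\ref{IandI0} together with the filtered colimit $\mathcal{F}_\eta = \varinjlim_D \mathcal{F}(D)$ over Cartier divisors (exactly as in the $0\in I\cap J$ part of the proof of Theorem~\ref{ThmDimX2}) reduces the equality to its analogue for $I\setminus 0,\,J\setminus 0\subset\{1,2,3\}$ applied to each twist $\mathcal{F}(D)$. If $0\in I\setminus J$ (symmetrically if $0\in J\setminus I$), the regularity of $X$ makes it normal and strongly biequidimensional, so Theorem~\ref{TheoremIcapJI0} applied to $I$ and $\{1,2,3\}\supset J$ gives $\A_I\cap\A_{\{1,2,3\}} = \A_{I\setminus 0}$, and intersecting further with $\A_J$ yields $\A_I\cap\A_J = \A_{I\setminus 0}\cap\A_J$. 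Thus we may assume $I,J\subset\{1,2,3\}$.

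Next I would reduce to the case $\mathcal{F} = \mathcal{O}_X(n)$ for $n\gg 0$. Choose $l_0$ so large that $H^1(X,\mathcal{O}_X(n)) = 0$ for every $n\geq l_0$ (Serre vanishing), and, by Lemma~\ref{FinExactSeq}, pick an exact sequence $0\to\mathcal{F}\to\mathcal{O}_X(n)^{\oplus s}\to\mathcal{O}_X(m)^{\oplus r}$ with $n,m\geq l_0$. Since every $\A_K(X,-)$ is exact on quasicoherent sheaves, both $\mathcal{F}\mapsto \A_I(X,\mathcal{F})\cap\A_J(X,\mathcal{F})$ (viewed as a subfunctor of $\A_{\{1,2,3\}}(X,-)$) and $\mathcal{F}\mapsto\A_{I\cap J}(X,\mathcal{F})$ are left exact, and the natural inclusion between them is a transformation of left exact functors. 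The five-lemma argument from the proof of Proposition~\ref{ImplforThick} therefore reduces the equality to the case $\mathcal{F} = \mathcal{O}_X(n)$ with $n\geq l_0$.

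For such $\mathcal{F}$, Proposition~\ref{Complwithout0} gives $H^1$ of the curtailed complex
\[
\mathcal{C}(\mathcal{F})^{\sbullet}\colon \A_1\oplus\A_2\oplus\A_3 \xrightarrow{d^0} \A_{\{1,2\}}\oplus\A_{\{1,3\}}\oplus\A_{\{2,3\}}\xrightarrow{d^1}\A_{\{1,2,3\}}
\]
equal to $H^1(X,\mathcal{F}) = 0$, so $\ker d^1 = \mathrm{im}\,d^0$. With the component formulas $d^1(y_{12},y_{13},y_{23}) = d_2 y_{12} - d_1 y_{13} + d_0 y_{23}$ and $d^0(c_1,c_2,c_3) = (d_0 c_2 - d_1 c_1,\,d_0 c_3 - d_1 c_1,\,d_0 c_3 - d_1 c_2)$, together with the cosimplicial identities $d_j d_i = d_i d_{j-1}$ for $i<j$, I would handle each remaining pair $(I,J)\subset\{1,2,3\}$ by a diagram chase. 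Given $x\in\A_I\cap\A_J\subset\A_{\{1,2,3\}}$ written as the image of $y\in\A_I$ and of $z\in\A_J$, I assemble a tailor-made triple in $\ker d^1$: for instance $(y,z,0)$ for $\{1,2\}\cap\{1,3\} = \{1\}$, $(y,0,-z)$ for $\{1,2\}\cap\{2,3\} = \{2\}$, $(0,y,z)$ for $\{1,3\}\cap\{2,3\} = \{3\}$, and $(y,d_0 z,0)$ for $\{1,2\}\cap\{3\} = \varnothing$. Lifting through $d^0$ to some $(c_1,c_2,c_3)\in\A_1\oplus\A_2\oplus\A_3$, the component of the triple that was forced to vanish yields an equation of the form $d_1 c_i = d_0 c_j$ in a single $\A_{\{i,j\}}$, which by Proposition~\ref{A1capAj} (when $1\in\{i,j\}$) or Proposition~\ref{IntersectiondimX} (when $\{i,j\} = \{2,3\}$) forces $c_i$ and $c_j$ to be the images of a single $h\in H^0(X,\mathcal{F})$. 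Using that $d_0 h = d_1 h$ in each $\A_{\{i,j\}}$ (the vanishing of $d^1$ on the constant embedding of $H^0$), the remaining equations exhibit the lift on one side in the image of $\A_{I\cap J}$, so $x\in\A_{I\cap J}(X,\mathcal{F})$. In the empty-intersection cases the argument moreover forces the original lift $z$ itself into $H^0$, while the remaining pairs such as $\{1,3\}\cap\{2\}$ follow formally from the one-element-intersection cases combined with Proposition~\ref{A1capAj}. The main obstacle is the sign and cosimplicial bookkeeping in these case analyses; the countability hypothesis on $\Bbbk$ enters only through Proposition~\ref{Complwithout0}.
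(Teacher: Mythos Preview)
Your proposal is correct and follows the same overall architecture as the paper: the same reductions for $0\in I$ or $0\in J$ via Lemma~\ref{IandI0} and Theorem~\ref{TheoremIcapJI0}, then the curtailed-complex exactness from Proposition~\ref{Complwithout0} combined with the atomic identities from Propositions~\ref{A1capAj} and~\ref{IntersectiondimX}. The one genuine variation is how you pass from $\mathcal{F}$ to a sheaf with vanishing $H^1$. You reduce once and for all to $\mathcal{F}=\mathcal{O}_X(n)$ via Lemma~\ref{FinExactSeq} and the five-lemma (exactly the mechanism in the proof of Proposition~\ref{ImplforThick}); the paper instead embeds $\mathcal{F}\hookrightarrow\mathcal{F}(n)$, performs the diagram chase inside $\A_K(X,\mathcal{F}(n))$, and then descends back to $\mathcal{F}$ using that $\A_2(X,\mathcal{F}(n))+H^0(X,\mathcal{F}(n))\subset\A_{(0,2)}(X,\mathcal{F})$ together with Theorem~\ref{TheoremIcapJI0}. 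Your route is slightly more systematic and makes the case analysis cleaner; the paper's route avoids invoking Lemma~\ref{FinExactSeq} at the cost of the extra $\A_{(0,K)}$ step. Either way the countability of $\Bbbk$ is used only through Proposition~\ref{Complwithout0}, as you note.
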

\begin{proof}
First, consider the case $0\in I, J$. Then
$$\A_I(X, \mathcal{F})\cap \A_J(X, \mathcal{F}) = \A_{I\setminus 0}(X, \mathcal{F}_\eta)\cap \A_{J\setminus 0}(X, \mathcal{F}_\eta) = \varinjlim_D\:\A_{I\setminus 0}\big(X, \mathcal{F}(D)\big)\cap \A_{J\setminus 0}\big(X, \mathcal{F}(D)\big)),$$
where the first equality follows from lemma \ref{IandI0} and the direct limit is taken over the direct system of Cartier divisors on $X$. Thus the assertion can be reduced to the case $I\setminus 0$ and $J\setminus 0$.

Second, consider the case $0\in I\setminus J$. The case $0\in J\setminus I$ is analogous. It follows that $J\subset \{1, 2, 3\}$. By theorem \ref{TheoremIcapJI0}, 
$$\A_I(X, \mathcal{F})\cap \A_J(X, \mathcal{F}) = \A_I(X, \mathcal{F})\cap \A_{(1, 2, 3)}(X, \mathcal{F}) \cap \A_J(X, \mathcal{F}) = \A_{I\setminus 0}(X, \mathcal{F})\cap \A_J(X, \mathcal{F}),$$
so the assertion is reduced to the case $I\setminus 0$ and $J$.

By the above observations, it is sufficient to consider the case $I, J\subset \{1, 2, 3\}$. We can assume that $I\neq \{1, 2, 3\}$ and $J\neq \{1, 2, 3\}$, otherwise the statement is clear. We can also assume that $I\not\subset J$ and $J\not\subset I$.

Consider the case $|I| = |J| = 1$. Then $I\cap J = \varnothing$ by our assumptions. The equality 
$$\A_i(X, \mathcal{F})\cap \A_j(X, \mathcal{F}) = H^0(X, \mathcal{F}),$$
where $I = (i)$ and $J = (j)$, follows from proposition \ref{A1capAj} and 
proposition \ref{IntersectiondimX}.

Now consider the case $|I| = |J| = 2$. The only possibilities up to permutation of $I$ and $J$ are
 $$I = (1, 2), \: J = (1, 3), \quad I = (1, 2), \: J = (2, 3), \quad I = (1, 3), \: J = (2, 3).$$
Since the proofs of the assertion will be similar in these cases, we can only consider the case $I = (1, 2), \: J = (2, 3)$. Let $n > 0$ be such that $H^1\big(X, \mathcal{F}(n)\big) = 0$. Let $A_{K} = \A_{K}\big(X, \mathcal{F}(n)\big)$
for $K\subset \{1, 2, 3\}$.
Then the complex
\begin{equation}\label{eqCurtAdelicCompdim3}
A_1\oplus A_2\oplus A_3 \overset{d}{\longrightarrow} A_{(1, 2)}\oplus A_{(1, 3)}\oplus  A_{(2, 3)}\overset{d}{\longrightarrow} A_{(1,2 , 3)}
\end{equation}
is exact in the middle term by proposition \ref{Complwithout0}. Since $\mathcal{F}\longhookrightarrow \mathcal{F}(n)$, there is an embedding $\A_{K}(X, \mathcal{F})\longhookrightarrow A_{K}$ for any $K\subset \{1, 2, 3\}$. Let $a\in \A_{(1, 2)}(X, \mathcal{F})\cap \A_{(2, 3)}(X, \mathcal{F})$. Consider the element $\tilde{a} = (a, 0, -a)\in A_{(1, 2)}\oplus A_{(1, 3)}\oplus  A_{(2, 3)}.$
Then $d\tilde{a} = 0$. Thus, from the exactness in the middle term of the complex \eqref{eqCurtAdelicCompdim3}, there exists an element $(r, t, s)\in A_1\oplus A_2\oplus A_3$
such that 
\begin{center}
        $$\begin{cases}
            t - r = a,\\
            s - r = 0,\\
            s - t = - a,
        \end{cases}
        \implies
        \begin{cases}
            r =s \in A_1\cap A_3,\\
            a = t - r \in A_2 + A_1\cap A_3.
        \end{cases}
        $$
    \end{center}
    Since $A_2\longhookrightarrow \A_{(0, 2)}(X, \mathcal{F})$ and
    $$A_1\cap A_3 = \A_1\big(X, \mathcal{F}(n)\big)\cap \A_3\big(X, \mathcal{F}(n)\big) = H^0\big(X, \mathcal{F}(n)\big)\subset \A_0(X, \mathcal{F}),$$
    where the second equality follows from the previous case, we obtain that $A_2 + A_1\cap A_3 \subset \A_{(0, 2)}(X, \mathcal{F}).$
    Hence
    $$a\in \A_{(1, 2)}(X, \mathcal{F})\cap \A_{(2, 3)}(X, \mathcal{F})\cap \A_{(0, 2)}(X, \mathcal{F}) = \A_{(1, 2)}(X, \mathcal{F})\cap\A_2(X, \mathcal{F}) = \A_2(X, \mathcal{F}).$$
    Therefore we obtain the desired equality
    $$\A_{(1, 2)}(X, \mathcal{F})\cap \A_{(2, 3)}(X, \mathcal{F}) = \A_2(X, \mathcal{F}).$$
    
    Now consider the remaining case $|I| = 2$, $|J| = 1$ (the case $|I| = 1$, $|J| = 2$ is similar). By our assumptions, $I\cap J = \varnothing$. The only possibilities are
 $$I = (1, 2), \: J = (3), \quad I = (1, 3), \: J = (2), \quad I = (2, 3), \: J = (1).$$
 As in the previous case, the proofs of these cases are analogous. Thus we can consider only the case $I = (1, 2), \: J = (3)$. Then
 \begin{align*}
 \A_{(1, 2)}(X, \mathcal{F})\cap \A_3(X, \mathcal{F}) &= \A_{(1, 2)}(X, \mathcal{F})\cap \A_{(2, 3)}(X, \mathcal{F})\cap \A_3(X, \mathcal{F}) = \A_{2}(X, \mathcal{F})\cap \A_3(X, \mathcal{F})\\
 &= H^0(X, \mathcal{F}),
 \end{align*}
 where the second and the third equalities follow from the previous cases.
\end{proof}

\end{document}